\documentclass[11pt]{amsart}
\usepackage[T1]{fontenc}
\usepackage[utf8]{inputenc}
\usepackage{lmodern,microtype}
\usepackage{amsmath,amssymb,mathtools}
\usepackage{enumitem,needspace,booktabs,array}
\usepackage{tikz-cd}
\usepackage{xurl,xcolor}
\usepackage[letterpaper,margin=1.02in]{geometry}
\usepackage{hyperref}
\hypersetup{colorlinks=true,linkcolor=blue!45!black,citecolor=blue!45!black,urlcolor=blue!45!black,pdftitle={Poincare duality and module topological complexity},pdfauthor={Paul-Eugène Parent}}
\allowdisplaybreaks[2]
\newcommand{\Q}{\mathbb Q}
\newcommand{\TC}{\operatorname{TC}}
\newcommand{\MTC}{\operatorname{MTC}}
\newcommand{\secat}{\operatorname{secat}}
\newcommand{\relcat}{\operatorname{relcat}}
\newcommand{\TCM}{\operatorname{TC}^{M}}
\newcommand{\msecat}{\operatorname{Msecat}}
\newcommand{\id}{\mathrm{id}}
\newtheorem{theorem}{Theorem}[section]
\newtheorem{proposition}[theorem]{Proposition}
\newtheorem{lemma}[theorem]{Lemma}
\newtheorem{corollary}[theorem]{Corollary}
\theoremstyle{definition}

\newtheorem{example}[theorem]{Example}
\theoremstyle{remark}
\newtheorem{remark}[theorem]{Remark}
\DeclareRobustCommand{\fundingmark}{\textsuperscript{*}}
\title[Poincar\'e duality and module topological complexity]{Poincar\'e duality and module topological complexity}
\author[Paul-Eugène Parent]{Paul-Eugène Parent\fundingmark}
\thanks{\makebox[0pt][r]{\fundingmark\enspace}This research was supported by the Natural Sciences and Engineering Research Council of Canada (NSERC), grant no.~149521.}
\address{Department of Mathematics and Statistics, University of Ottawa, Ottawa, Canada}
\email{pparent@uottawa.ca}
\date{October 2, 2026}
\subjclass[2020]{Primary 55M30; Secondary 55P62, 13D02}
\keywords{Poincar\'e duality, rational topological complexity, module sectional category, elliptic fibre, relative category, Thom morphism, Sullivan model}
\begin{document}
\begin{abstract}
For a simply connected space $X$ of finite rational homotopy type with Poincar\'e duality cohomology, we prove
\[
 \MTC(X)=\TC_0(X)=\TC_0^M(X).
\]
Here $\TC_0(X)=\TC(X_{\Q})$. The equality extends to higher topological complexity. Controlled resolutions and an ideal-valued criterion for relative category also give
$\msecat(f_{\Q})\leq\secat(f_{\Q})\leq\relcat(f_{\Q})\leq\msecat(f_{\Q})+1$
for every rationally retractive map, with optimal additive constant; here $\msecat$ denotes module sectional category. In particular, for an arbitrary simply connected space $X$ of finite rational homotopy type, without any Poincar\'e duality assumption,
\[
 \MTC(X)\leq\TC_0(X)\leq\MTC(X)+1.
\]
For sections of fibrations, all three sectional invariants agree when the fibre has finite-dimensional total rational homotopy, without a bound on the cohomological dimension of the base. For Poincar\'e duality fibres, we prove that a derived self-intersection formula, the vanishing of Thom multiplication on the augmentation ideal of the section, and ideal-valued evaluation are equivalent. This criterion gives equality for sections of smooth bundles with closed simply connected manifold fibre, including rationally hyperbolic fibres. We also treat Poincar\'e duality fibres under a base-dimension bound and finite relative Frobenius models. Sphere, Grassmann and projective bundles provide geometric applications. We deduce product additivity under Poincar\'e duality and identify the module invariant with stable growth on Cartesian powers in the general retractive case.
\end{abstract}
\maketitle

\section{Introduction}\label{sec:introduction}

Farber introduced topological complexity to measure the number of
continuous local motion-planning rules needed on a configuration space
\cite{Farber2003}. The endpoint fibration
\[
 \pi:X^{[0,1]}\longrightarrow X\times X,
 \qquad \pi(\gamma)=(\gamma(0),\gamma(1)),
\]
is a fibration replacement of the diagonal
$\Delta:X\to X\times X$. Its normalized sectional category is
$\TC(X)$: a value $n$ means that $n+1$ local rules suffice.
Sectional category originates in Schwarz's genus of a fibre space
\cite{Schwarz1966}. For a map $f:X\to Y$, the invariant $\secat(f)$
counts, with the same normalization, the local homotopy sections over
an open cover of $Y$. Thus $\TC(X)=\secat(\Delta)$ and
$\operatorname{cat}(Y)=\secat(*\to Y)$.

Throughout the article, spaces have the homotopy type of simply
connected CW complexes and are of finite rational homotopy type.
Here finite rational homotopy type means that $\pi_k(X)\otimes\Q$
is finite dimensional for every $k$; in the simply connected setting,
this is equivalent to degreewise finite-dimensional rational cohomology.
Total rational cohomology and total rational homotopy may both be
infinite dimensional. All rational invariants are computed on rational
CW models, and $f_{\Q}$ denotes rationalization. We write
$\TC_0(X)=\TC(X_{\Q})$.

Fern\'andez Su\'arez, Ghienne, Kahl and Vandembroucq introduced
module sectional category $\msecat$
\cite[Definition~5.1]{FGKV2006}: one asks for a homotopy retraction of a Sullivan
model of an iterated Ganea map in differential modules over the model
of the target. Applied to the diagonal, this gives
$\MTC(X)=\msecat(\Delta_{\Q})$ and the lower bound
$\MTC(X)\leq\TC_0(X)$.

The comparison with module invariants is motivated by
Lusternik--Schnirelmann category. Halperin and Lemaire introduced
module category \cite{HL1988}, and Hess proved
\[
 \operatorname{Mcat}(Y)=\operatorname{cat}_0(Y),
 \qquad \operatorname{cat}_0(Y)=\operatorname{cat}(Y_{\Q}),
\]
where $\operatorname{Mcat}(Y)=\msecat(*\to Y_{\Q})$
\cite{Hess1991}; see also \cite[Theorem~29.9]{FHT2001}.
For topological complexity, the quotient algebra in the diagonal
model replaces the scalar quotient of the augmentation ideal.
A module nullhomotopy must therefore satisfy an additional
compatibility with this quotient. Our main result shows that
Poincar\'e duality supplies the compatibility required for equality.

Monoidal topological complexity requires each local domain to contain
the diagonal and its motion-planning rule to assign the constant path
there. We write $\TC_0^M(X)=\TCM(X_{\Q})$, computed on a rational
CW model.

\begin{theorem}\label{thm:poincare-main}
Let $X$ be a simply connected space of finite rational homotopy type whose
rational cohomology is a finite-dimensional Poincar\'e duality algebra.
Then
\[
 \MTC(X)=\TC_0(X)=\TC_0^M(X).
\]
\end{theorem}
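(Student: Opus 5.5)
The chain of inequalities $\MTC(X)\le\TC_0(X)\le\TC_0^M(X)$ holds for every space, so the whole content is the reverse inequality $\TC_0^M(X)\le\MTC(X)$. The first inequality is the general fact from \cite{FGKV2006} recalled in the introduction. The second holds because a monoidal motion planner is in particular a motion planner. I will work entirely with a minimal Sullivan model $(\Lambda V,d)$ of $X$. I will use the standard model of the diagonal, the multiplication $\mu:(\Lambda V\otimes\Lambda V,d)\to(\Lambda V,d)$, whose kernel is the ideal $K$ generated by the elements $v\otimes 1-1\otimes v$. The plan is to convert the module-level retraction defining $\MTC(X)=n$ into an algebraic characterisation of $\TC_0^M$. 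For this I will use the Carrasquel--Jessup type description: $\TC_0^M(X)\le n$ if and only if the projection $\Lambda V^{\otimes 2}\to\Lambda V^{\otimes 2}/K^{n+1}$ admits a homotopy retraction of $\Lambda V^{\otimes2}$-dgmodules that is compatible with the quotient map to $\Lambda V$. This is exactly the ``additional compatibility'' mentioned in the introduction.

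First step: make the module condition explicit. Using a relative Sullivan model of the $n$-th Ganea map of $\Delta$, or equivalently the quotient by $K^{n+1}$ as in Hess's proof of $\operatorname{Mcat}=\operatorname{cat}_0$, I rephrase $\MTC(X)\le n$ as a module retraction $r:\Lambda V^{\otimes2}/K^{n+1}\to\Lambda V^{\otimes2}$, up to quasi-isomorphism, with $r\circ\rho\simeq\id$, where $\rho$ is the projection. Second step: identify what is missing for $\TC_0^M$. It is the condition that $r$, followed by $\mu$, agrees with the induced projection to $\Lambda V$; equivalently, that the obstruction class living in a suitable $\mathrm{Ext}_{\Lambda V^{\otimes2}}(\Lambda V^{\otimes2}/K^{n+1},\Lambda V)$ vanishes.

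Third step, the heart of the argument: use Poincar\'e duality to kill this obstruction. Let $[X]\in H^N(X)$ be the fundamental class and $\omega\in H^N(X\times X)$ the diagonal (Thom) class, $\omega=\sum \pm a_i\otimes a_i^\#$ for dual bases. Since $H(X)$ is a Poincar\'e duality algebra, Lambrechts--Stanley style models apply: $\Lambda V\to H$ (or a finite Poincar\'e model $A$) is available, and $\Lambda V$ is a Gorenstein dg-module over $\Lambda V^{\otimes2}$. The consequence is that $\mathrm{Ext}_{\Lambda V^{\otimes 2}}(\Lambda V,\Lambda V^{\otimes2})$ is one-dimensional, generated by the shriek map $\Delta_!$, which sends $1\mapsto\omega$. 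I would correct the given retraction $r$ by a multiple of the Thom/shriek map: $r' = r + \Delta_!\circ\phi$ for a module map $\phi$. Since $\mu\circ\Delta_!$ is multiplication by the Euler class $\mu(\omega)=\chi(X)[X]$, and $\Delta_!$ is annihilated by $K$ (because $K\omega=0$ in cohomology), the correction changes the composite with $\mu$ while preserving the retraction property on $K^{n+1}$-torsion. Duality should thus let me adjust the composite $\mu r$ to the required projection. I expect this step to be the main obstacle. Care is needed when $\chi(X)=0$, and also in checking that the obstruction really lies in the part detected by the top class. For the latter I would use that the obstruction factors through the socle of $H$, which is spanned by $[X]$ by Poincar\'e duality, together with the nondegenerate pairing, to produce $\phi$.

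Finally, with a compatible retraction in hand, I would invoke the algebraic characterisation of $\TC^M$ for rational spaces (Carrasquel's theorem identifying $\TC_0^M$ with the monoidal sectional category of the Sullivan-model map $\Lambda V^{\otimes2}\to\Lambda V^{\otimes 2}/K^{n+1}$, with module retractions compatible with $\mu$ sufficing by Hess-type realisation). This gives $\TC_0^M(X)\le n=\MTC(X)$ and closes the chain of equalities.
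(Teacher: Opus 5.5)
\textbf{There is a genuine gap.} Your reduction targets the right obstruction. With $R=\Lambda V^{\otimes2}$ and $I=K$, a module retraction of $R\to R/I^{n+1}$ that is compatible with $\mu$ amounts to an $R$-linear nullhomotopy of $P\to I^{n+1}$ with values in $I$, i.e.\ $[I^{n+1}\hookrightarrow I]=0$. Your Ext class is the trace class $o_{n+1}$ of Proposition~\ref{prop:canonical-trace}.

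\textbf{Step three does not kill this obstruction.} Replacing $r$ by $r+\Delta_!\circ\phi$, with $\phi$ vanishing on $R$, changes the trace by $\mu\Delta_!\phi$. Suppose you grant that $\mu\Delta_!$ is multiplication by $\chi(X)[X]$ in $\mathcal D(R)$, not merely on the unit; in that form it is condition~(iii) of Proposition~\ref{prop:thom-evaluation}, a nontrivial statement in its own right. Then the correction is $\chi(X)[X]\cdot\phi$.
\begin{itemize}
\item When $\chi(X)=0$ (every odd-dimensional Poincar\'e duality space, $S^3$, Lie groups), the correction is zero. You would need the obstruction to vanish outright, and nothing in the proposal shows that.
\item When $\chi(X)\neq0$, you would need the trace, a class in $\operatorname{coker}\bigl(H^{-1}\operatorname{Hom}_R(P,R)\to H^{-1}\operatorname{Hom}_R(P,\Lambda V)\bigr)$, to be of the form $\chi(X)[X]\cdot\phi$. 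The assertion that it ``factors through the socle'' is not justified.
\end{itemize}

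\textbf{The missing idea} is a direct proof that $[I^m\hookrightarrow R]=0$ implies $[I^m\hookrightarrow I]=0$ without losing a power of $I$. The paper obtains this from three facts:
\begin{itemize}
\item In a finite Lambrechts--Stanley model $S=B\otimes B$ with kernel $J$, the identity $\operatorname{Ann}(\operatorname{Ann}J)=J$ forces every $S$-linear map $J\to S$ into $J$. Hence derived evaluation $I\otimes_R^{\mathbf L}\mathbb D_RI\to R$ factors through $I$.
\item $I$ is reflexive, $I\simeq\mathbb D_R\mathbb D_RI$.
\item The controlled comparison $P_m\to I^{m-1}P_1$ supplies the other $m-1$ ideal factors.
\end{itemize}
Together these show that the evaluation pairing on $I^m$ lands in $I^m$ and vanishes once $I^m\hookrightarrow R$ does. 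Its adjoint $\eta_I\circ(I^m\hookrightarrow I)$ is then zero (Lemma~\ref{lem:power-duality}). Your self-intersection formula is relevant, but only as an input to this biduality argument. It is equivalent to $T_I=0$ and to ideal-valued evaluation; it does not work as a correction term.

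\textbf{The final step has a second gap.} The criterion you invoke is that a $\mu$-compatible \emph{module} retraction already gives $\TC_0^M(X)\leq n$. This is true, but it is not an available result. The known characterization \cite[Theorem~3.6]{CV2017}, \cite{CGV2014} requires a compatible CDGA retraction. Passing from module to algebra retractions is exactly what fails in general: the retractive example of Section~\ref{sec:sharpness} has $\msecat=2<\secat=3$. So ``Hess-type realisation'' cannot be taken for granted. The sufficiency direction you need is the paper's new Theorem~\ref{thm:relative-category}. Proving it requires:
\begin{itemize}
\item the controlled resolutions with higher comparisons having coefficients in $I^{m-1}$;
\item the curved Harrison model of $R/I^m$;
\item the weighted-kernel conversion of Proposition~\ref{prop:relative-hess-conversion}, where $I$-valuedness is exactly what keeps the weighted kernels stable under those coefficient terms;
\item an explicit Ganea-model lift to turn the resulting CDGA retraction into a relative section.
\end{itemize}
Without this conversion and without a working duality step, the proposal establishes only the chain $\MTC(X)\leq\TC_0(X)\leq\TC_0^M(X)$.
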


\begin{corollary}\label{cor:elliptic-main}
For every simply connected rationally elliptic space $X$,
\[
 \MTC(X)=\TC_0(X)=\TC_0^M(X).
\]
\end{corollary}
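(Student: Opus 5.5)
The corollary follows from Theorem~\ref{thm:poincare-main} once we check its hypotheses, so the plan is to verify them for a simply connected rationally elliptic space $X$.

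\emph{Finite rational homotopy type.} By definition, rational ellipticity means that $\dim \pi_*(X)\otimes\Q<\infty$ and $\dim H^*(X;\Q)<\infty$. In particular each $\pi_k(X)\otimes\Q$ is finite dimensional, so $X$ has finite rational homotopy type in the sense fixed in the introduction.

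\emph{Poincar\'e duality.} Next we invoke Halperin's theorem (see \cite[\S32 and \S38]{FHT2001}): the rational cohomology of a simply connected elliptic space is a Poincar\'e duality algebra. Its formal dimension is
\[
 n=\sum_i (2a_i-1)-\sum_j (2b_j-2),
\]
computed from the odd generators of degrees $2a_i-1$ and the even generators of degrees $2b_j$ in a minimal Sullivan model. The standard argument runs as follows. Filter the minimal model $(\Lambda V,d)$ by the odd-spectral-sequence filtration and reduce to the associated pure model. A pure elliptic model has finite-dimensional cohomology satisfying duality, since it is a Koszul-type complex over the finite-dimensional quotient $\Lambda Q/(d P)$. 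Duality then passes back to $(\Lambda V,d)$ through the spectral sequence, with the top class preserved.

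\emph{Conclusion.} With both hypotheses verified, Theorem~\ref{thm:poincare-main} gives $\MTC(X)=\TC_0(X)=\TC_0^M(X)$.

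The only nontrivial input is Halperin's Poincar\'e duality theorem for elliptic spaces. Since it is classical, I would cite it rather than reprove it. The other point to watch is consistency of conventions: the paper's definition of finite rational homotopy type is stated degreewise, and elliptic spaces satisfy the stronger total finiteness, so they fall within its scope.
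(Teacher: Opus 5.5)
Your proposal is correct and follows the paper's argument: ellipticity gives finite rational homotopy type, Halperin's theorem \cite[Proposition~38.3]{FHT2001} gives Poincar\'e duality cohomology, and Theorem~\ref{thm:poincare-main} then applies. Your formal-dimension formula is off by one, although the proof never uses it: an even generator of degree $2b_j$ contributes $2b_j-1$, not $2b_j-2$, so the correct formula is $n=\sum_i(2a_i-1)-\sum_j(2b_j-1)$. As a check, for $S^2$ this gives $3-1=2$, whereas your formula gives $3$.
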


Here rational ellipticity means that both $H^*(X;\Q)$ and
$\pi_*(X)\otimes\Q$ are finite dimensional. The corollary follows
because an elliptic minimal Sullivan algebra has Poincar\'e duality
cohomology \cite[Proposition~38.3]{FHT2001}. Theorem~\ref{thm:poincare-main}
also applies when rational homotopy is infinite dimensional; it
requires neither formality nor a pure minimal model. For the higher
diagonal $\Delta_r:X\to X^r$, the same argument gives
\[
 \MTC_r(X)=\TC_{r,0}(X)=\relcat((\Delta_r)_{\Q})
 \qquad(r\geq2),
\]
where $\MTC_r(X)=\msecat((\Delta_r)_{\Q})$ and
$\TC_{r,0}(X)=\TC_r(X_{\Q})$, with $\TC_r$ denoting
Rudyak's higher topological complexity \cite{Rudyak2010}; see
Corollary~\ref{cor:higher-pd}.

Several earlier comparisons place this result in context. For formal spaces,
Lechuga and Murillo compute rational topological complexity as
rational zero-divisors cup-length \cite{LM2007}, and the module
computation gives the same value
\cite[Proposition~6.4]{FGKV2006}. Jessup, Murillo and Parent developed
quotient models for rational topological complexity \cite{JMP2012}.
Under Poincar\'e duality, Carrasquel-Vera, Kahl and Vandembroucq
identify $\MTC$ with $\operatorname{HTC}$, the approximation
defined by cohomological injectivity \cite[Corollary~4.1]{CKV2016}.
Theorem~\ref{thm:poincare-main} therefore gives
\[
 \operatorname{HTC}(X)=\MTC(X)=\TC_0(X)=\TC_0^M(X)
\]
under Poincar\'e duality. This parallels the theorem of F\'elix,
Halperin and Lemaire identifying rational Lusternik--Schnirelmann
category with the rational Toomer invariant for simply connected
Poincar\'e duality complexes,
$\operatorname{cat}_0(X)=e_0(X)$ \cite[Theorem~3]{FHL1998}.
Kishimoto and Yamaguchi suggest the expected higher equality and
prove it when rational homotopy is concentrated in odd degrees
\cite[Theorem~1.4]{KY2020}. Recent work gives bounds and computations for pure
elliptic spaces, including coformal ones, and for higher topological
complexity \cite{Hamoun2025,HRV2023,HRV2025}.
Benzaki and Rami study further approximations defined by Ext for
Gorenstein spaces \cite{BenzakiRami2025}.

The proof uses a comparison that applies more generally to maps
with a rational homotopy retraction. We call $f:X\to Y$
\emph{rationally retractive} when there is a map
$g:Y_{\Q}\to X_{\Q}$ such that $g f_{\Q}\simeq\id_{X_{\Q}}$.
Relative category expresses the relevant compatibility
geometrically. In the Ganea construction $g_n:G_n(f)\to Y$, let
$\iota_n:X\to G_n(f)$ be the canonical map. The relative category
$\relcat(f)$ is the least $n$ for which $g_n$ has a homotopy section
$s$ satisfying $sf\simeq\iota_n$. Doeraene and El Haouari proved \cite[Theorem~18]{DEH2013}
\[
 \secat(f)\leq\relcat(f)\leq\secat(f)+1.
\] Carrasquel-Vera, Garc\'ia-Calcines and
Vandembroucq developed its rational models and identified relative
category of the diagonal with monoidal topological complexity on
CW models \cite[Theorem~12]{CGV2014}.

\begin{theorem}\label{thm:main}
Let $f\colon X\to Y$ be a map between simply connected spaces of finite
rational homotopy type. Suppose that $f_{\Q}$ admits a homotopy retraction. Then
\[
 \msecat(f_{\Q})\leq\secat(f_{\Q})\leq\relcat(f_{\Q})
                  \leq\msecat(f_{\Q})+1.
\]
The inequalities are interpreted in $\mathbb N\cup\{\infty\}$.
If $\msecat(f_{\Q})=0$, all three invariants are zero.
\end{theorem}

The constant one is optimal in this generality. Carrasquel-Vera
conjectured equality of module and sectional category in the
retractive class \cite[Conjecture~3.4]{CV2017}. In
\cite[Theorem~1.1]{Parent2026} we construct a rationally retractive
map $p$ with
\[
 \msecat(p)=2<\secat(p)=3.
\]
Section~\ref{sec:sharpness} recalls its finite Sullivan test and
computes the nonzero relative obstruction. Earlier, Stanley's
spherical-fibration example separated the two invariants for general
maps \cite{Stanley2000}; see also \cite[Remark~5.8]{FGKV2006}.
The strict inequality in \cite{Parent2026} shows that Hess's equality
does not extend to sectional category even within the retractive class.

Either projection retracts the diagonal. The general comparison
therefore also bounds the possible difference outside the
Poincar\'e duality case.

\begin{corollary}\label{cor:diagonal-bound}
For every simply connected space $X$ of finite rational homotopy type,
\[
 \MTC(X)\leq\TC_0(X)\leq\TC_0^M(X)\leq\MTC(X)+1.
\]
\end{corollary}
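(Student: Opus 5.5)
We apply Theorem~\ref{thm:main} to the diagonal $f=\Delta\colon X\to X\times X$. The only hypothesis to check is that $\Delta_{\Q}$ admits a homotopy retraction. Rationalization commutes with finite products for simply connected spaces of finite rational homotopy type, so $(X\times X)_{\Q}\simeq X_{\Q}\times X_{\Q}$, and under this identification $\Delta_{\Q}$ is the diagonal of $X_{\Q}$. The first projection $\mathrm{pr}_1\colon X_{\Q}\times X_{\Q}\to X_{\Q}$ then satisfies $\mathrm{pr}_1\circ\Delta_{\Q}=\id_{X_{\Q}}$ strictly. Hence $\Delta$ is rationally retractive, and Theorem~\ref{thm:main} gives
\[
 \msecat(\Delta_{\Q})\leq\secat(\Delta_{\Q})\leq\relcat(\Delta_{\Q})\leq\msecat(\Delta_{\Q})+1 .
\]

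Next we identify each term with the invariants in the corollary. By definition $\MTC(X)=\msecat(\Delta_{\Q})$. For the second term, $\TC_0(X)=\TC(X_{\Q})=\secat(\Delta_{X_{\Q}})$. Sectional category is invariant under replacing a map by a homotopy-equivalent map over a homotopy-equivalent target, so this equals $\secat(\Delta_{\Q})$; equivalently, the path fibration of $X_{\Q}$ is a fibration replacement of $\Delta_{\Q}$. For the third term, the theorem of Carrasquel-Vera, Garc\'ia-Calcines and Vandembroucq \cite[Theorem~12]{CGV2014} identifies relative category of the diagonal with monoidal topological complexity on CW models. Applied to a rational CW model of $X_{\Q}$, it gives $\relcat(\Delta_{\Q})=\TCM(X_{\Q})=\TC_0^M(X)$. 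Substituting these three identifications into the chain yields the claimed inequalities, interpreted in $\mathbb N\cup\{\infty\}$.

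The only delicate point is the third identification. We must confirm that \cite[Theorem~12]{CGV2014} applies to the CW model used in defining $\TC_0^M$; its hypotheses are simply connected CW complexes, which rational CW models satisfy. We must also check that $\relcat$ is a homotopy invariant of the map, so that computing it on $\Delta_{\Q}$ or on the diagonal of a rational CW model gives the same value. Ganea constructions are homotopy invariant, and the condition $sf\simeq\iota_n$ transports along homotopy equivalences of maps, so this holds. Everything else is formal bookkeeping.
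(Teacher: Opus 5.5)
Your proposal is correct and follows the paper's proof: a projection retracts the diagonal, Theorem~\ref{thm:main} gives the chain, and the three terms are identified by definition and by \cite[Theorem~12]{CGV2014}. One small correction: the hypothesis the paper checks for that theorem is that a rational CW model is locally equiconnected (every CW complex is), not that it is simply connected.
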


Equality between module and rational topological complexity remains
open for arbitrary simply connected spaces of finite rational homotopy type.
The strict retractive example above is not a diagonal example.

We now describe the algebraic mechanism behind both the equality
and the bound. The quotient approach originates in the work of
F\'elix and Halperin \cite[Theorem~4.7]{FH1982}.
Carrasquel-Vera obtained a quotient upper bound from a general
surjective model \cite{CV2015}, and exact quotient criteria for
sectional category and module sectional category of rationally
retractive maps \cite[Theorem~3.3]{CV2017}.
Such a map has a split minimal Sullivan model
\[
 \varphi:R=A\otimes\Lambda W\longrightarrow A,
 \qquad \varphi|_A=\id_A,\quad\varphi(W)=0,
 \qquad I=\ker\varphi=(W),
\]
where $R$ models $Y$ and $A$ models $X$.
The module quotient criterion tests whether
$[I^{n+1}\hookrightarrow R]$ vanishes in $\mathcal D(R)$,
the derived category of differential $R$-modules. For relative
category we obtain the exact ideal-valued criterion
\begin{equation}\label{eq:intro-relative-criterion}
 \relcat(f_{\Q})\leq n
 \quad\Longleftrightarrow\quad
 [I^{n+1}\hookrightarrow I]=0\text{ in }\mathcal D(R)
 \qquad(n\geq0).
\end{equation}
For $n\geq1$, the necessary implication follows by translating the
relative quotient condition of \cite[Theorem~3.6]{CV2017} into a
nullhomotopy with values in $I$. For sufficiency, a curved Harrison
construction and an ideal-valued module-to-algebra conversion give
a compatible CDGA retraction; an explicit Ganea-model lift supplies
the required relative section. The case $n=0$ follows from the
split model.

The controlled resolutions $P_m\xrightarrow{\simeq}I^m$ admit
comparison maps
\[
 P_{m+1}\longrightarrow IP_m.
\]
They turn an $R$-valued nullhomotopy at power $m$ into an
$I$-valued one at power $m+1$, proving Theorem~\ref{thm:main}.
For the diagonal of a Poincar\'e duality space, the same comparison
can be iterated without increasing the final power:
\[
 P_m\longrightarrow I^{m-1}P_1.
\]
Lambrechts and Stanley provide a finite differential Poincar\'e
duality model \cite{LS2008}. Its Frobenius pairing makes the kernel
ideal reflexive and gives a derived evaluation with values in that
ideal. The comparison contributes $m-1$ ideal factors and evaluation
contributes the last one. Derived biduality then yields
\[
 [I^m\hookrightarrow R]=0
 \quad\Longrightarrow\quad
 [I^m\hookrightarrow I]=0.
\]
Together with \eqref{eq:intro-relative-criterion}, this proves
Theorem~\ref{thm:poincare-main}.

Fibre duality also yields an equality theorem for general retractive
maps. Let $F$ be the homotopy fibre of a retraction
$g:Y_{\Q}\to X_{\Q}$. If $H^*(F;\Q)$ satisfies Poincar\'e duality
in dimension $N$ and $H^{>N}(X;\Q)=0$, then
\[
 \msecat(f_{\Q})=\secat(f_{\Q})=\relcat(f_{\Q}).
\]
Theorem~\ref{thm:pd-fibre-dimension} gives a sharper bound involving
the degrees of the relative generators. For a Poincar\'e duality
fibre, Proposition~\ref{prop:thom-evaluation} shows that three
conditions are equivalent: a derived self-intersection formula,
the vanishing of Thom multiplication on $I$, and ideal-valued
evaluation. Theorem~\ref{thm:pd-fibre-dimension} establishes them
by degree. When the fibre has finite-dimensional
total rational homotopy, a filtered Thom construction removes the bound on the
cohomological dimension of the base.

\begin{theorem}[Fibres with finite-dimensional total rational homotopy]
\label{thm:finite-homotopy-fibre}
Let $f:X\to Y$ be a rationally retractive map between simply
connected spaces of finite rational homotopy type, and choose a homotopy
retraction $g:Y_{\Q}\to X_{\Q}$. If its homotopy fibre $F$ satisfies
\[
 \dim_{\Q}\bigl(\pi_*(F)\otimes\Q\bigr)<\infty,
\]
then
\[
 \msecat(f_{\Q})=\secat(f_{\Q})=\relcat(f_{\Q}).
\]
The equality includes the value $\infty$ and requires no bound on
the cohomological dimension of $X$.
\end{theorem}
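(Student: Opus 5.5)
By Theorem~\ref{thm:main}, it suffices to show that $\msecat(f_{\Q})\leq n$ implies $\relcat(f_{\Q})\leq n$. If $\msecat(f_{\Q})=\infty$, Theorem~\ref{thm:main} already makes all three invariants infinite, and the value $0$ is covered there as well. I will work with the split minimal model $\varphi:R=A\otimes\Lambda W\to A$, $I=(W)$, where $\Lambda W$ (with the induced differential) is a Sullivan model of the fibre $F$. The hypothesis says $W$ is finite dimensional, so $(\Lambda W,\bar d)$ is elliptic and $H^*(F;\Q)$ is a Poincar\'e duality algebra of some formal dimension $N$. The reduction then goes through the ideal-valued criterion \eqref{eq:intro-relative-criterion}: I need to show that $[I^{n+1}\hookrightarrow R]=0$ in $\mathcal D(R)$ forces $[I^{n+1}\hookrightarrow I]=0$.

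Proposition~\ref{prop:thom-evaluation} reduces this to a single fibrewise statement. For Poincar\'e duality fibres it identifies ideal-valued evaluation with the vanishing of Thom multiplication on $I$. So the plan is to construct a relative Thom class and show that multiplication by it kills $I$ in the derived sense. Concretely, I would realise a fibrewise Thom/orientation element as an $R$-module map $\tau:A\to R[N]$. This map is dual to $\varphi$ under the fibrewise Frobenius pairing, i.e.\ a derived $R$-linear "integration along the fibre" $R\to A[-N]$ together with its adjoint. The required vanishing is then that $\tau\circ\varphi|_I\simeq 0$. Once this holds, the biduality argument sketched in the introduction converts an $R$-valued nullhomotopy of $I^{n+1}$ into an $I$-valued one, with no loss in the power.

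The new ingredient, replacing the base-dimension bound of Theorem~\ref{thm:pd-fibre-dimension}, is a filtered Thom construction. I filter $R$ by the word-length filtration in $\Lambda W$, or more precisely by the Koszul-type filtration coming from an elliptic pure model. The associated graded is $A\otimes(\Lambda W,d_0)$ with $A$ acting trivially on the fibre directions. On the graded level, the evaluation/Thom vanishing is then a statement about the elliptic algebra $\Lambda W$ alone, tensored with $A$, and it holds there by the Poincar\'e duality of elliptic Sullivan algebras. I use the finiteness of $\dim W$ at two points. First, the filtration has finite length in each relevant piece, since the Koszul complex on finitely many odd generators is bounded. Second, the homotopy lifting along the filtration terminates, so the obstruction classes, which live in bounded fibre degrees, can be killed inductively without any degree bound on $A$. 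The key point is that these obstructions sit in $A\otimes H^{>N}(F)=0$, or in a suitable negative Thom degree, and vanish by fibre degree alone, whereas the proof of Theorem~\ref{thm:pd-fibre-dimension} had to use the base degree.

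The step I expect to be the main obstacle is this perturbation argument. Since $A$ may be unbounded, a naive spectral sequence need not converge in $\mathcal D(R)$. My first attempt would be to work with semifree $R$-modules built from the finite-dimensional $\Lambda W$ and to perform a homological perturbation lemma on the finite Koszul-type resolution, which converges because the perturbation lowers fibre word-length. I would then check that the perturbed Thom map still annihilates $I$ up to homotopy. With the self-intersection condition of Proposition~\ref{prop:thom-evaluation} established this way, ideal-valued evaluation follows, and the equivalence \eqref{eq:intro-relative-criterion} together with Theorem~\ref{thm:main} gives $\msecat=\secat=\relcat$, including the value $\infty$.
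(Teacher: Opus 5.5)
Your outer reduction is right: Theorem~\ref{thm:main}, the criterion \eqref{eq:intro-relative-criterion} and Proposition~\ref{prop:thom-evaluation} are what the paper uses. The argument nevertheless has two genuine gaps.

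The first is at the start. $\dim W<\infty$ does not make $(\Lambda W,\bar d)$ elliptic, because ellipticity also requires $\dim H^*(F;\Q)<\infty$. The fibre $K(\Q,2d)$, with one generator and cohomology $\Q[x]$, is a counterexample (Example~\ref{ex:homotopy-finite-infinite-value}). Without Poincar\'e duality for $H^*(F;\Q)$, neither Lemma~\ref{lem:relative-pd-duality} nor Proposition~\ref{prop:thom-evaluation} is available. You must extract ellipticity from finiteness of the module invariant. With $m=\msecat(f_{\Q})+1\geq2$ one has $[I^m\hookrightarrow R]=0$. Since $R$, $P_m$ and $I^m$ are semifree over $A$ (for $I^m$, use a weight filtration on relative monomials adapted to a finite Sullivan basis of $W$), reduction along $A\to\Q$ gives $[\Lambda^{\geq m}W\hookrightarrow\Lambda W]=0$ in $\mathcal D(\Lambda W)$. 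Put $D=\max_i|w_i|$. Every element of degree greater than $(m-1)D$ has word length at least $m$, so $H^{>(m-1)D}(F;\Q)=0$. Only then is $F$ elliptic, with Poincar\'e duality and $n_{\mathrm{odd}}\geq n_{\mathrm{even}}$.

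The second gap is that the filtered Thom construction, which is the heart of the theorem, is not supplied, and the mechanism you sketch would not work. The associated graded fibre you name, $(\Lambda W,d_0)$, has $d_0=0$ by minimality. When $W^{\mathrm{even}}\neq0$ it is neither elliptic nor a Poincar\'e duality algebra, so there is no graded-level Thom vanishing to perturb. Nor do obstructions vanish ``by fibre degree alone'': the natural obstruction group \eqref{eq:pd-fibre-factor-group} involves base degrees and need not vanish without a bound on $X$ (Remark~\ref{rem:pd-fibre-diagonal}). Your formulation $\tau\circ\varphi|_I\simeq0$ is also vacuous, since $\varphi|_I=0$; condition (ii) is $T_I=1_I\otimes\tau=0$ on $I\otimes_R^{\mathbf L}A[-N]$.

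The paper instead verifies (iii) on the nose. It builds, one generator at a time, a Thom cocycle $\tau\in\operatorname{Hom}_R^N(E_A,R)$ on the relative acyclic closure $E_A=R\otimes\Lambda(sW)$, with the ideal-power estimate $\tau(u)\in I^{\ell(u)+n_{\mathrm{odd}}-n_{\mathrm{even}}}$ (Lemma~\ref{lem:filtered-thom}). The elliptic inequality $n_{\mathrm{odd}}\geq n_{\mathrm{even}}$ then puts $\tau(u)$ in $I$ for every monomial $u\neq1$. Hence $\varphi\tau=(e_A\cdot-)\varepsilon$ at the cochain level (Corollary~\ref{cor:finite-thom-self-intersection}), which is the self-intersection formula. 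The coefficient-degree filtration is used there only to show that $[\tau]$ generates $\operatorname{Ext}_R(A,R)$, not to kill obstructions. Your plan is missing these two ingredients: the relative-length estimate on $\tau$ and the inequality $n_{\mathrm{odd}}\geq n_{\mathrm{even}}$.
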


In a split minimal Sullivan model $R=A\otimes\Lambda W\to A$,
this geometric condition is exactly $\dim W<\infty$.
At a finite module threshold the fibre is necessarily elliptic.
An inductive Thom cocycle then satisfies the self-intersection
formula, which gives ideal-valued evaluation by
Proposition~\ref{prop:thom-evaluation}. The construction
refines the finite-generator Gorenstein calculation of
F\'elix, Halperin and Thomas \cite{FHT1988} and the inductive
shriek constructions of Wakatsuki \cite[Section~5]{Wakatsuki2016};
we record the relative ideal filtration at each step.

For smooth bundles with closed simply connected manifold fibre,
a tubular neighbourhood supplies the same self-intersection
formula, even when the fibre has infinite-dimensional rational
homotopy (Theorem~\ref{thm:smooth-section}). The construction adapts the derived Thom morphism for compact
embeddings in F\'elix and Thomas \cite[p.~419]{FelixThomas2009};
the bundle argument also applies when the base is noncompact.

Section~\ref{sec:geometric-examples} gives geometric families over
bases of arbitrary cohomological dimension. Every section of a
fibration with simply connected sphere fibre has equal module
sectional category, sectional category and relative category after
rationalization. For the section defined by a trivial
line in the projective bundle
$\mathbb P(\underline{\mathbb C}^{\,r}\oplus L)\to\mathbb{CP}^k$,
where $c_1(L)$ is a generator, the common value is $k+r$.
For $r=1$, these are sphere bundles with fixed fibre $S^2$ and
arbitrarily large common values. Grassmann bundles and
associated homogeneous bundles give further families. We also
exhibit smooth bundles with rationally hyperbolic fibre of fixed
dimension eight, base of arbitrarily large dimension, and common
sectional invariant $k+3$ (Example~\ref{ex:hyperbolic-smooth}).

Products give a further consequence of the equality. Additivity of
module sectional category \cite[Theorem~1]{CPV2018} implies
\[
 \TC_0(X\times Y)=\TC_0(X)+\TC_0(Y)
\]
when both $X$ and $Y$ satisfy the hypotheses of
Theorem~\ref{thm:poincare-main}. For a general rationally retractive
map with $q=\msecat(f_{\Q})<\infty$, the comparison gives
\[
 nq\leq\secat\bigl((f^{\times n})_{\Q}\bigr)\leq nq+1,
 \qquad
 \lim_{n\to\infty}
 \frac{\secat\bigl((f^{\times n})_{\Q}\bigr)}n=q.
\]
Thus the module invariant also describes stable growth on Cartesian
powers. Section~\ref{sec:remaining-obstruction} gives a complementary
equality criterion using generator degrees and cohomological dimension.

Other recent approaches include Quillen models of sectional category
\cite{BC2025}, rational models for sequential parametrized topological
complexity \cite{Minowa2025}, and relative sectional category for pairs
of maps \cite{DS2026}. The relative category used here is the invariant
defined by the compatibility condition in the Ganea construction above.

Section~\ref{sec:preliminaries} fixes the conventions and quotient
criteria. Sections~\ref{sec:controlled-resolutions}
and~\ref{sec:power-comparison} construct the controlled resolutions
and their comparisons. Sections~\ref{sec:filtered-model}
and~\ref{sec:relative-conversion} build the curved Harrison model
and its algebra retraction, leading to the exact relative-category
criterion, the general comparison and its trace obstruction in
Section~\ref{sec:relative-category}.
Section~\ref{sec:poincare} proves the equality simultaneously for
all higher Poincar\'e duality diagonals, establishes the Thom
criterion (Proposition~\ref{prop:thom-evaluation}), and treats
general Poincar\'e duality
fibres under a bound on the dimension of the base.
Section~\ref{sec:finite-homotopy} constructs filtered Thom cocycles
and proves the theorem for fibres with finite-dimensional total
rational homotopy.
Section~\ref{sec:geometric-examples} treats relative Frobenius
models, smooth bundles and the geometric examples.
Sections~\ref{sec:sharpness} and~\ref{sec:products} treat sharpness
and products, and Section~\ref{sec:remaining-obstruction} gives
the degree criteria. Appendix~\ref{app:universal-linear-resolution}
provides the universal linear splittings,
Appendix~\ref{app:sphere-resolutions} gives explicit formulas for
the rational two-sphere, and Appendix~\ref{app:filtered-thom}
supplies the cylinder and complete filtered comparison used in
the Thom construction.

\Needspace{8\baselineskip}
\section{Algebraic preliminaries}\label{sec:preliminaries}

\subsection{Sullivan algebras and grading conventions}
We work over $\Q$ and use cohomological differentials of degree $+1$. Our conventions for Sullivan algebras and differential modules are those of F\'elix, Halperin and Thomas \cite{FHT2001}. For a graded vector space $V$, the notation $\Lambda V$ denotes the free graded-commutative algebra on $V$:
\[
 \Lambda V=\operatorname{Sym}(V^{\mathrm{even}})
       \otimes\bigwedge(V^{\mathrm{odd}}).
\]
Thus an even generator is polynomial and an odd generator is exterior. We write $\Lambda^jV$ for words of length $j$, and $(\Lambda V)^k$ for cohomological degree $k$.

For the absolute models considered here, we assume from the outset that $V=V^{\geq2}$ and $\dim V^k<\infty$ for every $k$. A Sullivan algebra is a commutative differential graded algebra $(\Lambda V,d)$ with a well-ordered homogeneous basis of $V$ such that the differential of a generator belongs to the subalgebra on earlier generators. In this simply connected setting, a free CDGA $(\Lambda V,d)$ is minimal Sullivan if and only if $d(V)\subset\Lambda^{\geq2}V$: decomposability makes every generator occurring in $d(v)$ have degree less than $|v|$, so ordering by degree gives the Sullivan condition. We decompose the differential as
\begin{equation}\label{eq:FHT-differential}
 d=d_0+d_1+d_2+\cdots,
 \qquad d_i(V)\subset\Lambda^{i+1}V.
\end{equation}
The linear part is $d_0$ and the quadratic part is $d_1$. Thus, for these simply connected models, minimality means $d_0=0$.

A relative Sullivan algebra over a commutative differential graded algebra $R$ is an inclusion
\[
 R\hookrightarrow (R\otimes\Lambda Z,D)
\]
with a well-ordered basis of $Z$ whose differentials use only earlier relative generators. A relative Sullivan model of a morphism $R\to C$ is a factorization
\[
 R\hookrightarrow (R\otimes\Lambda Z,D)
   \xrightarrow{\simeq} C.
\]
The morphism admits a homotopy retraction if the relative model admits a retraction as a commutative differential graded algebra. This formulation is independent of the chosen model. Equivalently, one may ask for a left inverse in the homotopy category; the Sullivan lifting property makes the retraction strict under $R$.

\subsection{Differential modules and nullhomotopies}
A semifree differential $R$-module is a graded free module $P=R\otimes G$ with an exhaustive increasing filtration of its basis such that the differential of a basis element uses only basis elements from earlier filtration stages. Its basis elements will be called cells. A homogeneous $R$-linear map $f$ of degree $k$ satisfies
\[
 f(rp)=(-1)^{k|r|}r f(p).
\]
The differential of the Hom complex is
\[
 \partial f=D f-(-1)^{|f|}fD.
\]
Tensor products and permutations always carry the Koszul signs. We denote the derived category of differential $R$-modules by $\mathcal D(R)$.

\begin{lemma}\label{lem:derived-retraction}
Let $J$ be a differential ideal in $R$, and let $\varepsilon:P\xrightarrow{\simeq}J$ be a semifree resolution. The projection $R\to R/J$ has a left inverse in $\mathcal D(R)$ if and only if there is an $R$-linear map $h:P\to R$ of degree $-1$ such that
\begin{equation}\label{eq:module-homotopy}
 dh+hD=\iota\varepsilon,
\end{equation}
where $\iota:J\hookrightarrow R$ is the inclusion.
\end{lemma}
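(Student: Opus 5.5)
The plan is to translate the statement entirely into the triangulated structure of $\mathcal D(R)$. The short exact sequence of differential $R$-modules
\[
 0\longrightarrow J\xrightarrow{\ \iota\ }R\xrightarrow{\ q\ }R/J\longrightarrow0
\]
gives a distinguished triangle $J\xrightarrow{\iota}R\xrightarrow{q}R/J\xrightarrow{\delta}J[1]$. The first step is the standard fact that in any triangulated category, for a triangle $A\xrightarrow{u}B\xrightarrow{v}C\to A[1]$, the map $v$ has a left inverse if and only if $u=0$. Indeed, if $rv=\id_B$, then $u=rvu=0$ because $vu=0$. Conversely, if $u=0$, apply $\operatorname{Hom}(-,B)$ to the triangle; exactness of $\operatorname{Hom}(C,B)\xrightarrow{v^*}\operatorname{Hom}(B,B)\xrightarrow{u^*}\operatorname{Hom}(A,B)$ shows that $\id_B$, which is killed by $u^*=0$, lies in the image of $v^*$. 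Hence $q$ has a left inverse in $\mathcal D(R)$ if and only if $[\iota]=0$ in $\operatorname{Hom}_{\mathcal D(R)}(J,R)$.

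The second step computes $\operatorname{Hom}_{\mathcal D(R)}(J,R)$ using the semifree resolution. Since $P$ is semifree, it is K-projective: $\operatorname{Hom}_R(P,-)$ preserves quasi-isomorphisms and acyclics. It follows that
\[
 \operatorname{Hom}_{\mathcal D(R)}(J,R)\cong\operatorname{Hom}_{\mathcal D(R)}(P,R)\cong H^0\operatorname{Hom}_R(P,R).
\]
Under this identification the class $[\iota]$ corresponds to the cohomology class of the degree-$0$ cycle $\iota\varepsilon$, since $\varepsilon$ is an isomorphism in $\mathcal D(R)$. This class vanishes exactly when $\iota\varepsilon=\partial h$ for some $h$ of degree $-1$. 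With the Hom differential fixed in the preliminaries, $\partial h=dh-(-1)^{-1}hD=dh+hD$, which is precisely \eqref{eq:module-homotopy}.

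The main point requiring care is the identification of $\operatorname{Hom}_{\mathcal D(R)}(P,R)$ with homotopy classes of $R$-linear maps. I would invoke the lifting property of semifree modules from \cite{FHT2001}: a semifree module lifts through surjective quasi-isomorphisms, and any quasi-isomorphism out of a semifree module into an arbitrary module becomes invertible after precomposing with a semifree resolution. This is a standard citation rather than a new argument. The remaining work is bookkeeping: the sign convention $f(rp)=(-1)^{k|r|}rf(p)$ must make $\partial h$ $R$-linear, and the degree-$0$ maps must be the morphisms, so that $H^0$ is the correct group.
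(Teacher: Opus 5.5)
Your proposal is correct and follows the same route as the paper: the triangle $J\to R\to R/J$ shows that $R\to R/J$ has a left inverse exactly when the inclusion $J\to R$ vanishes in $\mathcal D(R)$, and semifreeness of $P$ turns that vanishing into the cochain nullhomotopy $\partial h=dh+hD=\iota\varepsilon$. You simply write out the triangulated-category step and the sign check that the paper leaves implicit.
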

\begin{proof}
The short exact sequence $0\to J\to R\to R/J\to0$ gives a distinguished triangle. The identity of $R$ extends across $R\to R/J$ precisely when its composite with $J\to R$ is zero in $\mathcal D(R)$. Since $P$ is semifree, this last condition is equivalent to the cochain nullhomotopy \eqref{eq:module-homotopy}.
\end{proof}

Our suspension convention is $|sp|=|p|-1$. The cone
\begin{equation}\label{eq:general-module-cone}
 Q=R\oplus sP,
 \qquad d_Q(sp)=\varepsilon(p)-sDp,
\end{equation}
where $s(rp)=(-1)^{|r|}r(sp)$, is a module model for $R/J$. Equation~\eqref{eq:module-homotopy} is equivalent to the existence of the cochain retraction
\[
 r:Q\to R,
 \qquad r|_R=\id_R,\qquad r(sp)=h(p).
\]
This is a retraction in the category of differential $R$-modules.

We will use the following truncation to turn a cohomological
dimension bound into vanishing in a Hom complex.

\begin{lemma}[Bounded truncation]\label{lem:bounded-truncation}
Let $A$ be a nonnegatively graded CDGA with $A^0=\Q$, and suppose
that $H^{>n}(A)=0$ for an integer $n\geq0$. Choose a vector-space
complement $L$ to $\ker(d:A^n\to A^{n+1})$ in $A^n$, taking
$L=0$ when $n=0$. Then
\[
 \overline A=A/(L\oplus A^{>n})
\]
is a CDGA concentrated in degrees at most $n$, and the quotient
$A\to\overline A$ is a quasi-isomorphism.
If $R\to A$ is a CDGA morphism and $P$ is a semifree $R$-module
whose cells have degrees at least $c$, then
\[
 \operatorname{Hom}_R(P,\overline A)^k=0
 \qquad\text{whenever }c+k>n.
\]
Moreover, $\operatorname{Hom}_R(P,A)\to
\operatorname{Hom}_R(P,\overline A)$ is a quasi-isomorphism.
\end{lemma}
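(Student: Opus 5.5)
The argument runs in three steps: (i) the ideal property, so that $\overline A$ is a CDGA; (ii) acyclicity of the kernel, which gives the quasi-isomorphism $A\to\overline A$; (iii) a degree count in the Hom complex, followed by the Hom quasi-isomorphism via semifreeness.

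\emph{Step (i).} Set $K=L\oplus A^{>n}$. To see that $K$ is an ideal, let $a\in A^p$ and $x\in K$. If $p=0$, then $a$ is a scalar, since $A^0=\Q$. If $p\geq1$ and $x\in L\subset A^n$, then $ax\in A^{>n}$. If $x\in A^{>n}$, then $ax\in A^{>n}$ for every $p\geq0$. To see that $K$ is closed under $d$, note that $d(A^{>n})\subset A^{>n}$ and $d(L)\subset A^{n+1}$. Hence $\overline A$ is a CDGA concentrated in degrees at most $n$. When $n=0$ we have $L=0$, so $\overline A=\Q$ and nothing further about $L$ is needed.

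\emph{Step (ii).} By the long exact sequence of $0\to K\to A\to\overline A\to 0$, it suffices to show that $K$ is acyclic.
\begin{itemize}
\item In degree $n$, the differential $d$ is injective on $L$ because $L\cap\ker d=0$. So $H^n(K)=0$.
\item In degree $n+1$, let $z\in A^{n+1}$ be a cycle. Since $H^{n+1}(A)=0$, we can write $z=da$ with $a\in A^n$. Decompose $a=\ell+c$ with $\ell\in L$ and $c\in\ker d$. Then $z=d\ell$, which is a boundary in $K$.
\item In degrees $k>n+1$, the complex $K$ agrees with $A$ in degrees $k-1$ and $k$, so $H^k(K)=H^k(A)=0$.
\end{itemize}
Hence $K$ is acyclic and the quotient $A\to\overline A$ is a quasi-isomorphism.

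\emph{Step (iii), the degree count.} Let $f\in\operatorname{Hom}_R(P,\overline A)^k$. Since $P$ is free on its cells, $f$ is determined by its values on cells. A cell of degree $e\geq c$ is sent into $\overline A^{e+k}$, and $e+k\geq c+k>n$. This group vanishes because $\overline A$ is concentrated in degrees at most $n$. Therefore $f=0$.

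\emph{The Hom quasi-isomorphism.} Because $P$ is semifree, $\operatorname{Hom}_R(P,-)$ preserves quasi-isomorphisms of $R$-modules, by the standard property of semifree modules in \cite{FHT2001}. Here $A\to\overline A$ is a quasi-isomorphism of $R$-modules through $R\to A$. One can argue concretely: the kernel $\operatorname{Hom}_R(P,K)$ is acyclic, by induction over the cell filtration and passage to the limit along the filtration, which is exact because each stage splits.

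The main subtlety is the degree-$(n+1)$ exactness in Step (ii). This is exactly where the choice of $L$ as a complement to the cycles in $A^n$ is used, rather than truncating all of $A^n$.
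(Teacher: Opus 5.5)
Your proof is correct and follows the paper's argument step for step. You obtain the ideal property from $A^0=\Q$, prove acyclicity of $L\oplus A^{>n}$ degree by degree (your decomposition $a=\ell+c$ is the paper's identity $dL=dA^n$ in degree $n+1$), give the same cell-degree count, and use the same invariance of $\operatorname{Hom}_R(P,-)$ under quasi-isomorphisms for semifree $P$, just with more detail than the paper's terse version.
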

\begin{proof}
Connectedness and nonnegative grading make
$L\oplus A^{>n}$ an ideal, and its differential stays in that ideal.
It has no cycles in degree $n$ because $L$ complements the cycles.
In degree $n+1$, the identity $dL=dA^n$ identifies its cohomology
with $H^{n+1}(A)=0$; in higher degrees its cohomology equals that
of $A$ and is zero. The ideal is therefore acyclic, proving the
first assertion. A homogeneous $R$-linear map of degree $k$ sends
a cell of degree $p\geq c$ into $\overline A^{p+k}=0$ if
$c+k>n$, which proves the vanishing. The last assertion follows
because Hom from a semifree module preserves quasi-isomorphisms.
\end{proof}

\subsection{Split minimal models and the kernel ideal}

The homotopy retraction hypothesis provides coordinates in which the
kernel is generated by a subset of the minimal generators. This is the
algebraic structure used throughout the proof.

\begin{lemma}\label{lem:split-minimal-model}
Let $f\colon X\to Y$ be a map between simply connected spaces of finite
rational homotopy type. Suppose that there is a map
$g\colon Y_{\Q}\to X_{\Q}$ with
$g f_{\Q}\simeq\operatorname{id}_{X_{\Q}}$.
Then $f_{\Q}$ has a model
\begin{equation}\label{eq:retractive-minimal-model}
 \varphi\colon (R,d_R)=(A\otimes\Lambda W,d_R)
       \longrightarrow (A,d),
 \qquad \varphi|_A=\operatorname{id}_A,
 \qquad \varphi(W)=0,
\end{equation}
where $R$ models $Y$ and $A$ models $X$. Both are minimal Sullivan
algebras whose generator spaces are concentrated in degrees at least two
and finite dimensional in each degree.
The inclusion $A\hookrightarrow R$ is a relative Sullivan algebra
section of $\varphi$, and
\[
 d_R(W)\subset I=\ker\varphi=(W),\qquad
 I^m=A\otimes\Lambda^{\geq m}W\quad(m\geq1).
\]
\end{lemma}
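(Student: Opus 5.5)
Start from the retraction $g\colon Y_{\Q}\to X_{\Q}$ rather than from $f$. Let $(A,d)$ be the minimal Sullivan model of $X$; simple connectivity and finite rational homotopy type make its generator space concentrated in degrees $\geq2$ and degreewise finite. Take a minimal relative Sullivan model of a CDGA representative $A\to A_{PL}(Y)$ of $g$:
\[
 A\hookrightarrow (A\otimes\Lambda W,d_R)\xrightarrow{\simeq}A_{PL}(Y).
\]
The base is simply connected and the homotopy fibre $F$ of $g$ is simply connected, since $g f_{\Q}\simeq\id$ forces $\pi_*(g)$ to be surjective. Hence $W=W^{\geq2}$, and $W^k\cong\operatorname{Hom}(\pi_k(F)\otimes\Q,\Q)$ is finite dimensional. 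Minimality of the relative model means $d_R(W)\subset A^+\otimes\Lambda W+\Lambda^{\geq2}W$. Together with the minimality of $A$, this shows that $R=A\otimes\Lambda W$, viewed as an absolute Sullivan algebra, has differential with image in $\Lambda^{\geq2}(V\oplus W)$, where $A=\Lambda V$. So $R$ is a minimal Sullivan model of $Y$ by the criterion recorded above.

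Next, model $f$. A Sullivan representative $\psi\colon R\to A$ of $f_{\Q}$ exists by the lifting property. The composite $\psi\circ(A\hookrightarrow R)$ models $g f_{\Q}\simeq\id$, so it is homotopic to $\id_A$. Between minimal models, a homotopy-identity endomorphism is an automorphism, so after composing $\psi$ with the inverse of this automorphism we may assume it is homotopic to $\id_A$. Using that $A\hookrightarrow R$ is a relative Sullivan cofibration, the homotopy extension property lets us replace $\psi$ by a homotopic morphism $\varphi$ with $\varphi|_A=\id_A$ exactly; this is the retraction strict under $A$ mentioned earlier. The main obstacle is then achieving $\varphi(W)=0$. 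I would change generators by the automorphism of $R$ fixing $A$ and sending $w\mapsto w-\varphi(w)$, where $\varphi(w)\in A\subset R$. This is an algebra automorphism over $A$, and the new generators $w'=w-\varphi(w)$ satisfy $\varphi(w')=0$. Minimality survives the change: $\varphi(w)$ is decomposable or lies in $A$ of degree $\geq2$, so the linear part of $d$ in the new coordinates still vanishes. To see this, note that $d_R(w)$ has zero linear part and $d\varphi(w)\in\Lambda^{\geq2}V$. One also needs the Sullivan ordering on $W'$, and this follows by ordering by degree.

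Finally, set $I=\ker\varphi$. Since $\varphi$ is the identity on $A$ and kills $W'$, $\varphi$ is the projection $A\otimes\Lambda W\to A\otimes\Lambda^0W$. Hence $I=A\otimes\Lambda^{\geq1}W=(W)$, and more generally $I^m=A\otimes\Lambda^{\geq m}W$ by multiplicativity of word length. Because $\varphi$ is a chain map, $\varphi(d_Rw)=d\varphi(w)=0$, so $d_R(W)\subset I$. The inclusion $A\hookrightarrow R$ is the relative Sullivan algebra constructed above, and it is a section of $\varphi$.
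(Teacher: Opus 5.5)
Your route differs from the paper's. The paper starts from absolute minimal models $R$ of $Y$ and $A$ of $X$ and makes $\varphi\sigma=\id_A$ strict with $\sigma=\psi\alpha^{-1}$. It then builds $W$ inside $\ker\varphi$ from corrected lifts $\widetilde w-\sigma\varphi(\widetilde w)$ of a basis of $\ker Q(\varphi)$, and checks free generation on indecomposables. There, minimality of $R$ comes for free, while the splitting has to be constructed. You get the splitting $R=A\otimes\Lambda W$ and the relative Sullivan structure directly from a minimal relative model of $g$. Your correction $w\mapsto w-\varphi(w)$ is an explicit automorphism over $A$, and it preserves the original Sullivan order, since $\varphi(w)\in A$.

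The price is that you must prove $R$ is minimal as an absolute Sullivan algebra, and your argument for this is not valid. Your own formula $d_R(W)\subset A^+\otimes\Lambda W+\Lambda^{\geq2}W$ contains $A^+\otimes1\supset V$, so $d_Rw$ may have a linear term $v\in V$, and minimality of $A$ does not exclude it. For example, the minimal relative model of the augmentation $(\Lambda v_3,0)\to\Q$, which models $*\to S^3$, is $\Lambda(v_3,w_2)$ with $dw=v$. Here the base and the relative model are minimal, but the total algebra is not. The linear part $W\to V$ is dual to the connecting map $\pi_{*+1}(X)\otimes\Q\to\pi_*(F)\otimes\Q$, so the retraction hypothesis must be used at this point. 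Your later sentence that $d_R(w)$ has zero linear part rests on the same unproved claim.

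The repair uses only what you already have, because your construction of the strict retraction $\varphi$ with $\varphi|_A=\id_A$ never uses minimality of $R$. Write $d_Rw=a_w+b_w$, where:
\begin{itemize}
\item $a_w\in A$ has degree $|w|+1$, so lies in $A^+$;
\item $b_w\in A^+\otimes\Lambda^+W+A\otimes\Lambda^{\geq2}W$.
\end{itemize}
Since $\varphi(W)\subset A^{\geq2}$, you get $\varphi(b_w)\in A^+\cdot A^+$, so $\varphi(d_Rw)\equiv a_w$ modulo $\Lambda^{\geq2}V$. But $\varphi(d_Rw)=d\varphi(w)\in\Lambda^{\geq2}V$ by minimality of $A$. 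Hence $a_w\in\Lambda^{\geq2}V$, and $R$ is minimal. Alternatively, the surjectivity of $\pi_*(g)$ that you noted makes the connecting map zero. With this inserted, the rest of your proof goes through.

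A minor point: once you replace $\psi$ by $\alpha^{-1}\psi$, the composite with $A\hookrightarrow R$ is already $\id_A$ on the nose, as in the paper. The homotopy-extension step is therefore superfluous, though harmless.
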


\begin{proof}
Choose maps of minimal Sullivan models
$\varphi\colon R\to A$ and $\psi\colon A\to R$ representing
$f_{\Q}$ and $g$. Their composite $\alpha=\varphi\psi$ is
homotopic to the identity of $A$, hence is a quasi-isomorphism.
A quasi-isomorphism between these minimal Sullivan algebras is an
isomorphism \cite[Proposition~12.10\textup{(i)}]{FHT2001}. Replacing $\psi$ by
$\sigma=\psi\alpha^{-1}$ therefore gives
$\varphi\sigma=\operatorname{id}_A$ strictly.

Write $A=\Lambda V$ and let $Q(\varphi)$ denote the induced map on
indecomposables. The image of $V$ under $Q(\sigma)$ is a complement
to $\ker Q(\varphi)$. Choose a homogeneous basis of this kernel and
lifts $\widetilde w\in R$ of its elements. Set
$w=\widetilde w-\sigma\varphi(\widetilde w)$ and let $W\subset R$
be the span of these corrected lifts. Each $w$ lies in
$\ker\varphi$ and represents the chosen indecomposable class,
because $\varphi(\widetilde w)$ is decomposable.
The elements $\sigma(V)$ together with the chosen basis of $W$ freely generate
$R$: the induced map on indecomposables is an isomorphism, and
induction on degree inverts the change of generators. Here all
generators have degree at least two, so decomposable corrections
use generators of strictly smaller degree. Identifying $A$ with
$\sigma(A)$ gives \eqref{eq:retractive-minimal-model} and the
description of $I$ and its powers.

Minimality is preserved by this change of generators. Every
relative generator in a decomposable term of $d_Rw$ has degree
strictly less than $|w|$. Ordering $W$ by cohomological degree
therefore makes $A\hookrightarrow R$ a relative Sullivan
inclusion. Since $\varphi$ is a cochain map and kills $W$,
$d_R(W)\subset I$.
\end{proof}

Fix a split model as in Lemma~\ref{lem:split-minimal-model}.
We call word length in $W$ the \emph{relative length}. Its
filtration is given by the powers of $I$ and differs from word
length in all generators of $R$: coefficients in $A^+$ contribute
no relative factor. The differential preserves $I$ and all its
powers. Decomposing it by relative length gives
\begin{equation}\label{eq:relative-differential-splitting}
 d_R=D_{\mathrm{lin}}+d_+.
\end{equation}
The operator $D_{\mathrm{lin}}$ agrees with $d$ on $A$ and
retains the terms linear in $W$ on relative generators.
Lemma~\ref{lem:conormal-differential} identifies it with the
associated graded differential and proves that its coefficients
on $W$ lie in $A^{\geq2}$. The derivation $d_+$ vanishes on $A$
and increases relative length by at least one whenever it is nonzero.
The subscript in $D_{\mathrm{lin}}$ refers only to the relative
variables and is unrelated to the index in
\eqref{eq:FHT-differential}.

\subsection{The quotient criteria}
Let $G_n(f_{\Q})\to Y_{\Q}$ be the $(n+1)$-fold fibrewise join
of a fibration replacement of $f_{\Q}$, and choose a relative
Sullivan model $R\hookrightarrow R\otimes\Lambda Z_n$ of this
Ganea map. The module sectional category $\msecat(f_{\Q})$ is the
least nonnegative integer $n$ for which this inclusion admits a
retraction as a differential $R$-module; equivalently, it has a
left inverse in $\mathcal D(R)$ \mbox{\cite[Definition~5.1]{FGKV2006}}.
We follow \cite{FGKV2006} in writing $\msecat$ and $\MTC$;
\cite{CV2017,CPV2018} use the lowercase notation
$\operatorname{msecat}$ and $\operatorname{mTC}$.

The cohomology sectional category $\operatorname{Hsecat}(f_{\Q})$
is the least $n$ for which the Ganea map induces an injection
$H^*(Y_{\Q};\Q)\to H^*(G_n(f_{\Q});\Q)$.
Its diagonal specialization is
$\operatorname{HTC}(X)=\operatorname{Hsecat}(\Delta_{\Q})$
\cite{CKV2016}. A module retraction implies cohomological injectivity,
so $\operatorname{HTC}(X)\leq\MTC(X)$.

For a retractive map, the following characterization replaces the
Ganea models by powers of the kernel ideal. An $s$-model in the
terminology of Carrasquel-Vera is a surjective CDGA model which
retracts a relative Sullivan inclusion.
Lemma~\ref{lem:split-minimal-model} provides such a model.

\Needspace{17\baselineskip}
\begin{theorem}[Carrasquel-Vera]\label{thm:quotient-criteria}
Let $f\colon X\to Y$ satisfy the hypotheses of
Lemma~\ref{lem:split-minimal-model}, and let
$\varphi\colon R\to A$ be an $s$-model of $f_{\Q}$.
Put $I=\ker\varphi$. Then:
\begin{enumerate}[label=\textup{(\roman*)}]
\item $\secat(f_{\Q})$ is the least nonnegative integer $n$
      such that $\rho_n\colon R\to R/I^{n+1}$ admits a
      homotopy retraction as a CDGA morphism;
\item $\msecat(f_{\Q})$ is the least nonnegative integer $n$
      such that $\rho_n$ has a left inverse in $\mathcal D(R)$.
\end{enumerate}
If no such integer exists, the corresponding invariant is infinite.
\end{theorem}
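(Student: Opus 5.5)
The plan follows the Félix--Halperin proof of the quotient characterization of rational category, with the augmentation ideal $\Lambda^{+}V$ replaced by $I=\ker\varphi$. The aim is to show that, up to homotopy under $R$, the relative Sullivan model $R\hookrightarrow R\otimes\Lambda Z_n$ of the Ganea map $g_n\colon G_n(f_{\Q})\to Y_{\Q}$ and the projection $\rho_n\colon R\to R/I^{n+1}$ each factor through the other. Once this is established, a CDGA retraction of one gives a CDGA retraction of the other, which proves (i). Since a factorization in CDGAs under $R$ is in particular a factorization in $\mathcal D(R)$, the same two factorizations also transfer left inverses in $\mathcal D(R)$, which proves (ii).

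First I model the fibrewise join. For two surjective CDGA morphisms $R\to R/J_1$ and $R\to R/J_2$ that model maps into $Y$, the homotopy pullback $R/(J_1+J_2)$ models the fibre product. The strict pullback $R/J_1\times_{R/(J_1+J_2)}R/J_2=R/(J_1\cap J_2)$ then models the homotopy pushout over the fibre product, which is exactly the fibrewise join. Surjectivity of the two maps is what makes the strict pullback a homotopy pullback of CDGAs.

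To iterate this, I make the $n+1$ copies of $f$ independent. Let $R\hookrightarrow R\otimes\Lambda U$ be the relative Sullivan model retracted by the $s$-model. Form $\widetilde R=R\otimes\Lambda U_0\otimes\cdots\otimes\Lambda U_n$, which is quasi-isomorphic to $R$ by acyclicity of each $\Lambda U_j$. Let $I_j\subset\widetilde R$ be the ideal that becomes $I$ in the $j$-th copy, chosen so that $\widetilde R/I_j\simeq R/I\simeq A$. Because the copies are tensor-independent, $I_0\cap\cdots\cap I_n=I_0\cdots I_n$. By induction on $n$, the surjection
\[
\widetilde R\longrightarrow \widetilde R/(I_0I_1\cdots I_n)
\]
is then a CDGA model of the Ganea map $g_n$.

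Next I compare the two quotients. There is a multiplication morphism $\mu\colon\widetilde R\to R$, a quasi-isomorphism under $R$, which collapses every copy to the given one. It sends each $I_j$ into $I$, so it induces a map
\[
\widetilde R/(I_0\cdots I_n)\longrightarrow R/I^{n+1}
\]
under $R$. This is the direction "Ganea retraction $\Rightarrow$ quotient retraction": compose a retraction of the Ganea model with a relative Sullivan model of $\rho_n$ and use the lifting property.

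For the converse I have to produce a map $R/I^{n+1}\to\widetilde R/(I_0\cdots I_n)$ under $R$, up to homotopy. The natural approach is to show that the ideal-theoretic inclusion $I_0\cdots I_n\subset I^{n+1}$ becomes an equality after a homotopy. Concretely, I use the retraction of $R\hookrightarrow R\otimes\Lambda U$ to build homotopies $\widetilde R\to R$ under $R$, relative to the copies, that identify each $I_j$ with $I$. This is exactly where the $s$-model hypothesis enters, since the retraction lets us move generators of one copy into another. I expect this step to be the main obstacle. A cleaner fallback is Félix--Halperin's filtration argument: compare the two relative Sullivan models by induction on the $I$-adic filtration, using that both quotients have the same associated graded pieces up to the acyclic $\Lambda U_j$ factors.

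Finally I transport the retractions. A CDGA retraction of $\rho_n$, composed with the map above and lifted through the relative Sullivan model $R\hookrightarrow R\otimes\Lambda Z_n$ of $\widetilde R\to\widetilde R/(I_0\cdots I_n)$, gives the Ganea retraction; this settles (i). For (ii), the identical argument in $\mathcal D(R)$ uses only that both comparison maps live under $R$. If no $n$ works on one side, none works on the other, so the two invariants are infinite simultaneously.
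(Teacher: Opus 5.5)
\textbf{There is a genuine gap: your proposal proves only the easy half, and the step that carries the content of the theorem is missing.}

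\emph{What is sound.} Your easy half is the construction the paper itself uses in the proof of Theorem~\ref{thm:relative-category}. Push the fat-wedge model $R^{\otimes(n+1)}\to R^{\otimes(n+1)}/I^{\otimes(n+1)}$ out along multiplication and kill the new generators. Since $\mu(I^{\otimes(n+1)})\subset I^{n+1}$, this gives $\nu\colon\mathcal G_n\to R/I^{n+1}$ under $R$.

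\emph{Two setup errors.} If $R\hookrightarrow R\otimes\Lambda U$ models $\varphi$, then $\Lambda U$ is not acyclic, and your $\widetilde R$ models $X\times_Y\cdots\times_Y X$, not $Y$. Also, $R/(J_1+J_2)$ models the homotopy fibre product only under Tor-independence; this is why the independent copies in $R^{\otimes(n+1)}$ are needed.

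\emph{The directions are reversed.} Lifting $\nu$ through a relative Sullivan model $E_n\to R/I^{n+1}$ and composing with a retraction of $E_n$ gives a retraction of $\mathcal G_n$. So $\nu$ shows that a homotopy retraction of $\rho_n$ (resp.\ a left inverse in $\mathcal D(R)$) yields $\secat(f_\Q)\le n$ (resp.\ $\msecat(f_\Q)\le n$), not the converse. For the same reason, in your last paragraph, a retraction of $\rho_n$ composed with a map out of $R/I^{n+1}$ cannot produce a retraction of the Ganea model.

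\emph{The gap.} What is missing is the other comparison, a morphism $E_n\to\mathcal G_n$ under $R$. It carries the whole implication that $\secat(f_\Q)\le n$ forces $\rho_n$ to have a homotopy retraction. This is the only direction that uses the retraction hypothesis, since $\nu$ exists for every surjective model. Neither of your mechanisms supplies it.
\begin{itemize}
\item A strict map $R/I^{n+1}\to\widetilde R/(I_0\cdots I_n)$ under $R$ is impossible, because $I_0^{n+1}\not\subset I_0\cdots I_n$. Moreover $\mu$ already carries $I_0\cdots I_n$ onto $I^{n+1}$, so identifying each $I_j$ with $I$ by homotopies does not touch the real problem.
\item Your filtration fallback would, if it worked, make $\mathcal G_n$ and $R/I^{n+1}$ quasi-isomorphic, and that is false. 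Take $f\colon *\to S^2$, $R=\Lambda(x,y)$ with $dy=x^2$, and $n=1$. The Ganea space $\Sigma\Omega S^2$ has infinite-dimensional rational cohomology, whereas $H(R/I^2)=\Q\{1,x,y\}$.
\end{itemize}

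\emph{What kind of argument is needed.} The map has to come from a homotopical argument of Félix--Halperin type. For category, $G_n(Y)$ has cone length at most $n$, so its own word-length quotient at level $n$ retracts, and naturality of word length then gives $E_n\to\mathcal G_n$. A relative version would need a surjective model $\psi_n\colon\mathcal G_n\to A$ of $\iota_n$, compatible with $\varphi$, such that $\mathcal G_n\to\mathcal G_n/(\ker\psi_n)^{n+1}$ admits a homotopy retraction. This would have to be proved independently, for instance inductively over the $n$ join steps. Composing the induced map $R/I^{n+1}\to\mathcal G_n/(\ker\psi_n)^{n+1}$ with that retraction, after lifting to Sullivan models, would give the missing morphism. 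The same morphism is what part (ii) needs.

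For context, the paper does not reprove this theorem. It imports \cite[Theorem~3.3]{CV2017} and only checks that the algebraic argument extends from finite CW type to finite rational homotopy type, via \cite[Theorem~9.11]{FHT2001}.
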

This is the algebraic characterization of
\cite[Theorem~3.3\textup{(i)--(ii)}]{CV2017}. That theorem is stated
for simply connected CW complexes of finite type. By the finite
CW-type replacement theorem \cite[Theorem~9.11]{FHT2001}, each
rational space represented by our models is the rationalization
of a simply connected CW complex of finite type. The algebraic
proof applies to maps of finite rational homotopy type: the Ganea models are built
by relative Sullivan factorizations and homotopy pullbacks and
pushouts of CDGAs, and the quotient argument uses their homotopy
universal properties and Sullivan lifting. These constructions
apply to the degreewise finite minimal models chosen above;
rational realization identifies them with the corresponding Ganea
constructions for $f_{\Q}$. The same argument applies to the
necessary direction of the relative-category criterion in
\cite[Theorem~3.6]{CV2017}; Section~\ref{sec:relative-category}
proves the converse by an explicit Ganea-model lift.

The proof of Lemma~\ref{lem:derived-retraction}, together with
Theorem~\ref{thm:quotient-criteria}\textup{(ii)} and monotonicity in
$n$, gives
\begin{equation}\label{eq:msecat-derived}
 \msecat(f_{\Q})\leq n
 \quad\Longleftrightarrow\quad
 [I^{n+1}\hookrightarrow R]=0\quad\text{in }\mathcal D(R).
\end{equation}
Indeed, $I^{n+2}\hookrightarrow R$ factors through
$I^{n+1}\hookrightarrow R$, so derived nullity persists as $n$
increases.
Forgetting multiplication gives
$\msecat(f_{\Q})\leq\secat(f_{\Q})$.

\begin{lemma}[The zero threshold]\label{lem:zero-threshold}
For a rationally retractive map as in
Lemma~\ref{lem:split-minimal-model}, the following conditions are
equivalent: $\msecat(f_{\Q})=0$, the ideal $I$ is acyclic, and
$f_{\Q}$ is a homotopy equivalence. Under these conditions
$\secat(f_{\Q})=\relcat(f_{\Q})=0$.
\end{lemma}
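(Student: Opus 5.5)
The plan is to prove the cycle of implications
\[
\msecat(f_{\Q})=0\;\Rightarrow\; I\text{ acyclic}\;\Rightarrow\; f_{\Q}\text{ a homotopy equivalence}\;\Rightarrow\;\msecat(f_{\Q})=0,
\]
and then to read off the vanishing of $\secat$ and $\relcat$ from the last condition. All of this takes place in the split minimal model $\varphi\colon R=A\otimes\Lambda W\to A$ of Lemma~\ref{lem:split-minimal-model}.

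\emph{First implication.} By \eqref{eq:msecat-derived} with $n=0$, $\msecat(f_{\Q})=0$ means $[I\hookrightarrow R]=0$ in $\mathcal D(R)$. The inclusion $\iota\colon I\to R$ then induces the zero map on cohomology.
\begin{itemize}
\item Because $\varphi$ is surjective with kernel $I$, the short exact sequence $0\to I\to R\to A\to0$ gives a long exact sequence.
\item Since $A\hookrightarrow R$ is a CDGA section of $\varphi$, the map $H(\varphi)$ is surjective, so the connecting homomorphism vanishes.
\item Hence $H(I)\to H(R)$ is injective. Being also zero, it forces $H(I)=0$.
\end{itemize}

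\emph{Second implication.} If $I$ is acyclic, the long exact sequence shows that $\varphi$ is a quasi-isomorphism. A quasi-isomorphism of minimal Sullivan algebras is an isomorphism \cite[Proposition~12.10(i)]{FHT2001}. In particular $W=0$ and $f_{\Q}$ is a homotopy equivalence, since $\varphi$ models $f_{\Q}$.

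\emph{Third implication.} If $f_{\Q}$ is a homotopy equivalence, every model of it is a quasi-isomorphism. So $\varphi$ is an isomorphism of minimal algebras, which gives $W=0$ and $I=0$. Then $[I\hookrightarrow R]=0$ trivially, and $\msecat(f_{\Q})=0$ by \eqref{eq:msecat-derived}. This step is also immediate from the retraction of $R\to R/I=A\cong R$.

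\emph{Vanishing of $\secat$ and $\relcat$.} Once $I=0$, the map $\rho_0\colon R\to R/I$ is the identity and trivially admits a CDGA retraction. Theorem~\ref{thm:quotient-criteria}(i) then gives $\secat(f_{\Q})=0$. For relative category, I would argue geometrically rather than through the not-yet-proved criterion \eqref{eq:intro-relative-criterion}:
\begin{itemize}
\item $G_0(f_{\Q})\to Y_{\Q}$ is the fibration replacement of $f_{\Q}$, and $\iota_0\colon X_{\Q}\to G_0(f_{\Q})$ is the canonical equivalence.
\item When $f_{\Q}$ is an equivalence, take $s=\iota_0\circ f_{\Q}^{-1}$. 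It is a homotopy section of $g_0$, because $g_0\iota_0=f_{\Q}$.
\item It satisfies $sf_{\Q}\simeq\iota_0$, so $\relcat(f_{\Q})=0$.
\end{itemize}

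The only delicate point is the first implication: the vanishing of the connecting map, and hence the injectivity of $H(I)\to H(R)$, depends on the strict splitting provided by Lemma~\ref{lem:split-minimal-model}. I expect this to be the step needing the most care, though it is short.
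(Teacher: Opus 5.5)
Your proposal is correct and follows essentially the paper's argument. Both use the section (the splitting $R=A\oplus I$, equivalently surjectivity of $H(\varphi)$) to make $H(I)\to H(R)$ injective, then use the exact sequence $0\to I\to R\to A\to0$ to identify acyclicity of $I$ with $\varphi$ being a quasi-isomorphism, and note that a homotopy equivalence has relative category zero. The one difference is that the paper closes the cycle with $\msecat\leq\secat\leq\relcat$, whereas you verify $\msecat(f_{\Q})=0$ and $\secat(f_{\Q})=0$ directly from $W=0$, which is equally valid.
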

\begin{proof}
If $\msecat(f_{\Q})=0$, equation~\eqref{eq:msecat-derived}
makes $I\hookrightarrow R$ zero in $\mathcal D(R)$.
The splitting $R=A\oplus I$ as cochain complexes makes the
induced cohomology map injective, so $H(I)=0$.
The exact sequence $0\to I\to R\to A\to0$ identifies this
condition with $\varphi$ being a quasi-isomorphism, hence with
$f_{\Q}$ being a homotopy equivalence. A homotopy equivalence
has relative category zero, and
$\msecat\leq\secat\leq\relcat$ gives the remaining implications.
\end{proof}

\subsection{The diagonal as a special case}

For a simply connected space $X$ with minimal model
$B=(\Lambda V,d)$, the diagonal is modelled by multiplication
$\mu\colon B\otimes B\to B$.
Set $v_L=v\otimes1$, $v_R=1\otimes v$ and
$\bar v=v_R-v_L$. Then
\begin{equation}\label{eq:diagonal-coordinates}
 B\otimes B=B_L\otimes\Lambda\bar V,
 \qquad \ker\mu=(\bar V),
 \qquad (\ker\mu)^m=B_L\otimes\Lambda^{\geq m}\bar V.
\end{equation}
This is the split model above with $A=B_L$ and $W=\bar V$.
In these coordinates relative length is diagonal length, and
\eqref{eq:relative-differential-splitting} is obtained by expanding
$d_R\bar v=d(v_L+\bar v)-d(v_L)$.
Theorem~\ref{thm:quotient-criteria} specializes to the quotient
criteria for $\TC_0(X)$ and $\MTC(X)$; its CDGA part is also
\cite[Theorem~1.2]{CV2017}, for the ordinary diagonal.

\subsection{Finite perturbations}
We recall the perturbation formulas used in the resolution construction; see \cite[Lemma~2.4]{MillerRahmati2024}, with the opposite sign convention for the contracting homotopy. Suppose that complexes $B$ and $P$ have cochain maps $i:P\to B$ and $p:B\to P$, together with a map $h:B\to B$ of degree $-1$, satisfying
\[
 pi=1,\qquad d_Bh+hd_B=1-ip,\qquad hi=ph=h^2=0.
\]
Let $\eta$ be a degree-one perturbation of $d_B$, so that $(d_B+\eta)^2=0$, and suppose the series below are finite on each element. Put
\[
 T=(1+\eta h)^{-1}\eta
   =\sum_{a\geq0}(-1)^a\eta(h\eta)^a.
\]
The transferred differential and the new retract are
\begin{equation}\label{eq:perturbation-formulas}
 \begin{gathered}
 d'_P=d_P+pTi,\qquad i'=i-hTi,\qquad p'=p-pTh,\\
 h'=h-hTh.
 \end{gathered}
\end{equation}
These identities are the perturbation lemma in its pointwise finite form. Substitution of $T=\eta-\eta hT=\eta-Th\eta$, together with the retract identities, gives $(d'_P)^2=0$, the cochain-map identities for $i',p'$, and
\[
 p'i'=1,\qquad (d_B+\eta)h'+h'(d_B+\eta)=1-i'p'.
\]
The contraction-to-zero case is treated in
\eqref{eq:zero-contraction-perturbation} below. Pointwise finiteness,
established in each application, makes these formulas operators
on the ordinary complexes.

\section{Resolutions controlled by the kernel ideal}
\label{sec:controlled-resolutions}

We work with the split minimal Sullivan model of
Section~\ref{sec:preliminaries}:
\[
 A=(\Lambda V,d),\qquad
 (R,d_R)=(A\otimes\Lambda W,d_R),\qquad
 \varphi\colon R\longrightarrow A,\qquad I=\ker\varphi=(W).
\]
Both $A$ and $R$ are simply connected and of finite type; the
inclusion of $A$ and the projection $\varphi$ are CDGA morphisms.
In particular, $W^{\leq1}=0$ and $d_R(I)\subset I$.
The purpose of this section is to resolve $I^m$ while controlling
the powers of $I$ occurring in every comparison homotopy.

\subsection{The linear conormal differential}

Recall the decomposition by relative word length
$d_R=D_{\mathrm{lin}}+d_+$ from \eqref{eq:relative-differential-splitting}.
The associated graded description gives the following properties.

\begin{lemma}\label{lem:conormal-differential}
The operator $D_{\mathrm{lin}}$ is a differential.  With
$M=I/I^2$, the differential graded algebra
$(R,D_{\mathrm{lin}})$ is naturally
$\Lambda_{A}M$, where $M$ carries its induced differential.
The matrices of this differential on the generators of $M$ have
coefficients in $A^{\geq2}$.
\end{lemma}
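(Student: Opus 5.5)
The plan is to read everything off the filtration of $R$ by the powers $I^m=A\otimes\Lambda^{\geq m}W$. That filtration is multiplicative and preserved by $d_R$, so it has an associated graded CDGA. First I identify this graded object:
\[
 \operatorname{gr}R=\bigoplus_{m\geq0}I^m/I^{m+1}\cong A\otimes\Lambda W,
\]
graded by relative length. Under this identification the induced differential is the length-preserving component of $d_R$. The component $d_+$ strictly raises relative length, so it vanishes on the associated graded. Hence the induced differential is exactly the component of $d_R$ that keeps relative length, and by the definition in \eqref{eq:relative-differential-splitting} this component is $D_{\mathrm{lin}}$.

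Two points need checking, both by direct computation. The first is that on $A$ this component is just $d$: since $A$ is a sub-CDGA of $R$, $d_R(A)\subset A$ has relative length $0$. The second is that on $w\in W$ the component has no relative-length-zero term, because $d_R(W)\subset I$ by Lemma~\ref{lem:split-minimal-model}. So $D_{\mathrm{lin}}w\in A\otimes W$, and $D_{\mathrm{lin}}$ is a derivation preserving relative length.

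From these the main assertions follow. An induced differential on an associated graded squares to zero, so $D_{\mathrm{lin}}^2=0$. Explicitly, the length-preserving part of $d_R^2=0$ is $D_{\mathrm{lin}}^2$, because $d_+$ only raises length. Next, $M=I/I^2\cong A\otimes W$ is a free $A$-module, with differential induced by $d_R$, which equals $D_{\mathrm{lin}}$ on $A\otimes W$. Since $D_{\mathrm{lin}}$ is a derivation determined by its values on $A$ and $W$, we get $(R,D_{\mathrm{lin}})\cong\Lambda_A M=A\otimes\Lambda W$ as CDGAs. Naturality holds because everything is defined from the $I$-adic filtration alone.

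The substantive step is the coefficient claim. Write
\[
 D_{\mathrm{lin}}w_i=\sum_j a_{ij}w_j,\qquad a_{ij}\in A.
\]
There are three cases for the coefficient degree.
\begin{enumerate}[label=\textup{(\alph*)}]
\item Degree $0$: a coefficient in $A^0=\Q$ would give a term $\lambda w_j$ in $d_Rw_i$ that is linear in all generators of $R$. Minimality of $R$ (Lemma~\ref{lem:split-minimal-model}) forbids this, since $d_R$ lands in decomposables.
\item Degree $1$: $A^1=0$ because $V=V^{\geq2}$.
\item Degree $\geq2$: the only possibility left, so $a_{ij}\in A^{\geq2}$.
\end{enumerate}
One subtlety is that decomposability is measured in all generators $V\oplus W$. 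A term $a\,w_j$ with $a\in A^+$ is already decomposable, so it is allowed, while scalar multiples of a single $w_j$ are not; this is exactly the case excluded in (a). I expect this to be the only point needing care, namely confirming that the change of generators in Lemma~\ref{lem:split-minimal-model} keeps $R$ minimal on the new generator set. That lemma asserts it, so I would cite it directly.
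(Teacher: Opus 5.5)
Your proposal is correct and follows essentially the paper's proof. You identify $D_{\mathrm{lin}}$ with the associated graded differential of the $I$-adic filtration, which gives $D_{\mathrm{lin}}^2=0$ and the identification with $\Lambda_A(I/I^2)$. You then use minimality of $R=\Lambda(V\oplus W)$ to exclude scalar coefficients and $A^1=0$ to force every coefficient into $A^{\geq2}$.
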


\begin{proof}
The differential $d_R$ preserves powers of $I$.  Under the
identification
$\operatorname{gr}_I R=\Lambda_{A}(I/I^2)$, its associated
graded differential is exactly $D_{\mathrm{lin}}$.
Consequently $D_{\mathrm{lin}}^2=0$. To check its matrix
coefficients, write $D_{\mathrm{lin}}w=\sum_{w'}a_{w,w'}w'$.
Minimality of $R=\Lambda(V\oplus W)$ excludes a scalar
coefficient: every monomial in $d_Rw$ is decomposable.
A term linear in $W$ must therefore contain a positive-degree
factor from $A$. Since $A^1=0$, each nonzero $a_{w,w'}$ has
degree at least two.
\end{proof}

We include the linear conormal differential in the initial
differential, so that the remaining perturbation increases
the $I$-adic filtration.

\makeatletter
\@secpenalty=0
\makeatother
\subsection{Natural linear resolutions}
\makeatletter
\@secpenalty=-300
\makeatother

We first disregard the internal differential.  Give every element
of $M$ weight one and put $S=\Lambda_{A}M$ and
$\mathfrak m=(M)$.  We distinguish this weight from cohomological
degree.  A resolution index $j$ is homological: after
totalization, a generator of internal degree $u$ and index $j$
has cohomological degree $u-j$.

The total weight of a tensor counts the factors from $M$ in
all its entries, including the free coefficient. Thus, for
homogeneous weights, $\operatorname{weight}(ag)=
\operatorname{weight}(a)+\operatorname{weight}(g)$ in a
free resolution. A coefficient has order at least $k$ if it
belongs to $\mathfrak m^k$; after restoring $d_R$, this is
the condition that it belongs to $I^k$.

Over $\mathbb Q$, linearity of augmentation-ideal powers for a
free graded-commutative algebra on finitely many generators is a
special case of the general Koszul-algebra construction in
\cite[Theorem~5.3 and Corollary~5.5]{FaberEtAl2021}. A related
use of homological perturbation to obtain resolutions of power
quotients in the polynomial case appears in
\cite[Section~3.6]{MillerRahmati2024}. The short argument
below also covers infinitely many generators. We give the natural relative
construction in Appendix~\ref{app:universal-linear-resolution}.
That construction establishes the differential compatibility
and coefficient bounds used here.

\Needspace{13\baselineskip}
\begin{lemma}\label{lem:natural-linear-resolution}
For every $m\geq1$, the ideal $\mathfrak m^m$ has a free linear
resolution
\begin{equation}\label{eq:linear-resolution}
 P^{\mathrm{lin}}_{m,j}
       =S\otimes_{A}G_{m,j}(M),
 \qquad \operatorname{weight}G_{m,j}(M)=m+j.
\end{equation}
Its differential $\delta_m$ lowers $j$ by one and has coefficients
of weight one. The resolution and its inclusion in the relative bar resolution
are canonical. After choosing equivariant complements, the
retraction and homotopy are natural in $M$.
The kernel of the augmentation has an $A$-linear contraction
$H_{m,0}$ which preserves total weight, raises $j$ by one, and
satisfies $H_{m,0}^2=0$.

All these constructions commute, with the totalization signs,
with the internal differential of a differential graded module
$M$ over the differential graded base $A$.
\end{lemma}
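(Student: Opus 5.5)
The plan is to build everything functorially out of the Koszul complex of $M$ over $S=\Lambda_AM$, working weight by weight. Over $\Q$, characteristic zero lets the symmetrization idempotents and the Euler operator do the work that a choice of basis would otherwise do. That matters because $M$ may have infinitely many generators.

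\emph{Step 1: the Koszul complex and its weight strands.} Consider
\[
K=S\otimes_A\Lambda_A(sM)=\Lambda_AM\otimes_A\Lambda_A(sM),
\]
with the Koszul differential $\kappa(sx)=x$, extended as an $S$-linear derivation. It lowers the resolution index by one and preserves total weight, where each $x\in M$ and each $sx$ has weight one. The graded-commutative Koszul sign rule is built in, so a symmetric power in one factor pairs with an exterior power in the other according to parity.

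There is also a de~Rham operator $e$ sending $x\mapsto sx$, again extended as a derivation. It satisfies
\[
\kappa e+e\kappa=(\text{total weight})\cdot\id .
\]
Consequently, in each positive weight $w$ the complex $(K_w,\kappa)$ is contractible by $e/w$. This contraction is $A$-linear, preserves weight, raises $j$ by one, and squares to zero because $e^2=0$.

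\emph{Step 2: the resolution of $\mathfrak m^m$.} Define
\[
G_{m,j}(M)=\operatorname{im}\Bigl(\kappa\colon \operatorname{Sym}^{m-1}_A M\otimes\Lambda^{j+1}_A(sM)\to\operatorname{Sym}^{m}_AM\otimes\Lambda^{j}_A(sM)\Bigr),
\]
which is the kernel of $\kappa$ on the target in weight $m+j$. Over $\Q$ this is the direct summand cut out by the idempotent $\kappa e/(m+j)$, hence a natural $A$-module summand. It is projective, and free whenever $M$ is free.

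Set $P^{\mathrm{lin}}_{m,j}=S\otimes_AG_{m,j}(M)$. The differential $\delta_m$ is induced by $\kappa$ after multiplying out one factor of $\operatorname{Sym}M$ into $S$, so its coefficients have weight one. The augmentation $P^{\mathrm{lin}}_{m,0}=S\otimes\operatorname{Sym}^mM\to\mathfrak m^m$ is multiplication.

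Exactness will be checked one $S$-weight at a time. After multiplication, the complex in a fixed total weight becomes a truncation of a positive-weight Koszul strand, and that strand is exact by Step 1. The same computation identifies $P^{\mathrm{lin}}_m$ with the linear strand inside the relative bar resolution via the shuffle embedding of $\Lambda(sM)$ into the bar construction, which gives canonicity. The contraction $H_{m,0}$ of the augmentation kernel is the restriction of $e/(\text{weight})$, corrected by the idempotent projection onto $G$. I will verify $H_{m,0}^2=0$ from $e^2=0$ together with the projector identities. The retraction and homotopy onto the bar resolution require choosing equivariant complements, namely the $\Sigma$-averaging complements. These are natural because averaging is.

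\emph{Step 3: compatibility with internal differentials.} When $M$ is a differential graded module over the differential graded base $A$, every object above is obtained from $M$ by applying $\operatorname{Sym}_A$, $\Lambda_A$, $\otimes_A$ and averaging idempotents. These constructions carry the Leibniz-derivation extension of $d_M$. The maps $\kappa$, $e$, the projectors and the weight scalars are all natural transformations, so they commute with the induced derivations, up to the totalization sign $(-1)^j$ on index-$j$ terms.

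The main obstacle I expect is in Step 2. One must check that the truncated strand is exact at the bottom, giving $H_0=\mathfrak m^m$ and no homology in $j\ge1$. One must also check that $H_{m,0}$ is genuinely a contraction of the augmentation kernel, not merely of the full Koszul strand. My first attempt would be to write $P^{\mathrm{lin}}_m$ as the cokernel of the inclusion of the weight-$<m$ part of the Koszul filtration. Then the long exact sequence and the contractibility from Step 1 should give both exactness and the explicit homotopy, with the Euler denominators handled weight by weight.
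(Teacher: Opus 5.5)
\paragraph{Verdict.} Your route differs from the paper's and several ingredients are right, but the central step, exactness of your complex, is not established: the argument you sketch for it does not work.

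\paragraph{What is correct.} Your Step~1 is correct. So is your choice of generators. Under the shuffle embedding, $G_{m,j}=\ker\kappa\subset\operatorname{Sym}^m_AM\otimes\Lambda^j_A(sM)$ is exactly the space of top cycles the paper uses: graded antisymmetry of the bar entries kills the inner faces, and $\kappa=0$ kills the last one. On $\ker\kappa$ your $\delta_m$ agrees up to sign with the first bar face. The reason is that polarizing one letter after applying $\kappa$ equals the sum of your operator and the comultiplication of $\Lambda_A(sM)$.

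\paragraph{The gap: exactness.} You flag exactness of $(S\otimes_AG_m,\delta_m)\to\mathfrak m^m$ but do not supply it, and the mechanism you indicate fails.
\begin{itemize}
\item The weight-$N$ part of $P^{\mathrm{lin}}_m$ is not a truncation of the weight-$N$ Koszul strand. Already at $j=0$, multiplication $S_{N-m}\otimes\operatorname{Sym}^mM\to S_N$ is far from injective. For two even generators, $m=2$ and $N=4$, the weight-four part of $P^{\mathrm{lin}}_2$ has dimensions $4\to9$, while the Koszul strand has $3\to8\to5$.
\item The truncated strand $\{S_{N-j}\otimes\Lambda^j(sM):N-j\geq m\}$ is really $\mathfrak m^m\otimes_SK$. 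Exactness of the full strand then shows that $\operatorname{Tor}^S_j(\mathfrak m^m,A)$ is concentrated in weight $m+j$ and equals $G_{m,j}$.
\item That gives linearity and identifies the generators. It does not give acyclicity: a complex of free modules with the right generators and the right $H_0$ need not be acyclic. Your particular differential still has to be shown to resolve.
\item Your fallback does not produce $P^{\mathrm{lin}}_m$. The total-weight-$\geq m$ part of $K$ is exact over $\mathfrak m^m$, but its terms $\mathfrak m^{m-j}\otimes_A\Lambda^j(sM)$ are not free for $0<j<m$. Removing exterior degrees $<m$ resolves a syzygy module of $A$, not $\mathfrak m^m$, once $m\geq2$.
\end{itemize}

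\paragraph{How to close it.} Either of the following works.
\begin{itemize}
\item \emph{The paper's route.} Feed your Tor computation into the bar resolution. Since $A\otimes_S\mathcal B_m$ computes the same Tor, in each weight $N$ the bar generator complex has homology only at the top index $N-m$, where it is your $G_{m,N-m}$. Take an equivariant splitting over $\mathbb Q[\mathfrak S_N]$ and perturb it by the first face $\ell$, which preserves top cycles. This exhibits $(S\otimes_AG_m,\delta_m)$ as an augmented strong deformation retract of the exact bar resolution. You get exactness, the inclusion, and the maps $p$, $h$ at once.
\item \emph{Induction on $m$, inside your framework.} Use the exact sequences $0\to G_{m-1,j+1}\to\operatorname{Sym}^{m-1}_AM\otimes\Lambda^{j+1}_A(sM)\xrightarrow{\kappa}G_{m,j}\to0$. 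Since $\kappa$ anticommutes with the comultiplication differential, they assemble into a short exact sequence of complexes. Its middle term is $\operatorname{Sym}^{m-1}_AM$ tensored with the truncated Koszul resolution of $\mathfrak m$. The connecting map is the inclusion of the first syzygy of $\mathfrak m^{m-1}$ into $\mathfrak m\otimes_A\operatorname{Sym}^{m-1}_AM$. The long exact sequence then gives acyclicity with $H_0=\mathfrak m^m$.
\end{itemize}

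\paragraph{The contraction $H_{m,0}$.} Your $H_{m,0}$ is not yet a construction. The projector onto $G_{m,\bullet}$ does not commute with the operator that moves a coefficient letter into $\Lambda_A(sM)$. So neither the contraction identity nor $H_{m,0}^2=0$ follows from $e^2=0$ and idempotence. Once exactness is known, take any $A$-linear, weight-preserving contraction $H$ of the augmentation kernel that raises $j$ by one, for instance from an equivariant splitting as in the paper. Replace it by $H\delta_mH$: this is still such a contraction, and it squares to zero.
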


\begin{proof}
We give the construction, including the naturality that will be
needed later.  First work over $\mathbb Q$.  The free
graded-commutative algebra on a graded vector space is Koszul: tensor
the usual linear resolutions for its polynomial and exterior
generators.  This remains true with infinitely many generators,
since every element involves only finitely many of them.

Its augmentation ideal powers have linear resolutions.  Indeed,
assuming this for $\mathfrak m^a$, use the exact sequence
\[
 0\longrightarrow\mathfrak m^{a+1}
 \longrightarrow\mathfrak m^a
 \longrightarrow S_a\otimes\mathbb Q\longrightarrow0,
\]
where $S_a$ is the weight-$a$ component and $S$ acts on the last
term through its augmentation.  The associated Tor sequence
allows $\operatorname{Tor}_j^S(\mathfrak m^{a+1},\mathbb Q)$
only in weights $a+j$ and $a+j+1$.  A minimal resolution of a
module generated in weights at least $a+1$ has its generators
at index $j$ in weights at least $a+1+j$.  The first possibility
is therefore excluded.  The initial case follows from
$0\to\mathfrak m\to S\to\mathbb Q\to0$ and the Koszul property.

The natural construction is given in
Appendix~\ref{app:universal-linear-resolution}.  Its normalized
bar generators are the ordered-block-partition representations
\eqref{eq:appendix-representation-module}.  Splitting their
complexes over $\mathbb Q[\mathfrak S_N]$ gives the maps
$i,p,h$ of
\eqref{eq:appendix-bar-retract}, with
\begin{equation}\label{eq:normalized-retract}
             pi=1,\qquad hi=ph=h^2=0.
\end{equation}
A separate splitting of the exact augmentation kernel gives
$H_{m,0}$. The inclusion $i$ is the canonical inclusion of
top cycles, and $\delta_m$ is the restriction of the first
bar face. The representation maps preserve total weight;
the homotopies raise $j$ by one.  Their evaluation and
explicit totalization in
\eqref{eq:appendix-totalization} show that they commute, or
anticommute as appropriate, with the internal differential
over $A$.  This proves all the asserted naturality and
normalization properties.
\end{proof}

For clarity, each $G_{m,j}(M)$ in
\eqref{eq:linear-resolution} includes the totalization shift
by $-j$.  Its underlying graded $A$-module is obtained by
extension of scalars from a graded vector space, which we also
denote by $G_{m,j}$ when no confusion is possible.  Since
$W^{\leq1}=0$, its free generators satisfy
\begin{equation}\label{eq:resolution-degree-bound}
                   |g|\geq2m+j
                   \qquad(g\in G_{m,j}).
\end{equation}
More generally, if $W^{<c}=0$ for $c\geq2$, the same weight
calculation gives $|g|\geq cm+(c-1)j$.
In particular, only finitely many indices occur in a fixed
cohomological degree.  Each $G_{m,j}$ is also finite dimensional
in every degree: it is a summand of a finite sum of tensor
powers of $W$, shifted by $-j$, and these tensor powers
have finite-dimensional homogeneous components.

\begin{proposition}\label{prop:controlled-resolution}
For every $m\geq1$ there is a surjective quasi-isomorphism
\[
 \varepsilon_m:(P_m,D_m)\xrightarrow{\simeq}(I^m,d_R),
 \qquad P_m=R\otimes G_m,
 \qquad G_m=\bigoplus_{j\geq0}G_{m,j},
\]
where $P_m$ is semifree, with the degree bound
\eqref{eq:resolution-degree-bound} and a
decomposition
\begin{equation}\label{eq:controlled-differential}
             D_m=\delta_m+D_{\mathrm{lin},m}+\Delta_m.
\end{equation}
Here $\delta_m$ lowers $j$ by one and has coefficients linear in
$I$, $D_{\mathrm{lin},m}$ preserves $j$ and is induced by the
linear conormal differential, and $\Delta_m$ does not decrease
$j$ and satisfies
\begin{equation}\label{eq:delta-positive}
             \Delta_m(I^kP_m)\subset I^{k+1}P_m
             \qquad(k\geq0).
\end{equation}
All matrix coefficients of $D_m$ on free generators have
cohomological degree at least two.

Let $\mathcal B_m$ be the normalized relative bar resolution of
$(I^m,d_R)$. There are $R$-linear cochain maps
\begin{equation}\label{eq:complete-bar-retract}
 i_m^+:P_m\longrightarrow\mathcal B_m,
 \qquad p_m^+:\mathcal B_m\longrightarrow P_m,
 \qquad p_m^+i_m^+=1,
\end{equation}
which strictly preserve the augmentations:
\begin{equation}\label{eq:complete-bar-augmentation}
 \varepsilon_{\mathrm{bar},m}i_m^+=\varepsilon_m,
 \qquad \varepsilon_m p_m^+=\varepsilon_{\mathrm{bar},m}.
\end{equation}
The maps $i_m^+$ and $p_m^+$ are given by the pointwise finite
series \eqref{eq:complete-bar-map-series}.

\end{proposition}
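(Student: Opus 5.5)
The plan is to obtain $P_m$ by homological perturbation of the linear resolution of Lemma~\ref{lem:natural-linear-resolution}, perturbing only the nonlinear part $d_+$ of the differential.

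\emph{Step 1: the linear model.} Start from the linear model $(R,D_{\mathrm{lin}})=\Lambda_A M$ of Lemma~\ref{lem:conormal-differential}. Lemma~\ref{lem:natural-linear-resolution} gives the resolution $P^{\mathrm{lin}}_m=S\otimes_A G_m(M)$ of $\mathfrak m^m=I^m$ for this differential. Because the construction commutes with the internal differential, its total differential is $\delta_m+D_{\mathrm{lin},m}$. Here $\delta_m$ has weight-one coefficients, hence coefficients linear in $I$. The term $D_{\mathrm{lin},m}$ is induced by the conormal differential and preserves $j$. The same lemma gives the retract data
\[
(i,p,h)\quad\text{between } P^{\mathrm{lin}}_m \text{ and the normalized bar resolution } \mathcal B^{\mathrm{lin}}_m \text{ of } (I^m,D_{\mathrm{lin}}),
\]
satisfying \eqref{eq:normalized-retract} and compatible with the augmentations.

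\emph{Step 2: perturbing the bar complex.} Restore the full differential $d_R=D_{\mathrm{lin}}+d_+$. The bar resolution $\mathcal B_m$ of $(I^m,d_R)$ has the same underlying module as $\mathcal B^{\mathrm{lin}}_m$. Its differential differs by a perturbation $\eta$ induced by $d_+$ on the coefficients and on the bar entries. Because $d_+$ raises relative length, $\eta$ maps $I^k\mathcal B_m$ into $I^{k+1}\mathcal B_m$. Apply \eqref{eq:perturbation-formulas} with $T=\sum_a(-1)^a\eta(h\eta)^a$. This defines
\[
D_m=\delta_m+D_{\mathrm{lin},m}+pTi,\qquad i_m^+=i-hTi,\qquad p_m^+=p-pTh,
\]
so we set $\Delta_m=pTi$.

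\emph{Step 3: the required properties.} The identity $p_m^+i_m^+=1$ follows from the perturbation lemma, using $hi=ph=h^2=0$.
\begin{itemize}
\item \emph{The bound \eqref{eq:delta-positive}.} Each summand of $T$ contains at least one $\eta$, and $i$, $p$, $h$ preserve $I$-adic order since they are $R$-linear maps built from weight-preserving maps. Hence $\Delta_m$ raises the order by at least one.
\item \emph{$\Delta_m$ does not lower $j$.} The maps $h$ raise $j$ by one, $\eta$ preserves or lowers bar length, and $p$ projects to resolution index. A weight count shows that the output index is at least the input index: the extra weight from $d_+$ compensates any loss.
\item \emph{Augmentations.} The bar augmentation kills the image of $h$ and intertwines $\eta$ with $d_+$ on $I^m$. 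This gives \eqref{eq:complete-bar-augmentation}.
\item \emph{$\varepsilon_m$ is a surjective quasi-isomorphism.} Since $\varepsilon_{\mathrm{bar},m}$ is one and $P_m$ is a retract of $\mathcal B_m$ compatible with augmentations, so is $\varepsilon_m$. Surjectivity holds because index zero already hits the generators of $I^m$.
\end{itemize}
Semifreeness comes from filtering by cohomological degree of generators together with $j$; the degree bound \eqref{eq:resolution-degree-bound} is unchanged, since $G_m$ is unchanged. For the coefficient degrees, $\delta_m$ has coefficients in $W$ of degree at least two. Lemma~\ref{lem:conormal-differential} gives $A^{\geq2}$ coefficients for $D_{\mathrm{lin},m}$. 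Minimality makes the coefficients of $\Delta_m$ decomposable of positive degree, hence of degree at least two.

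\emph{Main obstacle: pointwise finiteness of the series.} We must show that $(h\eta)^a$ vanishes on each element for large $a$. I would argue by degree. Fix an element of given cohomological degree. Each application of $h\eta$ raises $I$-adic order by at least one. The generators $W$ have degree at least two, so $I^k$ lives in degrees at least $2k$ on coefficients. The index $j$, raised by $h$, is bounded in each degree by \eqref{eq:resolution-degree-bound}. In a fixed total degree, both the order and the bar index are therefore bounded, so the series terminates; this gives \eqref{eq:complete-bar-map-series}. The delicate point is the interaction of the bar-length shifts with degree. If the direct estimate fails, I would use a complete filtered argument in the $I$-adic topology, relying on degreewise finiteness of $R$.
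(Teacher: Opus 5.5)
Your construction is the paper's. You perturb the normalized bar retract $(i,p,h)$ by the factorwise extension $\eta$ of $d_+$, and your $i-hTi$, $p-pTh$ and $pTi$ expand to \eqref{eq:complete-bar-map-series} and $\Delta_m=\sum_a(-1)^a p\,\eta(h\eta)^a i$.

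The gap is in your proof of \eqref{eq:delta-positive}, which is the estimate that makes the resolution controlled. It is false that $\eta$ maps $I^k\mathcal B_m$ into $I^{k+1}\mathcal B_m$. When $d_+$ acts on a bar entry or on the last ideal factor, the free coefficient is unchanged. For the two-sphere of Appendix~\ref{app:sphere-resolutions}, $d_+\beta=X^2$, and $\eta(1[\beta]X)=\pm1[X^2]X$ still has coefficient $1\notin I$.

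What $\eta$ raises is total weight, and then ``at least one $\eta$'' is too weak. Applied to a generator of index $j$, the summand $p\,\eta(h\eta)^a i$ lands on generators of index $j+a$, hence of weight $m+j+a$. The missing argument balances index against weight:
\begin{itemize}
\item $\eta$ preserves $j$ exactly, because the multiplication faces are unchanged and nothing in $\eta$ lowers bar length (contrary to your bullet on $j$);
\item each $h$ raises $j$ by one;
\item $i,p,h$ preserve total weight.
\end{itemize}
So this summand raises $j$ by exactly $a$ and total weight by at least $a+1$, one unit per factor $\eta$. Because the resolution is linear, a generator of index $j'$ has weight exactly $m+j'$, so the surplus unit is forced into the coefficient. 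It is the projection $p$ onto linear generators, not $\eta$, that converts excess bar weight into coefficient order. Once this holds, the degree-two bound for the coefficients of $\Delta_m$ is just $I\subset R^{\geq2}$; those coefficients need not be decomposable.

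Two further steps are missing or misjustified.

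First, $\eta$ is a derivation over $d_+$, not $R$-linear, so \eqref{eq:perturbation-formulas} only produces maps of complexes. You must check two things:
\begin{itemize}
\item $D_m$ satisfies the Leibniz rule over $d_R$, with its first term $p\eta i$ acting by $d_+$ on coefficients;
\item $i_m^+$, $p_m^+$ and the higher terms $\Delta_{m,a}$, $a\geq1$, are $R$-linear.
\end{itemize}
Both hold because every term created by moving a coefficient past an $\eta$ contains $hi$, $ph$ or $h^2$. Thus \eqref{eq:normalized-retract} is needed here, not only for $p_m^+i_m^+=1$.

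Second, for pointwise finiteness, drop the $I$-adic claim, which fails for the same reason, and drop the fallback completion. The operator $h\eta$ preserves total degree and raises $j$ by one. A bar element of index $j$ has degree at least $2m+j$: its $j$ entries lie in $I\subset R^{\geq2}$, its last factor lies in $I^m\subset R^{\geq2m}$, and totalization shifts by $-j$. This bar analogue of \eqref{eq:resolution-degree-bound}, which itself concerns only $P_m$, bounds $j$ in each degree and terminates every series.
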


\begin{proof}
Start with Lemma~\ref{lem:natural-linear-resolution}, retaining
$D_{\mathrm{lin}}$ as an internal differential.

The derivation $d_+$ is $A$-linear, vanishes on $A$, and
preserves every ideal power.  Apply it to each tensor factor
of the normalized relative bar resolution, including the free
coefficient and the last ideal factor, with the tensor
differential signs.  This defines its extension to the bar;
the derivation rule makes it compatible with all multiplication
faces.  It preserves $j$ and increases total weight by at
least one, although it need not be homogeneous in that weight.
The full internal differential is the tensor differential of
$d_R=D_{\mathrm{lin}}+d_+$. It squares to zero and commutes
with the multiplication faces before totalization. With the
signs of \eqref{eq:appendix-totalization}, it anticommutes
with the bar differential, so their sum squares to zero.
We now transfer this perturbation. If $(i,p,h)$ is the
normalized bar retract at power $m$, its perturbed maps are
\begin{equation}\label{eq:complete-bar-map-series}
 \begin{aligned}
 i_m^+&=\sum_{a\geq0}(-1)^a(hd_+)^a i,\\
 p_m^+&=\sum_{a\geq0}(-1)^a p(d_+h)^a.
 \end{aligned}
\end{equation}
Each summand raises $j$ by $a$ and total weight by at least
$a$: the unperturbed maps preserve both, $h$ raises $j$ by
one without changing total weight, and $d_+$ preserves $j$
and raises total weight by at least one. The transferred
perturbation has terms
\begin{equation}\label{eq:transferred-perturbation}
        \Delta_{m,a}=(-1)^a p\,d_+(h d_+)^a i,
        \qquad a\geq0.
\end{equation}
The index $j$ increases by $a$, while total weight increases
by at least $a+1$.  A target free generator has weight $m+j+a$,
so each term gains at least one power of $I$ on coefficients.
This proves \eqref{eq:delta-positive}.  On coefficients the
first transferred term has derivation $d_+$; all higher terms
are $R$-linear.  Indeed, the extra terms produced by moving a
coefficient through the series contain one of $hi,ph,h^2$ and
vanish by \eqref{eq:normalized-retract}.  Thus $D_m$ is a
differential of modules over $(R,d_R)$.

The maps in \eqref{eq:complete-bar-map-series} are also
$R$-linear. Indeed, put $T_a=(hd_+)^a i$ and
$S_a=p(d_+h)^a$. The identities $hi=h^2=ph=0$ imply
$hT_a=0$ and $S_a h=0$ for all $a$. Inductively, the
derivation term obtained by moving a coefficient past $d_+$
in $T_{a+1}=hd_+T_a$ is killed by $hT_a=0$; in
$S_{a+1}=S_a d_+h$ it is killed by $S_a h=0$.
The remaining terms have the required graded linearity.
The finite perturbation identities give the cochain-map and
retraction identities in \eqref{eq:complete-bar-retract}.

Define $\varepsilon_m=\varepsilon_{\mathrm{bar},m}i_m^+$.
Every correction in $i_m^+-i$ raises $j$, so
$\varepsilon_m=\varepsilon_{\mathrm{bar},m}i$.
The augmentation is therefore unchanged: on $G_{m,0}$ it is
multiplication of the weight-$m$ generators, and on
$G_{m,j}$ for $j>0$ it is zero. The corrections in $p_m^+-p$
also raise $j$, so the unperturbed augmented identity
$\varepsilon_m p=\varepsilon_{\mathrm{bar},m}$ gives
the second identity in \eqref{eq:complete-bar-augmentation}.
The augmented bar contraction uses the split projection
$R\to I$ and insertion of the unit, both compatible with
$d_R$. It therefore contracts the full augmented complex
over $A$, with the totalization signs. Thus
$\varepsilon_{\mathrm{bar},m}$ is a quasi-isomorphism, and
the deformation retract makes $\varepsilon_m$ a
quasi-isomorphism as well.
In particular, $K_m=\ker\varepsilon_m$ is the same graded
kernel as in the linear resolution and is invariant under
$\Delta_m$, since the augmentation is a cochain map for both
the linear and the transferred differentials.

The series \eqref{eq:complete-bar-map-series} and
\eqref{eq:transferred-perturbation} are finite on each element.
Each homotopy raises $j$, each perturbation preserves it, and
\eqref{eq:resolution-degree-bound} bounds $j$ in a fixed
degree. The same bound holds on the bar: its $j$ nonempty
bar factors and its final ideal factor have total internal
degree at least $2j+2m$, before the shift by $-j$.
Thus all geometric sums define maps of ordinary complexes.

Finally, the coefficients of $\delta_m$ and $\Delta_m$ lie in
$I$, and those of the conormal differential on generators lie
in $A^{\geq2}$ by Lemma~\ref{lem:conormal-differential}.
Thus every generator differential involves generators of
strictly smaller cohomological degree.  Ordering free
generators by that degree makes $P_m$ semifree.
\end{proof}

Table~\ref{tab:resolution-weights} summarizes these bounds.
The coefficient column applies to the linear resolution,
where generator weight is $m+j$.
\begin{table}[htbp]
\centering\small
\begin{tabular}{@{}lccc@{}}
\toprule
Operator & Change in $j$ & Change in total weight
         & Coefficient order change\\
\midrule
$D_{\mathrm{lin},m}$ & $0$ & $0$ & $0$\\
$\delta_m$ & $-1$ & $0$ & $+1$\\
$h$ on the bar & $+1$ & $0$ & ---\\
$d_+$ on the bar & $0$ & $\geq1$ & ---\\
$H_{m,0}$ & $+1$ & $0$ & $-1$\\
$\Delta_{m,a}$ & $a$ & $\geq a+1$ & $\geq1$\\
\bottomrule
\end{tabular}
\caption{Indices and weights before and after transfer.}\label{tab:resolution-weights}
\end{table}

\subsection{Controlled contractions on Hom complexes}

The map $H_{m,0}$ contracts the augmentation kernel for
$\delta_m$ and anticommutes with $D_{\mathrm{lin},m}$, so it
contracts the kernel for their sum. Its preservation of total
weight and increase of resolution index by one show that it
loses at most one power of $I$ on coefficients.
This $A$-linear contraction defines an operator on
$\operatorname{Hom}_R(-,K_m)$ by acting on the values at free
generators, followed by graded $R$-linear extension.

For a free module $Q$, write $F^k\operatorname{Hom}_R(Q,P_m)$
for maps whose values on free generators have coefficients in
$I^k$.  Equivalently, their images are contained in $I^kP_m$.
The differential on Hom is
\[
             \partial f=D_mf-(-1)^{|f|}fD_Q.
\]

\begin{lemma}[Pointwise finiteness]\label{lem:pointwise-hom-finiteness}
Let $Q=P_{b_1}\otimes_R\cdots\otimes_RP_{b_n}=R\otimes F$,
where $n\geq1$ and all $b_i\geq1$.  On
$\operatorname{Hom}_R(Q,K_m)$ define $\kappa$ by
\[
               (\kappa f)(g)=H_{m,0}(f(g))\qquad(g\in F),
\]
followed by graded $R$-linear extension, and put
\[
 T(f)=-(-1)^{|f|}f\delta_Q,
 \qquad
 \mathcal U(f)=\Delta_mf-(-1)^{|f|}f\Delta_Q.
\]
Then the series
\begin{align}
 \mathcal K_0&=\sum_{a\geq0}(-1)^a\kappa(T\kappa)^a,
                                  \label{eq:koszul-hom-contraction}\\
 \mathcal H_{Q,m}&=\sum_{a\geq0}(-1)^a
                  \mathcal K_0(\mathcal U\mathcal K_0)^a
                                  \label{eq:full-hom-contraction}
\end{align}
are finite after evaluation on each source generator, for every
fixed homogeneous input map. No uniform bound over all source
generators is asserted. The associated geometric sums define two-sided inverses
of $1+T\kappa$, $1+\kappa T$, and
$1+\mathcal U\mathcal K_0$ on this Hom complex.
\end{lemma}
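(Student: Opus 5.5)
The plan is to evaluate every map on a fixed source generator $g\in F$, keep track of two integer gradings there, and let the degree bound \eqref{eq:resolution-degree-bound} cap how many terms can survive.

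\textbf{The two gradings.} For a homogeneous $f\in\operatorname{Hom}_R(Q,K_m)$ of degree $k$, the value $f(g)$ is a finite sum $\sum r_\alpha g'_\alpha$ with $g'_\alpha$ free generators of $P_m$. Since $|f(g)|=|g|+k$ and every generator satisfies $|g'|\geq 2m+j(g')$, we can record:
\begin{itemize}[label=--]
\item the resolution index $j$ of the target generators occurring;
\item the source-side index of $Q$, namely the sum of the resolution indices of the tensor factors of $g$.
\end{itemize}
Both are bounded in terms of the cohomological degree, because $R$ is nonnegatively graded and the generators of $Q$ satisfy $|g|\geq\sum(2b_i+j_i)$.

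\textbf{Finiteness of $\mathcal K_0$.} The operator $\kappa$ raises the target index $j$ by one and preserves total degree shifts appropriately. The operator $T$ precomposes with $\delta_Q$. Evaluated at $g$, $(T f)(g)$ involves $f$ on generators of source index one lower, by the degree and weight description of $\delta$ in Lemma~\ref{lem:natural-linear-resolution}. So each factor $T\kappa$ either raises the target index $j$, or, viewed on the source side, lowers the source index by one. The source index cannot go below zero, so at most $j_Q(g)+1$ of the $T$-steps can be applied before $(T\kappa)^a f$ vanishes at $g$. Concretely, $((T\kappa)^a f)(g)$ is a sum over chains $g=g_0\to g_1\to\dots\to g_a$ of $\delta_Q$-coefficients, and such a chain exists only for $a\leq j_Q(g)$. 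The series $\mathcal K_0$ is therefore finite at $g$. It is $R$-linear in the required sense by the same extension convention.

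\textbf{Finiteness of $\mathcal H_{Q,m}$.} Write $\mathcal U=\mathcal U_1+\mathcal U_2$, with $\mathcal U_1=\Delta_m\circ-$ and $\mathcal U_2=-(\pm)(-)\circ\Delta_Q$, where $\Delta_Q$ is the tensor-product extension of the $\Delta_{b_i}$ together with the conormal parts.

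\begin{itemize}[label=--]
\item Terms built from $\Delta_Q$ do not lower the source index. They do, however, raise the $I$-adic order of coefficients by \eqref{eq:delta-positive}.
\item $\mathcal U_1$ does not lower the target index $j$, and $\kappa$ raises it.
\end{itemize}

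So in a word of $(\mathcal U\mathcal K_0)^a$ evaluated at $g$, each factor contains at least one $\kappa$, which raises the target $j$ by one net of what $T$ can compensate. Along the way, $T$ can lower the source index, and $\Delta_Q$ keeps the source degree fixed while gaining $I$-order.

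The key count is degree. The value at $g$ has fixed cohomological degree $|g|+k-a(\cdots)$, and the coefficients' $I$-order $\ell$ forces coefficient degree at least $2\ell$, since $W^{\leq1}=0$. Combining $|g'|\geq 2m+j$ with a coefficient degree of at least $2\ell$ shows that $j+2\ell$, hence $j+\ell$, is bounded by $|g|+k$ plus a constant. Each $\mathcal U\mathcal K_0$ factor increases $j+\ell$ by at least one, unless it is compensated by $T$-steps. Those $T$-steps are globally bounded by $j_Q(g)$ along the entire chain, because $\Delta_Q$ never lowers the source index. Hence only finitely many $a$ contribute at $g$, and each contribution is itself a finite sum.

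\textbf{Main obstacle.} This bookkeeping is the main obstacle. The delicate points are that $\kappa$ loses one order of $I$ (Table~\ref{tab:resolution-weights}), and that conormal parts preserve everything. What I would try first is the combined quantity
\[
\text{target weight}\;+\;\text{coefficient order}\;+\;\text{source index},
\]
which should be monotone nondecreasing under all operators. I expect $\kappa$ to net $+1$: it raises $j$ by one and lowers order by one, but raises total weight. This quantity is bounded above by a linear function of $|g|+k$. Exact monotonicity is what needs checking, using that total weight on $P_m$ equals $m+j+\text{order}$.

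\textbf{Inverses.} Once pointwise finiteness holds, the two-sided inverse statements are formal. Telescoping gives
\[
(1+T\kappa)\sum_a(-T\kappa)^a=1,
\]
and the same on the other side. The identities $\sum(-1)^a\kappa(T\kappa)^a=\kappa(1+T\kappa)^{-1}=(1+\kappa T)^{-1}\kappa$ follow by termwise rearrangement. Likewise for $1+\mathcal U\mathcal K_0$. The rearrangements are legitimate because, at each source generator, only finitely many terms are nonzero. No uniform bound is needed, since identities of Hom elements are checked generatorwise.
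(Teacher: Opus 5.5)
Your finiteness argument for $\mathcal K_0$ agrees with the paper. The count that is supposed to make $\mathcal H_{Q,m}$ pointwise finite does not go through, because both justifications you give for it fail.

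\emph{The bound on $T$-steps.} You claim the $T$-steps in a word of $\mathcal K_0(\mathcal U\mathcal K_0)^a$ are bounded by $j_Q(g)$ \emph{because} $\Delta_Q$ never lowers the source index. This is backwards. Precomposition with $\Delta_Q$ evaluates the inner map at generators of index $j+c$ with $c\geq0$, so it can \emph{raise} the index available to later $T$-steps. The number of $T$-steps is then bounded only by $j_Q(g)$ plus the sum of these $c$'s.

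\emph{The fallback quantity.} The quantity in your last paragraph is not monotone. $H_{m,0}$ preserves total weight (Lemma~\ref{lem:natural-linear-resolution}), so $\kappa$ nets $0$, not $+1$. A $\Delta_Q$-precomposition whose source component raises the index by $c$ is guaranteed only one extra power of $I$ by \eqref{eq:delta-positive}. So ``weight $+$ order $+$ source index'' can decrease by $c-1$ when $c\geq2$. Since you leave exactly this bookkeeping unchecked, the $\mathcal H_{Q,m}$ assertion is not yet proved.

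The missing observation is that $T$ never works against you. At a generator $g$, $(Tf)(g)=\pm\sum_i r_if(g_i)$, where the $r_i$ have relative length one and the target generators are unchanged. So $T$ raises $j_{\mathrm{target}}+\ell$ by one. The operator $\kappa$ preserves this quantity. Both parts of $\mathcal U$ raise it by at least one: $\Delta_m$ acts on the target, including $d_+$ on coefficients, and $\Delta_Q$ has $I$-valued coefficients on the source.

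This quantity equals total weight minus $m$, and it is $\geq0$ on the input. The output of $\mathcal K_0(\mathcal U\mathcal K_0)^af$ at $g$ has degree $|g|+|f|-1$ for every $a$. Your estimate $j+2\ell\le|g|+|f|-1-2m$ therefore caps the quantity independently of $a$. Hence $a\le|g|+|f|-1-2m$, with no need to count $T$-steps. The same argument, with degree $|g|+|f|$, handles $(\mathcal U\mathcal K_0)^af$.

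The paper tracks a different quantity, the index difference $r=j_{\mathrm{target}}-j_{\mathrm{source}}$:
\begin{itemize}
\item $\kappa$ raises $r$ by one and $T$ lowers it by one, so each term of $\mathcal K_0$ raises it by exactly one;
\item $\mathcal U$ never lowers $r$;
\item since $r$ can be negative on the input, the paper also shows that only finitely many source generators are accessible from $g$, because all differential coefficients have degree at least two, and this gives a lower bound $r_0$.
\end{itemize}
The weight count avoids that last step, once $T$'s effect is computed correctly.

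For the telescoping identities, also record that $T$, $\kappa$, $\mathcal U$ and $\mathcal K_0$ each read only finitely many input values at $g$. That is what lets them commute with pointwise finite sums.
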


\begin{proof}
The first series is finite because every precomposition with
$\delta_Q$ lowers the source resolution index. For each
nonzero matrix term in a homogeneous map, consider the difference
\[
                   r=j_{\mathrm{target}}-j_{\mathrm{source}}.
\]
We use lower bounds for these differences on finite sets of
source generators. The operator $\kappa$ increases $r$ by one, and $T$ decreases
it by one.  Consequently each term of $\mathcal K_0$ increases
$r$ by one.  The components of $\mathcal U$ do not decrease
$r$.  For a target perturbation this is immediate.  A source
perturbation sending index $j$ to $j+c$ evaluates the input map
at index $j+c$ instead of $j$; its
precomposition therefore increases $r$ by $c\geq0$ as well.

Fix a source generator $g$ of resolution index $j(g)$ and
a homogeneous map $f$. Every
source generator encountered through a differential matrix
has strictly smaller cohomological degree, since its
coefficient has degree at least two. Each differential of a
generator is a finite sum. The resulting dependency tree
therefore has finite branching, and its depth is bounded by
the strict decrease in degree and the lower degree bound on
source generators. Hence only finitely many source generators
are accessible. If $f$ vanishes on them, every term under
consideration vanishes on $g$. Otherwise their images are
finite sums, so all initial index differences have a common lower bound
$r_0$. The term
$\mathcal K_0(\mathcal U\mathcal K_0)^a f$, evaluated on $g$,
has degree $|g|+|f|-1$ and index difference at least
$r_0+a+1$. For every target generator occurring in this value,
\eqref{eq:resolution-degree-bound} gives
\[
 j_{\mathrm{target}}\le |g|+|f|-1-2m,
 \qquad
 r_0+a+1\le |g|+|f|-1-2m-j(g).
\]
This bounds $a$. For $(\mathcal U\mathcal K_0)^a f$, the
same argument uses degree $|g|+|f|$ and lower bound $r_0+a$.
Each term of the first series is already finite on the fixed
input, so both expanded series are pointwise finite on the
full Hom complex.

For the inverse assertions, first observe that each of these
operators, evaluated on a fixed generator, uses only finitely
many values of the input map on source generators.  This is
immediate for $T,\kappa,\mathcal U$ and follows for
$\mathcal K_0$ from the first, finite series.  Consequently
applying an operator to a pointwise finite sum commutes with
that sum: only finitely many input evaluations are involved,
and each of them stabilizes.  The identities
\[
 (1+L)\sum_{a\geq0}(-L)^a
   =\sum_{a\geq0}(-L)^a(1+L)=1
\]
now follow by telescoping, for
$L=T\kappa,\kappa T$, or $\mathcal U\mathcal K_0$.
For the first two, finiteness follows by lowering the source
index; for the last, it is the index-difference argument
just proved.
\end{proof}

\begin{proposition}\label{prop:controlled-hom}
Let $m,b_1,\ldots,b_n\geq1$ and
$Q=P_{b_1}\otimes_R\cdots\otimes_RP_{b_n}$, with $n\geq1$.
There is a contraction $\mathcal H_{Q,m}$ of
$\operatorname{Hom}_R(Q,K_m)$ satisfying
\begin{equation}\label{eq:hom-filtration}
 \partial\mathcal H_{Q,m}+\mathcal H_{Q,m}\partial=1,
 \qquad
 \mathcal H_{Q,m}(F^k)\subset F^{k-1}\quad(k\geq1).
\end{equation}
Its defining series is finite on each source generator.
For $Q=P_b^{\otimes_R n}$ the contraction may be chosen
$\mathfrak S_n$-equivariant.
\end{proposition}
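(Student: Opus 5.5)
The contraction will be produced by a two-stage homological perturbation. The series of Lemma~\ref{lem:pointwise-hom-finiteness} supply the explicit formula; what remains is to check that it is a contraction, that it respects the filtration, and that it can be made equivariant.

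\emph{Stage one: the Koszul part.} Split the differential on $\operatorname{Hom}_R(Q,K_m)$ as
\[
 \partial=\partial_0+T+\mathcal U,
 \qquad
 \partial_0 f=(\delta_m+D_{\mathrm{lin},m})f-(-1)^{|f|}fD_{\mathrm{lin},Q},
\]
with $T$ and $\mathcal U$ as in Lemma~\ref{lem:pointwise-hom-finiteness}. By Lemma~\ref{lem:natural-linear-resolution}, $H_{m,0}$ contracts $K_m$ for $\delta_m$ and anticommutes with $D_{\mathrm{lin},m}$. Applying it to values on free generators, the operator $\kappa$ should satisfy
\[
 \partial_{\mathrm{tgt}}\kappa+\kappa\partial_{\mathrm{tgt}}=1
\]
for the target part of $\partial_0$. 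I will verify this on values at generators and then extend $R$-linearly; the sign bookkeeping here is routine.

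Adding the source part $T$ as a perturbation and inverting $1+T\kappa$ via Lemma~\ref{lem:pointwise-hom-finiteness} gives $\mathcal K_0$. The first perturbation lemma from \S2 (contraction-to-zero case) then gives
\[
 (\partial_0+T)\mathcal K_0+\mathcal K_0(\partial_0+T)=1.
\]
Source $D_{\mathrm{lin},Q}$ terms need care: they have coefficients in $A^{\geq2}$ and preserve $j$, so I will fold them into $\mathcal U$. Alternatively, I will observe that they commute with $\kappa$ up to sign, because $\kappa$ acts only on values. My first attempt is the latter.

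\emph{Stage two: the full perturbation.} Perturbing by $\mathcal U$ and inverting $1+\mathcal U\mathcal K_0$ (again from the lemma) yields
\[
 \mathcal H_{Q,m}=\mathcal K_0(1+\mathcal U\mathcal K_0)^{-1},
 \qquad
 \partial\mathcal H_{Q,m}+\mathcal H_{Q,m}\partial=1.
\]
This is the telescoping identity from the perturbation formulas.

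\emph{Filtration.} This is the main point, and it goes by weight accounting using Table~\ref{tab:resolution-weights}.
\begin{itemize}[label=--]
\item $H_{m,0}$ raises the index by one at fixed weight, so $\kappa$ maps $F^k$ into $F^{k-1}$.
\item $T$ is precomposition by $\delta_Q$, whose coefficients lie in $I$, so $T$ raises the filtration by one. Hence each $\kappa(T\kappa)^a$ maps $F^k$ to $F^{k-1+a-a}=F^{k-1}$, since each $T\kappa$ pair nets zero.
\item $\mathcal U$ raises the filtration by at least one, by \eqref{eq:delta-positive} and its analogue for $\Delta_Q$ on the tensor product. So each $\mathcal U\mathcal K_0$ pair has net change at least zero.
\end{itemize}
Therefore $\mathcal H_{Q,m}(F^k)\subset F^{k-1}$.

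The step I expect to be the real obstacle is $T\kappa$. The loss by $\kappa$ is measured relative to generator weight, and it is not obvious that precomposition by $\delta_Q$ restores exactly one power. I would handle this by tracking the order of coefficients on the target versus source generator weights, using $\operatorname{weight}G_{m,j}=m+j$. I would also check that $D_{\mathrm{lin}}$ terms (order change $0$) do not break the count.

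\emph{Equivariance.} For $Q=P_b^{\otimes_R n}$, $\mathfrak S_n$ acts on Hom through the source only. Every ingredient ($\kappa$, $T$, $\mathcal U$) commutes with this action, because $\kappa$ acts on values and $\delta_Q$, $\Delta_Q$ are symmetric tensor differentials with Koszul signs. Hence $\mathcal H_{Q,m}$ is equivariant with no averaging needed. Should a sign discrepancy appear, I would average over $\mathfrak S_n$ (we are over $\Q$), which preserves both the contraction identity and $F^{k-1}$.
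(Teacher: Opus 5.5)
Your proposal is correct and follows the paper's proof. It uses the same operators $\kappa$, $T$, $\mathcal U$, the same contraction $\mathcal H_{Q,m}=\mathcal K_0(1+\mathcal U\mathcal K_0)^{-1}$ from the contraction-to-zero perturbation identity with the inverses of Lemma~\ref{lem:pointwise-hom-finiteness}, and the same filtration count ($\kappa$ loses at most one power of $I$ while $T$ and $\mathcal U$ each gain at least one, so the $T\kappa$ step you flag is no obstacle). The differences are minor. You put the conormal differential into the unperturbed differential from the start, whereas the paper first contracts $\delta_m$ alone, adds $T$, and then checks that $\mathcal K_0$ anticommutes with the conormal Hom differential. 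This is legitimate because $H_{m,0}$ anticommutes with $D_{\mathrm{lin},m}$ and, being $A$-linear, graded-commutes with precomposition by the $A$-valued source conormal matrices; that, rather than $\kappa$ merely acting on values, is the real reason, and folding those terms into $\mathcal U$ instead would break the count since they do not raise $I$-adic order. You also get equivariance directly, because $\kappa$, $T$ and $\mathcal U$ commute with the signed permutation action, where the paper simply averages over $\mathfrak S_n$.
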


\begin{proof}
Use the operators of Lemma~\ref{lem:pointwise-hom-finiteness}.
For the differential acting only by the target Koszul
operator $\delta_m$, the operator $\kappa$ is a contraction:
the identity is checked on the free generators, where it
is the contraction identity for $H_{m,0}$.  This use of free
generators gives the required $R$-linear operator.

We use the following perturbation identity for contractions
to zero. Suppose $dh+hd=1$, set $D=d+\eta$ with $D^2=0$,
and assume $C=1+\eta h$ is invertible. Direct expansion gives
$DC=Cd$, and therefore
\begin{equation}\label{eq:zero-contraction-perturbation}
 h'=hC^{-1},\qquad
 Dh'+h'D=(Dh+hd)C^{-1}=CC^{-1}=1.
\end{equation}
The inverses used here are the pointwise finite geometric
sums of Lemma~\ref{lem:pointwise-hom-finiteness}.

Adding the source Koszul differential $T$ gives the
contraction \eqref{eq:koszul-hom-contraction} by
\eqref{eq:zero-contraction-perturbation}. Each $T$ supplies a coefficient in $I$,
whereas each $\kappa$ loses at most one power.  Every term
therefore loses at most one power in total.

Next restore the conormal differential on Hom.  The target
homotopy anticommutes with the internal differential by
Lemma~\ref{lem:natural-linear-resolution}, and the source
conormal matrices have coefficients in $A$.  The
$A$-linearity of $H_{m,0}$ consequently gives, on free
generators,
\[
       [D_{\mathrm{lin},\mathrm{Hom}},\kappa]_{\mathrm{gr}}=0.
\]
The source Koszul term has the corresponding graded
commutation relation.  Thus every term of $\mathcal K_0$
anticommutes with the conormal differential, and
$\mathcal K_0$ remains a contraction when it is added.

Finally add $\mathcal U$.  The coefficient derivations
cancel in its definition, so it acts on $R$-linear maps.
It raises coefficient order by at least one, by
\eqref{eq:delta-positive}. Equation~\eqref{eq:zero-contraction-perturbation}
gives the contraction \eqref{eq:full-hom-contraction}; its
finiteness is exactly
Lemma~\ref{lem:pointwise-hom-finiteness}.  Each new loss from
$\mathcal K_0$ is compensated by the gain from $\mathcal U$,
which proves \eqref{eq:hom-filtration}.

For equal tensor factors, average the resulting operator
over $\mathfrak S_n$.  Its contraction identity, coefficient
bound, and pointwise finiteness are preserved.
\end{proof}

\Needspace{10\baselineskip}
\begin{corollary}\label{cor:binary-comparison}
There is an $R$-linear map of degree $-1$
$\sigma_m:P_m\otimes_RP_m\to K_m$ such that, for homogeneous
$p,q\in P_m$,
\begin{align}
 D_m\sigma_m+\sigma_mD_{\otimes}
       &=\mathcal C_m,
 &\mathcal C_m(p,q)
       &=\varepsilon_m(p)q
               -(-1)^{|p||q|}\varepsilon_m(q)p,
                  \label{eq:binary-comparison}\\
 \sigma_m(q,p)&=-(-1)^{|p||q|}\sigma_m(p,q),
 &\sigma_m(P_m\otimes_RP_m)&\subset I^{m-1}P_m.
                  \label{eq:binary-filtration}
\end{align}
\end{corollary}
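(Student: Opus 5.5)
The plan is to obtain $\sigma_m$ by applying the contraction of Proposition~\ref{prop:controlled-hom} to the cycle $\mathcal C_m$, viewed as an element of $\operatorname{Hom}_R(P_m\otimes_RP_m,K_m)$ with $Q=P_m^{\otimes_R2}$, and then to symmetrize. Each step below is meant to be a direct check.

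\emph{Step 1: $\mathcal C_m$ is a degree-zero cocycle with values in $K_m$.} Since $\varepsilon_m$ is $R$-linear and $R$ is graded commutative,
\[
\varepsilon_m(\varepsilon_m(p)q)=\varepsilon_m(p)\varepsilon_m(q)
=(-1)^{|p||q|}\varepsilon_m(q)\varepsilon_m(p)
=\varepsilon_m\bigl((-1)^{|p||q|}\varepsilon_m(q)p\bigr),
\]
so $\varepsilon_m\mathcal C_m=0$ and $\mathcal C_m$ lands in $K_m$. Next I will verify that $\mathcal C_m$ is $R$-linear on $P_m\otimes_RP_m$ with the Koszul signs. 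For $r\in R$ the two expressions for $\mathcal C_m(rp,q)$ and $\mathcal C_m(p,rq)$ agree after commuting $r$ past $\varepsilon_m(p)$ and $q$. Finally, $\mathcal C_m$ commutes with the differentials, because $\varepsilon_m$ is a cochain map and $D_m$ is a differential of $R$-modules satisfying the Leibniz rule $D_m(xq)=d_R(x)q\pm xD_mq$. Hence $\partial\mathcal C_m=0$ in the Hom complex.

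\emph{Step 2: filtration and contraction.} The values $\varepsilon_m(p)$ lie in $I^m$, so $\mathcal C_m$ has coefficients in $I^m$ on free generators, that is, $\mathcal C_m\in F^m$. Set $\sigma'=\mathcal H_{Q,m}\mathcal C_m$. By \eqref{eq:hom-filtration},
\[
\partial\sigma'=\mathcal C_m-\mathcal H_{Q,m}\partial\mathcal C_m=\mathcal C_m,
\]
and this is precisely $D_m\sigma'+\sigma'D_\otimes=\mathcal C_m$, since $|\sigma'|=-1$. Moreover $\sigma'\in F^{m-1}$, so its image lies in $I^{m-1}P_m$, using that $I^{m-1}P_m$ is an $R$-submodule. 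The image also lies in $K_m$, so $\sigma'$ is an $R$-linear map $P_m\otimes_RP_m\to K_m$ of degree $-1$.

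\emph{Step 3: antisymmetry.} Let $\tau$ be the Koszul swap on $Q$; it is an $R$-linear cochain automorphism, and one computes $\mathcal C_m\circ\tau=-\mathcal C_m$. Proposition~\ref{prop:controlled-hom} allows $\mathcal H_{Q,m}$ to be chosen $\mathfrak S_2$-equivariant, that is, commuting with precomposition by $\tau$. Then
\[
\sigma'\tau=\mathcal H_{Q,m}(\mathcal C_m\tau)=-\sigma',
\]
which is \eqref{eq:binary-filtration} once the Koszul sign of $\tau$ is written out. If equivariance is unavailable, I will instead put $\sigma_m=\tfrac12(\sigma'-\sigma'\tau)$. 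This still satisfies the homotopy equation, because $\tau$ commutes with $D_\otimes$ and $\mathcal C_m$ is antisymmetric. It also keeps the $I^{m-1}$ bound and values in $K_m$, because precomposition does not change the submodule containing the image.

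\emph{Main obstacle.} The delicate point is sign bookkeeping. One must confirm that the degree-$(-1)$ map's precomposition with $\tau$ carries no extra sign that breaks antisymmetry. One must also confirm that the Hom differential convention $\partial f=D_mf-(-1)^{|f|}fD_Q$ matches the stated equation, and that $\mathcal C_m$ is genuinely $R$-linear rather than linear only up to a twist.
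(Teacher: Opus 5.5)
Your proposal is correct and follows the paper's proof. You show that $\mathcal C_m$ is a closed $R$-linear map into $K_m$ with coefficients in $I^m$, then apply the $\mathfrak S_2$-equivariant contraction of Proposition~\ref{prop:controlled-hom} to get the homotopy identity, the $I^{m-1}$ bound from \eqref{eq:hom-filtration}, and antisymmetry from $\mathcal C_m\tau=-\mathcal C_m$; your fallback $\tfrac12(\sigma'-\sigma'\tau)$ also works, since it simply averages the output map rather than the operator, as the paper does.
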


\begin{proof}
The map $\mathcal C_m$ is closed, its augmentation is zero,
and its coefficients lie in $I^m$.  Apply the equivariant
contraction of Proposition~\ref{prop:controlled-hom}.
Its filtration bound gives \eqref{eq:binary-filtration},
and equivariance preserves the graded antisymmetry of
$\mathcal C_m$.
\end{proof}

Appendix~\ref{app:sphere-resolutions} gives explicit resolutions
and binary homotopies for the rational two-sphere.

\Needspace{6\baselineskip}

\section{Comparison of consecutive powers}
\label{sec:power-comparison}

The resolutions of the preceding section carry more
information than their existence as semifree resolutions.
The weight of their generators forces a comparison between
two ideal powers to contain the expected number of relative
generators from $W$.  The consecutive comparison gives the extra power in
Theorem~\ref{thm:main}; its iterates retain the exponent in the duality
argument of Section~\ref{sec:poincare}.

\begin{proposition}\label{prop:power-comparison}
For every $m\geq1$ there is a morphism of differential
graded $R$-modules
\[
                 j_{m,1}:P_{m+1}\longrightarrow P_m
\]
lifting $I^{m+1}\hookrightarrow I^m$ and satisfying
\begin{equation}\label{eq:power-comparison-filtration}
                 j_{m,1}(P_{m+1})\subset IP_m.
\end{equation}
\end{proposition}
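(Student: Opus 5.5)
We build $j_{m,1}$ cell by cell over the semifree filtration of $P_{m+1}$, using the controlled contraction of Proposition~\ref{prop:controlled-hom} to correct errors while tracking powers of $I$.

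The first step is the zeroth-order map. On index-zero generators $G_{m+1,0}$, which are weight-$(m+1)$ words in $M=I/I^2$ lifted to $W$-words, the augmentation $\varepsilon_{m+1}$ is multiplication. Each weight-$(m+1)$ generator factors as a weight-one factor $w\in W$ times a weight-$m$ generator in $G_{m,0}$, using a symmetrized splitting so that the choice is canonical. Define $j^{(0)}(g)=\sum w\cdot g'$, averaged over the $m+1$ choices of the distinguished factor. Then $\varepsilon_m j^{(0)}=\varepsilon_{m+1}$ on $G_{m+1,0}$, and the values already lie in $IP_m$. On higher-index generators we extend by the natural map of linear resolutions supplied by Lemma~\ref{lem:natural-linear-resolution}. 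Because weight is preserved and a generator of $G_{m+1,j}$ has weight $m+1+j$ against $m+j$ for $G_{m,j}$, every coefficient carries at least one $M$-factor. Concretely, the comparison of linear resolutions $\mathfrak m^{m+1}\to\mathfrak m^m$ given by the natural bar construction sends the normalized bar representation at power $m+1$ to that at power $m$ by absorbing one factor of the final ideal entry into the coefficient. This yields a chain map $j^{\mathrm{lin}}$ for $\delta+D_{\mathrm{lin}}$ with image in $\mathfrak mP^{\mathrm{lin}}_m$, and its naturality in $M$ makes it commute with $D_{\mathrm{lin}}$.

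The second step restores the perturbation $\Delta$. The error
\[
 e=D_m j^{\mathrm{lin}}-j^{\mathrm{lin}}D_{m+1}=\Delta_m j^{\mathrm{lin}}-j^{\mathrm{lin}}\Delta_{m+1}
\]
is $\partial j^{\mathrm{lin}}$ in $\operatorname{Hom}_R(P_{m+1},P_m)$. Since $\Delta$ raises coefficient order by~\eqref{eq:delta-positive} and $j^{\mathrm{lin}}$ already lands in $IP_m$, we get $e\in F^2$. Moreover $\varepsilon_m e=0$, because $\varepsilon_m j^{\mathrm{lin}}=\varepsilon_{m+1}$ holds strictly: both augmentations are multiplication on index zero and vanish elsewhere, and $\varepsilon$ is a cochain map. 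So $e$ is a cycle in $\operatorname{Hom}_R(P_{m+1},K_m)$. Apply $\mathcal H=\mathcal H_{Q,m}$ of Proposition~\ref{prop:controlled-hom} with $Q=P_{m+1}$ and $n=1$, and set $j_{m,1}=j^{\mathrm{lin}}-\mathcal H(e)$. Then $\partial j_{m,1}=e-\partial\mathcal H e=e-(e-\mathcal H\partial e)=0$, and $\mathcal H(e)\in F^{1}$ with values in $K_m$. Hence $j_{m,1}$ is a chain map landing in $IP_m$, and it still lifts the inclusion $I^{m+1}\hookrightarrow I^m$ because the correction lies in $K_m=\ker\varepsilon_m$.

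The main obstacle is the first step: producing $j^{\mathrm{lin}}$ as an honest chain map whose values on every index, not just $j=0$, lie in $IP_m$, and which commutes with $D_{\mathrm{lin}}$. If the explicit bar-level formula is awkward, the fallback is to define $j^{(0)}$ only on index zero and then extend inductively in $j$ using the contraction $H_{m,0}$. That inductive extension is the same mechanism as $\mathcal K_0$, where each $\delta_{m+1}$ supplies one factor of $I$ and each $H_{m,0}$ loses at most one. To guarantee that the total stays at least one, I will use the weight count instead: target weight $m+j$ against source weight $m+1+j$ forces coefficient weight at least one, since all maps involved preserve total weight before $d_+$ is restored.
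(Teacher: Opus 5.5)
Your argument is correct, but it is organized differently from the paper's. The paper builds no separate linear comparison and uses no Hom contraction here. It takes the inclusion $u:\mathcal B_{m+1}\to\mathcal B_m$ of the last ideal factor between the complete bar resolutions and transfers it through the perturbed retracts of Proposition~\ref{prop:controlled-resolution}, setting $j_{m,1}=p_m^+u\,i_{m+1}^+$. The strict augmentation identities \eqref{eq:complete-bar-augmentation} give the lift. A single weight count gives \eqref{eq:power-comparison-filtration}: a term with $a$ homotopies from $i_{m+1}^+$ and $b$ from $p_m^+$ lands in index $j+a+b$ with total weight at least $m+1+j+a+b$. This holds because every $h$ is paired with a $d_+$, so the surplus weight of the source generator is never spent.

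You instead produce a weight-exact comparison $j^{\mathrm{lin}}$ of the linear resolutions and then correct it by $\mathcal H_{P_{m+1},m}$. The power lost by Proposition~\ref{prop:controlled-hom} is paid for because the error $\Delta_mj^{\mathrm{lin}}-j^{\mathrm{lin}}\Delta_{m+1}$ starts in $F^2$; this is the same mechanism as in Corollary~\ref{cor:binary-comparison}. The paper's route gives a closed formula from retract data already constructed and needs no correction step. Yours uses only the decomposition \eqref{eq:controlled-differential} and the Hom contraction, not the bar map $u$.

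In your first step, note that Lemma~\ref{lem:natural-linear-resolution} does not itself give a map between different powers. A canonical choice is the unperturbed composite $p\,u\,i$, which by weight reduces, up to sign, to $p_0\ell h_0\,u\,i$. This is the precise form of your description of moving one factor of the last entry into the coefficient.

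Your fallback via $H_{m,0}$ also works: define $x\mapsto H_{m,0}\,j^{\mathrm{lin}}(\delta_{m+1}x)$ $A$-linearly on $G_{m+1,j}(M)$ and extend $S$-linearly. You should, however, say why it commutes with $D_{\mathrm{lin}}$. The symmetrized splitting on index zero is natural in $M$, and $\delta_{m+1}$ and the $A$-linear contraction $H_{m,0}$ both anticommute with the internal differential. This is essential rather than cosmetic. A nonzero commutator of $j^{\mathrm{lin}}$ with $D_{\mathrm{lin}}$ has coefficients of weight exactly one, so the error would lie only in $F^1$ and the correction $\mathcal H(e)$ only in $F^0$.
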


\begin{proof}
The inclusion of the last ideal factor defines an augmented
$R$-linear cochain map
$u:\mathcal B_{m+1}\to\mathcal B_m$ between the complete
relative bar resolutions. Use the perturbed maps of
Proposition~\ref{prop:controlled-resolution} to set
\[
                  j_{m,1}=p_m^+u i_{m+1}^+.
\]
This is an $R$-linear cochain map, and
\eqref{eq:complete-bar-augmentation} shows that it strictly
lifts the inclusion of ideals.

Consider a source generator of index $j$ and weight $m+1+j$.
A term using $a$ homotopies in $i_{m+1}^+$ and $b$ in $p_m^+$
has target index $j+a+b$ and total weight at least
$m+1+j+a+b$. The map $u$ preserves index and total weight.
Since the target generator has weight $m+j+a+b$, every
coefficient has weight at least one. This proves
\eqref{eq:power-comparison-filtration}. All sums are finite
on each element by Proposition~\ref{prop:controlled-resolution}.
\end{proof}

\begin{remark}\label{rem:derived-base-change}
For every $k\geq0$, with $I^0=R$, the induced morphism
\[
 A\otimes_R^{\mathbf L}I^{k+1}
       \longrightarrow A\otimes_R^{\mathbf L}I^k
       \qquad\text{in }\mathcal D(A)
\]
is zero. For $k\geq1$, it is represented by
$A\otimes_R j_{k,1}=0$, since $j_{k,1}$ has coefficients in $I$.
For $k=0$, it is represented by $A\otimes_R\varepsilon_1=0$,
since $\varepsilon_1:P_1\to R$ has image in $I$.
\end{remark}

\begin{remark}\label{rem:higher-power-comparisons}
Set $j_{m,0}=1$ and, for $t>1$, compose the consecutive
comparisons:
\[
 j_{m,t}=j_{m,1}j_{m+1,1}\cdots j_{m+t-1,1}:
 P_{m+t}\longrightarrow P_m.
\]
It lifts $I^{m+t}\hookrightarrow I^m$ and has image in
$I^tP_m$.  The general comparison theorem uses the consecutive case; the
Poincar\'e duality argument uses $j_{1,m-1}:P_m\to I^{m-1}P_1$.
\end{remark}

\Needspace{12\baselineskip}
\begin{corollary}\label{cor:one-extra-power}
If $I^m\hookrightarrow R$ is zero in the derived category of
$R$-modules, then $I^{m+1}\hookrightarrow I$ is zero there.
\end{corollary}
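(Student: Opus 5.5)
Since $P_m$ is semifree, the hypothesis that $I^m\hookrightarrow R$ vanishes in $\mathcal D(R)$ can be encoded as a module nullhomotopy. Lemma~\ref{lem:derived-retraction}, applied with $J=I^m$ and the controlled resolution $\varepsilon_m:P_m\xrightarrow{\simeq}I^m$ of Proposition~\ref{prop:controlled-resolution}, gives an $R$-linear map $h:P_m\to R$ of degree $-1$ with
\[
 d_Rh+hD_m=\iota\varepsilon_m .
\]
The aim is to produce an $R$-linear map $h':P_{m+1}\to I$ of degree $-1$ with $d_Rh'+h'D_{m+1}=\varepsilon_{m+1}$, viewed as a map into $I$. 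Since $P_{m+1}$ is semifree and resolves $I^{m+1}$, such a nullhomotopy of $\varepsilon_{m+1}$ in $\operatorname{Hom}_R(P_{m+1},I)$ represents the derived vanishing of $I^{m+1}\hookrightarrow I$.

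For this I will use the comparison $j_{m,1}:P_{m+1}\to P_m$ of Proposition~\ref{prop:power-comparison} and set $h'=h\circ j_{m,1}$. Because $j_{m,1}$ is an $R$-linear cochain map, $h'$ is $R$-linear of degree $-1$, and
\[
 d_Rh'+h'D_{m+1}=(d_Rh+hD_m)j_{m,1}=\iota\varepsilon_mj_{m,1}=\iota\varepsilon_{m+1},
\]
using that $j_{m,1}$ strictly lifts $I^{m+1}\hookrightarrow I^m$, i.e. $\varepsilon_mj_{m,1}=\varepsilon_{m+1}$ as maps into $I^m\subset R$. The filtration property \eqref{eq:power-comparison-filtration} gives $j_{m,1}(P_{m+1})\subset IP_m$. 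Since $h$ is $R$-linear, $h(IP_m)\subset I\,h(P_m)\subset IR=I$. Hence $h'$ takes values in $I$, and the identity above holds in $\operatorname{Hom}_R(P_{m+1},I)$, because $d_R$ restricts to the differential of $I$.

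It remains to check that vanishing in $\mathcal D(R)$ is exactly a nullhomotopy of $\varepsilon_{m+1}$ in $\operatorname{Hom}_R(P_{m+1},I)$. Since $P_{m+1}$ is semifree and $\varepsilon_{m+1}$ is a quasi-isomorphism, $\operatorname{Hom}_{\mathcal D(R)}(I^{m+1},I)$ is computed as $H^0\operatorname{Hom}_R(P_{m+1},I)$, and the inclusion corresponds to the class of $\varepsilon_{m+1}$; this is the same reasoning as in the proof of Lemma~\ref{lem:derived-retraction}.

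The only substantive input is the strict lifting together with the filtration \eqref{eq:power-comparison-filtration}. Both are already provided by Proposition~\ref{prop:power-comparison}, so I expect no real obstacle. The one point to verify is the grading sign in $h\circ j_{m,1}$, and it causes no trouble because $j_{m,1}$ has degree zero.
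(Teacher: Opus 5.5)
Your proof is correct and is the same as the paper's: you encode the hypothesis as an $R$-linear nullhomotopy $h\colon P_m\to R$ of $\varepsilon_m$, precompose with the comparison $j_{m,1}$ of Proposition~\ref{prop:power-comparison}, and use $j_{m,1}(P_{m+1})\subset IP_m$ together with graded $R$-linearity of $h$ to land in $I$. Your added justification that a nullhomotopy in $\operatorname{Hom}_R(P_{m+1},I)$ is exactly derived vanishing of $I^{m+1}\hookrightarrow I$ is also correct.
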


\begin{proof}
Because $P_m$ is semifree, the first condition is represented
by an $R$-linear map $h:P_m\to R$ of degree $-1$ with
\[
                       d_Rh+hD_m=\varepsilon_m.
\]
Compose with $j_{m,1}$.  The resulting map
$h'=h j_{m,1}$ satisfies
\[
          d_Rh'+h'D_{m+1}=\varepsilon_{m+1},
          \qquad h'(P_{m+1})\subset I.
\]
The image condition follows from
$j_{m,1}(P_{m+1})\subset IP_m$ and graded $R$-linearity of
$h$.  Thus $h'$ is the required nullhomotopy with values
in $I$.
\end{proof}

\section{A curved Harrison model of the quotient}
\label{sec:filtered-model}

Retain the split minimal model $R=A\otimes\Lambda W$ and
$I=\ker\varphi=(W)$ of Section~\ref{sec:controlled-resolutions}.
Fix $m\geq2$ and the resolution
$\varepsilon\colon (P,D_P)\longrightarrow I^m$ of
Proposition~\ref{prop:controlled-resolution}. Write $P=R\otimes G$ and
$K=\ker\varepsilon$. We regard $\varepsilon$ also as a map to $R$.
The mapping cone $Q=R\oplus sP$ of
\eqref{eq:general-module-cone} maps onto $R/I^m$ by a surjective
quasi-isomorphism of $R$-modules.

We enlarge this module model to a relative Sullivan model while
retaining the coefficient bounds of
Section~\ref{sec:controlled-resolutions}. The resulting filtered model
will support the conversion in Section~\ref{sec:relative-conversion}.

\subsection{The higher comparison maps}

We recall explicitly the coalgebra notation used below. If $U$ is a
free graded $R$-module, $\operatorname{coLie}_R(U)$ denotes the conilpotent
cofree graded Lie coalgebra cogenerated by $U$. It is the direct sum of
its arity components $\operatorname{coLie}_{R,n}(U)$, $n\geq1$.
We realize this Lie coalgebra as the quotient of the reduced tensor
coalgebra by the shuffle decomposables. If $q$ denotes the quotient
map, $\Delta$ reduced deconcatenation, and $\chi$ the Koszul interchange
of the two output factors, its cobracket is defined by
\[
 \delta q=(q\otimes q)(\Delta-\chi\Delta).
\]
In characteristic zero the Harrison idempotents $e_n$ split the
quotient, so each arity component is a natural direct summand of
$U^{\otimes_R n}$. In this split realization the transported
cobracket is $(e\otimes e)(\Delta-\chi\Delta)$, where $e=\bigoplus_n e_n$. The output projections are part of this
transported cobracket. All permutations of homogeneous factors carry
their Koszul signs.

For clarity, write $[u\mid v]$ for the class of $u\otimes v$ in arity
two. The quotient relation, its chosen representative, and the
cobracket are
\begin{align}
 [u\mid v]&=-(-1)^{|u||v|}[v\mid u],\notag\\
 e_2(u\otimes v)&=\tfrac12
       \bigl(u\otimes v-(-1)^{|u||v|}v\otimes u\bigr),\notag\\
 \delta[u\mid v]&=u\otimes v-(-1)^{|u||v|}v\otimes u.
 \label{eq:harrison-arity-two}
\end{align}
Thus $e_2$ represents the quotient class, whereas $\delta$ includes
both antisymmetric terms. In all arities we use this fully
antisymmetric cobracket
\[
 \delta\colon\operatorname{coLie}_R(U)
       \longrightarrow\operatorname{coLie}_R(U)\otimes_R
                         \operatorname{coLie}_R(U).
\]
A coderivation is determined by its composites with the projection onto
$U$; these composites will be called its arity components. An arity-$n$ map from this quotient to $U$ extends uniquely to a
coderivation whose other cogenerator components vanish. Its value on
$[u\mid v]$ is exactly the prescribed arity-two component.

Set
\[
 L=\operatorname{coLie}_R(s^2P),\qquad
 \theta\colon L\longrightarrow R,
 \qquad
 \theta(s^2p)=\varepsilon(p),
\]
and let $\theta$ vanish on arities at least two. Its degree is two.
The unary differential on $L$ induced by $D_P$ and $d_R$ is denoted by
$b_1$; thus $b_1(s^2p)=s^2D_Pp$.

\Needspace{14\baselineskip}
\begin{lemma}\label{lem:curved-comparison}
For $m\geq2$, there is a degree-one coderivation $b=b_1+b_2+\cdots$ of $L$, over
$d_R$, such that
\begin{equation}\label{eq:curved-identities}
 b^2=(\theta\otimes\operatorname{id})\delta,
 \qquad d_R\theta=\theta b.
\end{equation}
For $n\geq2$, the arity component $b_n$ takes its values in $s^2K$.
After removing the suspensions, it is represented by a map
\[
 \tau_n\colon P^{\otimes_R n}\longrightarrow K,
 \qquad |\tau_n|=3-2n,
\]
which factors through the corresponding Harrison summand and satisfies
\begin{equation}\label{eq:higher-coefficient-bound}
 \tau_n(P^{\otimes_R n})\subset K\cap I^{m-1}P.
\end{equation}
For $n=2$, this map is the comparison $\sigma$ of
Corollary~\ref{cor:binary-comparison}.
\end{lemma}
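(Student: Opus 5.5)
We build $b$ arity by arity, by induction on $n$, using the controlled contraction of Proposition~\ref{prop:controlled-hom} at each step. Since a coderivation of the cofree conilpotent Lie coalgebra $L$ is determined by its arity components $L\to s^2P$, we write $b=b_1+b_2+\cdots$ with $b_n\colon\operatorname{coLie}_{R,n}(s^2P)\to s^2P$. The unary part $b_1$ is fixed by $D_P$.

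We first unpack the identities \eqref{eq:curved-identities}. The identity $d_R\theta=\theta b$ only involves the cogenerator component of $b$ followed by $\varepsilon$. In arity one it is the cochain property of $\varepsilon$. In arity $n\geq2$ it requires $\varepsilon\circ b_n=0$, which is exactly the requirement that $b_n$ take values in $s^2K$. The identity $b^2=(\theta\otimes\id)\delta$, projected to the cogenerators, gives in arity $n$ an equation of the form
\[
 \partial(b_n)=-\sum_{\substack{p+q=n+1\\ 2\le p,q\le n-1}} b_p\circ b_q+\Theta_n ,
\]
where $\partial$ is the Hom differential built from $b_1$ and $d_R$, and $\Theta_n$ is the arity-$n$ cogenerator component of $(\theta\otimes\id)\delta$. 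Because $\theta$ vanishes in arities at least two, $\Theta_n$ is nonzero only for $n=2$. There it is $\mathcal C_m$ of Corollary~\ref{cor:binary-comparison}, transported through the suspensions and the antisymmetric cobracket \eqref{eq:harrison-arity-two}. We therefore take $b_2$ to be $s^2\sigma_m$, restricted to the Harrison summand $e_2$. The antisymmetry in \eqref{eq:binary-filtration} makes it well defined on the quotient, and \eqref{eq:binary-comparison} gives the arity-two equation. This settles the case $n=2$ and its degree $3-4=-1$ after desuspension.

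For $n\geq3$ we argue inductively. Let $\Phi_n$ be the obstruction, namely the right-hand side above with $\Theta_n=0$. It is a sum of composites $b_p\circ b_q$, each precomposed with the appropriate Harrison projection, viewed as a map $P^{\otimes_R n}\to P$. Three properties must be checked.
\begin{enumerate}[label=(\alph*)]
\item $\Phi_n$ is $\partial$-closed. This is the usual Jacobi-type argument: one computes $[b_{<n},[b_{<n},b_{<n}]]=0$ in arity $n$, using the lower identities already established. The curvature $\theta$ enters through $(\theta\otimes\id)\delta$ and $d_R\theta=\theta b$, so its contributions cancel because $\theta$ is concentrated in arity one and $\varepsilon\circ b_q=0$ for $q\geq2$.
\item $\Phi_n$ takes values in $K$. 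Its outer factor $b_p$ with $p\ge2$ already lands in $s^2K$.
\item $\Phi_n$ has coefficients in $I^{m}$. In each composite $b_p\circ b_q$ the inner map $b_q$ lands in $I^{m-1}P$. The outer $b_p$ is $R$-linear, so this power survives, but an additional power is needed. We get it because $b_q$ feeds into a factor of $b_p$ whose other inputs it does not see; using $R$-linearity, the inner output carries coefficients in $I^{m-1}$. For the extra factor we observe that the composite is an \emph{arbitrary} element of $K$, while $\partial$ applied to the source involves $\varepsilon$, whose image lies in $I^m$.
\end{enumerate}
We expect (c) to be the main obstacle. If the direct count fails, we will instead phrase the whole construction as a perturbation problem on the filtered Hom complex $\operatorname{Hom}_R(P^{\otimes_R n},K)$. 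There we only need $\Phi_n\in F^{m}$ up to something already absorbed, or we can weaken the target to $F^{m-1}$ and apply Proposition~\ref{prop:controlled-hom}, which loses at most one power. Compensating for that lost power is the delicate point. We would try to use the weight grading of Table~\ref{tab:resolution-weights}: a composite $\tau_p\circ\tau_q$ has source weight $nm+\cdots$ but target weight $m+j$, and this gap should force the coefficient order to be at least $m$.

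Granting (a)--(c), Proposition~\ref{prop:controlled-hom} applied with $Q=P^{\otimes_R n}$ produces $\tau_n=\mathcal H_{Q,m}(\Phi_n)$. It is $\mathfrak S_n$-equivariant, so averaging and then precomposing with $e_n$ makes it factor through the Harrison summand. It takes values in $K\cap I^{m-1}P$, which is \eqref{eq:higher-coefficient-bound}, and it satisfies $\partial\tau_n=\Phi_n$ because $\Phi_n$ is closed. The degree count $|\tau_n|=3-2n$ follows from $|b_n|=1$ and the $n$ input suspensions $s^2$ together with the one output suspension.

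Finally, pointwise finiteness of each $\tau_n$ is supplied by Lemma~\ref{lem:pointwise-hom-finiteness}. The sum $b=\sum_n b_n$ is locally finite on $L$ by the degree bound \eqref{eq:resolution-degree-bound}, so $b$ is a genuine coderivation satisfying \eqref{eq:curved-identities}.
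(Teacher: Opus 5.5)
Your architecture matches the paper's. You induct on arity and set $b_2=s^2\sigma$ on the Harrison summand. The obstruction is closed because $[b,b^2]=0$ and $[b,(\theta\otimes\operatorname{id})\delta]=0$, and it takes values in $s^2K$ because the outer factor lands there. You then apply Proposition~\ref{prop:controlled-hom} equivariantly and restrict to the image of $e_n$. The gap is step (c), which carries the entire content of the coefficient bound \eqref{eq:higher-coefficient-bound}. You credit only the inner map $b_q$ with a factor $I^{m-1}$ and look for ``an additional power'' elsewhere. The justification you offer does not work:
\begin{itemize}
\item For $n\geq3$ the obstruction contains no $\varepsilon$ at all (you note yourself that $\Theta_n=0$), so the fact that $\varepsilon$ has image in $I^m$ is irrelevant.
\item The weight-grading fallback is unsubstantiated. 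After the perturbation of Proposition~\ref{prop:controlled-resolution}, total weight is only a filtration, and the $\tau_p$ need not be weight-homogeneous.
\end{itemize}
As written, (c) is not proved. Your argument also never uses the hypothesis $m\geq2$, which signals that the decisive estimate is missing.

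The missing observation is that the outer factor supplies its own $I^{m-1}$. By the inductive hypothesis, every previously constructed $\tau_p$ with $p\geq2$ sends free generators into $I^{m-1}P$; this includes $\tau_2=\sigma$, by \eqref{eq:binary-filtration}. In an insertion $\tau_p(\ldots,\tau_q(\ldots),\ldots)$, the inner output is a sum $\sum_k a_kg_k$ with $a_k\in I^{m-1}$ and $g_k$ free generators. Graded $R$-linearity pulls each $a_k$ out of $\tau_p$, and $\tau_p$ applied to generators lands in $I^{m-1}P$. The Harrison projections and signed permutations in the coderivation composite do not change coefficient order. Hence the obstruction lies in $F^{2m-2}$, and Proposition~\ref{prop:controlled-hom} gives a primitive in $F^{2m-3}$. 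The inequality $2m-3\geq m-1$, which is equivalent to $m\geq2$, preserves the uniform bound through the induction. Equivalently, $2m-2\geq m$ gives exactly the $F^m$ you were aiming for.

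A minor point: $b$ is locally finite on $L$ because each element of $L$ has finite arity, so only finitely many $b_k$ act on it. The degree bound \eqref{eq:resolution-degree-bound} is not the relevant reason.
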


\begin{proof}
The antisymmetry of $\sigma$ defines $b_2$ by
\[
 b_2[s^2p\mid s^2q]=s^2\sigma(p,q).
\]
Its extension is an $R$-linear coderivation of degree one, whereas
$b_1$ is a coderivation over $d_R$. Together with \eqref{eq:harrison-arity-two}, the identity
\[
 D_P\sigma+\sigma D_{P\otimes P}
 =\bigl[p\otimes q\longmapsto
       \varepsilon(p)q-(-1)^{|p||q|}\varepsilon(q)p\bigr]
\]
is exactly the first identity in \eqref{eq:curved-identities} in arity
two. The second identity holds in arity one because $\varepsilon$ is a
cochain map, and in arity two because $\varepsilon\sigma=0$.
The binary coefficient bound is $I^{m-1}$, as required.

Suppose that $b_2,\ldots,b_{n-1}$ have been constructed, and extend their
sum together with $b_1$ as a coderivation, still denoted by $b$.
The identity $d_R\theta=\theta b$ holds because $\varepsilon$
is a cochain map and every constructed higher component takes values
in $s^2K$. Write
\[
 T_\theta=(\theta\otimes\operatorname{id})\delta,
 \qquad F=b^2-T_\theta.
\]
The co-Jacobi identity makes $T_\theta$ an $R$-linear coderivation
of degree two. The already constructed components make $F$ zero in
arities below $n$.
The coderivation identity for $b$ gives
$[b,T_\theta]=0$ when $d_R\theta-\theta b=0$; equivalently, commuting
$b$ past contraction of the cobracket by $\theta$ replaces $\theta$
by its differential $d_R\theta-\theta b$. Hence
\[
 [b,F]=[b,b^2]-[b,T_\theta]=0.
\]
In the first nonzero arity, this identity says that the cogenerator
component $F_n$ is closed for the differential induced by $b_1$.
Thus the curvature identity controls the obstruction in each arity.

For $n\geq3$, the cogenerator component of $T_\theta$ is zero.
Consequently, $F_n$ is the signed sum of the insertions of $b_j$ into
$b_i$, with $i,j\geq2$ and $i+j=n+1$. Each summand takes its values in
$s^2K$. If all the previously constructed higher components have their
coefficients in $I^{m-1}$, each insertion has coefficients in
$I^{2m-2}$. Proposition~\ref{prop:controlled-hom} supplies a primitive
with coefficients in $I^{2m-3}$. The single inequality
\[
 2m-3\geq m-1\qquad(m\geq2)
\]
therefore preserves the uniform bound \eqref{eq:higher-coefficient-bound}.
Apply that contraction with the sign which
makes the differential of the primitive equal to $-F_n$, and use the
result for $b_n$. To justify the restriction to the Harrison summand, remove the double
suspensions and regard $F_n$ as a map on $\operatorname{im}e_n$.
Apply the controlled Hom contraction $\mathcal H$ to the map
$F_ne_n$ on the full tensor power. Since $\mathcal H$ commutes with
signed permutations, it commutes with precomposition by $e_n$, and
\[
 \mathcal H(F_ne_n)=\mathcal H(F_ne_n)e_n.
\]
The primitive therefore descends to the Harrison quotient. It defines
an arity component of an $R$-linear coderivation and still takes its
values in $s^2K$.
Thus $\theta b_n=0$, the second curved identity is preserved, and the
first holds through arity $n$.

Induction completes the construction. Every element of $L$ has finite
arity, so only finitely many components of $b$ act on it.
\end{proof}

\subsection{The commutative cobar construction}

Let $t=s^{-1}$, so that $t$ raises degree by one. The graded-commutative
algebra freely generated over $R$ by $tL$ is
\[
 E=\Lambda_R(tL)=R\otimes\Lambda Z,
 \qquad
 Z=\bigoplus_{p\geq0}Z_p,
 \qquad
 Z_{n-1}=t\operatorname{coLie}_{\mathbb Q,n}(s^2G).
\]
In particular, $Z_0=sG$ and $R\otimes Z_0=sP$.
Give each generator in $Z_{n-1}$ leaf arity $n$ and extend this
additively to products. We call the index $p$ of $Z_p$ its
\emph{weight}, so a generator of weight $p$ has leaf arity $p+1$.
This weight indexes the cobar generators and is distinct from
the weight of the linear resolutions in
Section~\ref{sec:controlled-resolutions}.
For $\delta(c)=\sum c_1\otimes c_2$, define
\begin{equation}\label{eq:curved-cobar-differential}
 D_E(tc)=\theta(c)-tb(c)
       -\frac12\sum(-1)^{|c_1|}(tc_1)(tc_2),
 \qquad D_E|_R=d_R,
\end{equation}
and extend by the graded Leibniz rule. The factor $1/2$ is required by
our use of the fully antisymmetric cobracket.

\begin{lemma}\label{lem:cobar-square-zero}
The derivation $D_E$ satisfies $D_E^2=0$. On
$R\oplus(R\otimes Z_0)$ it is the cone differential
\eqref{eq:general-module-cone}.
\end{lemma}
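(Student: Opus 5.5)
Since $D_E$ is defined on generators and extended by the graded Leibniz rule over $(R,d_R)$, its square $D_E^2=\tfrac12[D_E,D_E]$ is again a derivation. It vanishes on $R$ because $d_R^2=0$. It therefore suffices to check $D_E^2(tc)=0$ for every $c\in L$. I would split $D_E(tc)$ into three parts: the curvature term $\theta(c)\in R$, the linear term $-tb(c)$, and the quadratic term $Q(c)=-\tfrac12\sum(-1)^{|c_1|}(tc_1)(tc_2)$. Applying $D_E$ again and sorting the result by polynomial degree in the generators $tL$, I expect the following three groups of terms.

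\emph{Constant terms.} These are $d_R\theta(c)$ together with the constant part of $D_E(-tb(c))$, which is $-\theta(b(c))$. They cancel by the second identity $d_R\theta=\theta b$ of \eqref{eq:curved-identities}.

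\emph{Linear terms.} There are two sources. The first is $t\,b^2(c)$, coming from $D_E(-tb(c))$. The second comes from $D_E$ applied to $Q(c)$: replacing one factor $tc_i$ by its constant part $\theta(c_i)$ leaves the other factor, giving $\pm\tfrac12\sum\theta(c_1)\,tc_2\pm\tfrac12\sum\theta(c_2)\,tc_1$. Antisymmetry of the cobracket identifies the two halves, so the total is $t(\theta\otimes\id)\delta(c)$ up to sign. The first identity $b^2=(\theta\otimes\id)\delta$ of \eqref{eq:curved-identities} then cancels it against $t\,b^2(c)$. The factor $\tfrac12$ is exactly what removes the doubling coming from the fully antisymmetric convention \eqref{eq:harrison-arity-two}, and $\theta$ being central in $R$ lets it be moved out with a Koszul sign.

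\emph{Quadratic and cubic terms.} The quadratic terms are the standard ones from the Chevalley--Eilenberg/cobar construction. They are $Q(b(c))$ together with the terms where $-tb$ acts on a factor of $Q(c)$, and they cancel because $b$ is a coderivation of $\delta$, i.e. $\delta b=(b\otimes1+1\otimes b)\delta$. The cubic terms come from applying $Q$ to a factor of $Q(c)$, and they cancel by the co-Jacobi identity for $\delta$ together with graded commutativity of $E$.

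\emph{Main obstacle: signs.} The hard part is the sign bookkeeping: the suspension $t$, the degree $2$ of $\theta$, the Koszul sign $(-1)^{|c_1|}$, and the $b_1$ coderivation over $d_R$ rather than $R$-linear. I would handle this by first verifying the identity in arities one and two, where it must reproduce \eqref{eq:binary-comparison}, and then using the structural identities above for general $c$. The fact that $D_E^2$ is a derivation means the generator check suffices.

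\emph{Cone statement.} On $Z_0=sG$ the generator $t(s^2p)=sp$ has cobracket $\delta(s^2p)=0$, and $b(s^2p)=b_1(s^2p)=s^2D_Pp$. Hence
\[
D_E(sp)=\varepsilon(p)-sD_Pp,
\]
which is the cone differential \eqref{eq:general-module-cone}. The higher components $b_n$ with $n\geq2$ only involve arities at least two, so they never produce terms on $R\oplus(R\otimes Z_0)$. The remaining sign convention $s(rp)=(-1)^{|r|}r(sp)$ is just the Leibniz rule for $t$ of odd degree. This shows that $R\oplus(R\otimes Z_0)$ is a subcomplex whose differential is the cone differential.
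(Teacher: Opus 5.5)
Your proposal is correct and is essentially the paper's proof. Both arguments sort $D_E^2(tc)$ by the number of cobar generators and cancel the four groups of terms as follows: the scalar and linear terms by the two curved identities \eqref{eq:curved-identities}, with the factor $\tfrac12$ absorbed through antisymmetry of $\delta$ and the even degree of $\theta$; the quadratic terms by the coderivation identity; and the cubic terms by co-Jacobi. Both then read off the cone differential in arity one, where $\delta=0$ and only $b_1$ acts. The paper, like you, leaves the general sign bookkeeping implicit and checks the arity-two normalization explicitly in Appendix~\ref{app:binary-cobar-signs}.
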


\begin{proof}
Separate $D_E^2(tc)$ according to the number of cobar generators
in each term. The scalar term is
$(d_R\theta-\theta b)(c)$. The linear term is
\[
 t\bigl(b^2-(\theta\otimes\operatorname{id})\delta\bigr)(c).
\]
Both vanish by \eqref{eq:curved-identities}. The quadratic term vanishes
by the coderivation identity for $b$, and the cubic term by the
co-Jacobi identity for $\delta$. These are all the terms.
In the linear calculation, the two scalar substitutions into the
quadratic expression combine because
\[
 (\theta\otimes\operatorname{id})\delta
 =\frac12(\theta\otimes\operatorname{id}
                  -\operatorname{id}\otimes\theta)\delta;
\]
the minus sign uses the even degree of $\theta$ and the antisymmetry of
the cobracket. In arity one the cobracket is zero, and
\eqref{eq:curved-cobar-differential} reads
$D_E(sp)=\varepsilon(p)-sD_Pp$.

The arity-two normalization is verified explicitly in
Appendix~\ref{app:binary-cobar-signs}.
\end{proof}

The filtered Sullivan model in the next proposition is structurally
analogous to the model used in Hess's proof of the equality of module
category and rational category \cite{Hess1991}; see
\cite[Proposition~29.10 and Theorem~29.9]{FHT2001}.
In each construction a semifree module model forms the first layer
of an algebra model, and a filtration separates the quadratic leading
differential from higher terms. The proposition below also records
bounds on those terms in powers of the kernel ideal $I$.
These bounds are used in Section~\ref{sec:relative-conversion} to
extend an ideal-valued module retraction to an algebra retraction.

\Needspace{15\baselineskip}
\begin{proposition}\label{prop:filtered-sullivan-model}
For $m\geq2$, the quotient map has a factorization
\[
 R\lhook\joinrel\longrightarrow (R\otimes\Lambda Z,D_E)
       \xrightarrow{\zeta\,\simeq}R/I^m
\]
in which the first map is a relative Sullivan extension and $\zeta$
sends $Z$ to zero. With $Z=\bigoplus_{p\geq0}Z_p$ as above,
\begin{equation}\label{eq:filtered-sullivan-shape}
 D_E Z_p\subset
 \sum_{i+j=p-1}R\otimes Z_iZ_j
 +R^{>0}\otimes Z_p^{\mathrm{earlier}}
 +I^{m-1}\otimes Z_{<p}+I^m.
\end{equation}
Here the Sullivan basis is the union of homogeneous bases of the
$Z_p$, ordered by degree, and ``earlier'' at fixed weight means
smaller degree. The quadratic term is absent for $p=0$, and the
scalar term occurs only on $Z_0$. The term arising from $b_k$, $k\geq2$, lowers $p$ by $k-1$
and has its coefficients in $I^{m-1}$.
Furthermore, $Q=R\oplus(R\otimes Z_0)$ is a subcomplex, and
$\zeta|_Q$ is a surjective quasi-isomorphism.
\end{proposition}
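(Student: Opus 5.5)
The model is the commutative cobar construction $E=\Lambda_R(tL)$ with the differential $D_E$ of \eqref{eq:curved-cobar-differential}. Lemma~\ref{lem:cobar-square-zero} already gives $D_E^2=0$ and identifies the restriction to $Q=R\oplus(R\otimes Z_0)$ with the cone differential. I would prove the proposition in four steps: the shape \eqref{eq:filtered-sullivan-shape}, the relative Sullivan property, the map $\zeta$, and finally the quasi-isomorphism.

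\textbf{Shape of $D_E$.} I read off the shape term by term from \eqref{eq:curved-cobar-differential}, taking $c$ of arity $n=p+1$.
\begin{itemize}
\item The scalar term $\theta(c)$ is nonzero only for $n=1$. There it equals $\varepsilon(p)\in I^m$.
\item The quadratic term comes from the cobracket. It splits arity $n$ into arities $n_1+n_2=n$, so it lands in $\sum_{i+j=p-1}Z_iZ_j$ with coefficients in $R$, and it is absent for $p=0$.
\item The unary part $tb_1(c)$ applies $D_P$ to one leaf. Write $D_P=\delta_m+D_{\mathrm{lin},m}+\Delta_m$. Every matrix coefficient on free generators has degree at least two by Proposition~\ref{prop:controlled-resolution}, so this part lies in $R^{>0}\otimes Z_p$. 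Since the coefficient has positive degree and $D_E$ has degree one, the target generators have strictly smaller degree; this gives $Z_p^{\mathrm{earlier}}$.
\item A component $b_k$ with $k\geq2$ replaces $k$ leaves by one leaf. It therefore lowers the arity, and hence $p$, by $k-1$. By \eqref{eq:higher-coefficient-bound} its coefficients lie in $I^{m-1}$.
\end{itemize}
One point needs care. The generators $Z$ are defined over $\Q$ using $\operatorname{coLie}_{\Q}(s^2G)$, but $L=\operatorname{coLie}_R(s^2P)$. The identification $\operatorname{coLie}_R(s^2P)=R\otimes\operatorname{coLie}_{\Q}(s^2G)$ comes from $P=R\otimes G$ and the naturality of the Harrison idempotents. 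Under this identification, $R$-coefficients produced by $b$ are pulled out, and each term of $D_E(tc)$ becomes an $R$-combination of products of generators of the stated kinds.

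\textbf{Relative Sullivan property.} I order the basis of $Z$ first by weight $p$ and then by degree within each weight. The shape just established shows that $D_E Z_p$ involves only $R$, lower weights, and earlier generators of the same weight, so this order makes $R\hookrightarrow E$ relative Sullivan. This ordering uses the positive-degree coefficient bound to guarantee that the same-weight term only involves strictly earlier generators. That is also why the proposition states the basis order as "weight, then degree"; ordering by degree alone could fail because of the scalar and $I^{m-1}$ terms.

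\textbf{The map $\zeta$.} I define $\zeta$ as the CDGA map $E\to R/I^m$ that is the projection on $R$ and kills $Z$. It is a chain map because $D_E(Z)\subset(Z)+I^m$, which follows from the shape. On $Q$ it is the cone map $R\oplus sP\to R/I^m$. That map is a surjective quasi-isomorphism because $\varepsilon:P\to I^m$ is one.

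\textbf{Quasi-isomorphism: the main obstacle.} I expect the hardest step to be showing that $\zeta$ is a quasi-isomorphism on all of $E$, which is the same as showing that $Q\hookrightarrow E$ is a quasi-isomorphism. I would first try a filtration argument modelled on Hess's proof (\cite[Proposition~29.10]{FHT2001}). Filter $E$ by leaf arity, with the associated graded taking the quadratic cobar part as leading differential. The leading piece $\Lambda(t\operatorname{coLie}(U))$ with its quadratic differential is the Harrison cobar construction of the trivial coalgebra. In characteristic zero it is quasi-isomorphic to $\Q\oplus tU$ via the Koszul duality between commutative algebras and Lie coalgebras.

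The difficulty is that the curvature term $\theta$ and the terms $b_k$ lower arity, so the filtration has to be arranged as a complete decreasing filtration. I would use total leaf arity minus word length, together with the degree bound \eqref{eq:resolution-degree-bound}, which gives convergence degreewise. With that filtration, the spectral sequence comparison shows that $Q\to E$ is a quasi-isomorphism, and composing with $\zeta|_Q$ proves the claim.
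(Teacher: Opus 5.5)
Your treatment of the shape, the relative Sullivan property and $\zeta$ is fine. The step you single out as the main obstacle, however, does not work as proposed.

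Your first instinct, filtering by total leaf arity, is the right one. The concern that $\theta$ and the $b_k$ force a complete decreasing filtration is a misdiagnosis. Use the \emph{increasing} filtration $F_nE$ spanned by monomials of total leaf arity at most $n$:
the internal terms and the cobracket preserve leaf arity, while the curvature and the $b_k$, $k\geq2$, strictly lower it. So $F_n$ is $D_E$-stable, the arity-lowering terms drop out of the associated graded, and $F_1E=Q$. Since a generator of $Z_{n-1}$ has degree at least $n(2m-2)+1$, the filtration is finite in each degree, so no completeness is needed. This is the paper's argument.

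The replacement you propose, leaf arity minus word length, is the total cobar weight $\sum p_i$, and it has a different leading term: the curvature preserves it, whereas the cobracket lowers it by one. Its associated graded carries the internal differential plus the curvature; in weight zero it is $\Lambda_R(sP)$ with the derivation extending the cone differential. This is not acyclic modulo $Q$. For closed cells $p,q$, the product $(sp)(sq)$ is a cycle of $E/Q$, since its differential $\varepsilon(p)\,sq\pm\varepsilon(q)\,sp$ lies in $Q$. In $E$ it is killed by the quadratic part of $D_E\,t[s^2p\mid s^2q]$, but that part lowers your weight and is invisible in your associated graded. Concretely, take $A=\Q$, $R=\Q[x_1,x_2]$ with $|x_i|=2$, and $m=2$. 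The product of the suspended cells lifting $x_1^2$ and $x_1x_2$ survives to $E_1$, because $\Lambda_R^{\geq2}(sP)$ vanishes in degree $5$. So your comparison does not close; you would have to redo the Koszul cancellation on $E_1$.

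Even with the correct filtration, the rational statement $\Lambda(t\operatorname{coLie}(U))\simeq\Q\oplus tU$ for a graded vector space is not quite the input you need. The arity-$n$ piece of the associated graded of $E/Q$ is $\mathcal C_n\otimes_{\Q[\mathfrak S_n]}P^{\otimes_R n}$. Its differential is the cobracket differential on the universal multilinear complex $\mathcal C_n$ together with the internal differential of $P^{\otimes_R n}$, which has nonconstant coefficients in $R$. You also need the following step: $\mathcal C_n$ is acyclic for $n\geq2$, so semisimplicity of $\Q[\mathfrak S_n]$ gives an equivariant contraction of it. Tensored with the identity, this contraction anticommutes with the internal differential and so contracts each graded piece.

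Finally, a correction to your Sullivan step: ordering by degree alone does work, as the proposition asserts. The scalar term contains no generator, coefficients in $I^{m-1}$ have degree at least $2(m-1)\geq2$, and every generator of $Z$ has degree at least $3$. Your weight-then-degree order is also a valid Sullivan order, so only that side remark is wrong.
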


\begin{proof}
Lemmas~\ref{lem:curved-comparison} and~\ref{lem:cobar-square-zero}
give the differential \eqref{eq:curved-cobar-differential} and
its square-zero identity. This differential has four types of terms.
The unary component $b_1$ preserves arity and comes from $D_P$; its
coefficients have positive degree. The component $b_k$, $k\geq2$,
lowers arity by $k-1$ and has coefficients in $I^{m-1}$. The cobracket
splits arity $n$ into two positive arities whose sum is $n$, and so
gives the displayed quadratic term. Finally, the curvature is scalar
and is supported in arity one. These observations prove \eqref{eq:filtered-sullivan-shape}.

Every generator of $G$ has degree at least $2m$. A generator of $Z_{n-1}$
represented by $g_1,\ldots,g_n$ therefore has degree
\begin{equation}\label{eq:arity-degree-bound}
 \sum_{i=1}^n|g_i|-2n+1\ \geq\ n(2m-2)+1.
\end{equation}
In particular, all these degrees are at least three. Since $R^1=0$,
the positive coefficients of every linear term have degree at least
two; the same holds for coefficients in $I^{m-1}$. Each generator
occurring in a linear or quadratic term of $D_E z$ thus has
strictly smaller cohomological degree than $z$. Ordering by degree
makes $R\longrightarrow E$ a relative Sullivan extension.

The map $\zeta$ is a CDGA morphism by the shape of the differential.
Its restriction to $Q=R\oplus(R\otimes Z_0)$ is the cone projection
\eqref{eq:general-module-cone}, hence a surjective quasi-isomorphism. It
therefore suffices to prove that the quotient complex of
$R$-modules $E/Q$ is acyclic.

The duality between Com and Lie and the Koszulity of Com
\cite[Propositions~13.1.6--13.1.7]{LodayVallette2012} give the required
universal contraction. Applying the bar--cobar quasi-isomorphism of
\cite[Theorem~11.3.6 and Corollary~11.3.8]{LodayVallette2012}
to the nonunital square-zero algebra $sU$, for a graded rational
vector space $U$, and then adjoining a unit shows that the
commutative cobar complex of
$\operatorname{coLie}_{\mathbb Q}(s^2U)$ has cohomology only in
leaf arities zero and one. Indeed, the Harrison bar construction suspends the
augmentation ideal $sU$ once more, giving cogenerators $s^2U$.
The internal differential and multiplication on that ideal are
zero; its bar coalgebra is therefore
$\operatorname{coLie}_{\mathbb Q}(s^2U)$ with zero differential.
For an explicit universal complex, let $U_n$ have basis
$x_1,\ldots,x_n$, all of degree zero, and set
\begin{equation}\label{eq:universal-arity-complex}
 \mathcal C_n=
 \bigl[\Lambda\bigl(t\operatorname{coLie}_{\mathbb Q}(s^2U_n)\bigr)
       \bigr]_{(1,\ldots,1)}.
\end{equation}
The subscript denotes the component in which each $x_i$ occurs once.
Its differential is the cobracket term of
\eqref{eq:curved-cobar-differential}, with no curvature or internal
differential. Give $\mathcal C_n$ the right $\mathfrak S_n$-action by
inverse relabelling of the $x_i$. A monomial with $\ell$ cobar factors
has degree $\ell-2n$: the $n$ leaves contribute $s^{2n}$ and the
$\ell$ factors contribute $t^\ell$. Thus all suspension shifts are
included in $\mathcal C_n$. The Koszul fact above says that
$\mathcal C_n$ is acyclic for $n\geq2$; $\mathcal C_1$ is one copy of
$\mathbb Q$ in degree $-1$.

The arity-$n$ part of the relative arity-preserving cobar complex is
\begin{equation}\label{eq:relative-arity-evaluation}
 \mathcal C_n\otimes_{\mathbb Q[\mathfrak S_n]}P^{\otimes_R n},
\end{equation}
where $\mathfrak S_n$ acts on the tensor power by Koszul permutations.
This is evaluation of the multilinear complex, with graded tensor
action $r(c\otimes p)=(-1)^{|r||c|}c\otimes rp$. If $c$ and $p$
are homogeneous, its differential is
\[
 \partial_{\mathcal C}c\otimes p
       +(-1)^{|c|}c\otimes D_{P^{\otimes n}}p.
\]
For $n=1$ this is $sP$ with differential $-sD_P$, as required by the
unary cobar convention.

For $n\geq2$, semisimplicity of $\mathbb Q[\mathfrak S_n]$ gives an
equivariant contraction $h_n$ of $\mathcal C_n$. Tensoring it with
the identity in \eqref{eq:relative-arity-evaluation} gives a contraction
of the relative complex. The graded tensor formula makes this
operator anticommute with the internal differential on $P$, so it
remains a contraction after that differential is added. The construction
therefore evaluates a rational equivariant splitting over $R$.

Filter the module complex $E/Q$ increasingly by total leaf arity. Its arities zero and one vanish. The internal and
cobracket terms of $D_E$ preserve leaf arity, and the curvature
and higher components lower it. Thus its associated graded complex
is the direct sum, for $n\geq2$, of precisely the relative acyclic
complexes just described. Bound \eqref{eq:arity-degree-bound} makes
the filtration finite in every cohomological degree. It follows that
$E/Q$ is acyclic. The exact sequence
\[
 0\longrightarrow Q\longrightarrow E\longrightarrow E/Q\longrightarrow0
\]
now shows that $Q\longrightarrow E$ is a quasi-isomorphism. Since its
composite with $\zeta$ is a quasi-isomorphism, so is $\zeta$.

A unital curved commutative cobar construction and its Koszul theorem
appear in \cite[Section~8.4, Definition~132 and Theorem~135]{LeGrignou2019}.
Here the relative evaluation and the leaf-arity filtration retain
the bounds in powers of $I$ needed for the next step.
\end{proof}

\section{From an ideal-valued homotopy to an algebra retraction}
\label{sec:relative-conversion}

We adapt the square-zero extension and weighted-kernel argument in
Hess's proof \cite{Hess1991}; see also
\cite[Theorem~29.9]{FHT2001}. The ideal-valued condition makes the
weighted kernels stable under the coefficient terms of our filtered
model. We state the conversion for an arbitrary differential ideal.

\begin{lemma}\label{lem:square-zero-product}
Let $R\longrightarrow Q\xrightarrow{r}R$ be a retraction of differential
$R$-modules, where the first map identifies $R$ with a subcomplex of
$Q$. The formula
\begin{equation}\label{eq:square-zero-product}
 u*v=r(u)v+u r(v)-r(u)r(v)
\end{equation}
defines a CDGA structure on $Q$ extending that of $R$. For this
structure, $r$ is a CDGA retraction and $\ker r$ is a square-zero ideal.
Here the right action is $ua=(-1)^{|u||a|}au$ for homogeneous
$u\in Q$ and $a\in R$.
\end{lemma}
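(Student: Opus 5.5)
The statement to prove is Lemma~\ref{lem:square-zero-product}: the formula \eqref{eq:square-zero-product} gives $Q$ a CDGA structure extending that of $R$, $r$ is a CDGA retraction, and $\ker r$ is square-zero. I plan to prove this by identifying $Q$, via $r$, with the trivial square-zero extension $R\oplus N$, where $N=\ker r$ is a differential $R$-module. Since $r$ is an $R$-linear cochain retraction of the inclusion $R\hookrightarrow Q$, every $u\in Q$ splits uniquely as $u=r(u)+(u-r(u))$. Here $r(u)\in R$ and $u-r(u)\in N$. Both summands are compatible with the differential, because $r$ commutes with $d$ and $R$ is a subcomplex. Thus $Q\cong R\oplus N$ as differential $R$-modules.

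Next I compute the product in these coordinates. For $u=a+x$ and $v=b+y$ with $a,b\in R$ and $x,y\in N$, formula \eqref{eq:square-zero-product} reads
\[
(a+x)*(b+y)=ab+ay+xb+ab-ab=ab+ay+xb,
\]
using $r(u)=a$ and $r(v)=b$. The terms $ay$ and $xb$ are the module actions; the right action is defined by $xb=(-1)^{|x||b|}bx$. This is exactly the multiplication of the trivial square-zero extension $R\ltimes N$. The remaining properties then follow from the standard facts about $R\ltimes N$:
\begin{itemize}[label=\textendash]
\item associativity and graded commutativity follow from those of $R$, the module axioms, and the sign convention on the right action;
\item the unit of $R$ acts as a unit;
\item $N*N=0$, since the $xy$ term is absent;
\item $d$ is a derivation, because $d_R$ is a derivation and $d(ay)=(da)y+(-1)^{|a|}a\,dy$ is the cochain condition for the $R$-action on the dg module $N$.
\end{itemize}

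Restricted to $R$, the product is the product of $R$. Moreover $r(u*v)=ab=r(u)r(v)$, so $r$ is multiplicative, and $r$ is unital. It is a cochain map by hypothesis, so $r$ is a CDGA retraction with square-zero kernel $N$. The only step needing care is the sign bookkeeping in the Leibniz rule for $xb$. I will handle it by converting $xb$ to $(-1)^{|x||b|}bx$ and using the graded commutativity of the left action. I expect no real obstacle here.
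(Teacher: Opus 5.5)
Your proposal is correct and follows the same route as the paper: the paper's one-line proof likewise uses the decomposition $Q=R\oplus\ker r$ of differential $R$-modules to identify \eqref{eq:square-zero-product} with the trivial square-zero extension $R\ltimes\ker r$. Your coordinate computation $(a+x)*(b+y)=ab+ay+xb$ and your Leibniz-sign check for $xb$ simply spell out what the paper leaves implicit.
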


\begin{proof}
The decomposition of differential $R$-modules
$Q=R\oplus\ker r$ identifies \eqref{eq:square-zero-product} with the
usual square-zero extension $R\ltimes\ker r$.
\end{proof}

\begin{lemma}\label{lem:weighted-acyclic-subcomplexes}
Let $R$ be a CDGA over $\Q$, let $Q$ carry the product of
Lemma~\ref{lem:square-zero-product}, and
write $\iota\colon R\to Q$ for the inclusion. Let $J$ be a differential
ideal of $R$, let $m\geq1$, and write $\pi\colon R\to R/J^m$.
Suppose that $\zeta_Q\colon Q\to R/J^m$ is a surjective
quasi-isomorphism of $R$-modules restricting to $\pi$ on $R$, and that
$r(N)\subset J$, where $N=\ker\zeta_Q$. For $p\geq0$, set
\begin{equation}\label{eq:weighted-acyclic-subcomplexes}
 T_p=\operatorname{id}_Q+p\,\iota r,
 \qquad
 N_p=T_p^{-1}(N)=\ker(\zeta_Q+p\,\pi r).
\end{equation}
Then $N_p$ is an acyclic $R$-subcomplex of $Q$, $r(N_p)\subset J$, and
\begin{align}
 N_i*N_j&\subset N_{i+j+1},\label{eq:weighted-product}\\
 J^{m-1}N_q&\subset N_p\quad(p,q\geq0),\label{eq:weighted-linear}\\
 \iota(J^m)&\subset N_p\quad(p\geq0).\label{eq:weighted-scalar}
\end{align}
\end{lemma}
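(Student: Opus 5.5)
Everything follows from two facts about the operator $T_p=\operatorname{id}_Q+p\,\iota r$: it is an invertible $R$-linear cochain map, and it interacts with the square-zero product in a controlled way. We also use $\zeta_Q\iota=\pi$ throughout. First, $r$ and $\iota$ are $R$-linear cochain maps, so $\iota r$ is an idempotent $R$-linear cochain endomorphism of $Q$. Hence $T_p$ is an $R$-linear cochain automorphism with inverse $\operatorname{id}-\frac{p}{1+p}\iota r$; this is where working over $\Q$ enters. It follows that $N_p=T_p^{-1}(N)$ is an $R$-subcomplex quasi-isomorphic to $N$. Since $\zeta_Q$ is a surjective quasi-isomorphism, $N$ is acyclic, and therefore so is $N_p$. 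The kernel description $N_p=\ker(\zeta_Q T_p)=\ker(\zeta_Q+p\,\pi r)$ is immediate from $\zeta_Q\iota=\pi$. For $r(N_p)\subset J$, take $x\in N_p$, so $x+p\,\iota r(x)\in N$. Then $r(x)+p\,r(x)=(1+p)r(x)\in r(N)\subset J$, and $J$ is a $\Q$-subspace.

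For the scalar inclusion \eqref{eq:weighted-scalar}, let $a\in J^m$. We have $\zeta_Q\iota(a)=\pi(a)=0$ and $r\iota(a)=a$, so $(\zeta_Q+p\pi r)(\iota a)=(1+p)\pi(a)=0$.

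For the linear inclusion \eqref{eq:weighted-linear}, let $a\in J^{m-1}$ and $x\in N_q$. The product $a*x$ is the module action $ax$, since $r(a)=a$ and the square-zero formula reduces to $ax+r(x)a-ar(x)=ax$ after the sign conventions. We then have $\zeta_Q(ax)=a\,\zeta_Q(x)$ and $\pi r(ax)=a\,\pi r(x)$. By the previous paragraph $r(x)\in J$, so $a\,r(x)\in J^m$ and $\pi r(ax)=0$. Also $\zeta_Q(x)=-q\,\pi r(x)=0$, because $r(x)\in J$ gives $\pi r(x)\in J/J^m$ and multiplying by $a$ lands in $J^m/J^m=0$. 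Hence $\zeta_Q(ax)=a\zeta_Q(x)=-q\,a\,\pi r(x)=0$, and $ax\in N_p$ for every $p$.

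The main point is the product inclusion \eqref{eq:weighted-product}. Let $u\in N_i$ and $v\in N_j$, and write $a=r(u)$ and $b=r(v)$, both in $J$. Then $r(u*v)=ab\in J^2$, and
\[
\zeta_Q(u*v)=a\zeta_Q(v)+\zeta_Q(u)b-\pi(ab)=(-j-i-1)\pi(ab),
\]
using $\zeta_Q(u)=-i\pi(a)$ and $\zeta_Q(v)=-j\pi(b)$, $R$-linearity, and the sign convention for the right action. Therefore $(\zeta_Q+(i+j+1)\pi r)(u*v)=0$, which is exactly $u*v\in N_{i+j+1}$. The only obstacle I expect is bookkeeping of Koszul signs in the right action; no genuine difficulty arises, since all three identities reduce to $R$-linearity of $\zeta_Q$ and $r$.
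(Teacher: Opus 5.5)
Your proof is correct and matches the paper's argument step for step: idempotence of $\iota r$ gives the inverse of $T_p$ and hence acyclicity of $N_p$; then come the direct evaluations of $\zeta_Q+p\,\pi r$ on $u*v$, on $J^{m-1}$-multiples and on $\iota(J^m)$. One small slip in the linear step: $\zeta_Q(x)=-q\,\pi r(x)$ need not vanish by itself, but only $a\,\zeta_Q(x)=-q\,\pi\bigl(a\,r(x)\bigr)$ is needed, and that does vanish; this is what your following sentence uses.
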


\begin{proof}
The cochain map $\iota r$ is idempotent, so
\[
 T_p^{-1}=\operatorname{id}_Q-\frac{p}{p+1}\iota r.
\]
Thus $T_p$ identifies $N_p$ with the acyclic complex $N$. For $u\in N_p$,
\[
 (p+1)r(u)=r(T_pu)\in J,
\]
which also gives $r(N_p)\subset J$.

For $u\in N_i$ and $v\in N_j$, $R$-linearity of $\zeta_Q$ and
\eqref{eq:square-zero-product} give
\[
 \zeta_Q(u*v)=-(i+j+1)\pi\bigl(r(u)r(v)\bigr),
 \qquad r(u*v)=r(u)r(v).
\]
This proves \eqref{eq:weighted-product}. For $a\in J^{m-1}$ and
$u\in N_q$, we have
\begin{equation}\label{eq:hess-only-extra-term}
 (\zeta_Q+p\,\pi r)(au)
       =(p-q)\pi\bigl(a\,r(u)\bigr)=0,
\end{equation}
because $r(u)\in J$. This proves \eqref{eq:weighted-linear}.
For $a\in J^m$, the same expression on $\iota(a)$ is
$(p+1)\pi(a)=0$, giving \eqref{eq:weighted-scalar}.
\end{proof}

\Needspace{13\baselineskip}
\begin{proposition}\label{prop:relative-hess-conversion}
Let $R$ be a CDGA over $\mathbb Q$, let $J$ be a differential
ideal, and let $m\geq1$. Write $\pi\colon R\to R/J^m$ for the quotient.
Suppose that
\[
 R\longrightarrow E=(R\otimes\Lambda Z,D)
       \xrightarrow{\zeta\,\simeq}R/J^m
\]
is a relative Sullivan model with $Z=\bigoplus_{p\geq0}Z_p$.
Choose a homogeneous Sullivan basis that is a union of bases of
the $Z_p$. Suppose that the following properties hold:
\begin{enumerate}
\item $Q=R\oplus(R\otimes Z_0)$ is a subcomplex, $\zeta(Z_0)=0$, and
      $\zeta|_Q$ is a surjective quasi-isomorphism;
\item for $p\geq1$,
\begin{equation}\label{eq:general-hess-shape}
 DZ_p\subset
 \sum_{i+j=p-1}R\otimes Z_iZ_j
 +R\otimes Z_p^{\mathrm{earlier}}
 +J^{m-1}\otimes Z_{<p}+J^m,
\end{equation}
where ``earlier'' refers to this chosen Sullivan order at fixed weight;
\item there is an $R$-linear cochain retraction $r\colon Q\to R$ such that
\[
 r\bigl(\ker(\zeta|_Q)\bigr)\subset J.
\]
\end{enumerate}
Then the inclusion $R\longrightarrow E$ admits a CDGA retraction
$\rho$ with $\rho(Z)\subset J$.
\end{proposition}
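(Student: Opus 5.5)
We follow Hess's argument: extend a module retraction to an algebra retraction one generator at a time, using the weighted kernels $N_p$ of Lemma~\ref{lem:weighted-acyclic-subcomplexes} as targets. First give $Q$ the square-zero product of Lemma~\ref{lem:square-zero-product} built from $r$. Put $\zeta_Q=\zeta|_Q$ and $N=\ker\zeta_Q$, so that $r(N)\subset J$ by hypothesis (3). Lemma~\ref{lem:weighted-acyclic-subcomplexes} then supplies acyclic subcomplexes $N_p\subset Q$ with $r(N_p)\subset J$ and the properties \eqref{eq:weighted-product}--\eqref{eq:weighted-scalar}. We will construct a CDGA morphism $\Phi\colon E\to Q$ under $R$, defined on generators by
\[
 \Phi(z)\in N_p\qquad (z\in Z_p).
\]
Then $\rho=r\Phi$ is a CDGA retraction of $R\to E$, since $r$ is a CDGA retraction for $*$. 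It also satisfies $\rho(Z)\subset r(N_p)\subset J$, which is the claim.

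On $Z_0$ we take $\Phi$ to be the identity of $R\otimes Z_0\subset Q$. This is a cochain map because $Q$ is a subcomplex by hypothesis (1). It lands in $N_0=N$ because $\zeta(Z_0)=0$. For $p\geq1$ we proceed by induction on weight, and at fixed weight by the chosen Sullivan order. Suppose $\Phi$ is already defined on $Z_{<p}$ and on the earlier generators of $Z_p$, with $\Phi(Z_i)\subset N_i$ there, and compatible with differentials. For the next generator $z$, the element $\Phi(Dz)$ is defined, and we check term by term, using \eqref{eq:general-hess-shape}, that it lies in $N_p$.
\begin{itemize}
 \item Quadratic terms $R\otimes Z_iZ_j$ with $i+j=p-1$ map into $R\cdot(N_i*N_j)\subset N_{i+j+1}=N_p$, by \eqref{eq:weighted-product} and since $N_p$ is an $R$-submodule.
 \item Earlier generators at weight $p$ map into $R\cdot N_p=N_p$.
 \item Terms $J^{m-1}\otimes Z_q$ with $q<p$ map into $J^{m-1}N_q\subset N_p$, by \eqref{eq:weighted-linear}.
 \item The scalar term $J^m$ maps into $\iota(J^m)\subset N_p$, by \eqref{eq:weighted-scalar}.
\end{itemize}
Since $\Phi$ is multiplicative and commutes with $D$ on the generators already treated, $\Phi(Dz)$ is a cycle. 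Because $N_p$ is acyclic, there is $y\in N_p$ with $d_Qy=\Phi(Dz)$. We set $\Phi(z)=y$ and extend multiplicatively over $R$.

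We do not expect a serious obstacle, but care is needed in two places. The first is bookkeeping. We must verify that $\Phi$ is well defined as an algebra map from the free graded-commutative algebra $R\otimes\Lambda Z$ into the graded-commutative algebra $(Q,*)$. We must also confirm that the $R$-action on $N_p$ used above agrees with $*$-multiplication by $\iota(R)$, so that the submodule arguments are valid. Both follow from the formula \eqref{eq:square-zero-product}: for $a\in R$ one gets $a*u=au$.

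The second concerns the coefficients $R\otimes Z_p^{\mathrm{earlier}}$ and $R\otimes Z_iZ_j$ in \eqref{eq:general-hess-shape}. They are unrestricted elements of $R$, not elements of $J$. This causes no problem because each $N_p$ is an $R$-submodule; the only terms that need ideal control are the lower-weight linear terms and the scalar term, and \eqref{eq:weighted-linear} and \eqref{eq:weighted-scalar} supply exactly that control. Finally, a generator can have only finitely many generators before it in the well-ordering, and the weights are nonnegative, so the double induction terminates and defines $\Phi$ on all of $Z$.
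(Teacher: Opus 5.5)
Your proposal is correct and follows the paper's proof essentially step for step: the square-zero product on $Q$, the weighted acyclic kernels $N_p$, lifting generator by generator in the (weight, Sullivan order) ordering with $\Phi(Z_p)\subset N_p$, and then $\rho=r\Phi$. One small correction to your termination remark: a generator need not have only finitely many predecessors (every element of $Z_1$ follows all of $Z_0$, which is typically infinite dimensional). The lexicographic order is nevertheless a well-order, so well-founded (transfinite) induction, as in the paper, defines $\Phi$ on all of $Z$.
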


\begin{proof}
Equip $Q$ with the product of Lemma~\ref{lem:square-zero-product} and
write $\mathcal Q$ for this CDGA. The restriction $\zeta_Q=\zeta|_Q$
satisfies the hypotheses of Lemma~\ref{lem:weighted-acyclic-subcomplexes}.
Let $N_p$ be the resulting acyclic $R$-subcomplexes. We construct a
CDGA map $\Phi\colon E\to\mathcal Q$, identical on $R$, such that
\[
 \Phi(Z_p)\subset N_p.
\]
On $Z_0$ use the given inclusion into $Q$; its image lies in $N_0$,
and it respects the differential because $Q$ is a subcomplex.

Use the lexicographic order given first by the nonnegative weight
$p$, then by the inherited Sullivan well-order within $Z_p$. This is a
well-founded order; every generator occurring in
\eqref{eq:general-hess-shape} is earlier in it. For a new generator $z\in Z_p$, the expression $\Phi(Dz)$ is defined.
The quadratic terms of \eqref{eq:general-hess-shape} belong to $N_p$
by \eqref{eq:weighted-product}. The linear terms of weight $p$ do also,
since $N_p$ is an $R$-submodule. Equations
\eqref{eq:weighted-linear} and \eqref{eq:weighted-scalar} handle the
remaining terms. Since $\Phi$ is already a cochain map on the earlier
generators and $D^2z=0$, the element $\Phi(Dz)$ is a cycle in $N_p$.
Its acyclicity supplies $\Phi(z)\in N_p$ with
$d\Phi(z)=\Phi(Dz)$. This completes the induction. Since $r$ is
multiplicative for the chosen product on $Q$, the composite
$\rho=r\Phi$ is the required CDGA retraction. Moreover,
$\rho(Z_p)\subset r(N_p)\subset J$.
\end{proof}

\begin{corollary}\label{cor:ideal-nullhomotopy}
Let $A=(\Lambda V,d)$ and $R=(A\otimes\Lambda W,d_R)$ be
simply connected minimal Sullivan algebras of finite type, with
$d_R|_A=d$ and $d_R(W)\subset (W)$. Put $I=(W)$.
If $m\geq2$ and the inclusion $I^m\longrightarrow I$ is zero
in the derived category of $R$-modules, then the quotient
$R\longrightarrow R/I^m$ admits a homotopy retraction in the
category of CDGAs. More precisely, its controlled Sullivan model
$R\hookrightarrow E=R\otimes\Lambda Z$ from
Proposition~\ref{prop:filtered-sullivan-model} admits a CDGA
retraction $\rho:E\to R$ with $\rho(Z)\subset I$.
\end{corollary}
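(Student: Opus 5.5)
The plan is to apply Proposition~\ref{prop:relative-hess-conversion} with $J=I$ to the controlled Sullivan model $R\hookrightarrow E=(R\otimes\Lambda Z,D_E)\xrightarrow{\zeta}R/I^m$ of Proposition~\ref{prop:filtered-sullivan-model}. Then $\rho$ is the desired retraction with $\rho(Z)\subset I$. Since $E$ is a relative Sullivan model of $\rho_{m-1}\colon R\to R/I^m$, a CDGA retraction of $R\hookrightarrow E$ is exactly a homotopy retraction of the quotient in the sense fixed in Section~\ref{sec:preliminaries}. It therefore remains to verify hypotheses (1)--(3) of the conversion.

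Hypotheses (1) and (2) come directly from Proposition~\ref{prop:filtered-sullivan-model}. The cone $Q=R\oplus(R\otimes Z_0)=R\oplus sP$ is a subcomplex, $\zeta(Z_0)=0$, and $\zeta|_Q$ is a surjective quasi-isomorphism. The shape \eqref{eq:filtered-sullivan-shape} is contained in \eqref{eq:general-hess-shape}: the term $R^{>0}\otimes Z_p^{\mathrm{earlier}}$ lies in $R\otimes Z_p^{\mathrm{earlier}}$. The only check is that ``earlier'' means smaller degree at fixed weight, and that this is compatible with choosing the Sullivan basis as a union of bases of the $Z_p$ ordered by degree. Proposition~\ref{prop:filtered-sullivan-model} fixes exactly this choice.

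The substantive step is hypothesis (3). Since $P=P_m$ is semifree and resolves $I^m$, the hypothesis $[I^m\hookrightarrow I]=0$ in $\mathcal D(R)$ gives an $R$-linear map $h\colon P\to I$ of degree $-1$ with $d_Rh+hD_P=\varepsilon$, where $\varepsilon$ is viewed as a map into $I$. As in Lemma~\ref{lem:derived-retraction}, this yields a cochain retraction $r\colon Q\to R$ with $r|_R=\id$ and $r(sp)=h(p)$. The equation $d_Rh+hD_P=\varepsilon$ is precisely the statement that $r$ commutes with the cone differential \eqref{eq:general-module-cone}, and the sign convention $s(rp)=(-1)^{|r|}r(sp)$ must be matched with the degree $-1$ linearity of $h$.

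Next I would identify $\ker(\zeta|_Q)$. The map $\zeta$ is $\pi$ on $R$ and zero on $sP$, so the kernel is $I^m\oplus sP$. On it, $r(a+sp)=a+h(p)$ with $a\in I^m\subset I$ and $h(p)\in I$, so $r(\ker\zeta|_Q)\subset I$. The main point to watch is that this uses the $I$-valued nullhomotopy in an essential way: an $R$-valued $h$ would only give a module retraction and would fail (3). Everything else is bookkeeping already carried out in the cited propositions, so once (3) is established the Corollary follows from Proposition~\ref{prop:relative-hess-conversion}.
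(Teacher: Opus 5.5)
Your proposal is correct and follows the paper's proof essentially step for step. The $I$-valued nullhomotopy $h\colon P_m\to I$ defines the cone retraction $r$ with $r(I^m\oplus sP_m)\subset I$, and applying Proposition~\ref{prop:relative-hess-conversion} with $J=I$ to the model of Proposition~\ref{prop:filtered-sullivan-model} yields $\rho$ with $\rho(Z)\subset I$.
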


\begin{proof}
Use the controlled resolution $P\xrightarrow{\varepsilon}I^m$
from which the model in Proposition~\ref{prop:filtered-sullivan-model}
is constructed. Its semifreeness and the
derived nullity give an $R$-linear map $h\colon P\to I$ of degree $-1$
with $d_Rh+hD_P=\varepsilon$. On the cone $Q=R\oplus sP$ define
$r|_R=\operatorname{id}$ and $r(sp)=h(p)$. This is an $R$-linear cochain
retraction. The kernel of $Q\longrightarrow R/I^m$ is, as a graded
module, $I^m\oplus sP$, and its image under $r$ lies in $I$.
Apply Proposition~\ref{prop:relative-hess-conversion} to the model of
Proposition~\ref{prop:filtered-sullivan-model}, with $J=I$.
\end{proof}

\section{Relative category and ideal-valued nullhomotopies}
\label{sec:relative-category}

The preceding conversion gives an exact algebraic characterization of
relative category. Retain the split model $\varphi:R\to A$, put
$I=\ker\varphi$, and, for $m\geq2$, use the model
\[
 R\lhook\joinrel\longrightarrow E_m=(R\otimes\Lambda Z,D_E)
       \xrightarrow{\zeta_m\,\simeq}R/I^m
\]
of Proposition~\ref{prop:filtered-sullivan-model}, whose Sullivan
basis is the union of homogeneous bases of the weight spaces $Z_p$.
Let $\pi_m:R\to R/I^m$ be the quotient map and
$\overline\varphi:R/I^m\to A$ the map induced by $\varphi$.
Put $\lambda_m=\overline\varphi\zeta_m:E_m\to A$.
It restricts to $\varphi$ on $R$ and sends $Z$ to zero.
The necessary direction of \cite[Theorem~3.6]{CV2017} gives a CDGA
retraction $\rho:E_m\to R$ satisfying
\begin{equation}\label{eq:relative-category-criterion}
 \varphi\rho\simeq\lambda_m\quad\text{relative to }R
\end{equation}
whenever $\relcat(f_{\Q})\leq m-1$.
Here a relative section at an earlier Ganea stage is composed with the
canonical map to stage $m-1$; its compatibility with the map from $X$
is preserved. The homotopy in
\eqref{eq:relative-category-criterion} is constant on the whole
subalgebra $R$. For the converse below, the retraction we construct
satisfies $\varphi\rho=\lambda_m$ strictly. We pass from it to a
relative Ganea section by an explicit lift, using
\cite[Section~5 and Proposition~29]{CGV2014}.

\begin{theorem}\label{thm:relative-category}
Let $f:X\to Y$ be a rationally retractive map between simply connected
spaces of finite rational homotopy type. Let
$\varphi:R=A\otimes\Lambda W\to A$ be its split minimal Sullivan
model, with $I=(W)$. For every $n\geq0$,
\begin{equation}\label{eq:relative-category-ideal-equivalence}
 \relcat(f_{\Q})\leq n
 \quad\Longleftrightarrow\quad
 [I^{n+1}\hookrightarrow I]=0\quad\text{in }\mathcal D(R).
\end{equation}
\end{theorem}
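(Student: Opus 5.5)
We treat the cases $n=0$ and $n\geq1$ separately, and for $n\geq1$ prove the two implications by different routes. Put $m=n+1\geq2$ in the latter case, so that the model $E_m$ of Proposition~\ref{prop:filtered-sullivan-model} is available.

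\emph{The case $n=0$.} The ideal $I^{1}=I$, so the right-hand side says that $\mathrm{id}_I$ is zero in $\mathcal D(R)$, i.e.\ $I$ is acyclic. By Lemma~\ref{lem:zero-threshold} this is equivalent to $f_{\Q}$ being a homotopy equivalence, and then $\relcat(f_{\Q})=0$. Conversely, $\relcat(f_{\Q})=0$ forces $\msecat(f_{\Q})=0$ through $\msecat\leq\secat\leq\relcat$, and Lemma~\ref{lem:zero-threshold} again gives acyclicity of $I$.

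\emph{Necessity for $n\geq1$.} Assume $\relcat(f_{\Q})\leq m-1$. By \eqref{eq:relative-category-criterion} we have a CDGA retraction $\rho\colon E_m\to R$ with $\varphi\rho\simeq\lambda_m$ relative to $R$. Restrict $\rho$ to the cone $Q=R\oplus sP$, which sits in $E_m$ as $R\oplus(R\otimes Z_0)$. This yields $h=\rho\circ s\colon P\to R$ with $d_Rh+hD_P=\varepsilon$. The goal is to correct $h$ into a map with values in $I$. Since $\lambda_m$ kills $Z_0$, the relative homotopy restricted to $sP$ gives an $R$-linear map $k\colon P\to A$ of degree $-2$ (up to sign conventions) with $\varphi h=dk\pm kD_P$. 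I plan to extract $k$ from a relative path object $A\otimes\Lambda(t,dt)$: the $dt$-component integrated on $Z_0$, which is $R$-linear precisely because the homotopy is constant on $R$. Using the inclusion $A\hookrightarrow R$, which is a cochain section of $\varphi$, extend $k$ to $\tilde k\colon P\to R$ by composing with this inclusion; $R$-linearity must be rechecked here, and I expect to set $\tilde k$ on free generators of $G$ and extend $R$-linearly. Then set $h'=h-(d_R\tilde k\pm\tilde kD_P)$. This is still a nullhomotopy of $\varepsilon$, since we subtracted a boundary in Hom. Moreover $\varphi h'=0$ on generators, hence $h'(P)\subset I$. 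The delicate point is that $\varphi(d_R\tilde k\pm\tilde kD_P)=dk\pm kD_P$ holds on generators, and this uses that $\varphi$ is $R$-linear onto the $R$-module $A$.

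\emph{Sufficiency for $n\geq1$.} Corollary~\ref{cor:ideal-nullhomotopy} gives a CDGA retraction $\rho\colon E_m\to R$ with $\rho(Z)\subset I$, so $\varphi\rho=\lambda_m$ strictly, since both vanish on $Z$ and agree on $R$. The remaining step is geometric: translate this strict compatible retraction into a homotopy section $s$ of the Ganea map $g_{m-1}$ satisfying $sf\simeq\iota_{m-1}$. Following \cite[Section~5 and Proposition~29]{CGV2014}, I will model $g_{m-1}$ by $R\hookrightarrow R\otimes\Lambda Z_{m-1}$ and $\iota_{m-1}$ by a map to $A$ under $R$. By Theorem~\ref{thm:quotient-criteria} and its proof, the Ganea model maps to $E_m$ under $R$ compatibly with the maps to $A$. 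I will produce this explicit lift by Sullivan lifting against the surjective quasi-isomorphism $\zeta_m$, keeping the composite to $A$ equal to the canonical one. Precomposing $\rho$ with the lift gives the relative section, realized through rational realization.

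I expect this Ganea-model compatibility to be the main obstacle, together with the sign and linearity bookkeeping of $\tilde k$ in the necessity step.
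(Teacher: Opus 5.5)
Your proposal is correct and follows essentially the paper's own proof: the case $n=0$ via Lemma~\ref{lem:zero-threshold}; necessity by restricting $\rho$ and the integrated relative homotopy to the cone, then subtracting $\partial\widetilde k$ for a graded $R$-linear lift $\widetilde k$ of $k$; and sufficiency via Corollary~\ref{cor:ideal-nullhomotopy}, a Sullivan lift of the Ganea model against $\zeta_m$, and \cite[Proposition~29]{CGV2014}. The one point you leave implicit is which map is lifted. In the paper it is $\nu\colon\mathcal G_n\to R/I^m$ with $\nu|_R=\pi_m$ and $\nu(U_m)=0$, which is a CDGA map because the pushed-out Ganea differential satisfies $\overline D(U_m)\subset I^m+R\otimes\Lambda^+U_m$; the identity $\varphi\rho\widetilde\nu=\overline\varphi\nu$ then gives $\tau(U_m)\subset I$.
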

\begin{proof}
For $n=0$, the inclusion $I\hookrightarrow I$ is the identity,
which is zero in $\mathcal D(R)$ exactly when $I$ is acyclic.
Lemma~\ref{lem:zero-threshold} and the lower bound
$\msecat\leq\relcat$ identify this condition with
$\relcat(f_{\Q})=0$.
Now suppose $n\geq1$, put $m=n+1$, and use the controlled resolution
$\varepsilon_m:P_m\xrightarrow{\simeq}I^m$ from which $E_m$ is
constructed. Its cone subcomplex is
\[
 Q_m=R\oplus sP_m,\qquad
 D_E(sp)=\varepsilon_m(p)-sD_mp.
\]
Suppose first that $\relcat(f_{\Q})\leq n$. Choose $\rho$ and a
homotopy $H:E_m\to A\otimes\Lambda(u,du)$ from $\lambda_m$ to
$\varphi\rho$, relative to $R$. Integration in $u$ gives an
$R$-linear cochain homotopy $\mathcal H:E_m\to A$ of degree $-1$,
vanishing on $R$, such that
\[
 d_A\mathcal H+\mathcal H D_E=\varphi\rho-\lambda_m.
\]
Explicitly, with polynomial forms written on the right, integration
sends $a u^j$ to zero and $a u^jdu$ to
$(-1)^{|a|}a/(j+1)$. The $R$-action is through $\varphi$;
its constancy along $H$ gives the asserted graded $R$-linearity.

Define $h(p)=\rho(sp)$ and $k(p)=\mathcal H(sp)$, of degrees $-1$ and
$-2$, respectively. Restricting the two cochain identities to the cone
gives
\[
 \partial h=\varepsilon_m,\qquad
 \partial k=\varphi h,
\]
where $\partial$ is the Hom differential. Since $P_m$ is graded free
and $\varphi$ is surjective, $k$ has a graded $R$-linear lift
$\widetilde k:P_m\to R$. Then
$h_I=h-\partial\widetilde k$ has values in $I$ and satisfies
$\partial h_I=\varepsilon_m$. Thus $[I^m\hookrightarrow I]=0$.

Conversely, Corollary~\ref{cor:ideal-nullhomotopy}, applied to
$E_m$, gives a CDGA retraction $\rho:E_m\to R$ with
$\rho(Z)\subset I$. Hence
$\varphi\rho=\lambda_m$ on $R$ and $Z$.

To obtain the relative section, take the Ganea model of
\cite[Section~5]{CGV2014}. Here
$I^{\otimes m}=I\otimes\cdots\otimes I$ is the differential ideal
of $R^{\otimes m}$ obtained by taking $m$ tensor factors.
Choose a relative Sullivan model
\[
 R^{\otimes m}\lhook\joinrel\longrightarrow
 (R^{\otimes m}\otimes\Lambda U_m,D)
 \xrightarrow{\xi_m\,\simeq}R^{\otimes m}/I^{\otimes m}
\]
with $\xi_m(U_m)=0$, and push it out along the multiplication
$\mu_m:R^{\otimes m}\to R$. This gives a model
$j_n^{\mathrm G}:R\hookrightarrow
\mathcal G_n=(R\otimes\Lambda U_m,\overline D)$
of $G_n(f_{\Q})\to Y_{\Q}$. Since
\[
 D(U_m)\subset I^{\otimes m}
       +(R^{\otimes m}\otimes\Lambda^+U_m),
 \qquad \mu_m(I^{\otimes m})\subset I^m,
\]
the assignments $\nu|_R=\pi_m$ and $\nu(U_m)=0$ define a CDGA
map $\nu:\mathcal G_n\to R/I^m$. Sullivan lifting against the
surjective quasi-isomorphism $\zeta_m$ supplies a map
$\widetilde\nu:\mathcal G_n\to E_m$ under $R$ with
$\zeta_m\widetilde\nu=\nu$.
The composite $\tau=\rho\widetilde\nu$ then satisfies
\[
 \tau j_n^{\mathrm G}=\operatorname{id}_R,\qquad
 \varphi\tau=\lambda_m\widetilde\nu
       =\overline\varphi\zeta_m\widetilde\nu
       =\overline\varphi\nu.
\]
Thus $\tau(U_m)\subset I$, and
\cite[Proposition~29]{CGV2014} gives $\relcat(f_{\Q})\leq n$.
\end{proof}

\subsection{The general comparison}\label{sec:main-proof}

For a general rationally retractive map, the comparison of
consecutive ideal powers gives an ideal-valued nullhomotopy one power
above the module threshold. The relative-category criterion identifies
its topological meaning and proves the uniform bound.

\begin{proof}[Proof of Theorem~\ref{thm:main}]
Put $q=\msecat(f_{\Q})$, and choose the split minimal model
$\varphi:R=A\otimes\Lambda W\to A$ of
Lemma~\ref{lem:split-minimal-model}, with $I=\ker\varphi$.
Forgetting multiplication gives
$\msecat(f_{\Q})\leq\secat(f_{\Q})$, and the compatibility required
of a relative section gives $\secat(f_{\Q})\leq\relcat(f_{\Q})$.
These lower bounds settle the case $q=\infty$.

The case $q=0$ follows from Lemma~\ref{lem:zero-threshold}.

For $1\leq q<\infty$, equation~\eqref{eq:msecat-derived} gives
$[I^{q+1}\hookrightarrow R]=0$. Corollary~\ref{cor:one-extra-power}
then gives $[I^{q+2}\hookrightarrow I]=0$.
Theorem~\ref{thm:relative-category}, with $n=q+1$, yields
$\relcat(f_{\Q})\leq q+1$.
\end{proof}

\begin{proof}[Proof of Corollary~\ref{cor:diagonal-bound}
(application to the diagonal)]
Either projection retracts $\Delta:X\to X\times X$.
Rationalization preserves finite products in this setting, so
$\Delta_{\Q}$ models the diagonal of $X_{\Q}$. By definition,
\[
 \secat(\Delta_{\Q})=\TC_0(X),\qquad
 \msecat(\Delta_{\Q})=\MTC(X).
\]
A rational CW model is locally equiconnected. For such spaces,
\cite[Theorem~12]{CGV2014} identifies monoidal topological complexity
with the relative category of the diagonal. Theorem~\ref{thm:main}
therefore gives the stated chain.
\end{proof}

The diagonal also inherits the exact criterion. For a minimal model
$B$ of $X$, put $R=B\otimes B$ and $I=\ker(\mu:R\to B)$.
Then, for $n\geq0$,
\[
 \TC_0^M(X)\leq n
 \quad\Longleftrightarrow\quad
 [I^{n+1}\hookrightarrow I]=0\text{ in }\mathcal D(R).
\]
Theorem~\ref{thm:poincare-main} gives vanishing at the module
threshold for Poincar\'e duality diagonals.

\subsection{The trace class}\label{subsec:trace-class}

The exact coefficient sequence measures the obstruction to an
ideal-valued nullhomotopy at the module threshold. Retain the split
model $\varphi:R=A\otimes\Lambda W\to A$, its section $\sigma$,
and its kernel $I=(W)$.

\Needspace{12\baselineskip}
\begin{proposition}\label{prop:canonical-trace}
Let $\varphi:R=A\otimes\Lambda W\to A$ be the split model of
Lemma~\ref{lem:split-minimal-model}, put $I=(W)$, let $m\geq1$,
and let $\varepsilon:P\xrightarrow{\simeq}I^m$ be a
semifree resolution over $R$. Suppose that
$[I^m\hookrightarrow R]=0$ in $\mathcal D(R)$. Set
$C_T=\operatorname{Hom}_R(P,T)$ for $T=I,R,A$, where $R$ acts
on $A$ through $\varphi$. The connecting map induces an isomorphism
\begin{equation}\label{eq:trace-connecting-isomorphism}
 \delta:\operatorname{coker}\!\left(H^{-1}(C_R)\longrightarrow
 H^{-1}(C_A)\right)
 \xrightarrow{\ \cong\ }
 \ker\!\left(H^0(C_I)\longrightarrow H^0(C_R)\right).
\end{equation}
For an $R$-linear nullhomotopy $h:P\to R$ of $\varepsilon$, call
the closed map $\varphi h$ the \emph{trace} of $h$. It represents
the class corresponding to
\begin{equation}\label{eq:canonical-trace}
 o_m=[I^m\hookrightarrow I].
\end{equation}
In particular, $o_m=0$ if and only if $\varepsilon$ has an
$R$-linear nullhomotopy with values in $I$.
\end{proposition}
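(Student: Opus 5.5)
The plan is to apply $\operatorname{Hom}_R(P,-)$ to the short exact sequence of differential $R$-modules
\[
 0\longrightarrow I\longrightarrow R\xrightarrow{\ \varphi\ }A\longrightarrow 0.
\]
Since $P$ is semifree, and in particular graded free, this gives a short exact sequence of Hom complexes $0\to C_I\to C_R\to C_A\to0$. The associated long exact cohomology sequence contains the segment
\[
 H^{-1}(C_R)\to H^{-1}(C_A)\xrightarrow{\ \delta\ }H^0(C_I)\to H^0(C_R).
\]
Exactness at $H^{-1}(C_A)$ and at $H^0(C_I)$ yields the isomorphism \eqref{eq:trace-connecting-isomorphism} directly. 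Because $P$ is semifree, $H^0(C_T)$ is identified with $\operatorname{Hom}_{\mathcal D(R)}(I^m,T)$. Under this identification, $[\varepsilon]\in H^0(C_I)$ is $o_m=[I^m\hookrightarrow I]$, and its image in $H^0(C_R)$ is $[I^m\hookrightarrow R]$, which vanishes by hypothesis. So $o_m$ lies in the kernel on the right.

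Next I will identify the trace with $\delta^{-1}(o_m)$, up to the sign convention for the connecting map. Take an $R$-linear nullhomotopy $h\colon P\to R$ with $\partial h=\varepsilon$, which exists by Lemma~\ref{lem:derived-retraction}. Then
\[
 \partial(\varphi h)=\varphi\varepsilon=0,
\]
because $\varepsilon$ has values in $I=\ker\varphi$, so $\varphi h$ is a closed element of $C_A$ of degree $-1$. By the standard recipe for the connecting map, we lift $\varphi h$ to $h\in C_R$, apply $\partial$ to get $\varepsilon$, and observe that $\varepsilon$ lies in $C_I$. Hence $\delta[\varphi h]=[\varepsilon]=o_m$.

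Two nullhomotopies $h,h'$ differ by a cycle of $C_R$ of degree $-1$. Their traces therefore differ by an element of the image of $H^{-1}(C_R)$, so the class of the trace in the cokernel is well defined. It is the class corresponding to $o_m$. A short sign check against the convention $\partial f=Df-(-1)^{|f|}fD$ fixes the sign, and I will normalize $\delta$ so that no sign appears.

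For the final equivalence, suppose first that $o_m=0$. Then $\varepsilon=\partial h_I$ for some $h_I\in C_I$ of degree $-1$, and $h_I$ is a nullhomotopy with values in $I$. Conversely, if such an $h_I$ exists, then $[\varepsilon]=0$ in $H^0(C_I)$, and this class is $o_m$ by semifreeness.

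As a cross-check through the isomorphism, $o_m=0$ holds iff the trace of some, hence any, $h$ lies in the image of $H^{-1}(C_R)$. In that case we can write $\varphi h=\varphi c+\partial k$ with $c$ a cycle, lift $k$ to $\widetilde k\in C_R$ using surjectivity of $\varphi$ and freeness of $P$, and set $h-c-\partial\widetilde k$. This map has values in $I$, which is the same argument as in the proof of Theorem~\ref{thm:relative-category}.

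The only step requiring care is the identification of $H^0(C_T)$ with derived Hom. This needs semifreeness of $P$, together with the compatibility of the connecting map with the distinguished triangle $I\to R\to A$. Both are standard, so I expect no real obstacle beyond sign bookkeeping.
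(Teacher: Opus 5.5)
Your proposal is correct and follows essentially the same route as the paper: the degreewise exact sequence $0\to C_I\to C_R\to C_A\to0$ (using that $P$ is graded free), its long exact cohomology sequence, and the computation $\delta[\varphi h]=[\partial h]=[\varepsilon]=o_m$ by lifting the trace $\varphi h$ to $h$. Your additional checks (independence of the choice of $h$ modulo the image of $H^{-1}(C_R)$, and the explicit correction $h-c-\partial\widetilde k$) are valid but not needed for the statement.
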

\begin{proof}
Since $P$ is graded free, the coefficient sequence gives a degreewise
exact sequence
\begin{equation}\label{eq:trace-exact-sequence}
 0\longrightarrow C_I\longrightarrow C_R
 \xrightarrow{\varphi_*}C_A\longrightarrow0.
\end{equation}
The long exact cohomology sequence gives
\eqref{eq:trace-connecting-isomorphism}. Moreover,
$\partial(\varphi h)=\varphi\varepsilon=0$, and lifting
$\varphi h$ by $h$ computes its connecting class as
$[\partial h]=[\varepsilon]=o_m$. Finally, this class vanishes
in $H^0(C_I)$ precisely when $\varepsilon$ is a boundary in $C_I$.
\end{proof}

Thus $o_m$ is the inclusion class expressed through the trace of a
module nullhomotopy. Theorem~\ref{thm:relative-category} identifies
its vanishing with $\relcat(f_{\Q})\leq m-1$. In particular, for
$q=\msecat(f_{\Q})<\infty$,
\[
 o_{q+1}=0\quad\Longleftrightarrow\quad\relcat(f_{\Q})=q.
\]
In that case $\secat(f_{\Q})=q$ as well. The comparison of consecutive
powers always gives $o_{q+2}=0$.
The duality arguments in Section~\ref{sec:poincare} will make
$o_{q+1}$ vanish for Poincar\'e duality diagonals.
Section~\ref{sec:remaining-obstruction} gives complementary criteria
for general retractive maps.

For the resolution and nullhomotopy $h$ of
Proposition~\ref{prop:canonical-trace}, the algebra splitting gives
an $A$-linear nullhomotopy
$\widehat h=h-\sigma\varphi h$ with values in $I$, as in
\cite[Proposition~33]{CGV2014}.
Its $R$-linearity is governed by the formula
\begin{equation}\label{eq:normalization-defect}
 \widehat h(rp)-(-1)^{|r|}r\widehat h(p)
 =(-1)^{|r|}\bigl(r-\sigma\varphi(r)\bigr)\sigma\varphi h(p)
\end{equation}
for homogeneous $r\in R$ and $p\in P$. The trace class records
the compatibility needed to obtain a primitive in the category of
$R$-modules.

\section{Poincar\'e duality and equality of the invariants}
\label{sec:poincare}

Theorem~\ref{thm:relative-category} reduces equality with module
sectional category to the target of a nullhomotopy: the module
criterion uses $R$, whereas relative category uses $I$. We first
obtain an evaluation with values in $I$ from compatible finite
Poincar\'e duality models. Combining it with the controlled comparisons
of Proposition~\ref{prop:power-comparison} preserves the original
ideal power and gives the desired equality. We then characterize
ideal-valued evaluation for general Poincar\'e duality fibres by
vanishing of Thom multiplication and the self-intersection formula.
The finite-model route proves all higher diagonal equalities at once
(Corollary~\ref{cor:higher-pd}). The fibre route gives further
criteria for sections and a second proof of the diagonal result,
independent of Lambrechts--Stanley
(Remark~\ref{rem:pd-fibre-diagonal}).

\subsection{Duality and the filtration by ideal powers}

For a differential $R$-module $M$, write
\[
 \mathbb D_R M=\mathbf R\operatorname{Hom}_R(M,R),
 \qquad C=\mathbb D_R I.
\]
Let $\eta_I:I\to\mathbb D_R C$ be the natural biduality morphism.
We write derived evaluation in the order $I\otimes_R^{\mathbf L}C$;
on homogeneous cochain representatives its convention is
$x\otimes u\mapsto(-1)^{|x||u|}u(x)$.

The comparisons of Remark~\ref{rem:higher-power-comparisons} give
\begin{equation}\label{eq:pd-iterated-comparison}
 j_{1,m-1}:P_m\longrightarrow P_1,
 \qquad j_{1,m-1}(P_m)\subset I^{m-1}P_1
 \qquad(m\geq1).
\end{equation}
They strictly lift $I^m\hookrightarrow I$, with $j_{1,0}=\id_{P_1}$.

\begin{lemma}\label{lem:power-duality}
Let $\varphi:R=A\otimes\Lambda W\to A$ be the split model of
Lemma~\ref{lem:split-minimal-model}, put $I=(W)$, and use the
comparisons \eqref{eq:pd-iterated-comparison}. Suppose that $\eta_I$ is an isomorphism in $\mathcal D(R)$ and that
derived evaluation admits a factorization
\begin{equation}\label{eq:pd-ideal-evaluation}
 I\otimes_R^{\mathbf L}C\xrightarrow{\widetilde{\operatorname{ev}}}I
       \lhook\joinrel\longrightarrow R.
\end{equation}
Then, for every $m\geq1$,
\begin{equation}\label{eq:pd-power-nullity}
 [I^m\hookrightarrow R]=0
 \quad\Longrightarrow\quad
 [I^m\hookrightarrow I]=0
 \qquad\text{in }\mathcal D(R).
\end{equation}
\end{lemma}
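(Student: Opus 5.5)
The plan is to construct an explicit $R$-linear nullhomotopy of $\varepsilon_m$ with values in $I$. The inputs are a given $R$-valued nullhomotopy, the iterated comparison $j_{1,m-1}$, and the ideal-valued evaluation. The comparison contributes $m-1$ ideal factors and the evaluation contributes the last one. The case $m=1$ is covered by Lemma~\ref{lem:zero-threshold}: if $[I\hookrightarrow R]=0$ then $I$ is acyclic. So I will assume $m\geq2$, although the argument below should also work uniformly.

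\textbf{Step 1 (adjoint form of the hypothesis).} Fix $h:P_m\to R$ with $d_Rh+hD_m=\varepsilon_m$. I use the cofibrant representative $C=\operatorname{Hom}_R(P_1,R)$ of $\mathbb D_RI$. The map $\varepsilon_1$ is then represented by $P_1$, and derived evaluation is represented by $\operatorname{ev}:P_1\otimes_RC\to R$. I will rewrite $\varepsilon_m$ as a composite
\[
 P_m\xrightarrow{\;j_{1,m-1}\;}I^{m-1}P_1\subset P_1\xrightarrow{\;\eta\;}\operatorname{Hom}_R(C,R)\xrightarrow{\;\varepsilon\text{-image}\;}\cdots
\]
The key observation is that $\varepsilon_m=\varepsilon_1 j_{1,m-1}$ strictly. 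Hence the hypothesis $[\varepsilon_m]=0$ says that the closed element $\varepsilon_1\in\operatorname{Hom}_R(P_1,R)=C$, after pulling back along $j_{1,m-1}$, becomes exact in $\operatorname{Hom}_R(P_m,R)$.

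\textbf{Step 2 (using biduality to move the nullhomotopy to the $C$-side).} Write $\varepsilon_1=\operatorname{ev}(-\otimes c_0)$, where $c_0\in C$ is the closed element $\varepsilon_1$ itself. The map $\eta_I:P_1\to\operatorname{Hom}_R(C,R)$ is a quasi-isomorphism. Precomposition with $j_{1,m-1}$ gives the class $j^*[\varepsilon_1]$ in $H^0\operatorname{Hom}_R(P_m,R)$. I will identify this class with the image of $[c_0]$ under the map $\mathbb D_RI\to\mathbb D_R I^m$ induced by the inclusion, and this class vanishes by hypothesis. Applying biduality in adjoint form gives a homotopy-commutative square relating $\operatorname{ev}(\,\cdot\otimes c_0)$ to the evaluation of $\eta_I$. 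Now replace $\operatorname{ev}$ by the factorization $\widetilde{\operatorname{ev}}$ of \eqref{eq:pd-ideal-evaluation}, represented by a map $P_1\otimes_RC\to I$ that is homotopic to $\operatorname{ev}$ after composing with $I\hookrightarrow R$. This yields a representative $e:P_1\to I$ of $[I\hookrightarrow I]$, namely $p\mapsto\widetilde{\operatorname{ev}}(p\otimes c_0)$, and it is homotopic to $\varepsilon_1$ as a map into $R$.

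\textbf{Step 3 (combining with the comparison).} Precompose $e$ with $j_{1,m-1}$. Because $j_{1,m-1}$ has coefficients in $I^{m-1}$, graded $R$-linearity puts $e\circ j_{1,m-1}$ in $I^{m-1}\cdot I\subset I^m$. Equally, the homotopy witnessing $\operatorname{ev}\simeq\iota\widetilde{\operatorname{ev}}$, precomposed with $j_{1,m-1}$, lands in $I^{m-1}R\subset I$. So in $\operatorname{Hom}_R(P_m,I)$ the map $\varepsilon_m$ is homotopic to $(\,\cdot\,)\mapsto\widetilde{\operatorname{ev}}(j_{1,m-1}(\cdot)\otimes c_0)$. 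Since $[j^*c_0]=0$, the element $j^*c_0$ is exact, $j^*c_0=\partial\gamma$ with $\gamma\in\operatorname{Hom}_R(P_m,R)$.

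\textbf{Main obstacle.} The difficulty is the last transfer. The element $\gamma$ lives on $P_m$, but $\widetilde{\operatorname{ev}}$ needs an input in $C=\mathbb D_RI$. I would handle this by applying biduality to $I^m$ as well, that is, by applying the comparison map \emph{before} dualising, so that the nullhomotopy lives in $\operatorname{Hom}_R(P_m,C)$-type terms and the evaluation can be applied pointwise with the $P_1$-factor supplied by $j_{1,m-1}$. Concretely, the nullity $[I^m\hookrightarrow R]=0$ means the composite $I^m\to I\xrightarrow{\eta_I}\mathbb D_RC$ has adjoint $I^m\otimes^{\mathbf L}_RC\to R$ that factors through $\operatorname{ev}$. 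Because $\eta_I$ is invertible, the problem reduces to showing that this adjoint factors through $I$. That factorization is exactly the composite $\widetilde{\operatorname{ev}}\circ(j_{1,m-1}\otimes 1)$, and the filtration bound ensures that the homotopies remain $I$-valued. Checking the signs and the coherence of these homotopies is the step I expect to require the most care.
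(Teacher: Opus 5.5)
\textbf{There is a genuine gap: the proposal never reaches a vanishing statement.} You assemble the right ingredients: a cochain representative of $\widetilde{\operatorname{ev}}$, the bound $j_{1,m-1}(P_m)\subset I^{m-1}P_1$, and the adjunction pairing $\operatorname{ev}\circ(u_m\otimes^{\mathbf L}_R1_C)$ with $\eta_Iu_m$, where $u_m\colon I^m\hookrightarrow I$. But the reduction you state at the end aims at the wrong target.

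Showing that the adjoint pairing factors through $I$ is not what is needed. Factorization of evaluation through $I$ is the hypothesis \eqref{eq:pd-ideal-evaluation}, and it says nothing about vanishing. The adjoint of $\eta_Iu_m$ is $\operatorname{ev}\circ(u_m\otimes1)$ regardless of any nullity assumption. What closes the argument is factorization through $I^m\hookrightarrow R$:
\begin{itemize}
\item Replace $C$ by a semifree $Q_C$. Your $\operatorname{Hom}_R(P_1,R)$ need not be semifree, so a morphism in $\mathcal D(R)$ out of $P_1\otimes_R\operatorname{Hom}_R(P_1,R)$ need not be a cochain map.
\item Represent $\widetilde{\operatorname{ev}}$ by a cochain map $e\colon P_1\otimes_RQ_C\to I$.
\item By $R$-linearity and \eqref{eq:pd-iterated-comparison}, $e(j_{1,m-1}\otimes1)$ takes values in $I^{m-1}I=I^m$. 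This gives a strict cochain map $t_m\colon P_m\otimes_RQ_C\to I^m$.
\item Then $\operatorname{ev}\circ(u_m\otimes^{\mathbf L}_R1_C)$ is $t_m$ followed by $I^m\hookrightarrow R$. That inclusion is zero by hypothesis, so the pairing is zero.
\item Its adjoint is $\eta_Iu_m$, and invertibility of $\eta_I$ gives $u_m=0$.
\end{itemize}
No homotopies have to be tracked, and no nullhomotopy $h$ or $\gamma$ is used. The case $m=1$ needs no separate treatment. The hypothesis enters only as the vanishing of a morphism in $\mathcal D(R)$, composed after $t_m$.

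\textbf{Why Steps~1--3 cannot substitute for this.} Even granting the Step~3 homotopy in $\operatorname{Hom}_R(P_m,I)$ between $\varepsilon_m$ and $e(j_{1,m-1}(\cdot)\otimes c_0)$, both maps land in $I^m$ and are known to be null only after composing into $R$. So this merely restates the problem. The information needed concerns the whole pairing on $I^m\otimes^{\mathbf L}_RC$, not its value at the single element $c_0$. Biduality is what converts that pairing into a statement about $u_m$.

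\textbf{A further unjustified claim.} In Step~2 you assert that $p\mapsto\widetilde{\operatorname{ev}}(p\otimes c_0)$ represents the identity of $I$. It agrees with $1_I$ only after composing with $I\hookrightarrow R$. It is therefore determined only modulo the image of the connecting map $\operatorname{Hom}_{\mathcal D(R)}(I,A[-1])\to\operatorname{Hom}_{\mathcal D(R)}(I,I)$. That image changes with the choice of factorization $\widetilde{\operatorname{ev}}$, and it is exactly where the trace obstruction of Proposition~\ref{prop:canonical-trace} lives.
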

\begin{proof}
Choose a semifree representative $Q_C$ of $C$.
The tensor product $P_1\otimes_R Q_C$ is semifree. Indeed, for
semifree filtrations $F_iP_1$ and $F_jQ_C$, use the filtration
\[
 F_n(P_1\otimes_RQ_C)
   =\sum_{i+j\leq n}F_iP_1\otimes_RF_jQ_C.
\]
The differential of a tensor cell uses cells with a smaller sum
of filtration indices. Thus the tensor product computes
$I\otimes_R^{\mathbf L}C$, and the factor
$\widetilde{\operatorname{ev}}$ has a cochain representative
\[
 e:P_1\otimes_R Q_C\longrightarrow I.
\]
The composite with $I\hookrightarrow R$ represents evaluation in
$\mathcal D(R)$. By $R$-linearity and
\eqref{eq:pd-iterated-comparison},
\[
 e(j_{1,m-1}\otimes1)(P_m\otimes_R Q_C)
       \subset I^{m-1}I=I^m.
\]
Since $I^m$ is a differential ideal, this defines a cochain map
$t_m:P_m\otimes_R Q_C\to I^m$.

Put $u_m:I^m\hookrightarrow I$. Naturality of evaluation identifies
$\operatorname{ev}\circ(u_m\otimes_R^{\mathbf L}1_C)$ with the composite
\[
 P_m\otimes_R Q_C\xrightarrow{t_m}I^m
       \lhook\joinrel\longrightarrow R
\]
in $\mathcal D(R)$. If the last inclusion is zero, this pairing is
zero. Under derived tensor--Hom adjunction, its adjoint is
\[
 I^m\xrightarrow{u_m}I\xrightarrow{\eta_I}\mathbb D_R C.
\]
Since $\eta_I$ is an isomorphism, $u_m=0$.
\end{proof}

The evaluation in \eqref{eq:pd-ideal-evaluation} supplies one factor
of $I$, while the comparison supplies the other $m-1$ factors.
This is the filtration property that keeps the exponent unchanged.

\begin{corollary}\label{cor:duality-equality}
Let $f:X\to Y$ be a rationally retractive map between simply
connected spaces of finite rational homotopy type, with split minimal model
$\varphi:R=A\otimes\Lambda W\to A$ and $I=(W)$.
Suppose that the canonical derived biduality morphism
$I\to\mathbb D_R\mathbb D_RI$ is an isomorphism and that
canonical derived evaluation
$I\otimes_R^{\mathbf L}\mathbb D_RI\to R$ factors through
$I\hookrightarrow R$. Then
\[
 \msecat(f_{\Q})=\secat(f_{\Q})=\relcat(f_{\Q}).
\]
\end{corollary}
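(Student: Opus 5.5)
The plan is to combine Lemma~\ref{lem:power-duality} with the two exact derived criteria already proved: \eqref{eq:msecat-derived} for module sectional category and Theorem~\ref{thm:relative-category} for relative category. The chain $\msecat(f_{\Q})\leq\secat(f_{\Q})\leq\relcat(f_{\Q})$ holds in general, as recorded in the proof of Theorem~\ref{thm:main}. It therefore suffices to prove $\relcat(f_{\Q})\leq\msecat(f_{\Q})$. Put $q=\msecat(f_{\Q})$. If $q=\infty$ there is nothing to prove. If $q=0$, Lemma~\ref{lem:zero-threshold} already gives all three invariants zero.

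Now assume $1\leq q<\infty$ and set $m=q+1\geq2$. By \eqref{eq:msecat-derived}, the inclusion $I^m\hookrightarrow R$ is zero in $\mathcal D(R)$. The hypotheses of the corollary are exactly those of Lemma~\ref{lem:power-duality}, with $C=\mathbb D_RI$:
\begin{enumerate}[label=\textup{(\alph*)}]
\item $\eta_I$ is an isomorphism;
\item derived evaluation factors as in \eqref{eq:pd-ideal-evaluation}, through some $\widetilde{\operatorname{ev}}:I\otimes^{\mathbf L}_RC\to I$.
\end{enumerate}
The lemma then yields $[I^m\hookrightarrow I]=0$. Theorem~\ref{thm:relative-category}, applied with $n=q$, turns this into $\relcat(f_{\Q})\leq q$, which closes the chain.

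Little remains to check. One point is that the lemma's factorization is only required up to a choice of $\widetilde{\operatorname{ev}}$, which is exactly what the corollary's hypothesis supplies. The other is that the iterated comparisons $j_{1,m-1}$ used inside the lemma exist for the split model, which is Remark~\ref{rem:higher-power-comparisons}.

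The step most likely to need care is the case split at small $q$. Theorem~\ref{thm:relative-category} has its own argument at $n=0$. Lemma~\ref{lem:power-duality} is stated for all $m\geq1$, so the case $m=1$ would also follow directly, but routing $q=0$ through Lemma~\ref{lem:zero-threshold} avoids any subtlety there. The equality in $\mathbb N\cup\{\infty\}$ then follows.
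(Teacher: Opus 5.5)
Your proposal is correct and matches the paper's proof essentially verbatim: the module criterion \eqref{eq:msecat-derived} gives $[I^{q+1}\hookrightarrow R]=0$, Lemma~\ref{lem:power-duality} upgrades this to $[I^{q+1}\hookrightarrow I]=0$, and Theorem~\ref{thm:relative-category} closes the chain. The only difference is cosmetic: the paper runs $q=0$ through the same argument via the $n=0$ case of Theorem~\ref{thm:relative-category}, which is itself proved from Lemma~\ref{lem:zero-threshold}, whereas you invoke Lemma~\ref{lem:zero-threshold} directly.
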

\begin{proof}
If $q=\msecat(f_{\Q})<\infty$, the module quotient criterion
\eqref{eq:msecat-derived} gives $[I^{q+1}\hookrightarrow R]=0$.
Lemma~\ref{lem:power-duality} then gives
$[I^{q+1}\hookrightarrow I]=0$, so
Theorem~\ref{thm:relative-category} yields
\[
 q\leq\secat(f_{\Q})\leq\relcat(f_{\Q})\leq q.
\]
This includes $q=0$ by the zero-threshold case of that theorem.
For $q=\infty$, the module lower bound gives the conclusion.
\end{proof}

\makeatletter
\@secpenalty=0
\makeatother
\subsection{Evaluation in a differential Poincar\'e duality algebra}
\makeatletter
\@secpenalty=-300
\makeatother

A finite differential Poincar\'e duality algebra $S$ of formal
dimension $N$ has an orientation $\epsilon:S^N\to\Q$ with
$\epsilon(dS)=0$ such that the pairings
$S^k\otimes S^{N-k}\to\Q$, $x\otimes y\mapsto\epsilon(xy)$,
are nondegenerate \cite[Definition~2.2]{LS2008}.
For cochain shifts, write $M[k]^i=M^{i+k}$, and put
$M^\vee=\operatorname{Hom}_{\Q}(M,\Q)$ with its usual differential.

\begin{lemma}\label{lem:pd-evaluation}
Let $S$ be a finite differential Poincar\'e duality algebra and
$J$ a differential ideal. The natural biduality morphism
$J\to\mathbb D_S\mathbb D_SJ$ is an isomorphism. Moreover,
derived evaluation
\[
 J\otimes_S^{\mathbf L}\mathbb D_SJ\longrightarrow S
\]
factors through $J\hookrightarrow S$.
\end{lemma}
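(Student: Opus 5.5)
The plan is to exploit that a finite differential Poincar\'e duality algebra $S$ is self-injective up to shift. The Frobenius pairing gives an isomorphism of differential $S$-modules $S\cong S^\vee[-N]$, namely $x\mapsto(y\mapsto\epsilon(xy))$. Since $S$ is finite dimensional, $\operatorname{Hom}_S(M,S^\vee)=\operatorname{Hom}_{\Q}(M,\Q)=M^\vee$ for any $S$-module $M$.

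\textbf{Step 1: derived duals.} Take a semifree resolution $P\xrightarrow{\simeq}J$ over $S$. Then
\[
 \operatorname{Hom}_S(P,S)\cong\operatorname{Hom}_{\Q}(P,\Q)[-N].
\]
Linear duality over $\Q$ is exact, so this is quasi-isomorphic to $J^\vee[-N]$. Hence $\mathbb D_SJ\simeq J^\vee[-N]$, represented concretely by the ordinary Hom.

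\textbf{Step 2: biduality.} Applying Step 1 to the finite-dimensional module $J^\vee[-N]$ gives $\mathbb D_S\mathbb D_SJ\simeq (J^\vee)^\vee$. Finite dimensionality then gives $(J^\vee)^\vee\cong J$. The one thing to check is that this composite agrees with the natural biduality morphism $\eta_J$. I will verify this at the cochain level on representatives: both maps send $x$ to evaluation at $x$, once the identification $S\cong S^\vee[-N]$ is inserted. Keeping the Koszul signs consistent is routine bookkeeping.

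\textbf{Step 3: evaluation lands in $J$.} Represent $\mathbb D_SJ$ by the strict Hom complex $\operatorname{Hom}_S(J,S)$. This is legitimate because Step 1 shows that the map $\operatorname{Hom}_S(J,S)\to\operatorname{Hom}_S(P,S)$, being $J^\vee[-N]\to P^\vee[-N]$ under the Frobenius identification, is a quasi-isomorphism; so $S$ is effectively injective. Choose a semifree $Q_C\xrightarrow{\simeq}\operatorname{Hom}_S(J,S)$. Derived evaluation is then represented by
\[
 P\otimes_SQ_C\to J\otimes_S\operatorname{Hom}_S(J,S)\to S,
 \qquad x\otimes u\mapsto(-1)^{|x||u|}u(x).
\]
Its image is $\{u(x)\}$ for $x\in J$ and $S$-linear $u$. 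Since $u(x)=u(x\cdot1)$ cannot be pulled out directly, I handle it as follows. The identity on $J$ is the evaluation of $\eta_J$; the factorization claim is that the pairing is $\eta_J$-adjoint, so it suffices to show the adjoint $\mathbb D_SJ\to\mathbb D_SJ$ factors appropriately.

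The more direct route is better, and it is where the main obstacle lies. I will use the Frobenius description to identify $u\in\operatorname{Hom}_S(J,S)$ with a functional $\lambda\in J^\vee$ via $\epsilon(y\,u(x))=\lambda(yx)$. Then $u(x)$ is the element of $S$ dual to $y\mapsto\lambda(yx)$. The issue is that this element need not lie in $J$.

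To resolve this, I will use the extension--by--$J^\perp$ trick: replace the target $S$ by the quotient $S/J^{\perp}$, where $J^\perp$ is the annihilator of $J$ under the pairing. This quotient is dual to $J$ and quasi-isomorphic data are preserved. Concretely, $\mathbb D_SJ\simeq J^\vee[-N]\cong S/\operatorname{Ann}(J)$, and evaluation becomes multiplication $J\otimes S/\operatorname{Ann}(J)\to J\subset S$, $x\otimes\bar s\mapsto xs$. This is well defined because $\operatorname{Ann}(J)J=0$, and it visibly has image in the ideal $J$.

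It remains to check that this multiplication map represents derived evaluation. That is a direct comparison through the isomorphism $\operatorname{Hom}_S(J,S)\cong\operatorname{Hom}_{\Q}(J,\Q)[-N]\cong (S/J^\perp)$, where $J^\perp=\operatorname{Ann}(J)$ by nondegeneracy and the associativity $\epsilon((ys)x)=\epsilon(y(sx))$. The genuine content, and the step I expect to require the most care, is proving that every $S$-linear $u:J\to S$ is multiplication by some $s$. This is the self-injectivity of the Frobenius algebra, which I prove by the dimension count above, together with checking compatibility with the differential and signs.
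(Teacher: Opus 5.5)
Your Steps 1 and 2 coincide with the paper's argument. The orientation gives $\operatorname{Hom}_S(M,S)\cong M^\vee[-N]$ naturally in $M$, so ordinary Hom into $S$ computes $\mathbb D_S$, and biduality reduces to vector-space biduality of the finite-dimensional $J$.

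The factorization is where your route differs. The paper never describes $\operatorname{Hom}_S(J,S)$. It proves $\operatorname{Ann}(\operatorname{Ann}(J))=J$ from the pairing, then notes that $a\,u(x)=\pm u(ax)=0$ for $a\in\operatorname{Ann}(J)$, so $u(x)\in J$.

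You instead use self-injectivity: every $S$-linear $u\colon J\to S$ is the restriction of multiplication by some $s\in S$. Hence $s\mapsto(x\mapsto sx)$ induces $\operatorname{Hom}_S(J,S)\cong S/\operatorname{Ann}(J)$. This is a degree-zero cochain map of $S$-modules, since $\partial(s\cdot-)=(ds)\cdot-$, with no extra signs, and its kernel is $\operatorname{Ann}(J)$. Evaluation then becomes multiplication $J\otimes_S S/\operatorname{Ann}(J)\to J$, which visibly lands in the ideal. This is correct, and it gives a more explicit model of $\mathbb D_SJ$ than the paper needs.

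You can also drop the dimension count. By naturality of the coinduction isomorphism, restriction $\operatorname{Hom}_S(S,S)\to\operatorname{Hom}_S(J,S)$ corresponds to the surjection $S^\vee\to J^\vee$, so surjectivity is immediate. The factorization itself then needs neither $\operatorname{Ann}(J)=J^\perp$ nor the double-annihilator computation.

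Two expository points need correcting:
\begin{itemize}
\item Your intermediate remark that $u(x)$ ``need not lie in $J$'' is false; your own argument shows that it always does.
\item The ``extension-by-$J^\perp$ trick'' does not replace the target $S$ by $S/J^\perp$. It replaces the dual module by $S/\operatorname{Ann}(J)$, while the target of evaluation remains $S$.
\end{itemize}
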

\begin{proof}
The orientation gives an isomorphism of differential $S$-modules
$S\cong S^\vee[-N]$, with the standard shifted module action.
The coinduction adjunction consequently identifies
\begin{equation}\label{eq:pd-dual-identification}
 \operatorname{Hom}_S(M,S)\cong M^\vee[-N]
\end{equation}
for every differential $S$-module $M$. Vector-space duality preserves
acyclicity, so $S$ is $K$-injective and ordinary Hom into $S$
computes derived Hom. Applying
\eqref{eq:pd-dual-identification} twice identifies the natural
biduality morphism of the finite-dimensional module $J$ with its
vector-space biduality isomorphism.

Set $\operatorname{Ann}(J)=\{a\in S:aJ=0\}$. This is a
differential ideal, since $J$ is a differential ideal. The orientation
pairing gives
\begin{equation}\label{eq:pd-double-annihilator}
 \operatorname{Ann}(\operatorname{Ann}(J))=J.
\end{equation}
Indeed, $\operatorname{Ann}(J)$ is the orthogonal complement of
$J$: if $\epsilon(aJ)=0$, then
$\epsilon(ajb)=0$ for every $j\in J$ and $b\in S$, because
$jb\in J$. Nondegeneracy forces $aj=0$.
Taking the orthogonal complement twice proves
\eqref{eq:pd-double-annihilator}.

For a homogeneous graded $S$-linear map $u:J\to S$, an element
$a\in\operatorname{Ann}(J)$, and $x\in J$, graded linearity gives
\[
 a u(x)=(-1)^{|a||u|}u(ax)=0.
\]
Thus $u(x)\in\operatorname{Ann}(\operatorname{Ann}(J))=J$.
Ordinary evaluation therefore factors as a cochain map
\[
 J\otimes_S\operatorname{Hom}_S(J,S)\longrightarrow J
       \lhook\joinrel\longrightarrow S.
\]
Put $C_0=\operatorname{Hom}_S(J,S)$, and choose semifree
replacements $q:P\to J$ and $v:Q\to C_0$. Since $S$ is
$K$-injective, the map
$C_0\to\operatorname{Hom}_S(P,S)$ induced by $q$ is a
quasi-isomorphism, so $C_0$ represents $\mathbb D_SJ$.
The cochain map
\[
 P\otimes_S Q\longrightarrow J,\qquad
 p\otimes u\longmapsto(-1)^{|p||u|}v(u)(q(p)),
\]
followed by $J\hookrightarrow S$, therefore represents canonical
derived evaluation. This gives the required factorization.
\end{proof}

\begin{lemma}\label{lem:pd-transport}
Consider a strictly commutative square of CDGA morphisms whose
horizontal arrows are surjective
\[
\begin{tikzcd}[column sep=3.5em,row sep=1.6em]
 R \arrow[r] \arrow[d,"\alpha"'] & A \arrow[d,"\beta"] \\
 S \arrow[r] & B\rlap{,}
\end{tikzcd}
\]
where $\alpha$ and $\beta$ are quasi-isomorphisms. Let $I$ and $J$
be the respective kernels. If biduality is an isomorphism for $J$
and its derived evaluation factors through $J\hookrightarrow S$,
then the corresponding two properties hold for $I\hookrightarrow R$.
\end{lemma}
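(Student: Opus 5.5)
The plan is to transport everything along $\alpha$ using the restriction--extension adjunction between $\mathcal D(R)$ and $\mathcal D(S)$. First I will show that $\alpha$ restricts to a quasi-isomorphism $\alpha|_I:I\to J$. The square commutes strictly, so $\alpha(I)\subset J$. Both rows are short exact sequences of complexes, because the horizontal maps are surjective. The five lemma applied to the long exact cohomology sequences, with $\alpha$ and $\beta$ quasi-isomorphisms, then shows that $I\to J$ is a quasi-isomorphism of differential $R$-modules, where $R$ acts on $J$ through $\alpha$.

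Next I will use the fact that a quasi-isomorphism of CDGAs $\alpha:R\to S$ induces an equivalence of derived categories. Restriction $\alpha^*:\mathcal D(S)\to\mathcal D(R)$ is an equivalence with inverse $S\otimes^{\mathbf L}_R-$, and for semifree $P$ the unit $P\to S\otimes_R P$ is a quasi-isomorphism. Under this equivalence, $I$ corresponds to $J$ via $\alpha|_I$, and $R$ corresponds to $S$ via $\alpha$. I will verify the natural identifications
\[
\alpha^*\mathbf R\operatorname{Hom}_S(S\otimes_R P,S)\simeq\mathbf R\operatorname{Hom}_R(P,S)\simeq\mathbf R\operatorname{Hom}_R(P,R),
\]
the last step because $\alpha:R\to S$ is a quasi-isomorphism of $R$-modules. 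The same identifications apply to derived tensor products. Taking $P\simeq I$ gives $\alpha^*\mathbb D_SJ\simeq\mathbb D_RI$, and applying them twice gives $\alpha^*\mathbb D_S\mathbb D_SJ\simeq\mathbb D_R\mathbb D_RI$.

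The main point, and the step I expect to be the main obstacle, is checking that these identifications are compatible with the two natural maps. These are the biduality morphism $\eta$ and evaluation. I plan to do this on explicit representatives. I will choose a semifree resolution $q:P\to I$ over $R$; then $S\otimes_R P\to J$ is a semifree resolution over $S$. I will also choose a semifree $Q_C\to\operatorname{Hom}_R(P,R)$. Evaluation $P\otimes_R Q_C\to R$, composed with $\alpha$, agrees with the restriction of $S$-evaluation on $(S\otimes_RP)\otimes_S(S\otimes_RQ_C)$ along the unit maps, by $R$-linearity. Since biduality is adjoint to evaluation under tensor--Hom adjunction, its compatibility follows, so $\eta_I$ is identified with $\alpha^*\eta_J$. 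Hence $\eta_I$ is an isomorphism.

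For the factorization I will argue as follows. Let $e_S:J\otimes^{\mathbf L}_S\mathbb D_SJ\to J$ be the given factor. Under the equivalence, $\alpha^*e_S$ gives a morphism $I\otimes^{\mathbf L}_R\mathbb D_RI\to J\simeq I$, using the inverse of $\alpha|_I$ in $\mathcal D(R)$. Composing with $I\hookrightarrow R$ and then with the quasi-isomorphism $\alpha:R\to S$ gives the same map as the composite through $J\hookrightarrow S$, because $\alpha\circ(I\hookrightarrow R)=(J\hookrightarrow S)\circ\alpha|_I$. That composite is $\alpha\circ\operatorname{ev}_R$ by the evaluation compatibility above. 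Finally, $\alpha$ is invertible in $\mathcal D(R)$, so the factorization holds for $\operatorname{ev}_R$ itself.
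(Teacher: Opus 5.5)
Your proposal is correct and follows essentially the same route as the paper. The five lemma gives the quasi-isomorphism $I\to J$, and the derived equivalence along $\alpha$, compatible with derived tensor products and internal Hom, transports biduality, evaluation and the factorization. The only difference is presentational: you work with restriction $\alpha^*$ and check the compatibilities on explicit semifree representatives, whereas the paper simply observes that the extension equivalence is symmetric monoidal and preserves internal Hom.
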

\begin{proof}
The short exact sequences of kernels show that the induced map
$I\to J$ is a quasi-isomorphism of differential $R$-modules.
Extension and restriction of scalars along $\alpha$ give inverse
equivalences of derived module categories
\cite[Theorem~4.3]{SchwedeShipley2000}. Associativity of derived
tensor products gives
\[
 (S\otimes_R^{\mathbf L}M)\otimes_S^{\mathbf L}
 (S\otimes_R^{\mathbf L}N)
 \simeq S\otimes_R^{\mathbf L}(M\otimes_R^{\mathbf L}N).
\]
Thus the extension equivalence is symmetric monoidal. It sends
$R$ to $S$ and identifies $I\to R$ with $J\to S$.

It preserves internal derived Hom as well: applying the inverse
equivalence to the tensor--Hom adjunction identifies the transported
right adjoint with internal Hom over $S$. These identifications
preserve natural evaluation and biduality. Thus the stated
isomorphism and factorization transport to $I\hookrightarrow R$.
\end{proof}

The lemma transports the kernel ideal, its dual, and its evaluation.
We then apply Lemma~\ref{lem:power-duality} to the powers $I^m$
and controlled resolutions in the original split Sullivan model.

\subsection{Compatible models and the diagonal}

\begin{proposition}\label{prop:pd-compatible-map}
Let $f:X\to Y$ be a rationally retractive map between simply
connected spaces of finite rational homotopy type. Suppose its split minimal
Sullivan model $\varphi:R\to A$ fits into a strictly commutative
square of CDGA morphisms whose horizontal arrows are surjective
\[
\begin{tikzcd}[column sep=3.5em,row sep=1.6em]
 R \arrow[r,"\varphi"] \arrow[d,"\simeq"'] & A \arrow[d,"\simeq"] \\
 S \arrow[r] & B\rlap{,}
\end{tikzcd}
\]
where $S$ is a finite differential Poincar\'e duality algebra.
Then
\[
 \msecat(f_{\Q})=\secat(f_{\Q})=\relcat(f_{\Q}).
\]
\end{proposition}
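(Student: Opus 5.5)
The proof chains the three preceding lemmas. Let $J=\ker(S\to B)$, a differential ideal of the finite differential Poincar\'e duality algebra $S$, and let $I=\ker\varphi=(W)$.

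\textbf{Step 1: the properties for $J$.} By Lemma~\ref{lem:pd-evaluation}, the biduality morphism $J\to\mathbb D_S\mathbb D_SJ$ is an isomorphism in $\mathcal D(S)$. The same lemma shows that derived evaluation $J\otimes_S^{\mathbf L}\mathbb D_SJ\to S$ factors through $J\hookrightarrow S$. This is where Poincar\'e duality enters: the double annihilator identity $\operatorname{Ann}(\operatorname{Ann}(J))=J$ forces every $S$-linear map $J\to S$ to take values in $J$.

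\textbf{Step 2: transport to $I$.} The given square is strictly commutative, its horizontal arrows are surjective, and its vertical arrows are quasi-isomorphisms. These are exactly the hypotheses of Lemma~\ref{lem:pd-transport}. That lemma therefore carries both properties back to the split minimal model: the biduality morphism $I\to\mathbb D_R\mathbb D_RI$ is an isomorphism in $\mathcal D(R)$, and derived evaluation $I\otimes_R^{\mathbf L}\mathbb D_RI\to R$ factors through $I\hookrightarrow R$. The transport works because extension of scalars along $\alpha$ is a symmetric monoidal equivalence that preserves internal derived Hom, evaluation and biduality.

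\textbf{Step 3: conclusion.} These are precisely the hypotheses of Corollary~\ref{cor:duality-equality}, applied to the same split model $\varphi:R=A\otimes\Lambda W\to A$. That corollary yields
\[
\msecat(f_{\Q})=\secat(f_{\Q})=\relcat(f_{\Q}).
\]
The corollary also covers the values $0$ and $\infty$. Its engine is Lemma~\ref{lem:power-duality}: the iterated comparison $j_{1,m-1}$ contributes $m-1$ factors of $I$ and the ideal-valued evaluation contributes the last one. Together with biduality, this turns $[I^{q+1}\hookrightarrow R]=0$ into $[I^{q+1}\hookrightarrow I]=0$. Theorem~\ref{thm:relative-category} then bounds $\relcat(f_{\Q})$ by $q=\msecat(f_{\Q})$.

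\textbf{Main obstacle.} The only point needing care is in Step 2. The statements about $J$ live in $\mathcal D(S)$, while Lemma~\ref{lem:power-duality} must be applied in $\mathcal D(R)$ to the original $I$ and its controlled resolutions $P_m$. Lemma~\ref{lem:pd-transport} handles this provided the identification $I\simeq J$ is the one induced by $\alpha$. That holds because the square commutes strictly, so $\alpha$ restricts to a map $I\to J$, and the five lemma applied to the kernel sequences makes it a quasi-isomorphism. No further compatibility with the powers $I^m$ is required, since the argument uses only the properties of $I$ itself and the comparisons built inside $R$.
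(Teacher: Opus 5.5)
Your proposal is correct and follows the paper's own proof: it obtains biduality and ideal-valued evaluation for $J=\ker(S\to B)$ from Lemma~\ref{lem:pd-evaluation}, and transports both to $I=\ker\varphi$ via Lemma~\ref{lem:pd-transport}. It then applies Corollary~\ref{cor:duality-equality}, exactly as the paper does.
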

\begin{proof}
Put $I=\ker\varphi$ and let $J$ be the kernel of $S\to B$.
Lemmas~\ref{lem:pd-evaluation} and~\ref{lem:pd-transport} give
canonical biduality and ideal-valued evaluation for $I\hookrightarrow R$.
Apply Corollary~\ref{cor:duality-equality}.
\end{proof}

\begin{corollary}\label{cor:higher-pd}
Let $X$ be a simply connected space of finite rational homotopy type whose
rational cohomology is a finite-dimensional Poincar\'e duality algebra.
For every $r\geq2$,
\[
 \MTC_r(X)=\TC_{r,0}(X)=\relcat((\Delta_r)_{\Q}).
\]
\end{corollary}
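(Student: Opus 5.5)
The plan is to apply Proposition~\ref{prop:pd-compatible-map} to the higher diagonal $f=\Delta_r:X\to X^r$. After that, the corollary reduces to identifying the three invariants. The retraction hypothesis is immediate: any projection $X^r\to X$ retracts $\Delta_r$. Rationalization commutes with finite products, so $(\Delta_r)_{\Q}$ is the diagonal of $X_{\Q}$. By definition, $\secat((\Delta_r)_{\Q})=\TC_r(X_{\Q})=\TC_{r,0}(X)$ and $\msecat((\Delta_r)_{\Q})=\MTC_r(X)$. It therefore suffices to exhibit the compatible square required by Proposition~\ref{prop:pd-compatible-map}.

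To build the square, let $B_0=(\Lambda V,d)$ be the minimal model of $X$. The iterated multiplication $\mu_r:B_0^{\otimes r}\to B_0$ models $\Delta_r$. Writing $\bar v_i=v_i-v_1$ for $i=2,\ldots,r$ puts it in split form $R=B_0\otimes\Lambda(\bar V^{\oplus(r-1)})\to A=B_0$. Since the split minimal model of Lemma~\ref{lem:split-minimal-model} is unique up to isomorphism under $A$, we may work with this one.

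Next, the hypothesis that $H^*(X;\Q)$ is a simply connected Poincar\'e duality algebra lets us invoke Lambrechts--Stanley \cite{LS2008}. This gives a finite differential Poincar\'e duality algebra $S_0$ together with a quasi-isomorphism $\alpha_0:B_0\to S_0$, which may be taken surjective after the usual replacement. Set $S=S_0^{\otimes r}$. A tensor product of finite differential Poincar\'e duality algebras is again one: take the product orientation $\epsilon^{\otimes r}$; nondegeneracy holds degreewise because the pairing is a tensor product of nondegenerate pairings, and $\epsilon^{\otimes r}$ vanishes on boundaries. Take $B=S_0$ with the multiplication $S_0^{\otimes r}\to S_0$, which is surjective, and use the vertical maps $\alpha_0^{\otimes r}$ and $\alpha_0$. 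These are quasi-isomorphisms by the Künneth formula, and the square commutes strictly because $\alpha_0$ is multiplicative. The one point to check carefully is that Proposition~\ref{prop:pd-compatible-map} is stated for the split minimal model. If the coordinate change $R\cong B_0^{\otimes r}$ is an isomorphism over $A$, this is harmless: we precompose the left vertical map with it, and the square still commutes.

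With the square in hand, Proposition~\ref{prop:pd-compatible-map} gives $\msecat=\secat=\relcat$ for $(\Delta_r)_{\Q}$, which is the claimed chain of equalities. I expect the main obstacle to be the hypotheses of Lambrechts--Stanley. Their theorem needs a simply connected finite-type model with Poincar\'e duality cohomology, and that matches our assumption. The step most worth double-checking is that their model is connected through a zigzag rather than a direct map. If it is a zigzag, I would replace $B_0$ by its minimal model, which maps to the end of the zigzag by Sullivan lifting, and obtain a direct quasi-isomorphism $B_0\to S_0$. Surjectivity is not needed for the vertical maps. The horizontal arrows are multiplications, which are surjective automatically. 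For $r=2$ the argument recovers Theorem~\ref{thm:poincare-main}, using \cite[Theorem~12]{CGV2014} to identify $\relcat$ of the diagonal with $\TC_0^M$.
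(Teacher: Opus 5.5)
Your proposal is correct and follows the paper's proof essentially step for step: Lambrechts--Stanley supplies a finite differential Poincar\'e duality model, and the square of iterated multiplications on tensor powers satisfies the hypotheses of Proposition~\ref{prop:pd-compatible-map}; the difference coordinates identify its top row with a split minimal model of $\Delta_r$, and the higher evaluation fibration identifies $\secat$ and $\msecat$ with $\TC_{r,0}$ and $\MTC_r$. Your appeal to uniqueness of the split model ``under $A$'' is unnecessary, and not quite accurate in general because the section depends on the chosen retraction. It suffices that $\mu_r$, written in the coordinates $\bar v_i$, is itself a split minimal model of the form in Lemma~\ref{lem:split-minimal-model}, which is exactly what the paper uses.
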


\begin{proof}[Proof of Theorem~\ref{thm:poincare-main} and
Corollary~\ref{cor:higher-pd}]
By Lambrechts--Stanley \cite[Theorem~1.1]{LS2008}, choose a finite
differential Poincar\'e duality algebra $B$ modelling $X$. Its minimal
Sullivan model $M=(\Lambda V,d)$ admits a quasi-isomorphism
$\alpha:M\to B$. For every $r\geq2$, iterated multiplication gives
the strictly commutative square
\[
\begin{tikzcd}[column sep=3.5em,row sep=1.6em]
 R=M^{\otimes r} \arrow[r,"\mu_M^{(r)}"] \arrow[d,"\alpha^{\otimes r}"']
   & M \arrow[d,"\alpha"] \\
 S=B^{\otimes r} \arrow[r,"\mu_B^{(r)}"] & B\rlap{.}
\end{tikzcd}
\]
Both horizontal maps are surjective, both vertical maps are
quasi-isomorphisms, and the tensor orientation makes $S$ a finite
differential Poincar\'e duality algebra. Keep the first copy of $V$
and replace each generator in the other $r-1$ copies by its
difference with that first copy. These coordinates identify the
upper arrow with the split minimal model of $\Delta_r$.
Proposition~\ref{prop:pd-compatible-map} therefore gives
\[
 \msecat((\Delta_r)_{\Q})
 =\secat((\Delta_r)_{\Q})
 =\relcat((\Delta_r)_{\Q}).
\]
The higher evaluation fibration is a fibration replacement of
$\Delta_r$, so the first two terms in the displayed equality are
$\MTC_r(X)$ and $\TC_{r,0}(X)$, respectively
\cite{Rudyak2010,CV2017}.
This proves Corollary~\ref{cor:higher-pd} for all $r$ at once.
For $r=2$, relative category of the diagonal on a rational CW model
is $\TC_0^M(X)$ \cite[Theorem~12]{CGV2014}, giving
Theorem~\ref{thm:poincare-main}.
Corollary~\ref{cor:elliptic-main} follows because a simply connected
elliptic minimal Sullivan algebra has Poincar\'e duality cohomology
\cite[Proposition~38.3]{FHT2001}.
\end{proof}

\begin{corollary}\label{cor:pd-cohomological-equality}
Under the hypotheses of Theorem~\ref{thm:poincare-main},
\[
 \operatorname{HTC}(X)=\MTC(X)=\TC_0(X)=\TC_0^M(X).
\]
\end{corollary}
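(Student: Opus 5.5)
The plan is to splice Theorem~\ref{thm:poincare-main} into the inequality chain
\[
 \operatorname{HTC}(X)\leq\MTC(X).
\]
Section~\ref{sec:preliminaries} already records this inequality: a module retraction of the Ganea model forces cohomological injectivity of the Ganea map. Theorem~\ref{thm:poincare-main} then supplies the remaining equalities
\[
 \MTC(X)=\TC_0(X)=\TC_0^M(X).
\]
So the only missing link is $\MTC(X)\leq\operatorname{HTC}(X)$ under Poincar\'e duality.

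For that link I would cite Carrasquel-Vera, Kahl and Vandembroucq \cite[Corollary~4.1]{CKV2016}, which identifies $\MTC$ with $\operatorname{HTC}$ for spaces with Poincar\'e duality rational cohomology. Before citing it I would check two points.
\begin{enumerate}[label=\textup{(\roman*)}]
\item Their hypotheses. Their setting is simply connected spaces of finite type with a Poincar\'e duality cohomology algebra. Our spaces have finite rational homotopy type, and the replacement theorem \cite[Theorem~9.11]{FHT2001} realizes $X_{\Q}$ as the rationalization of a simply connected finite CW complex of finite type. Both invariants depend only on $X_{\Q}$, so the hypotheses are met.
\item The definitions. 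Their $\operatorname{mTC}$ and $\operatorname{HTC}$ must be the invariants $\msecat(\Delta_{\Q})$ and $\operatorname{Hsecat}(\Delta_{\Q})$ used here; the notational remark in Section~\ref{sec:preliminaries} already aligns these conventions.
\end{enumerate}

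If I wanted a self-contained argument instead of the citation, I would argue directly in the model $R=B\otimes B$ with $I=\ker\mu$. Suppose $\operatorname{HTC}(X)\leq n$. Cohomological injectivity of $H(R)\to H(R/I^{n+1})$ means that $H(I^{n+1})\to H(R)$ is zero. To upgrade this cohomological vanishing to vanishing in $\mathcal D(R)$, I would use duality: $R$ is quasi-isomorphic to the finite Poincar\'e duality algebra $S=B'\otimes B'$ of Lambrechts--Stanley, and over it $\operatorname{Hom}_S(-,S)\cong(-)^\vee[-N]$, as in Lemma~\ref{lem:pd-evaluation}. Under this identification, the map $[I^{n+1}\hookrightarrow R]$ in $\mathcal D(R)$ corresponds to a class in the dual of $H(I^{n+1})$, obtained by pairing with the fundamental class. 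A map into $S$ is therefore determined by its effect on cohomology, so it vanishes once its cohomology map vanishes.

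This upgrade from cohomological to derived vanishing is the main obstacle. It is exactly the content of the CKV argument, so in the write-up I would rely on the citation.
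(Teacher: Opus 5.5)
Your proposal is correct and matches the paper's proof, which cites \cite[Corollary~4.1]{CKV2016} for $\operatorname{HTC}(X)=\MTC(X)$ under Poincar\'e duality and then applies Theorem~\ref{thm:poincare-main}. Your hypothesis and definition checks, and the optional duality sketch, go beyond what the paper writes but are not needed for the argument.
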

\begin{proof}
Carrasquel-Vera, Kahl and Vandembroucq prove
$\operatorname{HTC}(X)=\MTC(X)$ under Poincar\'e duality
\cite[Corollary~4.1]{CKV2016}. Apply Theorem~\ref{thm:poincare-main}.
\end{proof}

The compatible square in Proposition~\ref{prop:pd-compatible-map}
is a condition on a model of the entire map. For the diagonal,
functoriality of multiplication supplies this square from any
Poincar\'e duality model of the space. The transport argument of
Lemma~\ref{lem:pd-transport} will also be used for the relative
Frobenius models of Corollary~\ref{cor:relative-frobenius-equality}.

\subsection{Poincar\'e duality fibres and the Thom criterion}
\label{sec:pd-fibres}

Let $g:Y_{\Q}\to X_{\Q}$ be a homotopy retraction of $f_{\Q}$.
In the split model
\[
 A\lhook\joinrel\longrightarrow R=A\otimes\Lambda W
       \xrightarrow{\varphi}A,
 \qquad I=(W),
\]
the inclusion models $g$ and $\varphi$ models its section $f_{\Q}$.
Let $F$ be the homotopy fibre of $g$, with Sullivan model
$(\Lambda W,\overline d)$. A Poincar\'e duality hypothesis on
$H^*(F;\Q)$ gives a relative duality over $A$.
Proposition~\ref{prop:thom-evaluation} shows that ideal-valued
evaluation, as used in Corollary~\ref{cor:duality-equality}, is then
equivalent to the vanishing of Thom multiplication on $I$ and to
a self-intersection formula. Theorem~\ref{thm:pd-fibre-dimension}
obtains these conditions by degree, under a bound on the base.
For fibres with finite-dimensional total rational homotopy,
Corollary~\ref{cor:finite-thom-self-intersection} obtains the
self-intersection formula from a filtered Thom cocycle.
Theorem~\ref{thm:smooth-section} obtains it from a tubular
neighbourhood. Proposition~\ref{prop:relative-frobenius-evaluation}
and Corollary~\ref{cor:relative-frobenius-equality} obtain
ideal-valued evaluation from a relative Frobenius model.

An $A$-module is called perfect if it is a retract in $\mathcal D(A)$
of a finite semifree $A$-module.

\begin{lemma}\label{lem:relative-pd-duality}
Suppose that $H^*(F;\Q)$ is a finite-dimensional Poincar\'e duality
algebra of formal dimension $N$. Then $R$ and $I$ are perfect as
differential $A$-modules. There is an $A$-linear cocycle
$\Pi\in\operatorname{Hom}_A^{-N}(R,A)$ inducing the orientation
on the fibre, called fibre integration, whose pairing induces an
isomorphism
\begin{equation}\label{eq:relative-pd-isomorphism}
 R\xrightarrow{\simeq}
       \mathbf R\operatorname{Hom}_A(R,A)[-N]
       \qquad\text{in }\mathcal D(R).
\end{equation}
For every differential $R$-module $M$, it induces a natural
isomorphism
\begin{equation}\label{eq:relative-pd-coinduction}
 \mathbb D_RM\simeq
       \mathbf R\operatorname{Hom}_A(M,A)[-N].
\end{equation}
In particular, the natural biduality morphism
$I\to\mathbb D_R\mathbb D_R I$ is an isomorphism, and
$\mathbb D_R A\simeq A[-N]$ in $\mathcal D(R)$, with $R$ acting
on $A$ through $\varphi$.
\end{lemma}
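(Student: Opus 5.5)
Since $\dim H^*(F;\Q)<\infty$ is not assumed to come with $\dim W<\infty$, I would not argue perfectness from finiteness of $W$. Instead I would work with the relative Sullivan extension $A\hookrightarrow R=A\otimes\Lambda W$, whose fibre $(\Lambda W,\overline d)$ models $F$. Filter $R$ by the degree of $A$ (the Serre filtration). The associated spectral sequence of $A$-modules has $E_2=H(A)\otimes H(\Lambda W,\overline d)$. That is, $R$ is an $A$-module whose fibre $\Q\otimes_A^{\mathbf L}R\simeq\Lambda W$ has finite-dimensional cohomology. A standard semifree argument (minimal semifree $A$-module models, with $A$ simply connected) then builds a semifree $A$-model of $R$ on a basis isomorphic to $H(F;\Q)$, which is finite, so $R$ is perfect. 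For $I$, use the exact sequence $0\to I\to R\to A\to0$ of $A$-modules; $A$ is perfect, hence so is $I$ as a cone.

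For fibre integration, take a representative $\omega\in(\Lambda W)^N$ of the fundamental class. Define $\Pi:R\to A$ as the $A$-linear map which is dual to the top class. Concretely, I would use the perfect model $A\otimes H(F)$, dualize it over $A$, and obtain an $A$-linear cocycle of degree $-N$ whose reduction $\Q\otimes_A\Pi$ is the orientation $H^N(F)\to\Q$. Existence of the cocycle lifting the orientation is an obstruction problem along the semifree filtration. The obstructions lie in $\operatorname{Hom}$ from lower cells into $A^{+}$ in wrong degrees. I expect this step to be the main obstacle. I would settle it by computing $H(\operatorname{Hom}_A(R,A))\cong\operatorname{Hom}$ via the Eilenberg--Moore spectral sequence, with $E_2=\operatorname{Ext}_{H(A)}$ reducing to $H(A)\otimes H(F)^\vee$. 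The class of the orientation sits in the corner and survives for degree reasons, since $H(F)^\vee$ vanishes above $0$ and below $-N$.

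Next, the pairing $r\mapsto(r'\mapsto\Pi(rr'))$ is an $R$-linear cochain map $R\to\operatorname{Hom}_A(R,A)[-N]$. Because $R$ is semifree over $A$ up to perfect replacement, the target computes $\mathbf R\operatorname{Hom}_A(R,A)[-N]$. To see it is a quasi-isomorphism, apply $\Q\otimes_A^{\mathbf L}-$. Perfection gives $\Q\otimes_A^{\mathbf L}\mathbf R\operatorname{Hom}_A(R,A)\simeq\operatorname{Hom}_\Q(\Lambda W,\Q)$, and the induced map is the Poincaré duality pairing of $H(F)$, which is an isomorphism. A map of bounded-below $A$-modules, with $A$ simply connected, that becomes a quasi-isomorphism after $\Q\otimes_A^{\mathbf L}$ is a quasi-isomorphism (Nakayama-type argument on semifree models). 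This gives \eqref{eq:relative-pd-isomorphism}.

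For \eqref{eq:relative-pd-coinduction}, use adjunction:
\[
 \mathbf R\operatorname{Hom}_R(M,R)\simeq\mathbf R\operatorname{Hom}_R\bigl(M,\mathbf R\operatorname{Hom}_A(R,A)\bigr)[-N]\simeq\mathbf R\operatorname{Hom}_A(M,A)[-N].
\]
This is natural in $M$. Biduality for $I$ then follows by applying the formula twice:
\[
 \mathbb D_R\mathbb D_RI\simeq\mathbf R\operatorname{Hom}_A\bigl(\mathbf R\operatorname{Hom}_A(I,A),A\bigr).
\]
Perfect $A$-modules are reflexive, and one checks that the identification is compatible with $\eta_I$ by unwinding the adjunctions. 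Finally, $\mathbb D_RA\simeq\mathbf R\operatorname{Hom}_A(A,A)[-N]=A[-N]$, with the $R$-action through $\varphi$ because $A$ is an $R$-module via $\varphi$.
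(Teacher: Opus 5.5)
Your route is the paper's except for the construction of $\Pi$, which is mis-argued. The shared steps are these:
\begin{itemize}
\item a finite semifree $A$-model of $R$ on cells $H^*(F;\Q)$ (the paper perturbs an $A$-linear extension of a contraction of $(\Lambda W,\overline d)$ onto its cohomology, which yields exactly a minimal model of the kind you invoke);
\item $\Pi$ as a top-cell functional;
\item the pairing $\Theta(r)(r')=\Pi(rr')$, compared with the Poincar\'e pairing after reduction modulo $A^+$;
\item coinduction, and biduality through $A$-perfectness of $I$.
\end{itemize}
Your Nakayama step replaces the paper's filtration by coefficient degree. It is fine if stated for bounded-below \emph{cohomology}, since $\operatorname{Hom}_A(R,A)$ itself is unbounded below once $\Lambda W$ is infinite dimensional. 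Deducing perfectness of $I$ from the triangle $I\to R\to A$ instead of the splitting $R=A\oplus I$ is equally good. When you unwind the adjunctions for $\eta_I$, the input you need is that coinduction acts by $h\mapsto\Pi\circ h$, together with the graded symmetry $\Pi(rr')=(-1)^{|r||r'|}\Pi(r'r)$.

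The step you single out as the main obstacle involves no obstruction at all. In the model $(A\otimes H^*(F;\Q),D)$, the twisting part of $D$ has coefficients in $A^+=A^{\geq2}$, because $A^1=0$, so it strictly lowers cell degree. Hence no cell differential has a component on the top cell $[\omega]$. The $A$-linear extension of ``coefficient of $[\omega]$'' is therefore already a cocycle of degree $-N$ whose reduction is the orientation. Composing with an $A$-linear quasi-isomorphism $R\to A\otimes H^*(F;\Q)$, which exists because $R$ is $A$-semifree, gives $\Pi$; this is how the paper defines it.

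The Eilenberg--Moore spectral sequence you propose instead does not have the $E_2$-term you claim. It is $\operatorname{Ext}_{H(A)}(H(R),H(A))$, and $H(R)$ need not be $H(A)\otimes H^*(F;\Q)$. For example, $A=\Lambda(a_3)$ and $R=A\otimes\Lambda(y_3,z_5)$ with $d_Rz=ay$ satisfy the hypotheses with $N=8$. Yet $H(R)$ has basis $1,a,y,az,yz,ayz$. Since $\Lambda(a)$ is self-injective, $E_2=\operatorname{Hom}_{\Lambda(a)}(H(R),\Lambda(a))$, which is six-dimensional rather than eight-dimensional. Your ``degree reason'' holds only for the spectral sequence of the coefficient-degree filtration on $\operatorname{Hom}_A(A\otimes H^*(F;\Q),A)\cong A\otimes H^*(F;\Q)^\vee$. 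There it is just the observation above.
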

\begin{proof}
Choose a contraction of $(\Lambda W,\overline d)$ onto
$H^*(F;\Q)$, and extend it $A$-linearly. Write
\[
 d_R=d_A\otimes1+1\otimes\overline d+\delta_{\mathrm{tw}}.
\]
Every coefficient of $\delta_{\mathrm{tw}}$ has positive degree in $A$,
hence degree at least two. Thus $\delta_{\mathrm{tw}}$ lowers degree in $\Lambda W$
by at least one. The chosen contracting homotopy lowers that degree
by one as well, so the perturbation series are finite on each
element. The formulas of \eqref{eq:perturbation-formulas} give an
$A$-module model
\[
 L=(A\otimes H^*(F;\Q),D_L)\simeq R.
\]
Its transferred differential, apart from $d_A$, has coefficients
in $A^{\geq2}$ and strictly lowers the degree of a cell in $H^*(F;\Q)$.
Ordering these finitely many cells by degree makes $L$ finite
semifree over $A$. Therefore $R$ is $A$-perfect. The cochain
splitting $R=A\oplus I$ makes $I$ a retract of $R$ as an
$A$-module, and hence $A$-perfect too.

Fix an orientation $H^N(F;\Q)\to\Q$. Its projection on the top-degree
cell of $L$, extended as a graded $A$-linear map of degree $-N$,
is a cocycle: the perturbation part of the transferred differential
has no component into that cell. Composing with the transferred projection $R\to L$
gives a cochain map $\Pi:R\to A$ of degree $-N$ that restricts to
the orientation on the fibre. The pairing $r\otimes r'\mapsto
\Pi(rr')$ defines
\[
 \Theta:R\longrightarrow\operatorname{Hom}_A(R,A)[-N],
 \qquad \Theta(r)(r')=\Pi(rr').
\]
With the shifted module action and Hom differential, this is a
degree-zero $R$-linear cochain map. To check it by reduction to the
fibre, first transport it to finite semifree $A$-models. Let
$i:L\to R$ and $p:R\to L$ be the transferred inclusion and
projection. They are inverse $A$-linear homotopy equivalences.
Precomposition with $i$ is therefore a homotopy equivalence
$i^*:\operatorname{Hom}_A(R,A)\to\operatorname{Hom}_A(L,A)$.
Consequently the cochain map
\[
 \Theta_L=i^*[-N]\,\Theta\,i:
 L\longrightarrow\operatorname{Hom}_A(L,A)[-N]
\]
represents the underlying $A$-module morphism of $\Theta$.
The source is finite semifree; the target is too, using the reverse
order on the dual cells. Only on these finite models do we apply
$\Q\otimes_A^{\mathbf L}-$, computed by ordinary tensor product.
Reduction modulo $A^+$ identifies the two sides with
$H^*(F;\Q)$ and $H^*(F;\Q)^\vee[-N]$, and identifies
$\Q\otimes_A\Theta_L$ with the Poincar\'e pairing: the reduced
$i$ chooses cocycle representatives, and the reduced $\Pi$ evaluates
the orientation on their products. This is an isomorphism.

Filter both finite models by degree of the coefficient in $A$.
Their finite sets of cell degrees make the filtration finite in
each total degree. The associated graded map is the identity on
$A$ tensored with this Poincar\'e pairing, hence an isomorphism.
Filtered comparison proves that $\Theta_L$, and therefore $\Theta$,
is a quasi-isomorphism.
This is the relative duality induced by fibre integration; compare
\cite[Propositions~3.2 and~4.1]{Prigge2026}.

A semifree $R$-module resolution is also semifree over $A$, since
$A\to R$ is a relative Sullivan inclusion. Applying derived Hom
to \eqref{eq:relative-pd-isomorphism} and using coinduction gives
\[
 \mathbf R\operatorname{Hom}_R
  \bigl(M,\mathbf R\operatorname{Hom}_A(R,A)\bigr)
 \simeq\mathbf R\operatorname{Hom}_A(M,A),
\]
which proves \eqref{eq:relative-pd-coinduction}.
The coinduction identification sends a homogeneous cochain $h$
to $\Pi\circ h$. Applying it twice gives, for every $R$-module $M$,
\[
 \mathbb D_R\mathbb D_R M
 \simeq
 \mathbf R\operatorname{Hom}_A
    \bigl(\mathbf R\operatorname{Hom}_A(M,A),A\bigr).
\]
The shifts by $N$ and $-N$ cancel. Under this identification,
$\eta_M^R$ becomes $\eta_M^A$: on homogeneous representatives this
is evaluation, with the Koszul sign supplied by the graded symmetry
$\Pi(rr')=(-1)^{|r||r'|}\Pi(r'r)$ and the shifted module action.
Since $I$ is $A$-perfect, $\eta_I^A$, and hence $\eta_I^R$, is an
isomorphism. Applying \eqref{eq:relative-pd-coinduction} to $M=A$
gives the last assertion.
\end{proof}

For an $A$-module $L$, write $\varphi^*L$ for its restriction
along $\varphi:R\to A$, and use the same notation for morphisms.
A \emph{Thom morphism} is a generator $\tau$ of the
one-dimensional space
\[
 \operatorname{Hom}_{\mathcal D(R)}(A[-N],R)
       \cong H^0(A)=\Q.
\]
For every $R$-module $M$, define \emph{Thom multiplication} on $M$ by
\[
 T_M=1_M\otimes\tau:
 M\otimes_R^{\mathbf L}A[-N]\longrightarrow M.
\]

\Needspace{15\baselineskip}
\begin{proposition}[Evaluation, Thom multiplication and the self-intersection formula]
\label{prop:thom-evaluation}
Under the hypotheses of Lemma~\ref{lem:relative-pd-duality},
choose a Thom morphism $\tau:A[-N]\to R$ and write
$\sigma:A\hookrightarrow R$ for the split inclusion.
Let $e\in H^N(A)$ be the \emph{self-intersection class}, defined by
$\sigma^*(\varphi\tau)=m_e$, where $m_e:A[-N]\to A$ is
multiplication by $e$. Equivalently,
$e=H^N(\varphi)(H^N(\tau)([1]))$, where
$[1]\in H^N(A[-N])$ is the shifted unit.
The following conditions are equivalent:
\begin{enumerate}[label=\textup{(\roman*)},leftmargin=*,itemsep=2pt]
\item Canonical derived evaluation
$I\otimes_R^{\mathbf L}\mathbb D_RI\to R$ factors through $I$.
\item $T_I=0$ in $\mathcal D(R)$.
\item The self-intersection formula holds:
\begin{equation}\label{eq:thom-self-intersection}
 \varphi\tau=\varphi^*(m_e)
       \qquad\text{in }\mathcal D(R).
\end{equation}
\end{enumerate}
Each condition implies, for every $m\geq1$,
\[
 [I^m\hookrightarrow R]=0\quad\Longrightarrow\quad
 [I^m\hookrightarrow I]=0
 \qquad\text{in }\mathcal D(R),
\]
and consequently
\[
 \msecat(f_{\Q})=\secat(f_{\Q})=\relcat(f_{\Q}).
\]
\end{proposition}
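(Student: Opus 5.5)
The plan is to translate all three conditions into statements about morphisms out of $A[-N]$ (or into $A$) using the relative duality of Lemma~\ref{lem:relative-pd-duality}, and to compare them through the exact triangle $I\to R\xrightarrow{\varphi}A$. The last assertion then follows at once: (i) is the hypothesis of Corollary~\ref{cor:duality-equality}, since biduality for $I$ is already supplied by Lemma~\ref{lem:relative-pd-duality}, and Lemma~\ref{lem:power-duality} gives the implication on powers.

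For (i)$\Leftrightarrow$(ii) I will use \eqref{eq:relative-pd-coinduction} to write $C=\mathbb D_RI\simeq\mathbf R\operatorname{Hom}_A(I,A)[-N]$. Derived evaluation $I\otimes^{\mathbf L}_RC\to R$ factors through $I$ exactly when its composite with $\varphi$ vanishes. By adjunction, this composite corresponds to a map $C\to\mathbb D_R(A)\otimes\cdots$, more precisely to $C\to\mathbf R\operatorname{Hom}_R(I,A)$, and by coinduction the target is $\mathbf R\operatorname{Hom}_A(I,A)$.

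My strategy is to show that, after the identifications, $\varphi\circ\operatorname{ev}$ is the dual, under $\mathbb D_R$, of the composite $I\otimes^{\mathbf L}_RA[-N]\xrightarrow{T_I}I$. The key input is that under \eqref{eq:relative-pd-isomorphism} the Thom morphism $\tau$ is dual to $\varphi\colon R\to A$, since $\mathbb D_R A\simeq A[-N]$ and $\mathbb D_R\varphi$ generates the one-dimensional Hom space, so we may normalize $\tau=\mathbb D_R(\varphi)$. Then $1_I\otimes\tau$ and $\operatorname{ev}$ followed by $\varphi$ are adjoint transposes of one another, and biduality for $I$ and $A$ (both $A$-perfect) shows that one vanishes iff the other does. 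I expect this step to be the main obstacle: the sign and shift bookkeeping in identifying $\mathbb D_R\varphi$ with $\tau$ and in matching the two adjoint transposes. I would first verify it on the finite semifree $A$-models $L$ from the proof of Lemma~\ref{lem:relative-pd-duality}, where everything is explicit.

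For (ii)$\Leftrightarrow$(iii) I will tensor the triangle $I\to R\to A$ with $A[-N]$ over $R$ and apply $\tau$, obtaining a map of triangles
\[
 T_I\to T_R=\tau\to T_A=\varphi^*(\text{?})\circ\ldots .
\]
Here $T_R$ is $\tau$ itself, and $T_A\colon A\otimes^{\mathbf L}_RA[-N]\to A$ restricted along the unit $A[-N]\to A\otimes^{\mathbf L}_RA[-N]$ equals $\varphi\tau$. Rather than computing $A\otimes^{\mathbf L}_RA$, I plan the simpler route through $\operatorname{Hom}_{\mathcal D(R)}(A[-N],-)$. The long exact sequence for $I\to R\to A$ shows that $\varphi\tau$ lifts to $I$-valued data and that the difference $\varphi\tau-\varphi^*(m_e)$ is detected by the $A$-linear part $\sigma^*$.

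The plan is then to identify $\operatorname{Hom}_{\mathcal D(R)}(A[-N],A)$, by the split inclusion $\sigma$ together with the split triangle, as $H^N(A)\oplus(\text{a piece mapping from } I)$. Under this splitting, $\varphi\tau-\varphi^*(m_e)$ has zero $H^N(A)$-component by the definition of $e$, and the remaining component is precisely the obstruction $T_I$, transported by the connecting map. Thus $T_I=0$ iff \eqref{eq:thom-self-intersection} holds. If the direct identification proves awkward, I would instead route (iii)$\Rightarrow$(i) through the dual picture from the previous step, where (iii) says that $\mathbb D_R$ of $\varphi\tau$ factors through $\varphi$.
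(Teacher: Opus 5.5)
Your reduction of the last two assertions to (i), and your argument for (i)$\Leftrightarrow$(ii), are sound. Normalizing $\tau=\mathbb D_R\varphi$ is legitimate: it is nonzero because $\eta_A$ is an isomorphism. This normalization identifies $\mathbb D_R(T_I)$ with postcomposition by $\varphi$ on $C$. The deduction $\mathbb D_R(T_I)=0\Rightarrow T_I=0$ then uses only that $\eta_I$ is an isomorphism, via $\eta_IT_I=\mathbb D_R\mathbb D_R(T_I)\,\eta$. This repackages the paper's projection property $\varphi\beta=0\Leftrightarrow\beta T_P=0$: with your normalization, the adjoint of $\beta T_P$ is $\mathbb D_R(\varphi\beta)$. (A side slip: $\mathbf R\operatorname{Hom}_R(I,\varphi^*A)\simeq\mathbf R\operatorname{Hom}_A(A\otimes^{\mathbf L}_RI,A)$, not $\mathbf R\operatorname{Hom}_A(I,A)$, but you do not use it.)

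The genuine gap is (ii)$\Leftrightarrow$(iii). By adjunction, $\operatorname{Hom}_{\mathcal D(R)}(A[-N],A)\cong\operatorname{Hom}_{\mathcal D(A)}(\Gamma[-N],A)$, where $\Gamma=A\otimes^{\mathbf L}_RA\simeq A\oplus\mathcal Z$ and $\mathcal Z=(A\otimes^{\mathbf L}_RI)[1]$. So the splitting you want is exactly the splitting of $\Gamma$; you cannot bypass the derived tensor product. The component of $\varphi\tau$ complementary to $H^N(A)$ is some $\kappa\in\operatorname{Hom}_{\mathcal D(A)}(\mathcal Z[-N],A)$.

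Your claim that this component ``is precisely $T_I$, transported by the connecting map'' is unsupported. $T_I$ lives in $\operatorname{Hom}_{\mathcal D(R)}(I\otimes^{\mathbf L}_RA[-N],I)$, and you give no map between the two groups. What naturality along $\delta\colon A\to I[1]$ actually yields is $\delta\,T_{UA}=\pm T_I[1]\,b_1$. Here $U=\varphi^*$, and $b_1=\delta\otimes1_{A[-N]}$ is a split epimorphism because $\varphi\otimes1_A$ is retracted by $U(\mu)$. This proves (iii)$\Rightarrow$(ii), since (iii) gives $T_{UA}=\varphi\tau\,U(\mu[-N])$ and $\delta\varphi=0$.

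In the other direction, $T_I=0$ only gives $\delta\,U\kappa=0$. That is, $U\kappa=\varphi\gamma$ for some $\gamma\colon U(\mathcal Z[-N])\to R$, which is not yet $\kappa=0$. The missing idea is to kill $\varphi\gamma$ with the projection property:
\begin{itemize}
\item $T_I=0$ implies $T_{U(\mathcal Z[-N])}=0$ by associativity of $\otimes^{\mathbf L}_R$, since $\mathcal Z$ is $A\otimes^{\mathbf L}_RI$ shifted.
\item Hence $\gamma\,T_{U(\mathcal Z[-N])}=0$, and the projection property gives $U\kappa=\varphi\gamma=0$.
\item Then $\sigma^*U=\id$ gives $\kappa=0$ in $\mathcal D(A)$.
\end{itemize}
Only at that point is $\varphi\tau$ determined by its $H^N(A)$-component, which is $m_e$ by the definition of $e$.

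Your fallback does not repair this. With $\tau=\mathbb D_R\varphi$, one has $\mathbb D_R(\varphi\tau)\simeq\varphi\tau$, which always factors through $\varphi$. So that reformulation of (iii) carries no information, and it would in any case address only (iii)$\Rightarrow$(i), not the hard direction.
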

When $e=0$, condition \textup{(iii)} is $\varphi\tau=0$ in
$\mathcal D(R)$, an identity before restriction along $\sigma$.
\begin{proof}
\emph{The projection property.}
Put $U=\varphi^*$ and $\mathcal F=A\otimes_R^{\mathbf L}-$, so that
$\mathcal F:\mathcal D(R)\to\mathcal D(A)$ is left adjoint to $U$.
For every $R$-module $P$ and morphism $\beta:P\to R$, we claim
\begin{equation}\label{eq:thom-projection-property}
 \varphi\beta=0\quad\Longleftrightarrow\quad\beta T_P=0.
\end{equation}
The natural $A$-action on $\mathbb D_RA$ is induced by
precomposition. Compute it on a semifree $R$-module resolution
$E_A\to A$, with its commuting $A$-action induced by $\sigma$.
For a cocycle representing $\tau$, the map
\[
 A[-N]\longrightarrow\operatorname{Hom}_R(E_A,R),
 \qquad a\longmapsto\tau(a\cdot-),
\]
with the standard shifted signs, is $A$-linear. On cohomology it
identifies $H(A)[-N]$ with $\operatorname{Ext}_R(A,R)$:
Lemma~\ref{lem:relative-pd-duality} identifies the latter with a
free rank-one module, whose $H(R)$-action factors through
$H(\varphi)$. Since $\varphi\sigma=\id_A$, the resulting
$H(A)$-action is precisely the precomposition action above.
To identify this action with coinduction, choose a $K$-injective
resolution $R\to\widehat R$. The $A$-linear zigzag
\[
 \operatorname{Hom}_R(E_A,R)\longrightarrow
 \operatorname{Hom}_R(E_A,\widehat R)\longleftarrow
 \operatorname{Hom}_R(A,\widehat R)
\]
consists of quasi-isomorphisms: $E_A$ is $K$-projective and $\widehat R$
is $K$-injective. The rightmost complex has the natural
coinduction action. Thus the preceding map gives
$A[-N]\simeq\mathbf R\operatorname{Hom}_R(A,R)$ in
$\mathcal D(A)$, with adjoint $\tau$.

The right adjunction for $U$ consequently gives a bijection
\[
 \operatorname{Hom}_{\mathcal D(A)}(L,A)
 \xrightarrow{\simeq}
 \operatorname{Hom}_{\mathcal D(R)}(U(L[-N]),R),
 \qquad h\longmapsto\tau U(h[-N]).
\]
The left adjunction sends $\varphi\beta$ to $\mathcal F\beta:\mathcal F P\to A$.
With $L=\mathcal F P$, its image under the displayed bijection is
$\beta T_P$, using the graded symmetry
$U(\mathcal F P)[-N]\simeq P\otimes_R^{\mathbf L}A[-N]$.
This proves \eqref{eq:thom-projection-property}.

\emph{Equivalence of \textup{(i)} and \textup{(ii)}.}
Put $C=\mathbb D_RI$ and write
$\operatorname{ev}:I\otimes_R^{\mathbf L}C\to R$.
If $T_I=0$, naturality and graded symmetry give
$T_{I\otimes_R^{\mathbf L}C}=T_I\otimes1_C=0$.
The projection property gives $\varphi\operatorname{ev}=0$;
the exact triangle $I\to R\to A$ then factors evaluation through
$I$. Conversely, such a factorization gives
$\varphi\operatorname{ev}=0$ and hence
$\operatorname{ev}(T_I\otimes1_C)=0$ by the same property.
Its tensor--Hom adjoint is $\eta_I T_I$.
Lemma~\ref{lem:relative-pd-duality} makes $\eta_I$ an isomorphism,
so $T_I=0$.

\emph{The two tensor units.}
For the remaining equivalence, put
\[
 \Gamma=\mathcal F(UA)=A\otimes_R^{\mathbf L}A,
 \qquad \mathcal Z=\mathcal F(I)[1],
\]
and let $\mu:\Gamma\to A$ be derived multiplication, the counit of
$\mathcal F\dashv U$ at $A$. We distinguish the two tensor units
\[
 \begin{aligned}
 \varphi\otimes1_A &: UA\longrightarrow U(\Gamma)
       &&\text{in }\mathcal D(R),\\
 1_A\otimes\varphi &: A\longrightarrow\Gamma
       &&\text{in }\mathcal D(A),
 \end{aligned}
\]
where these tensor products are derived over $R$.
The first is the adjunction unit at $UA$; the second is
$\mathcal F(\varphi)$. They satisfy
\[
 U(\mu)(\varphi\otimes1_A)=1_{UA},
 \qquad \mu(1_A\otimes\varphi)=1_A.
\]
The map $\widetilde T_A=\mathcal F(\tau):\Gamma[-N]\to A$
is the $A$-linear lift of $T_{UA}$ and the adjoint of
$\varphi\tau$: its restriction is $U(\widetilde T_A)=T_{UA}$.

Write the triangle of the augmentation as
\[
 I\xrightarrow{\iota}R\xrightarrow{\varphi}UA
       \xrightarrow{\delta}I[1].
\]
Tensor this triangle on the right with $A[-N]$.
Naturality of Thom multiplication along the $R$-linear boundary
$\delta$ gives
\begin{equation}\label{eq:thom-boundary-naturality}
 \delta U(\widetilde T_A)=\pm T_I[1]b_1,
 \qquad b_1:U(\Gamma[-N])\to I\otimes_R^{\mathbf L}A[1-N].
\end{equation}
The middle arrow of this tensor triangle is
$(\varphi\otimes1_A)[-N]$.
Since $U(\mu[-N])$ retracts it, $b_1$ is a split epimorphism.
The sign in \eqref{eq:thom-boundary-naturality} is determined by
the shift convention and does not affect the vanishing statements.

\emph{From \textup{(iii)} to \textup{(ii)}.}
Adjunction sends \eqref{eq:thom-self-intersection} to
$\widetilde T_A=m_e\mu[-N]$ in $\mathcal D(A)$.
Since $\delta U(m_e)=\delta\varphi\tau=0$, it follows that
$\delta U(\widetilde T_A)=0$.
Equation~\eqref{eq:thom-boundary-naturality} and a right inverse
to $b_1$ give $T_I=0$.

\emph{From \textup{(ii)} to \textup{(iii)}.}
The splitting $\Gamma\simeq A\oplus\mathcal Z$ reduces the
assertion to the vanishing of the complementary component of
$\widetilde T_A$. Restriction along $\sigma$ will then identify
the remaining component with $m_e$.
Apply $\mathcal F$ to the augmentation triangle. The resulting
triangle in $\mathcal D(A)$ is
\[
 \mathcal F(I)\longrightarrow
 A\xrightarrow{1_A\otimes\varphi}\Gamma
       \xrightarrow{b_2}\mathcal Z.
\]
Because $\mu(1_A\otimes\varphi)=1_A$, it splits. Choose a
section $j_0$ of $b_2$ and put
$j=j_0-(1_A\otimes\varphi)\mu j_0$. Then
\[
 b_2j=1_{\mathcal Z},\qquad \mu j=0,\qquad
 1_\Gamma=(1_A\otimes\varphi)\mu+jb_2.
\]
All these identities hold in $\mathcal D(A)$.
If $T_I=0$, equation~\eqref{eq:thom-boundary-naturality} gives
$\delta U(\widetilde T_A)=0$. The complementary component
\[
 \kappa=\widetilde T_Aj[-N]:\mathcal Z[-N]\longrightarrow A
       \qquad\text{in }\mathcal D(A)
\]
therefore satisfies $\delta U\kappa=0$.
Exactness of $\operatorname{Hom}_{\mathcal D(R)}(U(\mathcal Z[-N]),-)$
gives a lift
\[
 \gamma:U(\mathcal Z[-N])\longrightarrow R,
 \qquad \varphi\gamma=U\kappa.
\]
Associativity of the derived tensor product and the shift
identifications give
\[
 T_{U(\mathcal Z)}\simeq U(\mathcal F(T_I))[1]=0.
\]
Before shifting, both morphisms are represented by
$1_A\otimes1_I\otimes\tau$ from
$A\otimes_R^{\mathbf L}I\otimes_R^{\mathbf L}A[-N]$
to $A\otimes_R^{\mathbf L}I$.
Hence $T_{U(\mathcal Z[-N])}=0$, and the projection property
applied to $\gamma$ yields $U\kappa=0$.
Restriction along $\sigma$ is a left inverse to $U$, since
$\varphi\sigma=\id_A$. Thus $\kappa=0$ in $\mathcal D(A)$ itself.
The splitting now gives
\[
 \widetilde T_A=(\widetilde T_A(1_A\otimes\varphi)[-N])\mu[-N]
       \qquad\text{in }\mathcal D(A).
\]
Let $e'\in H^N(A)$ be the class represented by
$\widetilde T_A(1_A\otimes\varphi)[-N]$. The two sides are the adjoints of
$\varphi\tau$ and $U(m_{e'})$, respectively, so
$\varphi\tau=U(m_{e'})$. Restriction along $\sigma$ identifies
$m_{e'}=\sigma^*(\varphi\tau)=m_e$.
This proves \eqref{eq:thom-self-intersection} with the prescribed
class $e$.

Finally, condition \textup{(i)} and canonical biduality give the
assertion about ideal powers by Lemma~\ref{lem:power-duality}.
Corollary~\ref{cor:duality-equality} gives the equality of the
three invariants.
\end{proof}

\begin{theorem}\label{thm:pd-fibre-dimension}
Let $f:X\to Y$ be a rationally retractive map between simply
connected spaces of finite rational homotopy type, and let $g:Y_{\Q}\to
X_{\Q}$ be a homotopy retraction. Suppose that the rational
cohomology of the homotopy fibre of $g$ is a finite-dimensional
Poincar\'e duality algebra of formal dimension $N$.
In the associated split minimal model, suppose $W^{<c}=0$ for
an integer $c\geq2$. If there is an integer $n\geq0$ such that
\[
 H^{>n}(X;\Q)=0,
 \qquad n<N+c-1,
\]
then
\[
 \msecat(f_{\Q})=\secat(f_{\Q})=\relcat(f_{\Q}).
\]
In particular, it suffices that the cohomological dimension of
$X$ be at most $N$.
\end{theorem}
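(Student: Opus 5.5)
The plan is to verify condition (ii) of Proposition~\ref{prop:thom-evaluation}, namely $T_I=0$ in $\mathcal D(R)$, by a degree argument. Lemma~\ref{lem:relative-pd-duality} already applies under the present hypotheses, and Proposition~\ref{prop:thom-evaluation} then gives the equality of the three invariants.

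Thom multiplication on $I$ is a morphism $I\otimes_R^{\mathbf L}A[-N]\to I$ in $\mathcal D(R)$. To represent it concretely, take the controlled resolution $P_1\xrightarrow{\simeq}I$ of Proposition~\ref{prop:controlled-resolution}. Then $P_1\otimes_R A=A\otimes G_1$, with the differential reduced along $\varphi$, computes $I\otimes_R^{\mathbf L}A$. Its cells have degree at least $c$, by the bound $|g|\geq cm+(c-1)j$ with $m=1$. So $I\otimes_R^{\mathbf L}A[-N]$ is represented by an $A$-semifree module whose cells have degree at least $c+N$.

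The next step is to move the question into $\mathcal D(A)$ and truncate.
\begin{itemize}
\item Since $I\otimes_R^{\mathbf L}A[-N]=U(\mathcal F(I)[-N])$, the map $T_I$ is adjoint, through the left adjunction $\mathcal F\dashv U$ or through the projection-property bijection in the proof of Proposition~\ref{prop:thom-evaluation}, to an $A$-linear morphism.
\item Concretely, I want the group $\operatorname{Hom}_{\mathcal D(R)}(U(\mathcal F(I)[-N]),I)$ to vanish, or at least the part through which $T_I$ factors.
\item The projection property reduces vanishing of $\iota T_I$ to vanishing of $\varphi\iota=0$, which holds trivially. So $\iota T_I=0$, and $T_I$ lifts through the boundary $\delta\colon UA[-1]\to I$.
\item Hence $T_I$ factors as $\delta\circ\gamma$ for some $\gamma\colon U(\mathcal F(I)[-N])\to UA[-1]$.
\item By adjunction, $\gamma$ corresponds to a class in $\operatorname{Hom}_{\mathcal D(A)}(\Gamma\otimes_A\mathcal F(I)[-N],A[-1])$. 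This is computed by a Hom complex out of an $A$-semifree module with cells in degrees at least $c+N$, and $\Gamma=A\otimes_R^{\mathbf L}A$ has cells in degrees at least $0$.
\end{itemize}

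Now apply Lemma~\ref{lem:bounded-truncation} with $\overline A$ concentrated in degrees at most $n$. A degree-$0$ map into $A[-1]$ is a degree-$(-1)$ map into $A$, so the relevant Hom group is $\operatorname{Hom}^{-1}(P,\overline A)$, where $P$ has cells of degree at least $c+N$. This vanishes whenever $c+N-1>n$, which is exactly the hypothesis $n<N+c-1$. Hence $\gamma=0$, so $T_I=0$. The final remark follows since $c\geq2$: if $n\leq N$ then $n<N+1\leq N+c-1$.

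The main obstacle is bookkeeping of degrees and shifts: making sure $T_I$ genuinely factors through an $A$-linear morphism out of a module with cells of degree at least $c+N$, rather than only $N$. If the route through $\delta$ loses the cell bound from $\Gamma$, I would instead use the equivalent condition (iii) and prove $\kappa=0$ directly. Here $\kappa\colon\mathcal Z[-N]\to A$, where $\mathcal Z=\mathcal F(I)[1]$ is represented by $A\otimes sG_1$, whose cells have degree at least $c-1$. After the shift, the cells of $\mathcal Z[-N]$ have degree at least $c-1+N$. Maps of degree $0$ into $\overline A$ then vanish when $c-1+N>n$, which is again the hypothesis. This gives $\widetilde T_A=m_{e}\mu[-N]$ via the splitting $\Gamma\simeq A\oplus\mathcal Z$, and so the self-intersection formula. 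I expect this second route to be the cleaner one. The only care needed is that $\mathcal F(I)$ is computed by $A\otimes_R P_1$ with the stated cell degrees, and that $K$-projectivity lets Lemma~\ref{lem:bounded-truncation} compute $\operatorname{Hom}_{\mathcal D(A)}$.
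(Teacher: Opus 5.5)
Your argument is correct and is essentially the paper's proof. You obtain $\iota T_I=0$ (the paper uses Remark~\ref{rem:derived-base-change} and naturality; your projection-property argument amounts to the same thing), factor $T_I$ through the boundary $A[-1]\to I$ of the triangle $I\to R\to A$, and then show $\operatorname{Hom}_{\mathcal D(R)}(I\otimes_R^{\mathbf L}A[-N],A[-1])=0$ using the cell bound $c$ on $P_1$ and Lemma~\ref{lem:bounded-truncation}. The only difference is that you compute this group over $A$ by adjunction, whereas the paper computes it directly over $R$ with $Q_A\otimes_RP_1$; this is just bookkeeping. Your worry about $\Gamma$ is unfounded, since its cells lie in degrees at least $0$ and do not weaken the bound. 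So the fallback through $\kappa$ is not needed, although it is also valid and yields condition (iii) directly from a smaller vanishing group.
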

\begin{proof}
Choose a Thom morphism $\tau:A[-N]\to R$.
The map $I\otimes_R^{\mathbf L}A\to A$ induced by
$I\hookrightarrow R$ is zero, by
Remark~\ref{rem:derived-base-change} with $k=0$.
Naturality therefore gives
\[
 \iota T_I=\tau(\iota\otimes1_{A[-N]})=0,
 \qquad \iota:I\hookrightarrow R.
\]
Apply $\operatorname{Hom}_{\mathcal D(R)}
(I\otimes_R^{\mathbf L}A[-N],-)$ to the exact triangle
$I\to R\to A$. To prove $T_I=0$, it suffices to show that
\begin{equation}\label{eq:pd-fibre-factor-group}
 \operatorname{Hom}_{\mathcal D(R)}
   (I\otimes_R^{\mathbf L}A[-N],A[-1])
 \cong
 \operatorname{Hom}_{\mathcal D(R)}
   (A\otimes_R^{\mathbf L}I,A[N-1])
 =0.
\end{equation}

The cells of the controlled resolution $P_1\to I$ in resolution
index $j$ have degree at least $c+(c-1)j$, by the degree estimate
following \eqref{eq:resolution-degree-bound}; in particular their
degrees are at least $c$.
The cone $Q_A=R\oplus sP_1$ of $\varepsilon_1$, with differential
as in \eqref{eq:general-module-cone}, is a semifree resolution of
$A$. Its cells have degree zero or
at least $c-1$. Thus $Q_A\otimes_RP_1$ is a semifree model of
$A\otimes_R^{\mathbf L}I$ with every cell in degree at least $c$.
Apply Lemma~\ref{lem:bounded-truncation} to $A$, the source
$Q_A\otimes_RP_1$, and the degree $N-1$. It gives a
quasi-isomorphic target $\overline A$ concentrated in degrees at
most $n$, for which
\[
 \operatorname{Hom}_R(Q_A\otimes_RP_1,\overline A)^{N-1}=0
 \qquad\text{since }c+N-1>n.
\]
This proves \eqref{eq:pd-fibre-factor-group}, so $T_I=0$.
Proposition~\ref{prop:thom-evaluation} gives the equality of invariants.
Finally, $c\geq2$ makes $n\leq N$ a sufficient dimension bound.
\end{proof}

\begin{remark}\label{rem:pd-fibre-diagonal}
The first projection $X_{\Q}^r\to X_{\Q}$ retracts $\Delta_r$
and has fibre $X_{\Q}^{r-1}$, so
Theorem~\ref{thm:pd-fibre-dimension} with $c=2$ gives a second proof of Corollary~\ref{cor:higher-pd}, independent
of Lambrechts--Stanley: if $X$ has formal dimension $n$, then
$n\leq(r-1)n$. For a general Poincar\'e duality fibre, the group in
\eqref{eq:pd-fibre-factor-group} need not vanish outside this range.
\end{remark}

\section{Filtered Thom cocycles and finite-dimensional total rational homotopy}
\label{sec:finite-homotopy}

For the fibre $F$ of the retraction $g$ fixed in
\S\ref{sec:pd-fibres}, minimality of its Sullivan model gives
\begin{equation}\label{eq:geometric-W}
 W^j\cong\operatorname{Hom}_{\Q}
       (\pi_j(F)\otimes\Q,\Q),\qquad j\geq2,
\end{equation}
by \cite[Proposition~15.5 and Theorem~15.11]{FHT2001}.
Thus $\dim W<\infty$ means finite-dimensional \emph{total}
rational homotopy of the fibre. Equivalently, its rational
Postnikov tower has only finitely many nonzero stages, each of
finite rank. The homotopy exact sequence splits into graded
vector spaces:
\[
 \pi_*(Y_{\Q})\cong\pi_*(X_{\Q})\oplus\pi_*(F).
\]
This identifies the rational homotopy added by the total space to
the base; it is this summand whose total dimension is assumed finite.

This section obtains the self-intersection formula of
Proposition~\ref{prop:thom-evaluation} from a filtered Thom cocycle.

\subsection{A filtered Thom cocycle for a finite relative model}

We use relative minimality in the split model of
Lemma~\ref{lem:split-minimal-model}: the differential of a relative
generator has no linear term with a scalar coefficient.
The generator-by-generator construction of a shriek cocycle in
\cite[Definition~5.1 and Lemma~5.2]{Wakatsuki2016} concerns the diagonal.
We prove here its relative augmentation refinement, including the
ideal-power estimate needed below.  Appendix~\ref{app:filtered-thom}
gives the relative cylinder construction and the complete filtered
comparison argument; the formulas and the ideal-power estimate are
given here.

\begin{lemma}\label{lem:filtered-thom}
Suppose that $W$ is finite dimensional.  Write
$n_{\mathrm{odd}}=\dim W^{\mathrm{odd}}$,
$n_{\mathrm{even}}=\dim W^{\mathrm{even}}$, and put
\[
 N=\sum_{w\in\mathcal B_{\mathrm{odd}}}|w|
       -\sum_{w\in\mathcal B_{\mathrm{even}}}(|w|-1),
\]
where $\mathcal B$ is a homogeneous basis of $W$.
There is a relative acyclic closure
$\varepsilon:E_A=R\otimes\Lambda(sW)\longrightarrow A$ and a cocycle
$\tau\in\operatorname{Hom}_R^N(E_A,R)$ such that
\[
 H(A)\,[\tau]\cong \operatorname{Ext}_R(A,R),
 \qquad
 \tau(u)\in I^{\ell(u)+n_{\mathrm{odd}}-n_{\mathrm{even}}}
 \quad\text{for every monomial }u\in\Lambda(sW).
\]
Here $\ell(u)$ denotes its word length and $I^j=R$ for $j\leq0$.
\end{lemma}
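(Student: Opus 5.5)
We build the acyclic closure and the cocycle together, one relative generator at a time, following Wakatsuki's shriek construction but keeping track of relative length. Order a homogeneous basis $w_1,\ldots,w_r$ of $W$ by degree, so that by relative minimality $d_Rw_k$ lies in $A\otimes\Lambda(w_1,\ldots,w_{k-1})$ with no scalar linear term. Put $R_k=A\otimes\Lambda(w_1,\ldots,w_k)$ and $I_k=(w_1,\ldots,w_k)$. We argue by induction on $k$ and produce three things at each stage:
\begin{itemize}
\item[(a)] a relative acyclic closure $E_k=R_k\otimes\Lambda(sw_1,\ldots,sw_k)\xrightarrow{\simeq}A$;
\item[(b)] a cocycle $\tau_k\in\operatorname{Hom}_{R_k}^{N_k}(E_k,R_k)$, where $N_k$ is the partial sum in the formula for $N$;
\item[(c)] the bound $\tau_k(u)\in I_k^{\ell(u)+o_k-e_k}$, where $o_k$ and $e_k$ count the odd and even generators among $w_1,\ldots,w_k$.
\end{itemize}
The base case $k=0$ is $E_0=A$ and $\tau_0=\id$. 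The differential of $sw_k$ is $w_k-\widetilde{d w_k}$. Here $\widetilde{dw_k}$ is a lift of $d_Rw_k$ through the previous closure, and it has linear term at least relative length one in $sW$, by the ideal estimates of Section~\ref{sec:controlled-resolutions}. The filtration by $\ell$ is therefore respected.

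For the inductive step, treat the extension $R_{k-1}\to R_k$ as a one-generator relative extension and view $\tau_k$ as a composite of two maps. The first is the old cocycle $\tau_{k-1}$, extended $R_k$-linearly. The second is the Thom (shriek) class for the single generator $w_k$ over $R_{k-1}$.
\begin{itemize}
\item If $w_k$ is odd, the relevant complex is $\Lambda(w_k)\otimes\Gamma(sw_k)$. Its dual class multiplies by $w_k$, which has degree $|w_k|$ and gains one ideal factor; this accounts for the $+1$ in $o_k$.
\item If $w_k$ is even, the relevant complex is $\Q[w_k]\otimes\Lambda(sw_k)$. The dual class is evaluation on $sw_k$, which has degree $-(|w_k|-1)$. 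It consumes one letter of $u$ without producing an ideal factor, and this accounts for the $-1$ in $e_k$.
\end{itemize}
The naive composite is a cocycle only up to the twisting terms of $d_R$. We correct it by the perturbation formulas \eqref{eq:perturbation-formulas}, applied to the cylinder of Appendix~\ref{app:filtered-thom}. Each correction term contains a coefficient of $d_R$ on $W$ that is either in $A^{\geq2}$ times a relative generator, or has higher relative length. A filtration count, of the same shape as Table~\ref{tab:resolution-weights}, then shows that the correction never lowers $\ell(u)+o_k-e_k$ minus the ideal exponent. This maintains the bound in (c).

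Finally, $H(A)[\tau]\cong\operatorname{Ext}_R(A,R)$ should follow from the Gorenstein computation of F\'elix--Halperin--Thomas. There, $\operatorname{Ext}_R(A,R)$ is a rank-one free $H(A)$-module concentrated in the shift $N$, where $N$ is the formal dimension of the elliptic fibre (Lemma~\ref{lem:relative-pd-duality}). It then suffices to check that $[\tau]$ reduces modulo $A^+$ to a generator of $\operatorname{Ext}_{\Lambda W}(\Q,\Lambda W)$, the fibre's fundamental class. This holds because the reduction kills all twisting terms and leaves the tensor product of the one-generator Thom classes.

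The main obstacle is the perturbation estimate in the inductive step. The twisting terms of $d_Rw_k$ involve products of earlier generators $w_i$ with coefficients in $A$. When these are fed through the homotopy, they could trade ideal factors for word length in $sW$ in the wrong direction, especially through the divided-power and exterior mixing of odd and even generators. To control this, I would first grade everything by $\ell(u)$ minus the ideal exponent, and then verify that the contracting homotopy of the cylinder raises this grading by at most one. I would also verify that every twisting term lowers it by at least one, since its coefficients lie in $I$ or $A^{\geq2}$. Once these two facts hold, the bound in (c) is preserved term by term, and pointwise finiteness follows because $W$ is finite dimensional.
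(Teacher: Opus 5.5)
Your inductive architecture matches the paper's. You add one relative generator at a time, build a closure with $D(sw_k)=w_k-z_k$, and use a one-generator shriek step that multiplies by $w_k$ for odd $w_k$ and extracts the coefficient of $sw_k$ for even $w_k$; this gives the shifts $+1$ and $-1$.

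\textbf{The gap: the generation statement.} The genuine gap is $H(A)[\tau]\cong\operatorname{Ext}_R(A,R)$. The lemma assumes only $\dim W<\infty$, so the fibre need not be elliptic or satisfy Poincar\'e duality. For $W=\langle x_{2d}\rangle$, with fibre $K(\Q,2d)$, one has $N=1-2d<0$. You therefore cannot take rank-one freeness from Lemma~\ref{lem:relative-pd-duality}, nor speak of the formal dimension of an elliptic fibre.

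Your check of the generator also fails where duality does hold. Reduction modulo $A^+$ kills only the base coefficients. The fibre differential $\overline d$ on $\Lambda W$ survives, and with it the reduced terms $\bar z_k$ of the closure. So the reduced cocycle is not a tensor product of one-generator classes. In Example~\ref{ex:even-sphere-thom}, the reduced value $\tau(1)=x$ comes entirely from the correction through $z$. Its nonvanishing is exactly what must be proved.

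The paper proves freeness and generation together by showing that each step $\Phi$ is a quasi-isomorphism after the shift (Lemma~\ref{lem:complete-thom-comparison}). It filters both Hom complexes by the degree of coefficients in $B=A\otimes\Lambda(w_1,\ldots,w_{i-1})$, so the earlier relative generators count as base. In the associated graded, $dx$, the old differentials and $z$ all disappear, leaving the one-generator Koszul comparison; completeness of the filtration gives convergence. Iterating from $\operatorname{Hom}_A(A,A)=A$ then yields the claim with no duality hypothesis. This uses the closure property $z_i\in B_i^+\otimes\Lambda(sw_1,\ldots,sw_{i-1})$, which you never state.

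\textbf{The cocycle step.} The step you flag as the main obstacle needs no perturbation series. For odd $x=w_i$ with $t=sx$, the paper sets $\Phi(f)(u)=xf(u)-(-1)^pf(zu)$ and $\Phi(f)(ut^j)=0$ for $j\geq1$. For even $x$ it sets $\Phi(f)(u)=0$ and $\Phi(f)(ut)=(-1)^{p+|u|}f(u)$. That these preserve cocycles uses only $Dt=x-z$, $Dz=dx$, and, in the odd case, $x^2=z^2=0$.

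The ideal estimate then rests on one fact, which comes from the augmentation rather than from Section~\ref{sec:controlled-resolutions}. Since $z$ is a primitive of $dx$ with $\varepsilon(z)=0$, every monomial of $z$ has positive relative length $a+b\geq1$. Here $a$ counts unsuspended letters from $W$ and $b$ counts suspended letters from $sW$. Coefficients in $A^{\geq2}$ contribute no factor of $I$: they control the degree filtration of the comparison, not the ideal power. Your justification ``its coefficients lie in $I$ or $A^{\geq2}$'' conflates these two controls.

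Finally, the convention $I^j=R$ for $j\leq0$ means that suspended letters of $z$ help only when $\ell(u)+c\geq0$, where $c=n_{\mathrm{odd}}-n_{\mathrm{even}}$ at the current stage. The paper's bound $I^{a+\max\{0,\ell(u)+b+c\}}\subseteq I^{\ell(u)+c+1}$ handles this truncation. A term-by-term grading argument would also have to address it.
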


\begin{proof}
Order the basis $w_1,\ldots,w_k$ by nondecreasing degree and set
$B_i=A\otimes\Lambda(w_1,\ldots,w_{i-1})$.
Lemma~\ref{lem:relative-thom-cylinder} supplies an acyclic closure with
\[
 D(sw_i)=w_i-z_i,\qquad Dz_i=dw_i,\qquad
 z_i\in B_i^+\otimes\Lambda(sw_1,\ldots,sw_{i-1}),
\]
in which every monomial of $z_i$ contains a letter from $W\oplus sW$.
These are two separate controls: positive degree of the
$B_i$-coefficients gives the filtered comparison, and positive
relative length gives the ideal-power estimate.

Suppose that the closure $E_B$ is already constructed over
$B=A\otimes\Lambda(w_1,\ldots,w_{i-1})$, and write
$x=w_i$, $t=sx$, $R_i=B\otimes\Lambda x$, and $Dt=x-z$.
Extend a homogeneous $B$-linear cochain $f:E_B\to B$ of degree $p$
to $R_i\otimes_BE_B$ by graded $R_i$-linearity.
For homogeneous monomials $u$ in the old suspended generators, define
$\Phi(f)$ by graded $R_i$-linearity and the following formulas:
\begin{align}\label{eq:thom-phi-odd}
 |x|\text{ odd}:&\qquad
 \Phi(f)(u)=xf(u)-(-1)^p f(zu),&
 \Phi(f)(ut^j)&=0\quad(j\geq1),\\
 \label{eq:thom-phi-even}
 |x|\text{ even}:&\qquad
 \Phi(f)(u)=0,&
 \Phi(f)(ut)&=(-1)^{p+|u|}f(u).
\end{align}
The operator $\Phi$ has degree $|x|$ in the odd case and $1-|x|$
in the even case. Both degrees are odd, and
$\partial\Phi=-\Phi\partial$.
Lemma~\ref{lem:complete-thom-comparison} proves that this is a
quasi-isomorphism after the indicated shift, using the filtration by
the degree of the $B$-coefficients.  Starting with the identity cochain
of $A$ and applying these comparisons therefore gives a cocycle
$\tau$ generating $\operatorname{Ext}_R(A,R)$ over $H(A)$, of degree $N$.

It remains to prove the ideal-power estimate.  Suppose at one stage that
$f(u)\in I_B^{\ell(u)+n_{\mathrm{odd},B}-n_{\mathrm{even},B}}$, and put $c=n_{\mathrm{odd},B}-n_{\mathrm{even},B}$.
For odd $x$, the term $xf(u)$ has the required extra relative factor.
A term of $z$ with $a$ unsuspended relative letters and $b$ suspended
letters satisfies $a+b\geq1$.  Its contribution to $f(zu)$ lies in
\[
 I^{a+\max\{0,\ell(u)+b+c\}}\subseteq I^{\ell(u)+c+1}
\]
when the exponent on the right is positive. Indeed,
$\ell(u)+c+1>0$ implies $\ell(u)+c\geq0$, so the exponent on
the left is $\ell(u)+c+a+b\geq\ell(u)+c+1$.
For a nonpositive exponent the assertion is automatic.
For even $x$, the only nonzero new value is, up to sign, $f(u)$ on
$ut$; its input length increases by one while
$n_{\mathrm{odd}}-n_{\mathrm{even}}$ decreases by one.
This proves the estimate by induction, including the convention
$I^j=R$ for $j\leq0$.
\end{proof}

\begin{corollary}\label{cor:finite-thom-self-intersection}
Under the assumptions of Lemma~\ref{lem:filtered-thom}, suppose that
$n_{\mathrm{odd}}\geq n_{\mathrm{even}}$.  Then the cocycle can be chosen so that
\[
 \varphi\tau=(e_A\cdot-)\varepsilon,
 \qquad e_A=\varphi(\tau(1))\in A^N,
 \qquad de_A=0.
\]
In particular, the self-intersection formula holds already at the
level of cochains. Under the Poincar\'e duality hypothesis of
Proposition~\ref{prop:thom-evaluation}, the self-intersection class
of that proposition is $e=[e_A]$.
If $n_{\mathrm{odd}}>n_{\mathrm{even}}$, then $e_A=0$.
\end{corollary}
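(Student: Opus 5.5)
The plan is to exploit the ideal-power estimate of Lemma~\ref{lem:filtered-thom} together with the fact that $\varphi$ kills $I$. Put $c=n_{\mathrm{odd}}-n_{\mathrm{even}}\geq0$. For a monomial $u\in\Lambda(sW)$ of word length $\ell(u)\geq1$, the exponent $\ell(u)+c$ is at least $1$, so $\tau(u)\in I$ and hence $\varphi\tau(u)=0$. Since $\tau$ is graded $R$-linear and $\varphi$ is $R$-linear for the action through $\varphi$, we get $\varphi\tau(r u)=\pm\varphi(r)\varphi\tau(u)=0$ for every such $u$. On word length zero, $E_A$ is just $R\cdot1$, and
\[
 \varphi\tau(r\cdot1)=\varphi(r)\varphi(\tau(1))=\varphi(r)e_A
\]
up to the graded-linearity sign, which I will fix to match the convention $(e_A\cdot-)$ by using $|\tau|=N$ and graded commutativity. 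The acyclic closure augmentation $\varepsilon$ is $\varphi$ on $R\cdot1$ and zero on positive word length in $sW$. Indeed, $\varepsilon$ sends $sW$ to zero, which is the standard normalization of the closure of Lemma~\ref{lem:relative-thom-cylinder}. Hence $\varphi\tau$ and $(e_A\cdot-)\varepsilon$ agree on all $R$-basis monomials, and so they agree as $R$-linear maps.

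The cocycle condition on $e_A$ follows from this identity. Since $\tau$ is a cocycle, $\varphi\tau$ is a cochain map of degree $N$. Applying it to the cycle $1\in E_A$, note that $D1=0$ and $\varepsilon(1)=1$, so $d e_A=d(\varphi\tau(1))=\pm\varphi\tau(D1)=0$. Alternatively, $e_A\in A^N$, and the cochain-map property of $(e_A\cdot-)\varepsilon$ forces $de_A=0$.

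To identify $e$, I will compare with the definition $\sigma^*(\varphi\tau)=m_e$ in Proposition~\ref{prop:thom-evaluation}. The cocycle $\tau$ represents the Thom morphism $A[-N]\to R$ through the resolution $E_A\to A$: Lemma~\ref{lem:filtered-thom} states that $[\tau]$ generates $\operatorname{Ext}_R(A,R)$ over $H(A)$, so it is a generator of the one-dimensional $\operatorname{Hom}_{\mathcal D(R)}(A[-N],R)$. The formula $\varphi\tau=(e_A\cdot-)\varepsilon$ then represents $\varphi^*(m_{[e_A]})$ in $\mathcal D(R)$. Restricting along $\sigma$ gives $m_{[e_A]}$, so $e=[e_A]$. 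Equivalently, $e=H^N(\varphi)H^N(\tau)[1]$ is computed by evaluating on the lift $1\in E_A$ of the unit, which is exactly $\varphi\tau(1)=e_A$.

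Finally, suppose $c>0$. Then the estimate for $u=1$ gives $\tau(1)\in I^{c}\subset I$, so $e_A=\varphi\tau(1)=0$. The main point needing care is the sign bookkeeping in graded linearity, and checking that the closure's augmentation vanishes on all positive-length monomials. The latter holds because $\varepsilon$ is a CDGA map sending $sW$ to zero.
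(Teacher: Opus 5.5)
Your proposal is correct and follows the paper's own argument. The ideal-power estimate with $n_{\mathrm{odd}}\geq n_{\mathrm{even}}$ puts $\tau(u)\in I$ for every monomial $u\neq1$, the value on $1$ defines $e_A$, graded $R$-linearity extends the formula to all of $E_A$, and strict inequality gives $\tau(1)\in I$; your checks that $de_A=0$ and $e=[e_A]$ spell out details the paper leaves implicit.
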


\begin{proof}
The ideal-power estimate puts $\tau(u)$ in $I$ whenever $u\ne1$.
On $1$ the formula is the definition of $e_A$; graded linearity
extends it to all of $E_A$.  If $n_{\mathrm{odd}}>n_{\mathrm{even}}$, the same estimate also gives
$\tau(1)\in I$.
\end{proof}

\subsection{Equality and examples}

\begin{proof}[Proof of Theorem~\ref{thm:finite-homotopy-fibre}]
Write $B=(\Lambda W,\overline d)$ for the fibre model. By
\eqref{eq:geometric-W}, the hypothesis is precisely $\dim W<\infty$.
If $\msecat(f_{\Q})=\infty$, the lower bounds give the conclusion.
The zero case follows from Lemma~\ref{lem:zero-threshold}.
We may therefore write
$\msecat(f_{\Q})=m-1$ with $m\geq2$, so that
$[I^m\hookrightarrow R]=0$.

We first show directly that $H^*(F;\Q)$ is finite dimensional.
The powers $I^m$ are semifree over $A$. Indeed, choose a finite
Sullivan basis $w_1,\ldots,w_k$ of $W$, and assign positive
integer weights recursively so that the weight of $w_i$ exceeds
the weight of every relative monomial occurring in $d_Rw_i$.
The differential of a relative monomial, apart from the
differential of its coefficient in $A$, then strictly lowers
weight. Since $d_R(I^m)\subset I^m$, increasing weight gives a
semifree filtration of its relative-monomial basis.
The controlled resolution $P_m\to I^m$ and $R$ are also semifree
over $A$. Consequently reduction along $A\to\Q$ preserves this
resolution and its module nullhomotopy. It follows that
\[
 [\Lambda^{\geq m}W\hookrightarrow B]=0
       \quad\text{in }\mathcal D(B).
\]
Let $D$ be the largest degree of a generator of $W$.
Every cochain of $B$ in degree greater than $(m-1)D$ has relative
length at least $m$. The preceding nullity therefore implies
\[
 H^j(B)=0\qquad\text{for }j>(m-1)D.
\]
Finite type now gives finite-dimensional cohomology. Thus $F$ is
rationally elliptic. Its cohomology satisfies Poincar\'e duality, and
the elliptic homotopy Euler characteristic inequality gives
$n_{\mathrm{odd}}\geq n_{\mathrm{even}}$. Its formal dimension is
\[
 N=\sum_{|w_i|\text{ odd}}|w_i|
       -\sum_{|w_i|\text{ even}}(|w_i|-1),
\]
the degree of the Thom cocycle in Lemma~\ref{lem:filtered-thom};
see \cite[Theorem~32.6 and Proposition~38.3]{FHT2001}.
Corollary~\ref{cor:finite-thom-self-intersection} gives
condition~\textup{(iii)} of Proposition~\ref{prop:thom-evaluation},
and the proposition yields equality of all three invariants.
\end{proof}

The next example gives the Thom cocycle and its self-intersection
class explicitly.

\begin{example}[An even-sphere Thom cocycle]\label{ex:even-sphere-thom}
Let $R=A\otimes\Lambda(x_{2d},y_{4d-1})$, with $d\geq1$,
$dx=0$, $dy=x^2+ax$, and $a\in A^{2d}$ a cocycle; the map
$\varphi:R\to A$ kills $x$ and $y$.  Set $t=sx$ and $u=sy$.  The acyclic closure and
the formulas above give
\[
 Dt=x,\qquad Du=y-(x+a)t,\qquad
 \tau(1)=x+a,\quad\tau(t)=y,\quad
 \tau(u^j)=\tau(tu^j)=0\quad(j\geq1).
\]
Here $|\tau|=2d$ and $u$ is even, so the free basis consists of all
$u^j$ and $tu^j$. On these basis elements,
\begin{align*}
 D(u^j)&=j\bigl(y-(x+a)t\bigr)u^{j-1} &&(j\geq1),\\
 D(tu^j)&=xu^j+jytu^{j-1} &&(j\geq0),\\
 \tau D(u^j)&=j\delta_{j1}\bigl(y(x+a)-(x+a)y\bigr)=0
       =d\tau(u^j) &&(j\geq1),\\
 \tau D(tu^j)&=\delta_{j0}x(x+a)+\delta_{j1}y^2
       =\delta_{j0}dy=d\tau(tu^j) &&(j\geq0).
\end{align*}
The term multiplied by $j$ is omitted when $j=0$, and the value on
$1$ is closed. Thus $\varphi\tau=a\varepsilon$ on the entire free
basis, so $e_A=a$ and the self-intersection class is $e=[a]$.
Here $n_{\mathrm{odd}}=n_{\mathrm{even}}=1$.
\end{example}

The proof of Theorem~\ref{thm:finite-homotopy-fibre} also shows that, for a fibre with finite-dimensional total
rational homotopy, finiteness of the module sectional category forces
ellipticity. Consequently, a fibre with finite-dimensional total
rational homotopy and infinite-dimensional cohomology forces all
three sectional invariants to be infinite.

\begin{example}\label{ex:homotopy-finite-infinite-value}
The projection $X\times K(\Q,2d)\to X$, for $d\geq1$,
has fibre with one rational homotopy generator and cohomology
$\Q[x]$, $|x|=2d$. Every power of the kernel class $x$ is nonzero,
so all three invariants of its canonical section are infinite. This is the
nonelliptic case of Theorem~\ref{thm:finite-homotopy-fibre}.
\end{example}

It remains open whether the equivalent conditions of
Proposition~\ref{prop:thom-evaluation} hold for every Poincar\'e duality
fibre, without a bound on the base, an assumption of
finite-dimensional total rational homotopy, a smooth realization,
or a finite relative Frobenius model as in
Corollary~\ref{cor:relative-frobenius-equality}.

\section{Frobenius models and geometric examples}
\label{sec:geometric-examples}

The equality theorems apply to concrete families of sections.
Relative Frobenius models give direct algebraic computations;
finite-dimensional total rational homotopy covers homogeneous
and Grassmann fibres;
and tubular neighbourhoods cover smooth manifold fibres.
The projective and hyperbolic examples below also give exact
values over bases of arbitrarily large dimension.

\subsection{Relative Frobenius models}

A \emph{relative Frobenius model} over a connected CDGA $B$ is a
split CDGA $B\to S\xrightarrow{\pi}B$ that is finite semifree as a
differential $B$-module, together with a closed $B$-linear map
$\lambda:S\to B$ of degree $-N$ for which the multiplication
pairing induces an isomorphism
$S\cong\operatorname{Hom}_B(S,B)[-N]$ of differential $S$-modules.
We call $\lambda$ its \emph{relative Frobenius trace}.
Such a model gives a finite algebraic way to verify ideal-valued
evaluation and to compute powers of the augmentation ideal, even
when the base has unbounded cohomology. This criterion complements
the geometric equality theorems. The sphere and projective
examples give explicit polynomial models; the final two examples
compare the algebraic and geometric criteria for hyperbolic fibres.

\begin{proposition}\label{prop:relative-frobenius-evaluation}
Let $A$ be a connected CDGA, and let $A\to S\xrightarrow{\pi}A$
be CDGA morphisms with composite $\id_A$. Suppose that $S$ is a
finite semifree differential $A$-module and admits a closed
$A$-linear map $\lambda:S\to A$ of degree $-N$ whose multiplication
pairing gives an isomorphism of differential $S$-modules
\[
 S\cong\operatorname{Hom}_A(S,A)[-N].
\]
Put $J=\ker\pi$. Then canonical biduality
$J\to\mathbb D_S\mathbb D_SJ$ is an isomorphism, and derived
evaluation $J\otimes_S^{\mathbf L}\mathbb D_SJ\to S$ factors
through $J\hookrightarrow S$.
\end{proposition}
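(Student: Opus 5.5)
The plan is to imitate the proof of Lemma~\ref{lem:pd-evaluation}, with $\Q$ replaced by $A$ and the orientation replaced by $\lambda$. The relative duality takes the place of Lemma~\ref{lem:relative-pd-duality}.

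\emph{Step 1: coinduction.} The isomorphism $S\cong\operatorname{Hom}_A(S,A)[-N]$ of differential $S$-modules, together with the coinduction adjunction along $A\to S$, gives for every differential $S$-module $M$ a natural isomorphism
\[
 \operatorname{Hom}_S(M,S)\cong\operatorname{Hom}_A(M,A)[-N].
\]
Passing to derived functors (semifree $S$-modules are semifree over $A$, since $S$ is $A$-semifree), we get $\mathbb D_SM\simeq\mathbf R\operatorname{Hom}_A(M,A)[-N]$. Applying this twice identifies $\eta^S_J$ with the $A$-linear biduality $\eta^A_J$; the sign bookkeeping is as in Lemma~\ref{lem:relative-pd-duality}, using graded symmetry of $\lambda(xy)$. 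The splitting $S=A\oplus J$ as $A$-modules makes $J$ a retract of the finite semifree module $S$, hence $A$-perfect. Biduality over $A$ is an isomorphism for perfect modules, so $\eta^S_J$ is an isomorphism.

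\emph{Step 2: the factorization.} Here I would try the annihilator argument, but in derived form, since the strict orthogonality argument needs nondegeneracy over a field. Put $\operatorname{Ann}(J)=\{a\in S:aJ=0\}$. Under the isomorphism $S\cong\operatorname{Hom}_A(S,A)[-N]$, an element $a$ corresponds to $\lambda(a\cdot-)$, and $\operatorname{Hom}_A(S/J,A)[-N]=\operatorname{Hom}_A(A,A)[-N]\cong A[-N]$ sits inside as those $a$ with $\lambda(aJ)=0$. Such $a$ satisfy $\lambda(ajb)=0$ for all $b$, hence $aj=0$ by injectivity of the pairing map, so this submodule equals $\operatorname{Ann}(J)$. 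It is the image of a Thom element $\Theta\in S^N$ generating a free rank-one $A$-module with $\operatorname{Ann}(J)=A\Theta$. I would then phrase the conclusion via Proposition~\ref{prop:thom-evaluation}'s mechanism. The projection-property argument used there only needs $\mathbf R\operatorname{Hom}_S(A,S)\simeq A[-N]$ with Thom morphism $\tau$, which Step 1 provides (take $M=A$). The equivalence (i)$\Leftrightarrow$(ii) needs only that and biduality. So it suffices to show $T_J=0$, equivalently the self-intersection formula $\pi\tau=\pi^*(m_e)$.

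\emph{Step 3, the main obstacle.} This step verifies the self-intersection formula at cochain level. Since $S$ is $K$-projective over $A$ and $\operatorname{Hom}_A(S,A)$ is strictly identified with $S[N]$, the module $\operatorname{Hom}_S(A,S)=\operatorname{Ann}(J)=A\Theta$ already computes $\mathbf R\operatorname{Hom}_S(A,S)$, by $K$-injectivity of coinduced modules relative to $A$. Hence $\tau$ is represented strictly by $a\mapsto a\Theta$. Then $\pi\tau(a)=a\,\pi(\Theta)$, so $\pi\tau=\pi^*(m_e)$ holds on the nose with $e=[\pi(\Theta)]$; closedness of $\Theta$ follows from $A\Theta$ being a subcomplex. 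The delicate point is justifying that $\operatorname{Hom}_S(A,S)$ computes the derived Hom without resolving $A$. I would argue that $\operatorname{Hom}_S(-,S)\cong\operatorname{Hom}_A(-,A)[-N]$ and that $\operatorname{Hom}_A(-,A)$ preserves quasi-isomorphisms between $A$-semifree modules; $A$ itself is $A$-semifree, and its $S$-semifree resolution is $A$-semifree. Granting this, condition (iii) of Proposition~\ref{prop:thom-evaluation} holds, giving $T_J=0$ and hence the factorization. As a fallback, one can proceed directly as in Lemma~\ref{lem:pd-evaluation}: show that every $S$-linear $u:J\to S$ satisfies $\Theta u(x)=\pm u(\Theta x)=0$, so $u(x)\in\operatorname{Ann}(\Theta)$, and check that $\operatorname{Ann}(\Theta)=J$ by the same orthogonality argument using freeness of $A\Theta$. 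Ordinary evaluation on $\operatorname{Hom}_S(J,S)$ then lands in $J$, and this represents the derived evaluation by the same $K$-injectivity claim.
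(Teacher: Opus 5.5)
Your proposal is correct. Step~1 is the paper's argument, and your fallback for the factorization is exactly the paper's proof. The paper proceeds in four steps:
\begin{enumerate}
\item It identifies $\operatorname{Ann}(J)$ with the functionals vanishing on $J$, hence with the free rank-one $A$-module generated by the Frobenius dual $\Delta$ of $\pi$ (your $\Theta$).
\item It gets $\operatorname{Ann}(\operatorname{Ann}(J))=J$ from $s\Delta=\pi(s)\Delta$ and injectivity of $A\to A\Delta$.
\item It concludes that every $S$-linear $u\colon J\to S$ lands in $J$.
\item It notes that $\operatorname{Hom}_S(J,S)$ computes $\mathbb D_SJ$. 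A semifree $S$-resolution of $J$ is $A$-semifree, hence $A$-homotopy equivalent to $J$, and coinduction turns this into a quasi-isomorphism on $\operatorname{Hom}_S(-,S)$. So ordinary evaluation is already a cochain-level factorization.
\end{enumerate}
The last step is the same point you isolate in Step~3, applied to $J$ instead of $A$. That route needs no adjunction bookkeeping and does not use biduality for the factorization.

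Your primary route through the self-intersection formula also works, but it is heavier. Proposition~\ref{prop:thom-evaluation} is stated for the split minimal Sullivan model of a map with Poincar\'e duality fibre. You therefore have to re-check that the projection property and the implications \textup{(iii)}$\Rightarrow$\textup{(ii)}$\Rightarrow$\textup{(i)} follow formally from $\mathbf R\operatorname{Hom}_S(A,S)\simeq A[-N]$ in $\mathcal D(A)$. They do, and your strict computation $\operatorname{Hom}_S(A,S)=A\Theta$ supplies the needed $\mathcal D(A)$-identification. There, precomposition by $a$ acts as multiplication by the section image of $a$, and the map to a $K$-injective resolution is $A$-linear. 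This matters because Step~1 alone gives $\mathbb D_SA\simeq A[-N]$ only in $\mathcal D(S)$.

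One justification needs repair. $A\Theta$ being a subcomplex only gives $d\Theta=a\Theta$ with $a\in A^1$, and $a$ need not vanish since $A$ is merely connected. $\Theta$ is closed because it corresponds to the closed map $\pi$, equivalently to $\id_A$ under $\operatorname{Hom}_S(A,S)\cong\operatorname{Hom}_A(A,A)[-N]$.

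In exchange, your route gives more than the statement: a strict Thom cocycle $a\mapsto a\Theta$ and the self-intersection class $e=[\pi(\Theta)]$. For $S=A[x]/(x^2+ax)$ one finds $\Theta=x+a$ and $e=[a]$, recovering Example~\ref{ex:even-sphere-thom}.
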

\begin{proof}
The unit and augmentation split $S=A\oplus J$ as differential
$A$-modules. Thus $J$ is a direct summand of a finite semifree
$A$-module. Its $A$-linear dual has the same property: the dual of
a finite semifree module is semifree with the reverse cell order.
Both modules are therefore $K$-projective over $A$, and the natural
map to the double $A$-linear dual is an isomorphism.

The Frobenius isomorphism and coinduction give, for every
differential $S$-module $M$, an isomorphism of complexes
\begin{equation}\label{eq:relative-frobenius-hom}
 \operatorname{Hom}_S(M,S)
 \cong\operatorname{Hom}_A(M,A)[-N].
\end{equation}
Choose a semifree $S$-resolution $P\to J$. Since $S$ is semifree
over $A$, $P$ is semifree over $A$ as well. The quasi-isomorphism
$P\to J$ is consequently an $A$-linear homotopy equivalence.
It follows from \eqref{eq:relative-frobenius-hom} that
\[
 C=\operatorname{Hom}_S(J,S)
       \longrightarrow\operatorname{Hom}_S(P,S)
\]
is a quasi-isomorphism. Thus $C$ computes $\mathbb D_SJ$.
As an $A$-module, $C$ is the shifted dual of $J$, so the same
argument computes $\mathbb D_SC$ by ordinary Hom. Applying
\eqref{eq:relative-frobenius-hom} twice, with the graded symmetry
of the multiplication pairing, identifies the natural biduality map
with the double $A$-linear duality isomorphism of $J$.

The Frobenius pairing identifies $\operatorname{Ann}(J)$ with
the $A$-linear functionals on $S$ that vanish on $J$. Indeed,
$\lambda(sJ)=0$ implies $\lambda(sjb)=0$ for every $j\in J$
and $b\in S$, and perfection of the pairing gives $sJ=0$.
These functionals form the free rank-one $A$-module generated
by $\pi$. Hence $\operatorname{Ann}(J)$ is generated over $A$
by its Frobenius-dual element $\Delta$. Since $J\Delta=0$,
\[
 s\Delta=\pi(s)\Delta\qquad(s\in S),
\]
where coefficients from $A$ act through its unit map. The map
$A\to A\Delta$ is injective, so
\[
 \operatorname{Ann}(\operatorname{Ann}(J))=J.
\]
For any homogeneous $S$-linear map $u:J\to S$,
\[
 a u(x)=(-1)^{|a||u|}u(ax)=0
 \qquad(a\in\operatorname{Ann}(J),\ x\in J).
\]
Thus $u(J)\subset J$. Ordinary evaluation
$J\otimes_S C\to S$ therefore takes values in $J$.
Precomposing with semifree replacements of both factors represents
the required factorization of derived evaluation.
\end{proof}

\begin{corollary}\label{cor:relative-frobenius-equality}
Let $f:X\to Y$ satisfy the hypotheses of
Theorem~\ref{thm:main}. Suppose its split minimal model fits into
a strictly commutative square
\[
\begin{tikzcd}[column sep=3.5em,row sep=1.6em]
 R \arrow[r] \arrow[d,"\simeq"'] & A \arrow[d,"\simeq"] \\
 S \arrow[r,"\pi"] & B\rlap{,}
\end{tikzcd}
\]
whose horizontal maps are surjective, and suppose
$B\to S\xrightarrow{\pi}B$ satisfies
Proposition~\ref{prop:relative-frobenius-evaluation} over $B$.
Then
\[
 \msecat(f_{\Q})=\secat(f_{\Q})=\relcat(f_{\Q}).
\]
\end{corollary}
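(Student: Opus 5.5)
The plan is to follow the proof of Proposition~\ref{prop:pd-compatible-map}, replacing Lemma~\ref{lem:pd-evaluation} by Proposition~\ref{prop:relative-frobenius-evaluation}. Put $I=\ker(R\to A)$ and $J=\ker\pi$. First we check that the square satisfies the hypotheses of Lemma~\ref{lem:pd-transport}. It is strictly commutative, the horizontal maps are surjective, and the vertical maps are quasi-isomorphisms. The lemma does not use any finiteness property of $S$; it only needs biduality and ideal-valued evaluation for $J\hookrightarrow S$.

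Next, Proposition~\ref{prop:relative-frobenius-evaluation}, applied with base $B$ and the split $B\to S\xrightarrow{\pi}B$, supplies exactly these two properties:
\begin{itemize}
\item canonical biduality $J\to\mathbb D_S\mathbb D_SJ$ is an isomorphism;
\item derived evaluation $J\otimes_S^{\mathbf L}\mathbb D_SJ\to S$ factors through $J\hookrightarrow S$.
\end{itemize}
Lemma~\ref{lem:pd-transport} then transports both properties to $I\hookrightarrow R$. Here the extension of scalars along $R\to S$ is a symmetric monoidal equivalence that sends $R$ to $S$, and it identifies $I\to R$ with $J\to S$ because $I\to J$ is a quasi-isomorphism of differential $R$-modules.

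Finally, Corollary~\ref{cor:duality-equality} applies to the split minimal model of $f$ and gives
\[
\msecat(f_{\Q})=\secat(f_{\Q})=\relcat(f_{\Q}).
\]
This uses Lemma~\ref{lem:power-duality}, the controlled comparisons $j_{1,m-1}$, and Theorem~\ref{thm:relative-category}.

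The main point to verify is that Lemma~\ref{lem:pd-transport} is agnostic about the nature of $S$. Its proof uses only the derived equivalence along the quasi-isomorphism $\alpha$ and the short exact sequences of kernels, so the Frobenius case goes through without change. No degree bound on the base is needed, since all the duality input comes from the relative Frobenius pairing.
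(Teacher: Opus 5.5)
Your proposal is correct and follows the paper's proof exactly. Proposition~\ref{prop:relative-frobenius-evaluation} gives biduality and ideal-valued evaluation for $\ker\pi$, Lemma~\ref{lem:pd-transport} carries both properties to $I=\ker(R\to A)$, and Corollary~\ref{cor:duality-equality} then gives the equality of the three invariants.
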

\begin{proof}
Proposition~\ref{prop:relative-frobenius-evaluation} gives biduality
and ideal-valued evaluation for $\ker\pi$.
Lemma~\ref{lem:pd-transport} carries these properties to
$I=\ker(R\to A)$. Corollary~\ref{cor:duality-equality} gives
the conclusion.
\end{proof}

The next computation applies to any connected base CDGA and gives
explicit relative Frobenius models for the sphere and projective
examples below.

\begin{lemma}\label{lem:monic-frobenius}
Let $B$ be a connected CDGA and let $x$ have positive even
degree. Suppose that
\[
 p(x)=x^{r+1}+a_r x^r+\cdots+a_1x,\qquad r\geq1,
\]
is homogeneous and that all $a_i\in B$ are closed. Give
$S=B[x]/(p)$ the differential induced by $dx=0$ and the
augmentation $x\mapsto0$. Then $S$ is finite semifree over $B$,
and the coefficient of $x^r$ defines a closed relative Frobenius
trace of degree $-r|x|$.
Moreover, the projection
\[
 (B\otimes\Lambda(x,y),D)\longrightarrow S,
 \qquad Dx=0,\quad Dy=p(x),\quad |y|=(r+1)|x|-1,
\]
which kills $y$, is a quasi-isomorphism compatible with the
augmentations to $B$.
\end{lemma}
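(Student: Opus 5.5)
The plan is to verify the three assertions of Lemma~\ref{lem:monic-frobenius} directly from the monic relation, working with the $B$-basis $1,x,\ldots,x^r$ of $S$. No homotopical input beyond elementary cone arguments is needed.

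\emph{Finite semifree structure.} Since $p$ is monic of degree $r+1$ in $x$, division with remainder in $B[x]$ is available: $x$ is even, so $B[x]$ is graded-commutative and $x$ is central. Hence $S=\bigoplus_{i=0}^r B\,x^i$ is free over $B$ as a graded module. The differential is induced by $dx=0$ and the $a_i$ are closed, so the ideal $(p)$ is a differential ideal and $d(bx^i)=(db)x^i$. Thus the cells $x^i$ have zero differential. Any order on them gives a finite semifree filtration, and the splitting $B\to S\to B$ is the inclusion and projection onto the cell $x^0$.

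\emph{The trace.} Define $\lambda\bigl(\sum_i b_ix^i\bigr)=b_r$. This map is $B$-linear of degree $-r|x|$. It is closed because $d$ acts coefficientwise on the basis. For the pairing, put $G_{ij}=\lambda(x^{i+j})$ for $0\le i,j\le r$. Then $G_{ij}=0$ for $i+j<r$ and $G_{ij}=1$ for $i+j=r$. The entries with $i+j>r$ are computed by reducing modulo $p$; they lie in $B$, and we do not need their values. So $G$ is anti-triangular with unit antidiagonal, and it is invertible over $B$ by the evident triangular recursion. Its inverse has entries in $B$, since only $\pm1$ is ever inverted.

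The map $s\mapsto\lambda(s\,\cdot-)$ from $S$ to $\operatorname{Hom}_B(S,B)[-N]$, with $N=r|x|$, is $S$-linear: on the source this is associativity, and on the target it is the action by precomposition. It is a cochain map because $\lambda$ is closed and $d$ is a derivation. In the bases $\{x^i\}$ and their duals its matrix is $G$. It is therefore an isomorphism of differential $S$-modules, which is the relative Frobenius condition.

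\emph{The Koszul model.} As a $B$-module, $E=B\otimes\Lambda(x,y)=B[x]\oplus B[x]\,y$, since $y$ is odd. The differential is $d_B$ coefficientwise on $B[x]$, together with $D(fy)=(Df)y\pm fp$. The projection $E\to S$ kills $y$ and reduces modulo $p$, so it is surjective. Its kernel is $pB[x]\oplus B[x]\,y$. Multiplication by the closed element $p$ gives a map of complexes $B[x]y\to pB[x]$, up to the appropriate shift and sign. This map is bijective because $p$ is monic and hence a nonzerodivisor on $B[x]$. The kernel is therefore the mapping cone of an isomorphism, and so it is acyclic. Hence the projection is a quasi-isomorphism. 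Compatibility with the augmentations holds because both augmentations send $x$ and $y$ to zero.

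The only step requiring care is the sign bookkeeping in the cone identification and in the $S$-linearity of the pairing. Since $x$ is even and $p$ is closed, these signs cannot obstruct the argument, and I expect them to be routine.
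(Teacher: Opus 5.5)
Your proposal is correct and takes essentially the same route as the paper. The monic basis $1,x,\ldots,x^r$ gives the finite semifree structure and a unit anti-triangular pairing matrix $\lambda(x^{i+j})$. Acyclicity of the kernel $pB[x]\oplus B[x]y$ then follows from injectivity of multiplication by $p$; the paper writes the contraction explicitly as $H(pb)=yb$, $H(yb)=0$, which is your cone-of-an-isomorphism argument made explicit.
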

\begin{proof}
Monicity gives a differential $B$-module basis
$1,x,\ldots,x^r$. The trace is closed because the coefficients
of $p$ are closed. For its multiplication pairing, the matrix
entry $\lambda(x^{i+j})$ is zero when $i+j<r$ and one when
$i+j=r$. Reversing the columns gives a triangular matrix with
diagonal entries one. All entries have even degree, so the
pairing is invertible over the graded base $B$.

Multiplication by $p$ is injective on $B[x]$, even if $B$ has
zero divisors. The kernel of the displayed projection is
$pB[x]\oplus yB[x]$. The degree-$-1$ map
$H(pb)=yb$, $H(yb)=0$, satisfies $DH+HD=\id$ on this kernel.
Thus the projection is a quasi-isomorphism.
\end{proof}

\subsection{Homogeneous and Grassmann fibres}

\begin{corollary}\label{cor:homogeneous-fibres}
Let $F\to Y\xrightarrow{p}X$ be a fibration with a section $s$,
where $X$ and $F$ are simply connected and of finite rational homotopy type.
If $F$ is a compact homogeneous space, then
\[
 \msecat(s_{\Q})=\secat(s_{\Q})=\relcat(s_{\Q}).
\]
\end{corollary}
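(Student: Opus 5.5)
The plan is to reduce Corollary~\ref{cor:homogeneous-fibres} to Theorem~\ref{thm:finite-homotopy-fibre}. The section $s$ is rationally retractive, with $p_{\Q}$ as homotopy retraction. Choose $g=p_{\Q}$ in that theorem. Its homotopy fibre is then $F_{\Q}$, since rationalization preserves fibration sequences of simply connected spaces of finite rational homotopy type. The statement therefore follows once we show that
\[
 \dim_{\Q}\bigl(\pi_*(F)\otimes\Q\bigr)<\infty .
\]
No bound on the cohomological dimension of $X$ is needed, because Theorem~\ref{thm:finite-homotopy-fibre} requires none.

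The finiteness comes from the homogeneous structure. Write $F=G/H$ with $G$ a compact Lie group and $H$ a closed subgroup. The fibration $H\to G\to G/H$ gives a long exact sequence in rational homotopy. For a compact Lie group $K$, each component has the rational homotopy type of a finite product of odd spheres, so $\pi_{\geq2}(K)\otimes\Q$ is finite dimensional. Also, $\pi_0(K)$ is finite and $\pi_1(K)$ is finitely generated. The exact sequence therefore bounds $\pi_k(G/H)\otimes\Q$ in every degree $k\geq2$. It also shows that all but finitely many degrees vanish, because $\pi_*(G)\otimes\Q$ and $\pi_*(H)\otimes\Q$ are concentrated in finitely many degrees.

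Degrees $1$ and $0$ need separate care, but simple connectivity of $F$ handles them. Only $\pi_{\geq2}$ enters \eqref{eq:geometric-W}, and $\pi_2(G/H)\otimes\Q$ is controlled by $\pi_1(H)\otimes\Q$, which is finite dimensional. Equivalently, we may pass to the identity component of $G$ acting transitively, which is legitimate because $F$ is connected. In either form, $F$ is rationally elliptic. Since $F$ is a closed manifold, Poincar\'e duality holds, although Theorem~\ref{thm:finite-homotopy-fibre} does not need it.

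The main obstacle is bookkeeping rather than mathematics. One must check that the homotopy fibre of $p_{\Q}$ is really the rationalization of $F$, so that \eqref{eq:geometric-W} identifies $W$ with the dual of $\pi_*(F)\otimes\Q$. One must also treat $\pi_1(H)$ and possibly disconnected $H$ correctly in the exact sequence. Once these points are settled, Theorem~\ref{thm:finite-homotopy-fibre} gives
\[
 \msecat(s_{\Q})=\secat(s_{\Q})=\relcat(s_{\Q}),
\]
including the value $\infty$.
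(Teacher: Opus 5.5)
Your proposal is correct and matches the paper's proof. The paper likewise applies Theorem~\ref{thm:finite-homotopy-fibre} to the retraction $p_{\Q}$. It cites F\'elix--Halperin--Thomas (Section~32(e)) for the rational ellipticity of a simply connected compact homogeneous space, where you instead prove finiteness of $\pi_*(G/H)\otimes\Q$ directly from the homotopy sequence of $H\to G\to G/H$. The bookkeeping you flag follows from the homotopy sequence of the fibration, split by $s$: $Y$ is simply connected of finite rational type, and the fibre of $p_{\Q}$ is $F_{\Q}$.
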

\begin{proof}
A simply connected compact homogeneous space is rationally
elliptic; see \cite[Section~32\textup{(e)}]{FHT2001}.
Apply Theorem~\ref{thm:finite-homotopy-fibre}.
\end{proof}

\begin{example}\label{ex:grassmann-section}
Let $\eta$ be a complex vector bundle of rank $b$ over a simply
connected base $X$ of finite rational homotopy type. The Grassmann bundle
\[
 \operatorname{Gr}_a(\underline{\mathbb C}^{\,a}\oplus\eta)
       \longrightarrow X
 \qquad(a,b\geq1)
\]
has the canonical section defined by the trivial rank-$a$
subbundle. Its fibre is
$\operatorname{Gr}_a(\mathbb C^{a+b})
 =U(a+b)/(U(a)\times U(b))$, so the three invariants of this
section agree after rationalization. The case $a=1$ is a
projective bundle. No condition on the characteristic classes
of $\eta$ or on the cohomological dimension of $X$ is needed.
\end{example}

\begin{example}\label{ex:associated-homogeneous-section}
Let $G$ be a compact simply connected Lie group, let $H$ be a
closed connected subgroup, and let $P\to X$ be a principal
$H$-bundle. Left translation by $H$ fixes the coset $H\in G/H$.
Consequently the associated bundle
\[
 P\times_H(G/H)\longrightarrow X
\]
has a section $x\mapsto[u,H]$, independent of the choice of
$u\in P_x$. Corollary~\ref{cor:homogeneous-fibres} applies.
For a principal $G$-bundle $Q\to X$, another natural family is the
adjoint bundle $Q\times_G G\to X$, with $G$ acting on itself by conjugation:
the identity element gives a section. Its common invariant is
the number of odd rational homotopy generators of $G$, by
Proposition~\ref{prop:finite-odd-fibre} below.
\end{example}

\Needspace{10\baselineskip}
\subsection{Odd generators and sphere fibrations with a section}

\begin{proposition}\label{prop:finite-odd-fibre}
Let $f:X\to Y$ be a rationally retractive map between simply
connected spaces of finite rational homotopy type. Suppose its split minimal
model $R=A\otimes\Lambda W\to A$ has a finite odd basis
$W=\langle w_1,\ldots,w_r\rangle$, with $r\geq1$. Then
\[
 \msecat(f_{\Q})=\secat(f_{\Q})=\relcat(f_{\Q})=r.
\]
\end{proposition}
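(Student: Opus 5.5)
The plan has two halves: equality of the three invariants, then computation of the common value as $r$. Since $W$ is finite dimensional, Theorem~\ref{thm:finite-homotopy-fibre} gives equality once we know the value is finite. So it suffices to prove $\msecat(f_{\Q})\geq r$ and $\relcat(f_{\Q})\leq r$.

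\emph{Upper bound.} Because every $w_i$ is odd, $\Lambda W$ is an exterior algebra on $r$ generators. Hence $I^{r+1}=A\otimes\Lambda^{\geq r+1}W=0$. The inclusion $I^{r+1}\hookrightarrow I$ is then the zero map, so Theorem~\ref{thm:relative-category} gives $\relcat(f_{\Q})\leq r$. In particular all three invariants are finite, and Theorem~\ref{thm:finite-homotopy-fibre} makes them equal. Alternatively, Corollary~\ref{cor:finite-thom-self-intersection} applies directly here, since $n_{\mathrm{odd}}=r>0=n_{\mathrm{even}}$.

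\emph{Lower bound.} I would show $[I^r\hookrightarrow R]\neq0$ in $\mathcal D(R)$, which by \eqref{eq:msecat-derived} gives $\msecat(f_{\Q})\geq r$. The natural route is to reduce along $A\to\Q$. As in the proof of Theorem~\ref{thm:finite-homotopy-fibre}, $I^r$ and its controlled resolution are semifree over $A$, so a nullhomotopy over $R$ reduces to one over the fibre model $B=(\Lambda W,\overline d)$. That would give $[\Lambda^{r}W\hookrightarrow B]=0$ in $\mathcal D(B)$. Now $\Lambda^rW$ is one-dimensional, spanned by $\omega=w_1\cdots w_r$. This element is a cocycle, because $\overline d$ raises word length and so $\overline d\omega\in\Lambda^{\geq r+1}W=0$. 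It is also not a coboundary, since any coboundary $\overline d b$ lies in $\Lambda^{\geq r+1}W$ once $\overline d$ is decomposable. Indeed $B$ is minimal, so $\overline d(\Lambda^{r-1}W)\subset\Lambda^{\geq r}W$, and the only way to reach $\omega$ would be a linear term, which minimality excludes. Therefore $[\omega]\neq0$ in $H(B)$. On the other hand, a derived-null inclusion is zero on cohomology, so it would force $[\omega]=0$, a contradiction. Hence $\msecat(f_{\Q})\geq r$. This is the usual cup-length bound for the top class of the exterior fibre, which is an elliptic Poincar\'e duality algebra of formal dimension $\sum|w_i|$.

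\emph{Main obstacle.} The step that needs care is the reduction from $\mathcal D(R)$ to $\mathcal D(B)$. One must check that $\Q\otimes_A-$ applied to $I^r\hookrightarrow R$ is the inclusion $\Lambda^{\geq r}W\hookrightarrow\Lambda W$, which requires the semifreeness over $A$ from that proof. A second route avoids this: use the split section $\sigma$ to view $\omega$ as a class in $H(R)$, and argue that $I^r\to R$ is nonzero on cohomology. For this, filter by $A$-degree; the associated graded is $A\otimes B$, where $\omega$ survives as a nonzero class. I would attempt the reduction first.
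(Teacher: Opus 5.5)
Your proof is correct. Your upper bound ($I^{r+1}=0$ together with Theorem~\ref{thm:relative-category}) is exactly the paper's; the lower bound is where you diverge.

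The paper stays inside $R$. Since no generator occurs in its own differential, $d_R\omega=0$, and the coefficient of $\omega$ in any boundary is a boundary in $A$. Hence $[\omega]\neq0$ in $H(R)$, and $I^r\hookrightarrow R$ is already nonzero on cohomology. You instead push a hypothetical nullhomotopy down to the fibre model $B$. That is valid, but it borrows the $A$-semifreeness established in the proof of Theorem~\ref{thm:finite-homotopy-fibre}. What it buys is the conceptual picture of $\omega$ as the top class of the exterior fibre.

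The obstacle you flag can be bypassed, essentially by your second route. The element $\omega$ is a cocycle in $R$, because $d_R\omega\in I^r\cap R^{|\omega|+1}=A^1\omega=0$. The CDGA quotient $R\to R/A^+R=B$ sends it to a nonzero class, so $[\omega]\neq0$ in $H(R)$ with no derived reduction at all.

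Two small corrections are needed.
\begin{enumerate}
\item Your claim that every coboundary of $B$ lies in $\Lambda^{\geq r+1}W$ is false: one can have $\overline d w_3=w_1w_2$ with $|w_1|=|w_2|=3$ and $|w_3|=5$. The clean reason $\omega$ is not a coboundary is degree. Every proper subword omits a generator of degree at least $3$, so $B^{|\omega|-1}=0$.
\item Theorem~\ref{thm:finite-homotopy-fibre} and Corollary~\ref{cor:finite-thom-self-intersection} are not needed. The chain $r\leq\msecat(f_{\Q})\leq\secat(f_{\Q})\leq\relcat(f_{\Q})\leq r$ gives the equality directly.
\end{enumerate}
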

\begin{proof}
Put $I=(W)$ and $\omega=w_1\cdots w_r$. Every term of
$d_Rw_i$ contains a relative generator other than $w_i$:
the section gives $d_R(W)\subset I$, and the Sullivan order
excludes $w_i$ from its own differential. Consequently
$d_R\omega=0$. Differentiating a proper exterior monomial
cannot create $\omega$, since the generator being differentiated
is absent from its differential. Thus the coefficient of
$\omega$ in a boundary is a boundary in $A$. Since $1$ is not
a boundary in $A$, the cocycle $\omega\in I^r$ represents a
nonzero class in $H(R)$.

The module criterion \eqref{eq:msecat-derived} now gives
$\msecat(f_{\Q})\geq r$, whereas $I^{r+1}=0$ and
Theorem~\ref{thm:relative-category} give
$\relcat(f_{\Q})\leq r$. The usual inequalities prove the claim.
\end{proof}

\begin{example}\label{ex:even-sphere-section}
Let $g:Y_{\Q}\to X_{\Q}$ be a homotopy retraction of $f_{\Q}$
and suppose its fibre is the rational sphere $S^{2d}$, with $d\geq1$.
The split minimal model has the form
\[
 R=A\otimes\Lambda(x_{2d},y_{4d-1}),\qquad
 Dx=0,\qquad Dy=x^2+ax,
\]
where $a\in A^{2d}$ is closed and the projection $R\to A$
kills $x$ and $y$. Indeed, degree and section compatibility force
$Dx=0$ and $Dy=x^2+ax$ after rescaling $y$, and $D^2=0$ gives
$da=0$. Conversely, every such model has the required rational
sphere as the fibre of $g$.

Lemma~\ref{lem:monic-frobenius}, with $p=x^2+ax$, gives a
compatible surjective quasi-isomorphism
\[
 R\longrightarrow S=A[x]/(x^2+ax),\qquad y\longmapsto0,
\]
and a closed relative Frobenius trace on $S$. Hence
Corollary~\ref{cor:relative-frobenius-equality} gives
\[
 \msecat(f_{\Q})=\secat(f_{\Q})=\relcat(f_{\Q}).
\]
The conclusion holds without a cohomological dimension bound on
the base. When $a=0$, the class of $x$ is nonzero and the ideal
$(x)$ in $S$ has square zero, so the common value is one by the
module and algebra quotient criteria. This includes the constant
section of a product fibration. When the fibre is a rational
odd sphere, Proposition~\ref{prop:finite-odd-fibre} with $r=1$ gives
$\msecat(f_{\Q})=\secat(f_{\Q})=\relcat(f_{\Q})=1$.
\end{example}

\subsection{Sections of projective bundles}
\label{sec:projective-examples}

The relative Frobenius criterion gives concrete examples in which
both the map and the common value of its three invariants are
explicit. Let $X=\mathbb{CP}^{k}$, with $k\geq1$, and let $L$ be
the hyperplane line bundle, with $c_1(L)=u\in H^2(X;\Q)$.
For $r\geq1$, consider the bundle of complex lines
\[
 g:E=\mathbb P(\underline{\mathbb C}^{\,r}\oplus L)
          \longrightarrow X.
\]
A fixed line in the trivial summand defines a section $f:X\to E$.
The fibre of $g$ is $\mathbb{CP}^{r}$. Write
$x=-c_1(\mathcal T)$, where $\mathcal T$ is the tautological line
bundle on $E$. In these coordinates, $g^*(u)=u$ and $f^*(x)=0$.
The projective bundle formula \cite[Equation~(2.1)]{CMS2010} gives
\begin{equation}\label{eq:projective-cohomology}
 H^*(E;\Q)\cong
 S=\Q[u,x]/(u^{k+1},x^{r+1}+ux^r),
 \qquad |u|=|x|=2.
\end{equation}
\Needspace{8\baselineskip}
\begin{proposition}\label{prop:projective-section-values}
For the section $f:\mathbb{CP}^{k}\to
\mathbb P(\underline{\mathbb C}^{\,r}\oplus L)$ above,
\[
 \msecat(f_{\Q})=\secat(f_{\Q})=\relcat(f_{\Q})=k+r.
\]
\end{proposition}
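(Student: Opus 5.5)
The plan has two parts: first show that the three invariants coincide, then compute the common value through the module quotient criterion. I would build a compatible square for the relative Frobenius criterion. Take $A=\Lambda(u,v)$, with $dv=u^{k+1}$, as the minimal model of $\mathbb{CP}^k$, quasi-isomorphic to $B=\Q[u]/(u^{k+1})$. Let
\[
R=A\otimes\Lambda(x,y),\qquad Dx=0,\qquad Dy=x^{r+1}+ux^{r},
\]
with $\varphi$ killing $x$ and $y$. This is the split model of the section: the fibre is $\mathbb{CP}^r$, formality of $E$ and \eqref{eq:projective-cohomology} identify $R$ as a model of $E$, and $f^*(x)=0$, $g^*(u)=u$ fix the maps. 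Lemma~\ref{lem:monic-frobenius} with $p=x^{r+1}+ux^r$ over $B$ gives $S=B[x]/(p)$ with a closed relative Frobenius trace. The maps $R\to B\otimes\Lambda(x,y)\to S$ then give a strictly commutative square over $A\to B$, with surjective horizontal arrows and quasi-isomorphic verticals. Corollary~\ref{cor:relative-frobenius-equality} yields $\msecat=\secat=\relcat$. Alternatively, Theorem~\ref{thm:finite-homotopy-fibre} applies directly, since $\mathbb{CP}^r$ is elliptic.

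For the value, I would work in $S$. By Lemma~\ref{lem:pd-transport}, which identifies $I\to R$ with $J\to S$ in the derived category, the condition $[I^{n+1}\hookrightarrow R]=0$ in $\mathcal D(R)$ is equivalent to $[J^{n+1}\hookrightarrow S]=0$ in $\mathcal D(S)$. Here $J=(x)$. Since $S$ has zero differential and $u^{k+1}=0$, the relation $x^{r+1}=-ux^r$ gives $x^{r+1+j}=(-u)^jx^r$. Hence $x^{k+r+1}=(-u)^{k+1}x^r=0$, while $x^{k+r}=(-u)^kx^r\ne0$, because $u^kx^r$ is the top class of $S=H^*(E)$. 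So $J^{k+r+1}=0$, and the upper bound $\relcat\le k+r$ follows directly from Theorem~\ref{thm:relative-category}, or simply from the vanishing of the power.

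For the lower bound I need $[J^{k+r}\hookrightarrow S]\neq0$ in $\mathcal D(S)$. Since $S$ is formal with zero differential, the map $J^{k+r}\hookrightarrow S$ induces an injection on cohomology: its image is the nonzero top class $u^kx^r$, and cohomology is the module itself. A derived-zero map induces zero on cohomology, so the map is nonzero. By \eqref{eq:msecat-derived}, $\msecat(f_{\Q})\ge k+r$. The main care needed is bookkeeping rather than an obstacle.

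First, I must make sure the transport from $R$ to $S$ is legitimate for powers. The criterion \eqref{eq:msecat-derived} is stated for $I^{m}\subset R$, and Lemma~\ref{lem:pd-transport} was stated for the ideal itself. I would handle this by the same short-exact-sequence argument applied to the surjective quasi-isomorphism $R/I^{m}\to S/J^{m}$. Alternatively, and more simply, I would argue entirely in $R$: the cocycle $x^{k+r}\in I^{k+r}$ maps to a nonzero class in $H(R)\cong S$, so $I^{k+r}\hookrightarrow R$ is nonzero on cohomology.

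Second, I must confirm that $R$ really models $f$ in the split form; this uses the formality of $E$.
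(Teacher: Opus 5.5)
Your proof is correct, but your upper bound takes a different route from the paper's. The paper factors the quasi-isomorphism $R\to S$ through $R/I^{k+r+1}$ and applies the CDGA quotient criterion of Theorem~\ref{thm:quotient-criteria}. That gives only $\secat(f_{\Q})\le k+r$, so the paper then needs Corollary~\ref{cor:relative-frobenius-equality} to reach $\relcat(f_{\Q})$.

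You instead use that $R\to S$ restricts to a quasi-isomorphism $I\to J$ (five lemma on the two augmentation sequences). This map kills $I^{k+r+1}$, so $[I^{k+r+1}\hookrightarrow I]=0$ in $\mathcal D(R)$, and Theorem~\ref{thm:relative-category} gives $\relcat(f_{\Q})\le k+r$ directly. This is the argument the paper itself uses in Example~\ref{ex:frobenius-nonsmooth}. For the lower bound, the cocycle $x^{k+r}\in I^{k+r}$ has class $(-1)^ku^kx^r\neq0$ in $H(R)\cong S$; this is your fallback and exactly the paper's argument. With both bounds, the chain $\msecat\le\secat\le\relcat$ closes on its own. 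Your first step (the Frobenius square, or Theorem~\ref{thm:finite-homotopy-fibre}) is therefore correct but superfluous. The paper's route, by contrast, uses only the quotient criterion for the value and leaves relative category to the equality theorem.

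Some side claims need correction.

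\begin{enumerate}
\item Lemma~\ref{lem:pd-transport} does not transport powers: neither $I^m\to J^m$ nor $R/I^m\to S/J^m$ need be a quasi-isomorphism. Already for $k=r=1$ one has $(I^3)^5=0$, so the cocycle $x^3$ gives a nonzero class in $H^6(I^3)$, whereas $J^3=0$. So your argument does not justify the asserted equivalence between $[I^{n+1}\hookrightarrow R]=0$ and $[J^{n+1}\hookrightarrow S]=0$. What survives is the deduction of vanishing in $R$ from vanishing in $S$, which is all your upper bound uses. The lower bound must be argued in $R$, as in your fallback.
\item Formality of $E$ alone does not identify models of the maps $f$ and $g$. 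Here this is harmless. $A$ has no nonzero odd cocycles and $H^{\mathrm{odd}}(R)=0$, so a CDGA map $R\to A$ is determined by its effect on cohomology, and so is the homotopy class of a map $A\to R$. The paper avoids formality by choosing representatives in polynomial forms directly.
\item The identity should read $x^{r+j}=(-u)^jx^r$, not $x^{r+1+j}=(-u)^jx^r$. The consequences you draw, $x^{k+r}\neq0$ and $x^{k+r+1}=0$, are correct.
\end{enumerate}
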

\begin{proof}
A split minimal Sullivan model of the section is
\[
 \varphi:R=A\otimes\Lambda(x,y)\longrightarrow A,
 \qquad \varphi(x)=\varphi(y)=0,
\]
where $A=(\Lambda(u,v),d)$ and
\[
 |u|=|x|=2,\qquad |v|=2k+1,\qquad |y|=2r+1,
\]
\[
 du=d_Rx=0,\qquad dv=u^{k+1},\qquad
 d_Ry=x^{r+1}+ux^r.
\]
The inclusion $A\hookrightarrow R$ models $g$. To obtain the
model from \eqref{eq:projective-cohomology}, choose closed
representatives for $u$ and $x$ and primitives for the two relations.
The representative of $x$ can be chosen to restrict to zero on the
section. The primitive for $u^{k+1}$ is pulled back from the base;
the primitive for $x^{r+1}+ux^r$ can be made to restrict to zero by
subtracting the pullback of its restriction. These choices give a
map of the split diagram to rational polynomial forms on $E$ and $X$.

Put $B=\Q[u]/(u^{k+1})$, with zero differential. The projection
$A\to B$ is a quasi-isomorphism. Lemma~\ref{lem:monic-frobenius}
applied over $A$ to $p=x^{r+1}+ux^r$ gives a
quasi-isomorphism $R\to S_A=A[x]/(p)$.
The induced map $S_A\to S=B[x]/(p)$ is also a
quasi-isomorphism: as a map of complexes it is the direct sum
of $r+1$ shifts of $A\to B$.
Thus the projections killing $v$ and $y$ give the
quasi-isomorphisms in the commutative square
\[
\begin{tikzcd}[column sep=3.5em,row sep=1.6em]
 R \arrow[r,"\varphi"] \arrow[d,"\simeq"'] & A \arrow[d,"\simeq"] \\
 S \arrow[r,"\overline\varphi"] & B\rlap{.}
\end{tikzcd}
\]
Here $\overline\varphi(u)=u$ and $\overline\varphi(x)=0$.
This also verifies the cohomology isomorphism for the chosen
Sullivan model. By Lemma~\ref{lem:monic-frobenius}, $S$ is a
relative Frobenius model over $B$.
Corollary~\ref{cor:relative-frobenius-equality} therefore gives
equality of the three invariants.

It remains to determine their common value. The algebra $S$ is
free over $B$ with basis $1,x,\ldots,x^r$. Its ideal
$J=\ker\overline\varphi=(x)$ satisfies
\[
 x^{r+j}=(-u)^j x^r\quad(j\geq0),
 \qquad
 x^{k+r}=(-1)^k u^k x^r\ne0,
 \qquad x^{k+r+1}=0.
\]
Thus $J^{k+r+1}=0$. In the Sullivan model, put $I=(x,y)$ and
$h=k+r$. The cocycle $x^h\in I^h$ has nonzero image in $H(R)$,
so $I^h\hookrightarrow R$ is nonzero in $\mathcal D(R)$.
The module criterion \eqref{eq:msecat-derived} gives
$\msecat(f_{\Q})\geq h$.

The quasi-isomorphism $R\to S$ sends $I$ into $J$, so it factors
as
\[
 R\longrightarrow R/I^{h+1}\longrightarrow S.
\]
Its inverse in the CDGA homotopy category, composed with the
second map, is a homotopy retraction of the first map.
Theorem~\ref{thm:quotient-criteria} consequently gives
$\secat(f_{\Q})\leq h$. Together with the lower bound and the
equality from Corollary~\ref{cor:relative-frobenius-equality},
this proves the formula.
\end{proof}

When $r=1$, the fibre is $\mathbb{CP}^{1}\cong S^2$. This is
an actual sphere bundle:
\[
 \mathbb P(\underline{\mathbb C}\oplus L)
       \cong S(\underline{\mathbb R}\oplus L_{\mathbb R}),
\]
where $L_{\mathbb R}$ is the underlying oriented real plane bundle.
Indeed, a Hermitian metric identifies a complex line with its
orthogonal projection, and the normalized traceless part of that
projection is a point on the unit sphere in the bundle of traceless
Hermitian endomorphisms. For
$\underline{\mathbb C}\oplus L$, this real rank-three bundle is
$\underline{\mathbb R}\oplus L_{\mathbb R}$.
The chosen trivial line gives one of the two sections coming from
the trivial real summand. Hence this family has
\[
 \msecat(f_{\Q})=\secat(f_{\Q})=\relcat(f_{\Q})=k+1
\]
while its fibre remains $S^2$. The common values are therefore
unbounded even with this fixed fibre. For $k>1$, the base has
cohomological dimension $2k>2$, beyond the sufficient dimension
range of Theorem~\ref{thm:pd-fibre-dimension}. More generally,
$k>r$ places the projective-bundle examples beyond that range.
These families also satisfy the source-degree criterion of
Corollary~\ref{cor:degree-cohomology-equality}: here $c=2$,
$n=2k$, and $q=k+r$, so
\[
 c(q+1)=2(k+r+1)>2k+1=n+1.
\]
Thus that criterion independently gives the equality once the
module value is known. The relative Frobenius models above also
make the exact value accessible through powers of their ideals.

\subsection{Sections of smooth bundles}

The self-intersection formula has a geometric source which does not
require a bound on the dimension of the base or finite-dimensional
total rational homotopy of the fibre.

\begin{theorem}\label{thm:smooth-section}
Let $p:Y\to X$ be a smooth fibre bundle over a simply connected smooth
manifold of finite rational homotopy type. Suppose its fibre is a closed simply
connected manifold and $p$ admits a smooth section $s:X\to Y$. Then
\[
 \msecat(s_{\Q})=\secat(s_{\Q})=\relcat(s_{\Q}).
\]
\end{theorem}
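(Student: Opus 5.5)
The plan is to verify condition (iii) of Proposition~\ref{prop:thom-evaluation}, the self-intersection formula, using a tubular neighbourhood of the section. The section $s:X\to Y$ is then a smooth embedding with normal bundle $\nu\cong s^*T^{\mathrm{vert}}Y$, of rank $N=\dim F$. We may take $p$ itself as the retraction: $ps=\id_X$, so $p_{\Q}$ is a homotopy retraction of $s_{\Q}$, and its fibre $F$ is a closed simply connected manifold. Hence $H^*(F;\Q)$ is a Poincar\'e duality algebra of formal dimension $N$, and the hypotheses of Lemma~\ref{lem:relative-pd-duality} hold for the split model $\varphi:R=A\otimes\Lambda W\to A$. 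The fibre may be rationally hyperbolic and the base noncompact, so neither Theorem~\ref{thm:finite-homotopy-fibre} nor Theorem~\ref{thm:pd-fibre-dimension} applies. The geometry must therefore supply the Thom morphism directly.

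\textbf{Construct a geometric Thom morphism.} Fibrewise (vertical) compactly supported forms give one. Choose a tubular neighbourhood $U\cong\nu$ of $s(X)$ inside $Y$ that is fibrewise over $X$: in each fibre $F_x$ it is a disc around $s(x)$. The Thom form of $\nu$ has vertical compact support, so extending it by zero gives a closed form $\Phi$ on $Y$. Its integral over each fibre is $1$, because the fibre meets $U$ in a single disc. Multiplication by $\Phi$ defines a map $A_{PL}(X)[-N]\to A_{PL}(Y)$ given by $a\mapsto p^*(a)\Phi$. This map is $A_{PL}(Y)$-linear, where $A_{PL}(Y)$ acts on $A_{PL}(X)$ through $s^*$. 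The reason is that $p^*s^*(b)-b$ vanishes on the zero section, and multiplying the Thom form by such a form is exact through a compactly supported primitive along the retraction $U\to s(X)$. To make this strict, I would work with a model of $\mathcal D(R)$-morphisms, for instance by adapting the derived Thom morphism of F\'elix and Thomas \cite[p.~419]{FelixThomas2009} to the relative setting. Concretely, one realizes $\tau$ as the composite
\[
 A_{PL}(X)[-N]\simeq A_{PL}(U,U\setminus s(X))\to A_{PL}(Y,Y\setminus s(X))\to A_{PL}(Y),
\]
where the first identification is the Thom isomorphism, followed by excision and the forgetful map. Each arrow is a morphism of $A_{PL}(Y)$-modules. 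Transporting along the quasi-isomorphisms $R\simeq A_{PL}(Y)$ and $A\simeq A_{PL}(X)$, which can be made compatible with $\varphi$ and the inclusion as in the proof of Proposition~\ref{prop:projective-section-values}, gives a morphism $A[-N]\to R$ in $\mathcal D(R)$. Its restriction to a fibre is the orientation class, so it generates $\operatorname{Hom}_{\mathcal D(R)}(A[-N],R)\cong\Q$ and is a Thom morphism $\tau$.

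\textbf{Self-intersection formula.} The composite $\varphi\tau$ corresponds to restricting along $s$:
\[
 A_{PL}(X)[-N]\to A_{PL}(Y,Y\setminus s(X))\to A_{PL}(X).
\]
Restricting the relative Thom class to the zero section is, at the module level, multiplication by the Euler form $e(\nu)$. This holds because the composite factors through $A_{PL}(U,U\setminus 0)\to A_{PL}(U)\to A_{PL}(X)$, all entirely over the bundle $\nu\to X$. There the statement is the classical identity $s^*\Phi=e(\nu)$, made $A_{PL}(U)$-linear via the homotopy equivalence $U\simeq X$ through which $A_{PL}(U)$ acts. This gives $\varphi\tau=\varphi^*(m_e)$ in $\mathcal D(R)$ with $e=e(\nu)$, which is condition (iii). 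Proposition~\ref{prop:thom-evaluation} then yields the three equalities.

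\textbf{Main obstacle.} The hard step is showing that the self-intersection identity holds in $\mathcal D(R)$ as a morphism of $R$-modules, not merely on cohomology or after restricting along $\sigma$ as $A$-modules. The point is that $A_{PL}(Y)$ acts on $A_{PL}(X)$ via $s^*$ in both the source and the target, and these actions must be matched coherently. I would handle this by working throughout with the pair $(U,U\setminus s(X))$, using the projection $U\to X$ to see that all the modules involved are actually modules over $A_{PL}(U)$, with $A_{PL}(Y)\to A_{PL}(U)$ the restriction. Over $A_{PL}(U)\simeq A_{PL}(X)$, the identity reduces to the relative Thom isomorphism of a vector bundle. Noncompactness of $X$ is harmless, since only vertical compact support is used.
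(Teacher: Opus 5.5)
Your proposal is correct and follows essentially the paper's route. The Thom morphism is the zigzag $A_{PL}(X)[-N]\leftarrow A_{PL}(U)[-N]\to A_{PL}(U,U\setminus X)\leftarrow A_{PL}(Y,Y\setminus X)\to A_{PL}(Y)$. The self-intersection formula comes from the strict cochain identity given by multiplicativity of restriction to the zero section, together with the commutative excision square. Transport to a strictly commutative split-model square then gives condition~(iii) of Proposition~\ref{prop:thom-evaluation}.

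The only detail you leave implicit is orientability of $\nu$, which the paper deduces from $H^1(X;\mathbb{Z}/2)=0$. Your preliminary map ``extend a vertically compactly supported form by zero'' is not needed once you pass to relative forms.
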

\begin{proof}
Write $n=\dim F$ and identify $X$ with $s(X)$. The normal bundle
of this closed embedding is $\nu=s^*T_pY$, where $T_pY$ is the
vertical tangent bundle. It is orientable because
$w_1(\nu)\in H^1(X;\mathbb Z/2)=0$.
For compact oriented embeddings, the derived Thom construction
below appears in F\'elix--Thomas \cite[p.~419]{FelixThomas2009}.
Here a tubular neighbourhood and relative duality for the closed
fibre give the construction without a compactness assumption on
$X$.

Choose an open tubular neighbourhood $U_\nu$ of $X$ and put
\[
 \mathcal P_Y=A_{PL}(Y),\qquad
 \mathcal P_\nu=A_{PL}(U_\nu),\qquad
 \mathcal P_X=A_{PL}(X).
\]
Restriction factors $\alpha=A_{PL}(s)$ as
$\mathcal P_Y\xrightarrow{\rho}\mathcal P_\nu
\xrightarrow{a}\mathcal P_X$, with $a$ a quasi-isomorphism.
For a pair $(Z,Z')$, write $A_{PL}(Z,Z')$ for the kernel of
restriction to $Z'$. Set
\[
 K_Y=A_{PL}(Y,Y\setminus X),\qquad
 K_\nu=A_{PL}(U_\nu,U_\nu\setminus X).
\]
The relative-to-absolute maps $b_Y:K_Y\to\mathcal P_Y$ and
$b_\nu:K_\nu\to\mathcal P_\nu$ fit into a commutative square
with $\rho$ and the excision quasi-isomorphism
$q:K_Y\to K_\nu$.

Choose a cocycle $\theta\in K_\nu^n$ representing a nonzero
Thom class of $\nu$. Graded multiplication by $\theta$ gives a
quasi-isomorphism of $\mathcal P_\nu$-modules
$t_\theta:\mathcal P_\nu[-n]\to K_\nu$. The zigzag
\[
 \tau:\mathcal P_X[-n]
 \xleftarrow[\simeq]{a[-n]}\mathcal P_\nu[-n]
 \xrightarrow[\simeq]{t_\theta}K_\nu
 \xleftarrow[\simeq]{q}K_Y
 \xrightarrow{b_Y}\mathcal P_Y
\]
defines a morphism in $\mathcal D(\mathcal P_Y)$. The class
$H^n(\tau)([1])$, where $[1]\in H^n(\mathcal P_X[-n])$, restricts
to a nonzero orientation class on a fibre, so $\tau\ne0$.
Since $F$ is closed, its cohomology is a finite Poincar\'e duality
algebra; relative duality identifies $\tau$ as a Thom morphism.

Put $c_\nu=a(b_\nu\theta)\in\mathcal P_X^n$. This cocycle
represents the Euler class for the chosen Thom class.
Multiplicativity of $a$ gives the cochain identity
\[
 a\,b_\nu\,t_\theta
   =(c_\nu\cdot-)\,a[-n].
\]
Consequently the commutative excision square gives
\[
 \alpha\tau
 =a\,b_\nu\,t_\theta\,(a[-n])^{-1}
 =\alpha^*([c_\nu]\cdot-)
 \quad\text{in }\mathcal D(\mathcal P_Y).
\]

To transfer this identity, choose a split Sullivan model
$\varphi:R\to A$ for the section. The map
$\alpha:\mathcal P_Y\to\mathcal P_X$ is surjective, by the
extension property of polynomial forms. The homotopy lifting
property for the cofibrant Sullivan algebra $R$ therefore makes
the model square strictly commutative:
\[
\begin{tikzcd}[column sep=3.5em,row sep=1.6em]
 R \arrow[r,"\varphi"] \arrow[d,"\simeq"'] & A \arrow[d,"\simeq"] \\
 \mathcal P_Y \arrow[r,"\alpha"] & \mathcal P_X\rlap{.}
\end{tikzcd}
\]
Restriction along these quasi-isomorphisms transports the Thom
morphism and the Euler class to the split model and preserves
the self-intersection formula. The transported formula is
condition~\textup{(iii)} of Proposition~\ref{prop:thom-evaluation},
with $e$ the Euler class of $\nu$; the proposition gives the
equality, including the infinite case.
\end{proof}

\Needspace{10\baselineskip}
\begin{example}\label{ex:hyperbolic-smooth}
Let
\[
 M=(S^3\times S^3)\mathbin{\#}(S^3\times S^3),\qquad
 E_k=\mathbb P(\underline{\mathbb C}\oplus L)
       \longrightarrow\mathbb{CP}^{k},
\]
where $k\geq1$ and $L$ is the hyperplane line bundle, and let
$f_k:\mathbb{CP}^{k}\to E_k$ be the section used in
Proposition~\ref{prop:projective-section-values}. Fix $m_0\in M$.
Then
\[
 s_k:\mathbb{CP}^{k}\longrightarrow M\times E_k,
 \qquad s_k(b)=(m_0,f_k(b)),
\]
is a section of a smooth bundle with fibre $F=M\times S^2$.
The simply connected closed manifold $M$ has Euler characteristic
$-2$, so it cannot be rationally elliptic
\cite[Proposition~32.10]{FHT2001}. It is therefore rationally hyperbolic. The same is true of $F$,
since $M$ is a retract of $F$. The fibre has Poincar\'e duality dimension $8$.
For $k\geq5$, the base has cohomological dimension $2k>8$.
Thus the fibre does not satisfy the finite-dimensional total
rational homotopy hypothesis of
Theorem~\ref{thm:finite-homotopy-fibre}, and the bundle
$M\times E_k\to\mathbb{CP}^k$ does not satisfy the dimension bound of
Theorem~\ref{thm:pd-fibre-dimension}.

Nevertheless, all three invariants are explicit:
\[
 \msecat((s_k)_{\Q})
 =\secat((s_k)_{\Q})
 =\relcat((s_k)_{\Q})=k+3.
\]
To see this, note that $M$ is $2$-connected and six dimensional, so the
connectivity--dimension inequality gives $\operatorname{cat}(M)\leq2$.
There are $a,b\in H^3(M;\Q)$ with $ab\ne0$, proving the reverse
inequality after rationalization, and also
$\msecat(*\to M_{\Q})=\operatorname{cat}_0(M)=2$.
The section $s_k$ is the product of
$\{m_0\}\hookrightarrow M$ and $f_k$, both retractive maps.
Module additivity \cite[Theorem~1]{CPV2018} and
Proposition~\ref{prop:projective-section-values} give directly
\[
 \msecat((s_k)_{\Q})
 =\msecat(*\to M_{\Q})+\msecat((f_k)_{\Q})
 =2+(k+1)=k+3.
\]
The product inequality \eqref{eq:secat-subadditivity} gives
$\secat((s_k)_{\Q})\leq2+(k+1)$, so it already identifies the
module and ordinary sectional categories. Theorem~\ref{thm:smooth-section}
then identifies the relative category with the same value.
Corollary~\ref{cor:degree-cohomology-equality} gives this last
equality independently: $c=2$, $n=2k$, and $q=k+3$ satisfy
$c(q+1)=2(k+4)>2k+1=n+1$.
As a concrete check of the lower bound,
\eqref{eq:projective-cohomology} gives
$x^{k+1}\ne0$ in $H^*(E_k;\Q)$. The K\"unneth theorem then gives
\[
 ab\,x^{k+1}\ne0\quad\text{in }H^*(M\times E_k;\Q),
\]
a product of $k+3$ classes in $\ker H^*(s_k;\Q)$.
In the split model $R=A\oplus I$, this kernel is the image of
$H(I)$. The displayed product is therefore represented by a
cocycle in $I^{k+3}$ with nonzero image in $H(R)$, which also
proves the module lower bound by \eqref{eq:msecat-derived}.

For an explicit relative Frobenius model, choose a finite
connected differential Poincar\'e duality model $C$ of $M$
\cite{LS2008}, with orientation $\lambda_C$ and augmentation
$\epsilon_C:C\to\Q$. Over $B=\Q[u]/(u^{k+1})$, the model
\[
 S=C\otimes B[x]/(x^2+ux)\longrightarrow B,
 \qquad \gamma\otimes P(u,x)\longmapsto\epsilon_C(\gamma)P(u,0),
\]
is finite semifree over $B$: split the finite complex $C$ into
cohomology representatives and contractible pairs, and use the
$B$-basis $1,x$ in the other factor. Its closed product trace is
$\lambda_C\otimes\operatorname{coeff}_x$. Tensoring the
compatible projective model with a model of
$\{m_0\}\hookrightarrow M$ supplies the commutative square
of Corollary~\ref{cor:relative-frobenius-equality}, which gives
another proof of equality for this product family.
The smooth-bundle, source-degree, and relative Frobenius criteria
therefore overlap in this family.
\end{example}

\begin{example}[A Frobenius model without a simply connected manifold realization]
\label{ex:frobenius-nonsmooth}
Let $C$ be the connected graded algebra with zero differential and
basis
\[
 1,\quad a_1,\ldots,a_4,\quad c_1,\ldots,c_4,\quad
 b_1,\ldots,b_4,\quad\omega,
\]
in degrees $0,3,4,5,8$, respectively. Its nonzero products in
positive degrees are
\[
 a_i b_j=\delta_{ij}\omega,\qquad
 c_i c_j=\delta_{ij}d_i\omega,\qquad
 (d_1,d_2,d_3,d_4)=(1,1,1,2),
\]
together with those required by graded commutativity. This is a
Poincar\'e duality algebra of dimension $8$, with trace
$\lambda_C(\omega)=1$. Let $F$ be the simply connected rational
space with model $C$, obtained by realizing its minimal Sullivan
model. For $k\geq5$, put
\[
 X=(\mathbb{CP}^{k})_{\Q},\qquad
 B=\Q[u]/(u^{k+1}),\qquad
 f:X\longrightarrow X\times F,\quad x\longmapsto(x,*).
\]
The augmentation $S=B\otimes C\to B$ is a relative Frobenius
model, with trace $1_B\otimes\lambda_C$. Taking the tensor
product of augmented minimal models for $B$ and $C$ gives the
compatible square in Corollary~\ref{cor:relative-frobenius-equality}.
Thus the three sectional invariants agree. Since $S$ is also a
finite Poincar\'e duality algebra,
Proposition~\ref{prop:pd-compatible-map} gives the same conclusion.

Their common value is $2$. Indeed, the augmentation ideal
$J=B\otimes C^{>0}$ satisfies $J^3=0$, while
$a_1b_1=\omega\ne0$. If $\varphi:R\to A$ is the split Sullivan
model and $I=\ker\varphi$, the quasi-isomorphism $R\to S$
therefore factors as
\[
 R\longrightarrow R/I^3\longrightarrow S.
\]
Composing the second map with the inverse of $R\to S$ in the
CDGA homotopy category gives a homotopy retraction of the first,
so Theorem~\ref{thm:quotient-criteria} gives $\secat(f)\leq2$.
The induced quasi-isomorphism $I\to J$ sends $I^3$ to zero.
Consequently $[I^3\hookrightarrow I]=0$, and
Theorem~\ref{thm:relative-category} gives the stronger upper bound
$\relcat(f)\leq2$ directly from nilpotence.
Lifting $a_1$ and $b_1$ to cocycles in $I$ gives a nonzero product
in $H(R)$, so \eqref{eq:msecat-derived} gives $\msecat(f)\geq2$.

The constant product structure gives another explanation of the
equality. For any simply connected rational fibre $F'$ of finite
rational homotopy type, extension and reduction along
$\Q\to A\to\Q$ identify the module and ideal criteria for
$X\hookrightarrow X\times F'$ with those for $*\to F'$: in the
untwisted model $R=A\otimes\Lambda W$,
$I^m=A\otimes\Lambda^{\geq m}W$.
Hess's theorem \cite{Hess1991} identifies the module and ordinary
categories of $F'$. If this value is finite,
Corollary~\ref{cor:degree-cohomology-equality}, applied to the
point source, identifies the relative category as well. In the
infinite case, the module lower bound gives the same conclusion.
The usual inequalities therefore identify all three invariants
of the constant product section with $\operatorname{cat}_0(F')$,
independently of duality. This example illustrates the scope of
the relative Frobenius criterion alongside these other algebraic
arguments.

We compare this family with the three equality theorems that impose
conditions on the fibre of a retraction. Any rational homotopy
retraction $g:X\times F\to X$ of $f$ has homotopy fibre
rationally equivalent to $F$. Indeed, the shear map
$(g,\operatorname{pr}_F):X\times F\to X\times F$ acts on each
rational homotopy group by a triangular matrix with identity
diagonal blocks. It is therefore a rational equivalence, and its
composite with $\operatorname{pr}_X$ is $g$.
The Euler characteristic of $F$ is $-2$, so $F$ is rationally
hyperbolic \cite[Proposition~32.10]{FHT2001};
Theorem~\ref{thm:finite-homotopy-fibre} does not apply.
Its first rational homotopy occurs in degree $3$, and
$\operatorname{cd}_{\Q}(X)=2k\geq10$, whereas
Theorem~\ref{thm:pd-fibre-dimension} requires
$2k<8+3-1=10$.
Finally, $F$ is not rationally homotopy equivalent to a closed
simply connected manifold. Such a manifold would have dimension
$8$ and a unimodular intersection form on its free integral
fourth cohomology. Its rational form would be congruent to
$\lambda\operatorname{diag}(1,1,1,2)$ for some
$\lambda\in\Q^{\times}$. The determinant of this form has square
class $2$, since it is $2\lambda^4$, whereas a definite unimodular
form of rank four has determinant $1$. This is impossible, so
Theorem~\ref{thm:smooth-section} does not apply either.
\end{example}

\section{Sharpness and a finite obstruction}
\label{sec:sharpness}

The retractive example of \cite{Parent2026} realizes the gap at
$\msecat=2$. We record the part of its construction that detects the
obstruction, and then relate it to the derived criterion above.
In the notation of this article, its split minimal model is
\[
 \varphi\colon R=A\otimes\Lambda W\longrightarrow A,
 \qquad
 A=(\Lambda(a,b),d),\quad |a|=6,\quad |b|=17,
 \quad da=0,\quad db=a^3.
\]
The relative generators of degrees at most six are
$x,y$ of degree two, $t_0,\ldots,t_3$ of degree five, and
$u_0,u_1,u_2$ of degree six, with
\[
 d_Rx=d_Ry=0,\qquad
 d_Rt_i=x^{3-i}y^i,\qquad
 d_Ru_i=yt_i-xt_{i+1}.
\]
All of these generators lie in $I=\ker\varphi$.
The later generators are added in degrees at least nine.
For a monomial $M$, let $[M]^*$ denote its coefficient in the
ordered monomial basis of this model. The construction preserves
\begin{equation}\label{eq:sharpness-functional}
 \ell=[u_0^2]^*+[au_0]^*\colon R^{12}\longrightarrow\Q,
 \qquad
 \ell\bigl(d_RR^{11}+xR^{10}+yR^{10}\bigr)=0.
\end{equation}
These properties, together with a semifree resolution
$P\xrightarrow{\simeq}I^3$ and an $R$-valued nullhomotopy of its
augmentation, are established in \cite[Proposition~6.1]{Parent2026}.
Thus $[I^3\hookrightarrow R]=0$ in $\mathcal D(R)$.
Since $R^3=0$, the cocycle $x^2\in I^2$ is not exact, and
\eqref{eq:msecat-derived} gives $\msecat(p_{\Q})=2$ for the
realized map $p:X\to Y$.

The finite test of \cite[Proposition~7.1]{Parent2026} uses the
relative Sullivan
algebra $R\hookrightarrow T=R\otimes\Lambda(E_0,E_1,F,G,H_0,H_1,Z)$
with the differentials in Table~\ref{tab:sharpness-generators}.
\begin{table}[htbp]
\centering
\begin{tabular}{ccl}
\toprule
Generator & Degree & Differential \\
\midrule
$E_0$ & $5$ & $x^3$ \\
$E_1$ & $5$ & $x^2y$ \\
$F$ & $6$ & $yE_0-xE_1$ \\
$G$ & $9$ & $E_0E_1+x^2F$ \\
$H_0$ & $10$ & $-E_1F+yG$ \\
$H_1$ & $10$ & $-E_0F+xG$ \\
$Z$ & $11$ & $-F^2/2-yH_1+xH_0$ \\
\bottomrule
\end{tabular}
\caption{The seven-generator Sullivan test.}\label{tab:sharpness-generators}
\end{table}

\begin{proposition}\label{prop:sharpness-certificate}
For this split model,
\[
 \msecat(p_{\Q})=2,\qquad
 \secat(p_{\Q})=\relcat(p_{\Q})=3,
 \qquad [I^3\hookrightarrow I]\ne0\text{ in }\mathcal D(R).
\]
\end{proposition}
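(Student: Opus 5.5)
The value $\msecat(p_{\Q})=2$ has already been recorded before the proposition: $[I^3\hookrightarrow R]=0$ by \cite[Proposition~6.1]{Parent2026}, and the non-exact cocycle $x^2\in I^2$ gives the lower bound via \eqref{eq:msecat-derived}. Theorem~\ref{thm:main} then gives
\[
2\leq\secat(p_{\Q})\leq\relcat(p_{\Q})\leq3 .
\]
So it suffices to prove $\secat(p_{\Q})\geq3$. Indeed, this forces $\secat=\relcat=3$. It also gives $\relcat(p_{\Q})>2$, and Theorem~\ref{thm:relative-category} with $n=2$ then yields $[I^3\hookrightarrow I]\neq0$.

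To show $\secat(p_{\Q})\geq3$, I would argue by contradiction using Theorem~\ref{thm:quotient-criteria}(i). Suppose $R\to R/I^3$ admits a CDGA homotopy retraction. The test algebra $T$ of Table~\ref{tab:sharpness-generators} is a relative Sullivan algebra over $R$. Its generators kill $x^3$ and $x^2y$ and then their successive Massey-type products, so every differential lands in $I^3+(E,F,G,H)$. Hence $T$ admits a CDGA map $\nu:T\to R/I^3$ under $R$ sending the new generators to zero, exactly as in the Ganea-model lift of Theorem~\ref{thm:relative-category}. Lifting $\nu$ against a surjective relative Sullivan model of $R/I^3$ and composing with the retraction produces a CDGA retraction $\rho:T\to R$. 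Retractions are determined only up to homotopy under $R$, but for the obstruction computation a strict retraction suffices.

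Now evaluate the degree-$11$ generator. Since $\rho$ is a cochain map and the identity on $R$,
\[
d_R\rho(Z)=\rho(DZ)=-\tfrac12\rho(F)^2-y\rho(H_1)+x\rho(H_0).
\]
I would apply the functional $\ell$ of \eqref{eq:sharpness-functional}. The left side lies in $d_RR^{11}$. The terms $y\rho(H_1)$ and $x\rho(H_0)$ lie in $yR^{10}+xR^{10}$. All three are killed by $\ell$, so
\[
\ell\bigl(\rho(F)^2\bigr)=0 .
\]

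The main work, and the expected obstacle, is showing that $\ell(\rho(F)^2)\neq0$ for every admissible choice. Degree considerations constrain $\rho$ heavily: $R^5$ is spanned by the $t_i$ (together with $A^5=0$ and products such as $x\cdot R^3$, which vanishes since $R^3=0$). The equations $d\rho(E_0)=x^3$ and $d\rho(E_1)=x^2y$ force $\rho(E_0)=t_0$ and $\rho(E_1)=t_1$ modulo cocycles. There are no nonzero degree-$5$ cocycles to add, because $R^4$ is spanned by $x^2,xy,y^2$ and degree-$5$ exact elements vanish. Then $d\rho(F)=yt_0-xt_1=d_Ru_0$, so $\rho(F)=u_0+\kappa a+c$ with $c$ a combination of closed degree-$6$ elements lying in the span of $x^3,\dots$ and $\ker d$ on the $u_i$, and $\kappa\in\Q$.

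Expanding $\rho(F)^2$ and using $[u_0^2]^*+[au_0]^*$, the contribution is $1+2\kappa+\dots$ plus $\kappa^2[a^2]^*$ terms. Here I would have to check, using the ordered-basis coefficients from \cite[Proposition~7.1]{Parent2026}, that the remaining freedom (in particular cocycles built from $u_1,u_2$ and multiples of $x,y$) cannot cancel the $u_0^2$ coefficient. The multiples of $x,y$ are killed by $\ell$ directly. I expect the precise verification that $\ell(\rho(F)^2)$ is a nonzero constant independent of the choices to be the delicate step, and I would import it from the cited finite test rather than recompute it.
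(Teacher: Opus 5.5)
\textbf{The gap.} Your reduction to $\secat(p_{\Q})\geq3$, the construction of a strict retraction $\rho\colon T\to R$ under $R$, and the low-degree analysis of $\rho(E_0)$, $\rho(E_1)$ all match the paper. The last step fails. You expect $\ell(\rho(F)^2)$ to be a nonzero constant independent of the choices, but this is false, so it cannot be imported from the cited test.

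First, the leftover freedom is smaller than you allow. The differentials $d_Ru_i=yt_i-xt_{i+1}$ are linearly independent, so no nonzero combination of the $u_i$ is closed. Hence $\rho(F)=u_0+\kappa a+P_3(x,y)$ exactly, with $\kappa\in\Q$ and $P_3\in\Lambda^3(x,y)$. Expanding,
\[
\rho(F)^2=u_0^2+2\kappa\,au_0+\kappa^2a^2+(\text{terms in }xR^{10}+yR^{10}),
\]
and therefore $\ell(\rho(F)^2)=1+2\kappa$. This vanishes at $\kappa=-\tfrac12$. So the functional $\ell$ by itself only forces $\kappa=-\tfrac12$ and produces no contradiction. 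The term $\kappa^2a^2$ is precisely the one that $\ell$ cannot see.

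\textbf{The missing idea.} Apply a second, independent test to the same $Z$-equation, namely the augmentation $\varphi\colon R\to A$. Since $A=\Lambda(a,b)$ has generators in degrees $6$ and $17$, we have $A^{11}=0$. Consequently $\varphi(d_R\rho(Z))=d\,\varphi\rho(Z)=0$ at the cochain level. Moreover, $\varphi$ kills $x$ and $y$, and $\varphi\rho(F)=\kappa a$. The equation thus becomes $0=-\tfrac12\kappa^2a^2$ in $A^{12}$. Because $a^2\neq0$ in the free algebra $A$, this forces $\kappa=0$, contradicting $\kappa=-\tfrac12$.

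It is the incompatibility of these two constraints on the single parameter $\kappa$, one linear coming from $\ell$ and one quadratic coming from $\varphi$, that proves $\secat(p_{\Q})>2$. With that step in place, the rest of your argument ($\secat=\relcat=3$ from Theorem~\ref{thm:main}, and $[I^3\hookrightarrow I]\neq0$ from Theorem~\ref{thm:relative-category} with $n=2$) goes through as in the paper.
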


\begin{proof}

The displayed differential squares to zero. For example, its last
identity is
\[
 d_T^2Z=-F(yE_0-xE_1)
          -y(-E_0F+xG)+x(-E_1F+yG)=0.
\]
Sending the seven relative generators to zero defines a map
$T\to R/I^3$. Extend it to a relative Sullivan model of
$R\to R/I^3$. Any homotopy retraction of this quotient would
therefore restrict to a CDGA map $r\colon T\to R$ identical on $R$.

The low-degree part of the model is
\[
 R^5=\operatorname{span}_{\Q}(t_0,t_1,t_2,t_3),\qquad
 R^6=\operatorname{span}_{\Q}(a,u_0,u_1,u_2)
          \oplus\Lambda^3(x,y).
\]
The differentials of the $t_i$ are linearly independent, as are
those of the $u_i$. The equations for $E_0,E_1,F$ consequently force
\[
 r(E_0)=t_0,\qquad r(E_1)=t_1,\qquad
 r(F)=u_0+\alpha a+P_3(x,y),
 \quad \alpha\in\Q,\quad P_3\in\Lambda^3(x,y).
\]
Applying $r$ to the equation for $Z$ gives
\begin{equation}\label{eq:sharpness-last-equation}
 d_Rr(Z)=-\tfrac12r(F)^2-yr(H_1)+xr(H_0).
\end{equation}
By \eqref{eq:sharpness-functional}, applying $\ell$ yields
$0=-\tfrac12(1+2\alpha)$, hence $\alpha=-\tfrac12$.
Applying $\varphi$ instead gives $0=-\tfrac12\alpha^2a^2$:
indeed $A^{11}=0$ and $\varphi r(F)=\alpha a$.
Since $a^2\ne0$ in $A$, this forces $\alpha=0$, a contradiction.
Thus $\secat(p_{\Q})>2$. Theorem~\ref{thm:main} gives
$\secat(p_{\Q})=\relcat(p_{\Q})=3$.
If $[I^3\hookrightarrow I]$ were zero, Theorem~\ref{thm:relative-category}
would give $\relcat(p_{\Q})\leq2$. This proves the last
assertion.
\end{proof}

The trace obstruction of \S\ref{subsec:trace-class}
also has a direct calculation in this example.
Choose a surjective semifree resolution $\varepsilon:(P,\delta)\to I^3$.
The acyclicity of its kernel supplies homogeneous elements with
\[
\begin{aligned}
 &\delta e_0=\delta e_1=0,
 &&\varepsilon(e_0)=x^3,\quad\varepsilon(e_1)=x^2y,\\
 &\delta f_0=ye_0-xe_1,
 &&\varepsilon(f_0)=0,\\
 &\delta g=x^2f_0-t_1e_0+t_0e_1,
 &&\varepsilon(g)=x^2u_0,\\
 &\delta k_0=t_0f_0+u_0e_0-xg,
 &&\varepsilon(k_0)=0,\\
 &\delta k_1=t_1f_0+u_0e_1-yg,
 &&\varepsilon(k_1)=0.
\end{aligned}
\]
Their degrees are $6,6,7,10,11,11$, respectively. The displayed
right-hand sides are compatible with $\delta^2=0$ and
$\varepsilon\delta=d_R\varepsilon$, so the choices can be made
successively by lifting through $\varepsilon$.
The element $\zeta=u_0f_0-yk_0+xk_1$ is a cycle in
$\ker\varepsilon$, hence $\zeta=\delta z$ for some
$z\in\ker\varepsilon$. If an $I$-valued nullhomotopy $h$ existed,
its low-degree equations would give
\[
 h(e_0)=t_0,\qquad h(e_1)=t_1,\qquad
 h(f_0)=-u_0+P_3(x,y),
 \qquad P_3\in\Lambda^3(x,y).
\]
It follows from \eqref{eq:sharpness-functional} that
$\ell(h(\zeta))=-1$. On the other hand,
$h(\zeta)=-d_Rh(z)$, so the same functional gives zero.
This contradiction proves $o_3=[I^3\hookrightarrow I]\ne0$
directly.

This example establishes the optimality of the uniform additive
constant one at $\msecat=2$; the case $\msecat=0$ has equality
by Theorem~\ref{thm:main}.

The source is elliptic, whereas the target has two-dimensional top
cohomology (see below). Thus ellipticity of the source alone permits a strict gap
for a general retractive map; Theorem~\ref{thm:poincare-main} gives
equality for the diagonal of every elliptic space.

Theorem~\ref{thm:finite-homotopy-fibre} shows that any strict
retractive example has a retraction fibre with infinite-dimensional
total rational homotopy. In the present example this can also be
checked directly. The relative generator space $W$ is infinite
dimensional: indeed, \cite[Proposition~6.1 and Corollary~6.2]{Parent2026} gives
$H^{>12}(R)=0$ and $\dim H^{12}(R)=2$. If $W$ were finite dimensional,
$R$ would be elliptic, so its cohomology would satisfy Poincar\'e duality
\cite[Proposition~38.3]{FHT2001}. Its top cohomology would then be
one dimensional, a contradiction.
The fibre also has infinite-dimensional rational cohomology.
Indeed, $H^*(X;\Q)\cong\Q[a]/(a^3)$, with $|a|=6$,
so $\chi(X)=3$, whereas \cite[Corollary~6.2]{Parent2026} gives
$\chi(Y)=11$. Let $F$ be the homotopy fibre of a retraction
$g:Y_{\Q}\to X_{\Q}$. If $H^*(F;\Q)$ were finite dimensional,
the Serre spectral sequence would give $11=3\chi(F)$, a contradiction.
Thus the fibre does not satisfy Poincar\'e duality, so
Theorem~\ref{thm:pd-fibre-dimension} does not apply.

\section{Products of maps and topological complexity}\label{sec:products}

Throughout this section, $f\colon X\to Y$ and $g\colon X'\to Y'$
satisfy the hypotheses of Theorem~\ref{thm:main}, and their module
sectional categories are finite. Applying the module additivity theorem
of Carrasquel, Parent and Vandembroucq \cite[Theorem~1]{CPV2018}
to fibration replacements of the rationalized maps gives
\begin{equation}\label{eq:module-additivity}
 \msecat\bigl((f\times g)_{\Q}\bigr)
 =\msecat(f_{\Q})+\msecat(g_{\Q}).
\end{equation}
The biduality used in its proof \cite[Lemma~3]{CPV2018} holds
degree by degree for our finite-type minimal models, and the quotient
constructions apply as in Section~\ref{sec:preliminaries}.

We also use the normalized product inequality
\begin{equation}\label{eq:secat-subadditivity}
 \secat\bigl((f\times g)_{\Q}\bigr)
 \leq\secat(f_{\Q})+\secat(g_{\Q}).
\end{equation}
We recall the kernel-power argument of \cite[Proposition~6 and
Corollary~7\textup{(i)}, arXiv version]{CV2016} in the present setting. Choose split minimal models $\varphi\colon R\to A$ and $\psi\colon S\to B$, with kernels $I$ and $J$. Their tensor product is a split minimal model of $(f\times g)_{\Q}$, with kernel
\[
 K=I\otimes S+R\otimes J.
\]
Put $a=\secat(f_{\Q})$ and $b=\secat(g_{\Q})$. These integers are finite by Theorem~\ref{thm:main}. The inclusion
\[
 K^{a+b+1}\subset I^{a+1}\otimes S+R\otimes J^{b+1}
\]
gives a factorization
\[
 R\otimes S\longrightarrow (R\otimes S)/K^{a+b+1}
 \longrightarrow (R/I^{a+1})\otimes(S/J^{b+1}).
\]
The composite admits a homotopy retraction by tensoring the two retractions supplied by Theorem~\ref{thm:quotient-criteria}. Composing that retraction with the second map gives a homotopy retraction of the first map. The same quotient criterion proves \eqref{eq:secat-subadditivity}.

Write $f^{\times n}\colon X^n\to Y^n$ for the Cartesian power of the map. The Cartesian power of a chosen homotopy retraction of $f_{\Q}$ is a homotopy retraction of $(f^{\times n})_{\Q}$.

\Needspace{11\baselineskip}
\begin{theorem}\label{thm:power-consequences}
Set $q=\msecat(f_{\Q})$ and $s_n=\secat\bigl((f^{\times n})_{\Q}\bigr)$. Then:
\begin{enumerate}[label=\textup{(\roman*)}]
\item For every $n\geq1$,
\begin{equation}\label{eq:power-bound}
 nq\leq s_n\leq nq+1.
\end{equation}
Consequently,
\[
 \lim_{n\to\infty}\frac{s_n}{n}=q,
 \qquad q=\left\lfloor\frac{s_n}{n}\right\rfloor
 \quad(n\geq2).
\]
\item The following conditions are equivalent: $s_1=q$;
$s_2=2s_1$; $s_n=ns_1$ for some $n\geq2$;
and $s_n=ns_1$ for every $n\geq1$.
\item If $s_1=q+1$, then
\[
 ns_1-s_n\in\{n-1,n\}\qquad(n\geq2).
\]
In particular, $s_2<2s_1$, with a defect of one or two.
\end{enumerate}
\end{theorem}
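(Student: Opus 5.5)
The plan is to derive everything from three earlier facts applied to Cartesian powers: module additivity \eqref{eq:module-additivity}, the comparison of Theorem~\ref{thm:main}, and subadditivity \eqref{eq:secat-subadditivity}. First I check that the hypotheses persist under powers. The Cartesian power of a homotopy retraction of $f_{\Q}$ retracts $(f^{\times n})_{\Q}$, so $f^{\times n}$ is again rationally retractive, and rationalization commutes with finite products. Iterating \eqref{eq:module-additivity} gives $\msecat\bigl((f^{\times n})_{\Q}\bigr)=nq$. Theorem~\ref{thm:main} applied to $f^{\times n}$ then yields $nq\leq s_n\leq nq+1$, which is \eqref{eq:power-bound}. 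Dividing by $n$ gives the limit. For $n\geq2$ we have $nq\leq s_n\leq nq+1<n(q+1)$, so $\lfloor s_n/n\rfloor=q$.

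For (ii), Theorem~\ref{thm:main} gives $s_1\in\{q,q+1\}$. If $s_1=q$, iterating \eqref{eq:secat-subadditivity} gives $s_n\leq ns_1=nq$, and together with $s_n\geq nq$ this forces $s_n=ns_1$ for every $n$. The chain of implications from ``every $n$'' to ``$n=2$'' to ``some $n\geq2$'' is trivial. Conversely, suppose $s_n=ns_1$ for some $n\geq2$. If $s_1=q+1$, then $s_n=nq+n>nq+1$, which contradicts \eqref{eq:power-bound}. Hence $s_1=q$.

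For (iii), put $s_1=q+1$. Then $ns_1-s_n=nq+n-s_n$, and \eqref{eq:power-bound} gives $s_n\in\{nq,nq+1\}$, so the defect $ns_1-s_n$ lies in $\{n,n-1\}$. Taking $n=2$ gives a defect of $1$ or $2$, and in particular $s_2<2s_1$.

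The only step needing care is the use of module additivity on iterated products. I would justify it by induction, writing $f^{\times(n+1)}=f^{\times n}\times f$. Both factors satisfy the hypotheses of Theorem~\ref{thm:main} and have finite module sectional category by the inductive step, so \cite[Theorem~1]{CPV2018} applies as recorded at the start of this section. Everything else is arithmetic.
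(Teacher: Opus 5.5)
Your proposal is correct and follows essentially the same route as the paper. You get \eqref{eq:power-bound} from iterated module additivity and Theorem~\ref{thm:main}, use subadditivity for $s_1=q\Rightarrow s_n=nq$, and obtain the converse and (iii) from the integrality of $s_1-q$ together with $s_n\le nq+1$. Your induction on $f^{\times(n+1)}=f^{\times n}\times f$ only makes explicit the application of module additivity that the paper leaves implicit.
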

\begin{proof}
Module additivity gives $\msecat\bigl((f^{\times n})_{\Q}\bigr)=nq$. Theorem~\ref{thm:main} applied to $f^{\times n}$ proves \eqref{eq:power-bound}. Division by $n$ gives the limit and, for $n\geq2$, places $s_n/n$ in $[q,q+1)$, proving the integer-part formula.

If $s_1=q$, module additivity and subadditivity give
\[
 nq\leq s_n\leq ns_1=nq.
\]
Conversely, if $s_n=ns_1$ for some $n\geq2$, then
\[
 0\leq n(s_1-q)\leq1.
\]
The difference $s_1-q$ is an integer, so it is zero. This proves all the equivalences in \textup{(ii)}. Finally, write $s_n=nq+\epsilon_n$, where $\epsilon_n\in\{0,1\}$. If $s_1=q+1$, the defect in \textup{(iii)} is $n-\epsilon_n$.
\end{proof}

More generally, put $\delta(f)=\secat(f_{\Q})-\msecat(f_{\Q})\in\{0,1\}$. Module additivity gives
\[
 \secat(f_{\Q})+\secat(g_{\Q})
 -\secat\bigl((f\times g)_{\Q}\bigr)
 =\delta(f)+\delta(g)-\delta(f\times g).
\]
If both factors have zero difference, their product does too. If exactly one factor has difference one, the defect of additivity is zero or one. If both do, that defect is one or two. These assertions apply to the retractive example of Section~\ref{sec:sharpness}.

\begin{corollary}\label{cor:strict-retractive-products}
There is a map $p$ between simply connected rational spaces of finite
rational homotopy type which admits a homotopy retraction and satisfies, for every
$n\geq2$,
\[
 2n\leq\secat(p^{\times n})\leq2n+1<n\secat(p).
\]
In particular, $\secat(p\times p)\in\{4,5\}$ whereas $\secat(p)=3$.
\end{corollary}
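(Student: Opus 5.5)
The plan is to take $p$ to be the rationalization of the map from \cite{Parent2026} recorded in Section~\ref{sec:sharpness}. It is a map between simply connected rational spaces of finite rational homotopy type and admits a homotopy retraction. Proposition~\ref{prop:sharpness-certificate} gives
\[
 q=\msecat(p)=2,\qquad \secat(p)=3=q+1.
\]
Since $p$ is already rational, $p_{\Q}=p$. The Cartesian power of a chosen retraction retracts $p^{\times n}$, so Theorem~\ref{thm:power-consequences} applies with $f=p$ and $s_n=\secat(p^{\times n})$.

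The bounds come directly from part~\textup{(i)} of Theorem~\ref{thm:power-consequences}, which gives $2n\leq s_n\leq 2n+1$ for every $n\geq1$. Module additivity \eqref{eq:module-additivity} supplies the lower bound $\msecat(p^{\times n})=2n$. Theorem~\ref{thm:main} applied to $p^{\times n}$ supplies the upper bound.

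The strict inequality follows from $n\secat(p)=3n$ and $2n+1<3n$, which holds exactly when $n\geq2$. Equivalently, part~\textup{(iii)} gives a defect $ns_1-s_n\in\{n-1,n\}$, and this defect is positive for $n\geq2$. At $n=2$ the bounds read $4\leq\secat(p\times p)\leq5<6=2\secat(p)$, which is the final assertion.

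There is no real obstacle beyond checking the hypotheses of Theorem~\ref{thm:power-consequences}. The one point to verify is that module additivity \cite[Theorem~1]{CPV2018} applies to powers of $p$ with finite $\msecat$. The discussion preceding \eqref{eq:module-additivity} already justifies this for finite-type split minimal models.
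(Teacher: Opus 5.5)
Your proposal is correct and is essentially the paper's proof. You take the retractive example of \cite{Parent2026}, with $\msecat(p)=2$ and $\secat(p)=3$ (Proposition~\ref{prop:sharpness-certificate}), apply Theorem~\ref{thm:power-consequences}\textup{(i)}, and use $2n+1<3n$ for $n\geq2$; your remark via part~\textup{(iii)} is only an equivalent restatement of the same bound.
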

\begin{proof}
Take the map of \cite[Theorem~1.1]{Parent2026}, for which
$\msecat(p)=2$ and $\secat(p)=3$, and apply
Theorem~\ref{thm:power-consequences}. For $n\geq2$,
$2n+1<3n$.
\end{proof}

Strict subadditivity for general rational maps was already recorded
in \cite[Remark~11, arXiv version]{CV2016}. The corollary
gives it for every higher Cartesian power of a retractive example.

\subsection{The diagonal specialization}
For $f=\Delta_X$, a permutation of factors identifies $f^{\times n}$ with the diagonal of $X^n$. The invariants in Theorem~\ref{thm:power-consequences} therefore become $q=\MTC(X)$ and $s_n=\TC_0(X^n)$. Module additivity for diagonals is also \cite[Corollary~2, the case $n=2$]{CPV2018}. For the diagonals of two spaces $X$ and $Y$, \eqref{eq:secat-subadditivity} gives
\begin{equation}\label{eq:TC-subadditivity}
 \TC_0(X\times Y)\leq\TC_0(X)+\TC_0(Y).
\end{equation}

\begin{corollary}\label{cor:TC-power-consequences}
Let $X$ be simply connected and of finite rational homotopy type, and suppose $q=\MTC(X)<\infty$. Then, for every $n\geq1$,
\[
 nq\leq\TC_0(X^n)\leq nq+1.
\]
Moreover,
\[
 \lim_{n\to\infty}\frac{\TC_0(X^n)}n=\MTC(X),
 \qquad
 \MTC(X)=\left\lfloor\frac{\TC_0(X^n)}n\right\rfloor
 \quad(n\geq2).
\]
Equality $\TC_0(X)=\MTC(X)$ holds if and only if
$\TC_0(X\times X)=2\TC_0(X)$.
\end{corollary}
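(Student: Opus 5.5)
This corollary is the diagonal case of Theorem~\ref{thm:power-consequences}, so the plan is to take $f=\Delta_X$ and translate each invariant.

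\textbf{Step 1: the diagonal satisfies the hypotheses.} Either projection retracts the diagonal, so $f=\Delta_X\colon X\to X\times X$ is rationally retractive. Both spaces are simply connected and of finite rational homotopy type. Hence Theorem~\ref{thm:main} applies, and the standing assumption $\msecat(f_{\Q})<\infty$ of Section~\ref{sec:products} is exactly $q=\MTC(X)<\infty$.

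\textbf{Step 2: identify the invariants.} Let $\theta\colon X^n\times X^n\to(X\times X)^n$ be the shuffle homeomorphism. It satisfies $\theta\circ\Delta_{X^n}=\Delta_X^{\times n}$. Sectional category is invariant under composing the target with a homeomorphism. Rationalization commutes with finite products in this setting. Therefore
\[
 s_n=\secat\bigl((\Delta_X^{\times n})_{\Q}\bigr)=\secat\bigl((\Delta_{X^n})_{\Q}\bigr)=\TC_0(X^n).
\]
Also $s_1=\TC_0(X)$ and $q=\MTC(X)$. If a model-level check is wanted, the same permutation of tensor factors carries the split minimal model of $\Delta_X^{\times n}$ to that of $\Delta_{X^n}$ and matches the kernel ideals. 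The quotient criteria of Theorem~\ref{thm:quotient-criteria} then agree.

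\textbf{Step 3: read off the conclusions.} Theorem~\ref{thm:power-consequences}(i) gives the bounds $nq\leq\TC_0(X^n)\leq nq+1$, the limit, and the floor formula. Part (ii) gives the equivalence of $s_1=q$ with $s_2=2s_1$, that is, $\TC_0(X)=\MTC(X)$ if and only if $\TC_0(X\times X)=2\TC_0(X)$.

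The only step needing care is Step 2. The product $f^{\times n}$ must be identified with the diagonal of $X^n$ compatibly with the homotopy retraction used for module additivity. Using the Cartesian power of one projection, transported by $\theta$, gives a projection retraction of $\Delta_{X^n}$, so the identification is consistent. Everything else is a direct citation.
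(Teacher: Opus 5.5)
Your proposal is correct and follows the paper's proof: it applies Theorem~\ref{thm:power-consequences} to $\Delta_X$, using the permutation of factors that identifies $\Delta_X^{\times n}$ with $\Delta_{X^n}$. This gives $q=\MTC(X)$ and $s_n=\TC_0(X^n)$, and parts (i) and (ii) then yield the stated bounds, limit, floor formula and equivalence. The extra care you take over compatibility with the chosen retraction is harmless but not needed, since $\secat$ and $\msecat$ do not depend on that choice.
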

\begin{proof}
Apply Theorem~\ref{thm:power-consequences} to $\Delta_X$.
\end{proof}

For Poincar\'e duality spaces, Theorem~\ref{thm:poincare-main}
gives exact additivity.

\begin{corollary}\label{cor:PD-products}
Let $X$ and $Y$ be simply connected spaces of finite rational homotopy type
whose rational cohomology algebras are finite-dimensional Poincar\'e
duality algebras. Then
\[
 \TC_0(X\times Y)=\TC_0(X)+\TC_0(Y).
\]
The same identity holds for $\MTC$ and $\TC_0^M$. More generally,
for every $r\geq2$,
\[
 \TC_{r,0}(X\times Y)=\TC_{r,0}(X)+\TC_{r,0}(Y).
\]
In particular,
for every $n\geq1$,
\[
 \TC_0(X^n)=\TC_0^M(X^n)=\MTC(X^n)=n\TC_0(X).
\]
\end{corollary}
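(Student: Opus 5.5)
The plan is to combine module additivity with the Poincar\'e duality equality. Take $X$ and $Y$ satisfying the hypotheses of Theorem~\ref{thm:poincare-main}. First I observe that $X\times Y$ satisfies them as well. It is simply connected and of finite rational homotopy type. By K\"unneth, $H^*(X\times Y;\Q)\cong H^*(X;\Q)\otimes H^*(Y;\Q)$, and the tensor product of finite-dimensional Poincar\'e duality algebras is again one, with the product orientation of formal dimension $n_X+n_Y$. Nondegeneracy holds because the pairing matrix is the tensor product of the two nondegenerate pairing matrices, with Koszul signs that affect only the sign of blocks.

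Next I use the diagonal form of module additivity, \cite[Corollary~2]{CPV2018}, as recalled in \eqref{eq:module-additivity} and in the diagonal specialization. It gives
\[
 \MTC(X\times Y)=\MTC(X)+\MTC(Y).
\]
This is an identity in $\mathbb N$: all three values are finite because Poincar\'e duality cohomology is finite dimensional, so $\TC_0\le 2\operatorname{cat}_0<\infty$. Applying Theorem~\ref{thm:poincare-main} to $X$, $Y$ and $X\times Y$ converts each $\MTC$ into $\TC_0$ and into $\TC_0^M$, which gives all three identities.

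For $r\ge2$ I run the same argument with $\Delta_r$. A permutation of factors identifies $\Delta_r^{X\times Y}$ with the product map $\Delta_r^X\times\Delta_r^Y$. Each factor is retractive (by a projection) and has finite module sectional category, since $\TC_{r,0}$ is finite for a finite-dimensional cohomology. So \eqref{eq:module-additivity} gives $\MTC_r(X\times Y)=\MTC_r(X)+\MTC_r(Y)$. Corollary~\ref{cor:higher-pd}, applied to $X$, $Y$ and $X\times Y$, then replaces $\MTC_r$ by $\TC_{r,0}$.

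The final statement about $X^n$ follows by induction on $n$. At each step $X^{n-1}$ is again a Poincar\'e duality space by the first step, so the additivity applies, and Theorem~\ref{thm:poincare-main} applied to $X^n$ identifies $\TC_0$, $\TC_0^M$ and $\MTC$ there.

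The only point needing care is the applicability of \cite[Theorem~1]{CPV2018} to the higher diagonals in the finite-type setting. The paper already asserted this for general retractive maps with finite $\msecat$ in the paragraph introducing \eqref{eq:module-additivity}, so it should be routine. I would still check finiteness of each factor explicitly so that additivity is not applied to infinite values.
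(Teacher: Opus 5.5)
Your proposal is correct and follows essentially the same route as the paper. It uses K\"unneth and Poincar\'e duality for tensor products, finiteness of all values, and module additivity (for $\Delta_r$ via the product identification). It then applies Theorem~\ref{thm:poincare-main} and Corollary~\ref{cor:higher-pd} to $X$, $Y$ and $X\times Y$, with induction for powers.

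The one step you leave implicit is finiteness of $\operatorname{cat}_0$ from finite-dimensional cohomology. The paper supplies it: the bounded model of Lemma~\ref{lem:bounded-truncation} has nilpotent augmentation ideal, and the quotient upper bound then gives $\operatorname{cat}_0(Z)<\infty$ and $\TC_{r,0}(Z)\le r\operatorname{cat}_0(Z)$.
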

\begin{proof}
The tensor product of finite-dimensional Poincar\'e duality
algebras again satisfies Poincar\'e duality, so
Theorem~\ref{thm:poincare-main} applies to $X$, $Y$, and $X\times Y$.
For each of these spaces $Z$, finite-dimensional rational cohomology
gives $\operatorname{cat}_0(Z)<\infty$: a bounded connected model
from Lemma~\ref{lem:bounded-truncation} has nilpotent augmentation
ideal, and the quotient upper bound \cite{CV2015} applies.
Consequently $\TC_{r,0}(Z)\leq\operatorname{cat}_0(Z^r)
\leq r\operatorname{cat}_0(Z)<\infty$ for every $r\geq2$,
so the finiteness hypothesis for module additivity is satisfied.
Module additivity gives
\[
 \TC_0(X\times Y)=\MTC(X\times Y)
 =\MTC(X)+\MTC(Y)=\TC_0(X)+\TC_0(Y).
\]
The monoidal equality follows from the same theorem, and induction
proves the assertion for Cartesian powers.
For higher complexity, use Corollary~\ref{cor:higher-pd}
and module additivity \cite[Corollary~2]{CPV2018}.
\end{proof}

Corollary~\ref{cor:TC-power-consequences} identifies the remaining
diagonal equality question with additivity of rational topological
complexity on squares. Theorem~\ref{thm:poincare-main} settles both
properties under Poincar\'e duality.

\makeatletter
\@secpenalty=0
\makeatother
\section{Degree criteria for equality}
\makeatletter
\@secpenalty=-300
\makeatother
\label{sec:remaining-obstruction}

We give two degree criteria for equality without a duality
hypothesis. Throughout this section, $f:X\to Y$ is rationally
retractive, with split minimal model $\varphi:R=A\otimes\Lambda W\to A$
and kernel $I=(W)$. The first criterion uses
Proposition~\ref{prop:canonical-trace} to bound the possible trace in
the source model; the second uses the cohomological dimension of
the target to turn a module retraction directly into an algebra
retraction.

\begin{corollary}\label{cor:degree-cohomology-equality}
Suppose that $W^{<c}=0$ for an integer $c\geq2$, that
$H^{>n}(X;\Q)=0$, and that
$q=\msecat(f_{\Q})<\infty$. If
\[
 c(q+1)>n+1,
\]
then $o_{q+1}=0$ and
\[
 \msecat(f_{\Q})=\secat(f_{\Q})=\relcat(f_{\Q}).
\]
In particular, for a simply connected space $X$ with
$H^{>n}(X;\Q)=0$, finite $\MTC(X)$, and
$n\leq2\MTC(X)$, one has $\MTC(X)=\TC_0(X)=\TC_0^M(X)$.
\end{corollary}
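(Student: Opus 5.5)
The plan is to use the trace description of the obstruction from Proposition~\ref{prop:canonical-trace} and show, by a degree count in the source model $A$, that the group containing the trace vanishes. Put $m=q+1$. By \eqref{eq:msecat-derived} we have $[I^m\hookrightarrow R]=0$, so Proposition~\ref{prop:canonical-trace} applies. The class $o_m$ is the image under the connecting isomorphism \eqref{eq:trace-connecting-isomorphism} of an element of the cokernel of $H^{-1}(C_R)\to H^{-1}(C_A)$, where $C_A=\operatorname{Hom}_R(P,A)$. It therefore suffices to prove $H^{-1}(C_A)=0$, or more strongly that $C_A$ vanishes in degree $-1$ after a quasi-isomorphic replacement of $A$.

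For $P$ take the controlled resolution $P_m\to I^m$ of Proposition~\ref{prop:controlled-resolution}. By the degree estimate following \eqref{eq:resolution-degree-bound}, its cells have degree at least $cm+(c-1)j\geq cm=c(q+1)$. Apply Lemma~\ref{lem:bounded-truncation} to $A$ with $H^{>n}(A)=0$. This gives a quasi-isomorphic CDGA $\overline A$ concentrated in degrees at most $n$, with $R$ acting through $R\to A\to\overline A$, and $\operatorname{Hom}_R(P_m,A)\to\operatorname{Hom}_R(P_m,\overline A)$ a quasi-isomorphism. The lemma's vanishing criterion $\operatorname{Hom}_R(P,\overline A)^k=0$ for $c'+k>n$, with cell bound $c'=c(q+1)$ and $k=-1$, requires exactly $c(q+1)-1>n$, which is the hypothesis $c(q+1)>n+1$. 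Hence $H^{-1}(C_A)=0$, so $o_{q+1}=0$, and Theorem~\ref{thm:relative-category} with $n$ replaced by $q$ gives $\relcat(f_{\Q})\leq q$. The chain $\msecat\leq\secat\leq\relcat$ from Theorem~\ref{thm:main} then forces equality. If $q=0$, the statement already follows from Lemma~\ref{lem:zero-threshold}.

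For the diagonal specialization, take $R=B\otimes B$ and $W=\bar V$, so $c=2$ since $V=V^{\geq2}$. The source $X$ is the space itself, with $H^{>n}(X;\Q)=0$. The condition $2(q+1)>n+1$ is equivalent to $n\leq 2q$ with $q=\MTC(X)$. The equalities with $\TC_0$ and $\TC_0^M$ then follow as in the proof of Corollary~\ref{cor:diagonal-bound}, via \cite[Theorem~12]{CGV2014}.

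The main point needing care is the truncation step. Lemma~\ref{lem:bounded-truncation} is stated for a semifree $R$-module $P$ with a morphism $R\to A$, so it applies directly. We must also check that the degree bound on cells of $P_m$ uses $c$ and not merely $2$, which is exactly the refined estimate $|g|\geq cm+(c-1)j$ recorded after \eqref{eq:resolution-degree-bound}. Finally, since the cokernel in \eqref{eq:trace-connecting-isomorphism} is computed from $C_A$, and $C_A$ is invariant up to quasi-isomorphism under replacing $A$ by $\overline A$, vanishing of $H^{-1}$ of the truncated complex suffices.
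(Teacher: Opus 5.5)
Your proposal is correct and follows essentially the same route as the paper. Both arguments set $m=q+1$, use the cell-degree bound $cm$ for $P_m$ together with Lemma~\ref{lem:bounded-truncation} in Hom degree $-1$ to get $H^{-1}(C_A)=0$, conclude $o_m=0$ from Proposition~\ref{prop:canonical-trace}, and then finish with Theorem~\ref{thm:relative-category}, the module lower bound, and $c=2$ for the diagonal (with $q=0$ handled by Lemma~\ref{lem:zero-threshold}).
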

\begin{proof}
If $q=0$, Lemma~\ref{lem:zero-threshold} makes $I$ acyclic,
so $o_1=0$ and all three invariants vanish. Suppose now that $q\geq1$.
Put $m=q+1$. The cells of $P_m$ in resolution index $j$ come
from $m+j$ generators of $W$, followed by a degree shift of $-j$.
Consequently their degrees are at least $cm+(c-1)j$.
Apply Lemma~\ref{lem:bounded-truncation} to $A$, with
cell-degree bound $cm$ and Hom degree $-1$. Since $cm-1>n$, it gives
$H^{-1}(\operatorname{Hom}_R(P_m,A))=0$. Hence
$H^{-1}(C_A)=0$, and
Proposition~\ref{prop:canonical-trace} gives $o_m=0$.
Apply Theorem~\ref{thm:relative-category} and the module lower
bound. For the diagonal, take $c=2$; the case $\MTC(X)=0$ follows
from Lemma~\ref{lem:zero-threshold}.
\end{proof}

\begin{corollary}\label{cor:target-cohomology-equality}
Suppose that $W^{<c}=0$ for an integer $c\geq2$, that
$H^{>N}(Y;\Q)=0$, and that
$1\leq q=\msecat(f_{\Q})<\infty$. If
\[
 2\bigl(c(q+1)-1\bigr)>N,
\]
then $\secat(f_{\Q})=\msecat(f_{\Q})$.
\end{corollary}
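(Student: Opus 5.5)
The plan is to bypass the ideal-valued criterion entirely and build the CDGA retraction by hand, with target a truncation of $R$. Here $R$ models $Y$ and $H^{>N}(R)=0$. Put $m=q+1\geq2$. By \eqref{eq:msecat-derived} we have $[I^m\hookrightarrow R]=0$. Lemma~\ref{lem:derived-retraction} then gives an $R$-linear $h\colon P_m\to R$ of degree $-1$ with $d_Rh+hD_m=\varepsilon_m$. Here $P_m$ is the controlled resolution of Proposition~\ref{prop:controlled-resolution}, and $E_m=(R\otimes\Lambda Z,D_E)$ is the filtered Sullivan model of Proposition~\ref{prop:filtered-sullivan-model}. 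Apply Lemma~\ref{lem:bounded-truncation} to $R$ with bound $N$, obtaining a surjective CDGA quasi-isomorphism $\pi\colon R\to\overline R$ with $\overline R$ concentrated in degrees $\leq N$.

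Next we record the degree bounds. Since $W^{<c}=0$, the weight calculation after \eqref{eq:resolution-degree-bound} gives cells of $P_m$ of degree at least $cm$. Hence generators of $Z_0=sG$ have degree at least $cm-1$. Repeating the computation \eqref{eq:arity-degree-bound} with this bound, a generator of $Z_{n-1}$ has degree at least $n(cm-2)+1$. In particular every $z\in Z_p$ with $p\geq1$ satisfies $|z|\geq 2cm-3$, so $|D_Ez|\geq 2cm-2=2(c(q+1)-1)>N$.

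Now define a CDGA map $\rho\colon E_m\to\overline R$ under $R$ on the free generators. Set $\rho|_R=\pi$, $\rho(sp)=\pi h(p)$ on $Z_0$, and $\rho(Z_p)=0$ for $p\geq1$; extend multiplicatively, which is possible because $E_m$ is free graded-commutative over $R$. It remains to check $\rho D_E=d\rho$ on generators.
\begin{itemize}
\item On $Z_0$, Proposition~\ref{prop:filtered-sullivan-model} says $Q=R\oplus sP_m$ is a subcomplex with the cone differential. There the identity $\rho D_E(sp)=\pi\varepsilon_m(p)-\pi h(D_mp)=d\pi h(p)$ is exactly the nullhomotopy equation.
\item On $Z_p$ with $p\geq1$, both sides lie in $\overline R^{>N}=0$ by the degree bound above.
\end{itemize}
Thus $\rho$ is a CDGA map retracting $R\to E_m$ up to the quasi-isomorphism $\pi$.

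Finally, since $R$ is a Sullivan algebra and $\pi$ is a surjective quasi-isomorphism, the composite $\rho$ and the inclusion give a homotopy retraction of $R\to E_m\simeq R/I^m$ in the CDGA homotopy category. Theorem~\ref{thm:quotient-criteria}(i) then yields $\secat(f_{\Q})\leq q$, and the lower bound $\msecat\leq\secat$ finishes the argument.

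The only delicate point is the degree bookkeeping for $Z_1$: the quadratic and $b_2$ terms of $D_Ez$ must land strictly above $N$. This holds exactly because of the hypothesis $2(c(q+1)-1)>N$, and it uses the sharpened cell-degree bound $cm$ rather than $2m$.
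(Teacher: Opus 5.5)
Your proposal is correct and follows essentially the paper's argument: the same model $E_m$, the cell bound $cm$, the truncation $\pi\colon R\to\overline R$ of Lemma~\ref{lem:bounded-truncation}, the inequality $2(cm-1)>N$, and Sullivan lifting. The only difference is packaging. The paper takes a cochain representative $u$ of the module left inverse and shows that $\pi u$ is multiplicative, since products of two elements of $R\otimes\Lambda^+Z$ have degree at least $2(cm-1)>N$; you instead define the algebra map on generators (the nullhomotopy on $Z_0$, zero on $Z_{\geq1}$) and check compatibility with the differential, which degree forces on $Z_{\geq1}$.
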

\begin{proof}
Put $m=q+1$ and $r=cm-1$. The degree calculation in
\eqref{eq:arity-degree-bound}, using the lower bound $cm$ for
cells of $P_m$, gives $Z^{<r}=0$ in the relative Sullivan model
$R\hookrightarrow E=R\otimes\Lambda Z$ of
Proposition~\ref{prop:filtered-sullivan-model}.
The module criterion gives a left inverse in $\mathcal D(R)$.
Since $E$ is semifree over $R$, choose a cochain representative
$u:E\to R$. Its restriction to $R$ is homotopic to the identity
and hence equal to it, because $R^{-1}=0$.
The degree argument of \cite[Lemma~A.1]{Parent2026} turns it into
an algebra retraction, as follows.
Lemma~\ref{lem:bounded-truncation}, applied to $R$, gives a
surjective quasi-isomorphism $\pi:R\to\overline R$ with
$\overline R^{>N}=0$.
Write $E=R\oplus(R\otimes\Lambda^+Z)$ as graded modules.
Every term in the second summand has degree at least $r$.
Consequently $\pi u$ respects products of two such terms,
since both sides vanish in degrees at least $2r>N$.
Products with a factor in $R$ are respected by $R$-linearity,
so $\pi u$ is a CDGA map. Sullivan lifting in the square
with top map $\operatorname{id}_R$, left map $R\hookrightarrow E$,
right map $\pi$, and bottom map $\pi u$ gives an algebra
retraction $E\to R$. The quotient criterion yields
$\secat(f_{\Q})\leq q$, and the module lower bound gives equality.
\end{proof}

For the diagonal of a space with top nonzero rational cohomology
in degree $n$, the target has top cohomology in degree $2n$.
Thus the numerical conditions in the two degree criteria agree.

For example, a strict gap with $\msecat(f_{\Q})=2$ requires
$H^k(X;\Q)\neq0$ for some $k\geq5$.
For the diagonal of $S^2$, Appendix~\ref{app:sphere-resolutions}
gives an explicit ideal-valued nullhomotopy on $P_3$, and hence
computes $o_3(\Delta_{S^2})=0$.

\appendix
\section{Universal splittings for the linear resolutions}
\label{app:universal-linear-resolution}

This appendix constructs the natural maps used in
Lemma~\ref{lem:natural-linear-resolution}.  All splittings are
made between rational representations of symmetric groups,
before evaluation on a differential graded module.  This order
of construction is what gives compatibility with nonconstant
coefficients in the base algebra.
The representation evaluation is the usual Schur-functor
construction; see \cite[Sections~5.1 and~6.2]{LodayVallette2012}
for that framework and the use of semisimplicity.  The
normalized finite perturbation lemma is recalled in
\eqref{eq:perturbation-formulas}; see also
\cite[Lemma~2.4]{MillerRahmati2024} for preservation of strong
deformation retracts. The construction below records the
filtration properties of the maps.

\subsection{The universal normalized bar complex}

Let $B$ be a graded-commutative algebra, let $M$ be a graded
free $B$-module, and put $S=\Lambda_BM$, $\mathfrak m=(M)$,
and $J=\mathfrak m^m$.  For the moment there is no internal
differential.  Give $M$ weight one.  The normalized relative
bar resolution has terms
\begin{equation}\label{eq:appendix-bar-terms}
 \mathcal B_{m,j}=S\otimes_B C_{m,j}(M),
 \qquad C_{m,j}(M)=\mathfrak m^{\otimes_Bj}\otimes_B J,
 \qquad j\geq0.
\end{equation}
The index $j$ is homological.  Write its elements as
$a_0[a_1|\cdots|a_j]u$.  Before totalization, the bar
differential is the alternating sum of the $j+1$ consecutive
multiplications.

\Needspace{8\baselineskip}
Split the bar differential as $\beta+\ell$, where $\ell$
multiplies $a_1$ into the free coefficient $a_0$, and $\beta$
is the sum of the remaining multiplications.  Thus $\beta$
acts on $C_{m,*}(M)$ and
\begin{align}
 \beta[a_1|\cdots|a_j]u
  ={}&\sum_{i=1}^{j-1}(-1)^i
       [a_1|\cdots|a_ia_{i+1}|\cdots|a_j]u\notag\\
    &+(-1)^j[a_1|\cdots|a_{j-1}](a_ju).
                       \label{eq:appendix-bar-beta}
\end{align}
This formula is zero at $j=0$. If $d_i$ denotes its $i$th
multiplication face, associativity gives
$d_i d_k=d_{k-1}d_i$ for $i<k$. The corresponding terms
in the alternating double sum cancel in pairs, so $\beta^2=0$.
The augmented complex $(\mathcal B_{m,*},\beta+\ell)\to J$
has augmentation
\[
 \varepsilon_{\mathrm{bar}}(a_0[]u)=a_0u,
 \qquad
 \varepsilon_{\mathrm{bar}}(\mathcal B_{m,j})=0\quad(j>0).
\]
It is exact.  One may verify this directly using the usual
insertion of the positive-weight part of $a_0$ as a new first
bar entry, together with $u\mapsto1[]u$ in augmented degree
$-1$.  These maps contract the augmented complex over $B$.

Here is an explicit representation model for its generators.
For $N\geq m$, let $E_{m,N,j}$ be the rational permutation
module on ordered partitions
\[
              (B_1|\cdots|B_j|C)
              \quad\text{of }\{1,\ldots,N\},
 \qquad |B_i|\geq1,\quad |C|\geq m.
\]
The symmetric group acts on labels, and we regard this as a
right $\mathbb Q[\mathfrak S_N]$-module.  Equivalently,
\begin{equation}\label{eq:appendix-representation-module}
 E_{m,N,j}
  =\bigoplus_{\substack{a_1+\cdots+a_j+a_0=N\\
                       a_i\geq1\ (i>0),\ a_0\geq m}}
    \operatorname{Ind}_{\mathfrak S_{a_1}\times\cdots
                \times\mathfrak S_{a_j}\times\mathfrak S_{a_0}}
                      ^{\mathfrak S_N}\mathbb Q.
\end{equation}
The differential $\beta$ merges consecutive blocks, including
the last bar block with $C$, with the signs in
\eqref{eq:appendix-bar-beta}.  It is an equivariant map
$E_{m,N,j}\to E_{m,N,j-1}$.

For a right representation $E$ define its evaluation by
\[
              E\langle M\rangle
               =E\otimes_{\mathbb Q[\mathfrak S_N]}
                         M^{\otimes_BN},
\]
where $\mathfrak S_N$ acts on the tensor power by signed
permutations. Every short exact sequence of rational
$\mathfrak S_N$-representations splits. This evaluation
functor is therefore exact and commutes with kernels and
homology. There is a natural identification
\begin{equation}\label{eq:appendix-evaluation}
 C_{m,j}(M)_N=E_{m,N,j}\langle M\rangle.
\end{equation}
Under this identification, block merger becomes multiplication
in $\Lambda_BM$.  Thus the formula includes all Koszul signs,
for odd as well as even generators.

Since $B\otimes_S\mathcal B_{m,*}=C_{m,*}(M)$, the homology of
this generator complex computes $\operatorname{Tor}^S(J,B)$.
The linearity proved in Lemma~\ref{lem:natural-linear-resolution}
therefore gives
\begin{equation}\label{eq:appendix-homology-weight}
                H_j(E_{m,N,*})=0\qquad(N\ne m+j).
\end{equation}
This is also an assertion about the representation modules
themselves: evaluate on $N$ distinct even variables and take
the multilinear summand.  The corresponding summand of the
tensor power is the regular representation, so this operation
recovers $E_{m,N,*}$ and its differential.  Hence vanishing
after this evaluation implies the asserted vanishing before
evaluation.

\subsection{A normalized deformation retract}

Fix $m,N$ and put $Z_j=\ker\beta_j$ and
$B_j=\operatorname{im}\beta_{j+1}$. The largest possible
index is $j=N-m$. By \eqref{eq:appendix-homology-weight},
all homology lies at this index, where there are no incoming
boundaries. Thus its representative is canonically
$\mathsf H_{N-m}=Z_{N-m}$; put $\mathsf H_j=0$ otherwise.
Semisimplicity allows equivariant complements $L_j$ such that
\begin{equation}\label{eq:appendix-splitting}
 E_{m,N,j}=B_j\oplus\mathsf H_j\oplus L_j,
 \qquad Z_j=B_j\oplus\mathsf H_j.
\end{equation}
The restriction $\beta:L_j\to B_{j-1}$ is an isomorphism.
Define $i_0$ to be inclusion of $\mathsf H_j$, define $p_0$
to be projection onto it, and define $h_0$ on $B_j$ as the
inverse of $\beta:L_{j+1}\to B_j$, with $h_0=0$ on
$\mathsf H_j\oplus L_j$.  Then
\begin{equation}\label{eq:appendix-initial-sdr}
 \begin{gathered}
 p_0i_0=1,\qquad \beta h_0+h_0\beta=1-i_0p_0,\\
 h_0i_0=p_0h_0=h_0^2=0.
 \end{gathered}
\end{equation}
The maps $i_0,p_0$ preserve $j$, the homotopy $h_0$ raises it
by one, and all three preserve $N$.

Evaluate the maps on $M$, sum over $N$, and extend them
$S$-linearly. Define
\[
 G_{m,j}(M)=Z_j(E_{m,m+j,*})\langle M\rangle,
 \qquad P^{\mathrm{lin}}_{m,j}=S\otimes_B G_{m,j}(M),
\]
with totalization shift $-j$. This is a canonical bar
subcomplex. Indeed, in a top cycle $z$ of weight $N=m+j$
every bar block has weight one and the final block has
weight $m$. The element $\ell z$ has coefficient weight one,
remaining generator weight $N-1$, and index $j-1$.
The identity $(\beta+\ell)^2z=0$ gives
$\beta\ell z+\ell^2z=0$; the two terms have coefficient
weights one and two, respectively. Hence both vanish, and
$\ell z$ is a top cycle at weight $N-1$. Consequently
$\delta_m=\ell|_{P_m^{\mathrm{lin}}}$ and $h_0\ell i_0=0$.

Restoring $\ell$ in the retract therefore leaves the
inclusion unchanged. The finite perturbation formulas give
\begin{equation}\label{eq:appendix-bar-retract}
 \begin{gathered}
 i=i_0,\qquad \delta_m=\ell|_{P_m^{\mathrm{lin}}},\\
 p=p_0(1+\ell h_0)^{-1},\qquad
 h=h_0(1+\ell h_0)^{-1}.
 \end{gathered}
\end{equation}
In these formulas the operators are understood after
totalization, as specified below.  Each $\ell$ removes a
nonempty first block from the free bar generators and places
it in the free coefficient.  Hence it strictly decreases
their weight, while $h_0$ preserves that weight.  The inverses
in \eqref{eq:appendix-bar-retract} are therefore finite
geometric sums on every element.

The differential $\delta_m$ lowers $j$ by one and preserves
total weight.  Since the generator weights are $m+j$ and
$m+j-1$, its coefficients have weight one.  The maps $i,p$
preserve $j$ and total weight, while $h$ raises $j$ by one
and preserves total weight.  They satisfy
\[
 pi=1,\qquad
 (\beta+\ell)h+h(\beta+\ell)=1-ip,
 \qquad hi=ph=h^2=0.
\]
These are the normalized finite perturbation identities.
The resolution and its inclusion are canonical; $p$ and $h$
depend on the chosen equivariant complements.
The augmentation of the transferred
resolution is, by definition,
$\varepsilon_m^{\mathrm{lin}}=\varepsilon_{\mathrm{bar}}\circ i$.
It is multiplication on $G_{m,0}(M)=\Lambda_B^mM$ and is zero
for $j>0$.  The retraction is augmented: since
$\varepsilon_{\mathrm{bar}}h=0$, the homotopy identity also
gives $\varepsilon_m^{\mathrm{lin}}p=\varepsilon_{\mathrm{bar}}$.

\subsection{The separate contraction of the augmentation kernel}

The bar homotopy $h$ is $S$-linear and contracts the complement
of the linear resolution in the bar.  The contraction of
$K_m=\ker(P_m^{\mathrm{lin}}\to J)$ is a different map, and
is only $B$-linear.  We now construct it explicitly at the
same representation level.

At total weight $N$, the term of resolution index $j$ in
$P_m^{\mathrm{lin}}$ is represented by
\begin{equation}\label{eq:appendix-linear-representation}
 U_{m,N,j}
  =\operatorname{Ind}_{\mathfrak S_{N-m-j}\times\mathfrak S_{m+j}}
                      ^{\mathfrak S_N}
       \bigl(\mathbb Q\otimes H_j(E_{m,m+j,*})\bigr),
 \qquad N\geq m+j.
\end{equation}
Set this representation to zero otherwise.  Multiplication
gives the augmentation $U_{m,N,0}\to\mathbb Q$, where the
target is the trivial representation representing
$J_N=\Lambda_B^NM$.  Define
\[
 U^K_{m,N,0}=\ker(U_{m,N,0}\to\mathbb Q),
 \qquad U^K_{m,N,j}=U_{m,N,j}\quad(j>0).
\]
The resulting complex is exact.  Indeed, the bar resolution
is exact and \eqref{eq:appendix-bar-retract} is an augmented
deformation retract; multilinear extraction on distinct even
variables again detects exactness of the representation
complex.

Choose equivariant complements to its cycles.  If
$Z^K_j=\ker(\delta_m:U^K_{m,N,j}\to U^K_{m,N,j-1})$ and
$U^K_{m,N,j}=Z^K_j\oplus L^K_j$, exactness identifies
$\delta_m:L^K_{j+1}\xrightarrow{\cong}Z^K_j$.
Define $H_{m,0}$ to be this inverse on $Z^K_j$ and zero on
$L^K_j$.  It follows that
\begin{equation}\label{eq:appendix-kernel-contraction}
 \delta_mH_{m,0}+H_{m,0}\delta_m=1,
 \qquad H_{m,0}^2=0.
\end{equation}
Exactness of evaluation identifies its target with the
augmentation kernel over $B$ and gives the promised
$B$-linear contraction. It
preserves total weight $N$ and raises $j$ by one.  A coefficient
in the source has weight $N-(m+j)$; the corresponding target
coefficient has weight $N-(m+j+1)$.  Thus this contraction
loses exactly one unit of coefficient weight on each nonzero
homogeneous component, and loses at most one power of
$\mathfrak m$ on arbitrary elements.

\subsection{Evaluation over a differential graded base}

Now let $(B,d_B)$ be a CDGA and let $(M,D_M)$ be a differential
graded $B$-module.  The tensor differential of
$M^{\otimes_BN}$ commutes with the signed action of
$\mathfrak S_N$.  Every representation map above consequently
commutes with the internal differential after evaluation.
This applies, in particular, when the matrices of $D_M$
have nonconstant coefficients in $B$.

For precision, the following totalization fixes the signs.
An element of internal cohomological degree $u$ and resolution
index $j$ has total degree $u-j$.  Keep the coefficient action
untwisted and set
\begin{equation}\label{eq:appendix-totalization}
 D(x)=d_{\mathrm{int}}x+(-1)^u\beta x,
 \qquad H(x)=(-1)^u h x
 \quad\text{for }|x|_{\mathrm{int}}=u.
\end{equation}
Use the same rule for $\ell,\delta_m$, and the other
homological homotopies.  Since internal maps commute with
the untotalized bar maps, these formulas give
$D^2=0$ and $d_{\mathrm{int}}H+Hd_{\mathrm{int}}=0$.
They also give the required graded $B$-linearity of the
homotopies of degree $-1$. This totalization is isomorphic to
the usual shifted-module convention by multiplying the
component of bidegree $(u,j)$ by $(-1)^{ju}$.

For the split minimal model $R=A\otimes\Lambda W$,
take $B=A$ and $M=I/I^2\cong A\otimes W$ with its conormal
differential from Lemma~\ref{lem:conormal-differential}.
The evaluated internal differential is
$D_{\mathrm{lin},m}$.  Consequently the degree-zero retract
maps commute with it, while the homotopies, including
$H_{m,0}$, anticommute with it.  In
particular, \eqref{eq:appendix-kernel-contraction} remains a
contraction identity after $D_{\mathrm{lin},m}$ is added.
This proves the naturality and differential compatibility
claimed in Lemma~\ref{lem:natural-linear-resolution}.

\section{Explicit resolutions for the rational two-sphere}
\label{app:sphere-resolutions}

This appendix gives explicit semifree resolutions for the diagonal
kernel powers of the rational two-sphere, together with comparison
maps and a binary homotopy satisfying the degree and ideal-filtration
bounds of Propositions~\ref{prop:controlled-resolution}
and~\ref{prop:power-comparison} and
Corollary~\ref{cor:binary-comparison}.

\subsection{Resolutions and comparison maps}

The minimal model of the rational two-sphere is
\[
 B=(\Lambda(x,a),d),\qquad |x|=2,\quad |a|=3,
 \qquad dx=0,\quad da=x^2.
\]
With $X=x_R-x_L$ and $\beta=a_R-a_L$, put
\[
 A=B_L,\qquad R=A\otimes\Lambda(X,\beta),\qquad
 I=(X,\beta),\qquad dX=0,\qquad d\beta=X\lambda,
 \qquad \lambda=2x+X.
\]
Thus $|X|=|\lambda|=2$, $|\beta|=3$, and $d\lambda=0$.
Although the sphere is formal, the term $X^2$ in $d\beta$ is a
nonzero nonlinear part of the diagonal differential.

For every $m\geq1$, let $P_m$ be the graded free $R$-module on
cells $e_j^{(m)},f_j^{(m)}$, $j\geq0$, of degrees
\[
 |e_j^{(m)}|=2m+2j,\qquad |f_j^{(m)}|=2m+2j+1.
\]
Suppress the superscript when $m$ is fixed, and set
$e_{-1}=f_{-1}=0$. Define the differential and augmentation by
\begin{align}
 De_j&=Xf_{j-1}-\beta e_{j-1},&
 Df_j&=\lambda e_j-\beta f_{j-1},\label{eq:sphere-resolution-differential}\\
 \varepsilon_m(e_j)&=\delta_{j0}X^m,&
 \varepsilon_m(f_j)&=\delta_{j0}X^{m-1}\beta.
 \label{eq:sphere-resolution-augmentation}
\end{align}
Extend $D$ by the differential-module rule. The identities
$d\beta=X\lambda$ and $\beta^2=0$ give $D^2=0$ and show that
$\varepsilon_m$ is a cochain map. Every term of the differential
of a cell involves a cell of smaller cohomological degree, so
$P_m$ is semifree.

To verify that it resolves $I^m$, first take $m=1$ and consider
\[
 \mathcal E=R\otimes\Lambda(u,v),\qquad |u|=1,\quad |v|=2,
 \qquad Du=X,\quad Dv=\beta-\lambda u.
\]
Writing $\beta'=\beta-\lambda u$ identifies $\mathcal E$ over $A$
with the tensor product of the two contractible pairs
$(u,X)$ and $(v,\beta')$. Therefore its projection to $A=R/I$
is a quasi-isomorphism. Write $v^{[j]}=v^j/j!$. The assignments
\begin{equation}\label{eq:sphere-cone-identification}
 se_j^{(1)}\longmapsto(-1)^j u v^{[j]},
 \qquad
 sf_j^{(1)}\longmapsto(-1)^j v^{[j+1]}
\end{equation}
give an isomorphism of differential $R$-modules
$R\oplus sP_1\cong\mathcal E$, identical on $R$, with the
cone differential of \eqref{eq:general-module-cone}. It follows that
$\varepsilon_1:P_1\to I$ is a quasi-isomorphism. Multiplication
by $X^{m-1}$ identifies $I$, with its degrees increased by
$2m-2$, with $I^m$: indeed, $I^m=X^{m-1}I$ and multiplication
by $X$ is injective. The same degree shift takes $P_1$ to
$P_m$ and its augmentation to \eqref{eq:sphere-resolution-augmentation}.
This proves the resolution assertion for every $m$.

The consecutive comparison maps are
\begin{equation}\label{eq:sphere-consecutive-comparison}
 j_{m,1}(e_j^{(m+1)})=Xe_j^{(m)},\qquad
 j_{m,1}(f_j^{(m+1)})=Xf_j^{(m)}.
\end{equation}
They are cochain maps, lift $I^{m+1}\hookrightarrow I^m$, and
have image in $IP_m$.

There is also a closed formula for the binary comparison. On
basis cells of $P_m$, set
\begin{align}
 \sigma_m(e_i,e_j)&=0,\notag\\
 \sigma_m(e_i,f_j)&=
 X^{m-1}\binom{i+j+1}{i}e_{i+j+1},\notag\\
 \sigma_m(f_j,e_i)&=-\sigma_m(e_i,f_j),\notag\\
 \sigma_m(f_i,f_j)&=
 -X^{m-1}\binom{i+j+2}{i+1}f_{i+j+1}.
 \label{eq:sphere-binary-comparison}
\end{align}
Extend these formulas as a graded $R$-linear map on
$P_m\otimes_RP_m$. Its degree is $-1$, it is graded
antisymmetric, and its image is contained in
$I^{m-1}\ker\varepsilon_m$.

For a direct verification of the homotopy identity, transport
the product of $\mathcal E$ through
\eqref{eq:sphere-cone-identification}. The identities $u^2=0$
and
\[
 v^{[i]}v^{[j]}=\binom{i+j}{i}v^{[i+j]}
\]
give exactly \eqref{eq:sphere-binary-comparison} when $m=1$,
with the suspension convention
\[
 (sp)(sq)=(-1)^{|p|+1}s\sigma_1(p,q).
\]
The Leibniz rule for this product is precisely
\[
 D\sigma_1+\sigma_1D_{\otimes}
 =\bigl[p\otimes q\longmapsto
 \varepsilon_1(p)q-(-1)^{|p||q|}\varepsilon_1(q)p\bigr].
\]
For general $m$, the differential matrices in
\eqref{eq:sphere-resolution-differential} are unchanged, while
both the augmentation and the binary formula acquire the
closed even factor $X^{m-1}$. The same identity therefore
holds with subscript $m$. For example,
\[
 \sigma_m(e_0,f_0)=X^{m-1}e_1,
 \qquad \sigma_m(f_0,f_0)=-2X^{m-1}f_1.
\]
These formulas exhibit both the additional factor in the
consecutive comparison and the coefficient bound for the
binary homotopy.

\subsection{An ideal-valued nullhomotopy}

Theorem~\ref{thm:poincare-main} gives equality for $S^2$. We now
exhibit the ideal-valued nullhomotopy on the explicit resolution above.
\begin{example}\label{ex:even-sphere}
For $m=3$, put
\[
 \omega=4Xa+(X-2x)\beta,
 \qquad d\omega=X^3,
\]
and define the $R$-linear map $h:P_3\to I$ of degree $-1$ by
\[
 h(e_0)=\omega,\qquad h(f_0)=4a\beta,
 \qquad h(e_j)=h(f_j)=0\quad(j\geq1).
\]
The nullhomotopy equation $dh+hD=\varepsilon_3$ follows on the
first two cells from
\[
 d\omega=X^3,
 \qquad d(4a\beta)+\lambda\omega=X^2\beta.
\]
On the next two cells, graded $R$-linearity gives
\[
 hDe_1=4Xa\beta+\beta\omega=0,
 \qquad hDf_1=4\beta a\beta=0.
\]
Every term on the remaining cells is zero. Therefore
$o_3(\Delta_{S^2})=0$.

This primitive also illustrates the coefficient order that can be
required of a nullhomotopy. There is no primitive of $X^3$ in $I^2$:
indeed,
\[
 (I^2)^5=\Q X\beta,
 \qquad d(X\beta)=2xX^2+X^3,
\]
and $xX^2$ and $X^3$ are linearly independent monomials. Since
$De_0=0$ and $\varepsilon_3(e_0)=X^3$, an ideal-valued
nullhomotopy of $P_3$ cannot have all its values in $I^2$.
\end{example}

\subsection{The arity-two cobar signs}\label{app:binary-cobar-signs}
The binary comparison~\eqref{eq:sphere-binary-comparison} illustrates the sign convention in the
curved Harrison model. In the general notation of
Section~\ref{sec:filtered-model}, let
$p,q$ be closed homogeneous elements, with parities $\alpha,\gamma$, and put
$c=[s^2p\mid s^2q]$. Formula \eqref{eq:harrison-arity-two} gives
\[
 D_E(tc)=-s\sigma(p,q)-(-1)^\alpha(sp)(sq).
\]
The two antisymmetric terms in $\delta(c)$ have combined with the
factor $1/2$ into the single displayed product. Writing
$\mathcal C(p,q)=\varepsilon(p)q-(-1)^{\alpha\gamma}\varepsilon(q)p$, we obtain
\begin{align*}
 D_E\bigl(-s\sigma(p,q)\bigr)
  &=s\mathcal C(p,q)
    =(-1)^\alpha\varepsilon(p)sq-(-1)^{\alpha\gamma+\gamma}\varepsilon(q)sp,\\
 D_E\bigl(-(-1)^\alpha(sp)(sq)\bigr)
  &=-(-1)^\alpha\varepsilon(p)sq+(sp)\varepsilon(q).
\end{align*}
The remaining terms cancel because
$(sp)\varepsilon(q)=(-1)^{(\alpha-1)\gamma}\varepsilon(q)sp$.
For arbitrary $p,q$, the internal terms cancel by the same suspension
rule and the cochain identity for $\sigma$.

\section{The relative cylinder and the filtered Thom comparison}
\label{app:filtered-thom}

This appendix supplies the two construction steps used in
Lemma~\ref{lem:filtered-thom}.  The underlying cylinder and
one-generator comparisons are standard.  The simultaneous control by
positive coefficient degree and relative word length, and its use for
the relative augmentation, are proved here.

\subsection{The relative acyclic closure}

\begin{lemma}\label{lem:relative-thom-cylinder}
Let $A\hookrightarrow R=A\otimes\Lambda W\xrightarrow{\varphi}A$
be the split minimal model of Lemma~\ref{lem:split-minimal-model},
with a finite Sullivan basis $w_1,\ldots,w_k$ ordered by
nondecreasing degree.  Put
$B_i=A\otimes\Lambda(w_1,\ldots,w_{i-1})$.
There is a relative acyclic closure
$\varepsilon:E_A=R\otimes\Lambda(sW)\to A$ satisfying
\[
 D(sw_i)=w_i-z_i,\qquad Dz_i=dw_i,\qquad
 z_i\in B_i^+\otimes\Lambda(sw_1,\ldots,sw_{i-1}),
\]
where every monomial of $z_i$ has total relative length at least one
in $W\oplus sW$.  In particular, $D$ does not decrease this length.
\end{lemma}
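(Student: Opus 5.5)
We build $E_A$ one relative generator at a time, by induction on $i$, along the chosen ordering $w_1,\ldots,w_k$. Put
\[
 E_i=R\otimes\Lambda(sw_1,\ldots,sw_{i-1}),
\]
and suppose inductively that the sub-CDGA
\[
 \mathcal E_i=B_i\otimes\Lambda(sw_1,\ldots,sw_{i-1})
\]
already carries a differential $D$ for which the projection $\mathcal E_i\to A$ is a quasi-isomorphism. We also assume that every $D(sw_j)$ with $j<i$ has the stated form. The base case $i=1$ is $\mathcal E_1=A$ with projection the identity.

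\textbf{Inductive step.} Write $w=w_i$. By the Sullivan condition, $dw\in B_i$, and by Lemma~\ref{lem:split-minimal-model}, $dw\in I\cap B_i$. Since $\mathcal E_i\to A$ is a quasi-isomorphism and $dw$ is a cocycle mapping to $0$ in $A$, the element $dw$ is exact in $\mathcal E_i$. We therefore choose a primitive $z$ with $Dz=dw$ and set $D(sw)=w-z$, so that $D^2(sw)=dw-Dz=0$. Then $\mathcal E_{i+1}=\mathcal E_i\otimes\Lambda(w,sw)$. Filtering by degree in the new pair, the pair $(sw,w-z)$ is contractible. Hence $\mathcal E_{i+1}\simeq\mathcal E_i\simeq A$, with the projection killing $w$ and $sw$. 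At the last stage this gives the relative acyclic closure $E_A=R\otimes\Lambda(sW)\to A$.

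\textbf{Choosing the primitive.} The substance of the lemma is that $z$ can be chosen with two properties: no scalar or pure-$A$ part, and positive coefficient degree in $B_i$.

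First, to make every monomial of $z$ have relative length at least one, consider the subcomplex $\mathcal E_i^{\ge1}$ of elements of relative length at least one in $W\oplus sW$. The inductive hypothesis says that $D$ does not decrease this length, so $\mathcal E_i^{\ge1}$ is indeed a subcomplex. Moreover, $\mathcal E_i=A\oplus\mathcal E_i^{\ge1}$ as complexes, because $D|_A=d_A$ and $D$ maps $A$ into $A$. Since the projection $\mathcal E_i\to A$ is a quasi-isomorphism, the complement $\mathcal E_i^{\ge1}$ is acyclic. The cocycle $dw$ lies in $\mathcal E_i^{\ge1}$, since it lies in $I$. Acyclicity then gives a primitive $z\in\mathcal E_i^{\ge1}$.

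Second, to make the coefficients lie in $B_i^+$, note that a monomial of $z$ of the form $1\otimes(\text{pure }sW\text{-word})$ contributes to $Dz$ only through the parts $sw_j\mapsto w_j$ and $z_j$. We would like to exclude such monomials. Degree counting alone does not suffice: since $|z|=|w|$ and $|sw_j|=|w_j|-1$, a monomial like $sw_j\,sw_l$ can sometimes have the right degree. So we instead use the length bookkeeping. Give $sW$ the same word length as $W$. Minimality says that $dw$ has no term linear in $W$ with a scalar coefficient, so the component of $dw$ with trivial $A$-part is decomposable, of length at least two. Pure-$sW$ monomials in $z$ could then be traded: replacing each $sw_j$ by $w_j$ changes $z$ by a boundary of an element that still lies in $\mathcal E_i^{\ge1}$. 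We would correct $z$ by such boundaries to remove the pure-$sW$ terms, inductively on word length, using the acyclicity of $\mathcal E_i^{\ge1}$ restricted to words with trivial $B_i$-coefficient, which is a Koszul-type complex. This yields $z\in B_i^+\otimes\Lambda(sw_{<i})$.

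\textbf{Main obstacle.} I expect the hardest step to be this last normalization, namely showing that the pure-$\Lambda(sW)$ component of the primitive can be killed while keeping the relative length at least one. If the trading argument proves awkward, the fallback is to build the closure directly as the standard Koszul-type acyclic closure of $R\to A$ relative to $A$, with $D_0(sw)=w$. We would then perturb by the remaining part $d_R-D_0$, whose coefficients all lie in $B^+$ by minimality and Lemma~\ref{lem:conormal-differential}. The perturbation series of \eqref{eq:perturbation-formulas} would produce the $z_i$ with coefficients in $B_i^+$ automatically, and the length bound would follow because every term of the series carries at least one letter of $W\oplus sW$. The final assertion, that $D$ does not decrease relative length, is then immediate from $D(sw_i)=w_i-z_i$ together with $d_R(W)\subset I$.
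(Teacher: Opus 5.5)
You establish the closure itself and the positive-relative-length property correctly, but your argument for the coefficient condition $z_i\in B_i^+\otimes\Lambda(sw_1,\ldots,sw_{i-1})$ does not work.

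What is sound: the splitting $\mathcal E_i=A\oplus\mathcal E_i^{\ge1}$ makes $\mathcal E_i^{\ge1}$ acyclic. Hence $dw_i$ has a primitive there, and the projection to $A$ killing all relative letters is a cochain quasi-isomorphism. The paper reads the same property off $\varepsilon(z_i)=0$.

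Why trading fails: assume $z_1,\ldots,z_{i-1}$ already satisfy the condition. Then every generator of $\mathcal E_i$ has its differential in the ideal $B_i^+\mathcal E_i$: $d_A$ lands in $A^+$, $d_Rw_j\in B_i^+$, and $D(sw_j)=w_j-z_j$. Hence $D(\mathcal E_i)\subset B_i^+\mathcal E_i$. A cocycle of degree $|w_i|$ is the sum of a cocycle of $A^+$ and a boundary in $\mathcal E_i^{\ge1}$, so it lies in this ideal. Therefore the image $\bar z_i$ of any primitive in $\mathcal E_i/B_i^+\mathcal E_i=\Lambda(sw_1,\ldots,sw_{i-1})$ is independent of every choice.

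So pure $sW$-words can never be removed by boundary corrections. Moreover:
\begin{itemize}
\item The span of words with trivial $B_i$-coefficient is not a subcomplex. In the quotient it carries the zero differential, so no acyclic Koszul-type complex is available.
\item Replacing $sw_j$ by $w_j$ does not even preserve degree.
\end{itemize}
What must be shown is that the invariant $\bar z_i$ vanishes, and you give no argument for this.

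The fallback does not supply one either. Making $D_0$ together with $d_R$ on $W$ square to zero requires choosing the values on $sW$, which is exactly the content of the lemma; so there is no given perturbation to transfer. Also, the Koszul contraction itself creates pure $sW$-words: it sends $w_j\,sw_l$ to a nonzero multiple of $sw_j\,sw_l$. So $B_i^+$-coefficients are not automatic.

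How the gap can be closed:
\begin{itemize}
\item \emph{The paper's route.} Take $z_i$ from the explicit cylinder formula $D(sw_i)=w_i^{(2)}-w_i^{(1)}-\sum_{j\ge1}(sD)^j(w_i^{(1)})/j!$ over $R\otimes_AR$. Check by relative minimality that every iterate stays in the ideal generated by $A^+$ and both unsuspended copies of $W$. Specializing $W^{(1)}$ to zero sends this ideal into $B_i^+\otimes\Lambda(sW)$.
\item \emph{Keeping your construction.} Prove that $\bar z_i=0$ is forced. One has $\Q\otimes_RE_A=(\Lambda(sW),\bar D)$ with $\bar D(sw_i)=-\bar z_i$. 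This equals $\Q\otimes_{\Lambda W}(\Q\otimes_AE_A)$, and $\Q\otimes_AE_A$ is an acyclic closure of $(\Lambda W,\bar d)\to\Q$. So $(\Lambda(sW),\bar D)$ computes $\operatorname{Tor}^{\Lambda W}(\Q,\Q)\cong H^*(\Omega F;\Q)$. This is free graded commutative on a copy of $sW$, so it has the same finite dimension as $\Lambda(sW)$ in each degree. This forces $\bar D=0$.
\end{itemize}
Without one of these ingredients, the second half of the statement remains unproved.
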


\begin{proof}
Apply the minimal relative cylinder construction to $R\otimes_A R$,
using the degree $-1$ suspension derivation $s$ with
\[
 s|_A=0,\qquad s(w_i^{(1)})=s(w_i^{(2)})=sw_i,
 \qquad s(sw_i)=0.
\]
The cylinder differential is determined inductively by
\[
 D(sw_i)=w_i^{(2)}-w_i^{(1)}
       -\sum_{j\geq1}\frac{(sD)^j(w_i^{(1)})}{j!}.
\]
All correction terms involve earlier relative generators, and the
sums are finite by the Sullivan ordering.  Specialize the first copy
of $W$ to zero.  The result has the asserted form
$D(sw_i)=w_i-z_i$; the identity $D^2=0$ gives $Dz_i=dw_i$.

For this cylinder construction, see
\cite[Sections~14--15]{FHT2001} and
\cite[Proposition~A.4 and Theorem~A.8]{Wakatsuki2016}.
The cylinder augmentation to $R$ is a quasi-isomorphism between
semifree modules over the first copy of $R$, with target free of
rank one, and hence a module homotopy equivalence. Tensoring over
that copy with $R\to A$ preserves the homotopy equivalence and gives
$\varepsilon:E_A\to A$, where $\varepsilon|_R=\varphi$ and
$\varepsilon(sW)=0$. Thus $\varepsilon$ is a relative acyclic closure.

The coefficient condition is
$z_i\in B_i^+\otimes\Lambda(sw_1,\ldots,sw_{i-1})$. Before
specialization, its analogue uses the ideal $\mathcal J$ generated
by $A^+$ and the two unsuspended copies of $W$; an element of
$A^+$ counts as one coefficient factor. Relative minimality makes $sD(w_i^{(1)})$
lie in $\mathcal J$: suspension removes at most one factor from a
decomposable relative differential, and vanishes on $A$.
Each further application of $sD$ preserves this property. If $D$
acts on an unsuspended relative factor, relative minimality replaces
it by at least two unsuspended factors before $s$ removes at most one.
If $D$ acts on a suspended factor, it introduces an unsuspended
factor while the original one is retained before $s$ acts.
If the coefficient lies in $A^+$, suspension vanishes on it and on
its differential. Every resulting term therefore still has a
positive coefficient degree in this algebra. Thus specialization gives
$z_i\in B_i^+\Lambda(sw_1,\ldots,sw_{i-1})$.

The other condition follows from the augmentation induced by the
cylinder construction: $\varepsilon$ is a cochain map and
$0=\varepsilon D(sw_i)=-\varepsilon(z_i)$. Since
$\ker\varepsilon=(W\oplus sW)$, every monomial of $z_i$ has
positive relative length. Positive coefficient degree and
positive relative length are distinct properties; the former
controls the filtered Hom comparison and the latter the
ideal-power estimate.

Finally $dW\subset(W)$ and the relative length of each $z_i$ is
positive, so the differential cannot decrease total relative word
length.
\end{proof}

\subsection{The complete filtered comparison}

\begin{lemma}\label{lem:complete-thom-comparison}
Suppose one stage of the preceding construction has the form
\[
 B=A\otimes\Lambda(w_1,\ldots,w_{i-1}),\quad
 E_B=B\otimes\Lambda(sw_1,\ldots,sw_{i-1}),\quad
 R_i=B\otimes\Lambda x,
\]
with $x=w_i$, $t=sx$, and
$E_i=R_i\otimes_BE_B\otimes\Lambda t$, where
$Dt=x-z$, $z\in B^+E_B$, and $Dz=dx$.
For a homogeneous $f:E_B\to B$ of degree $p$, first extend $f$
by graded $R_i$-linearity. On old suspended monomials $u$, define
\begin{align*}
 |x|\text{ odd}:&\qquad
 \Phi(f)(u)=xf(u)-(-1)^pf(zu),&
 \Phi(f)(ut^j)&=0\quad(j\geq1),\\
 |x|\text{ even}:&\qquad
 \Phi(f)(u)=0,&
 \Phi(f)(ut)&=(-1)^{p+|u|}f(u).
\end{align*}
This is the operator of
\eqref{eq:thom-phi-odd}--\eqref{eq:thom-phi-even}. The induced map
\[
 \Phi:\operatorname{Hom}_B(E_B,B)
       \longrightarrow\operatorname{Hom}_{R_i}(E_i,R_i)
\]
is graded $B$-linear, has degree $\sigma_x=|x|$ for odd $x$ and
$\sigma_x=1-|x|$ for even $x$, and satisfies
$\partial\Phi=-\Phi\partial$.  It induces an isomorphism on
cohomology after shifting by $\sigma_x$.
\end{lemma}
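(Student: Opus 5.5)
The plan has three parts: the formal properties of $\Phi$, then the cochain identity, then the cohomology isomorphism, which is the real content.

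\emph{Formal properties.} Graded $B$-linearity of $\Phi$ is immediate, since the formulas are defined by graded $R_i$-linear extension and the coefficients $x$, $z$ and the signs commute appropriately with $B$. The degree statement is read off from the formulas. In the odd case, $f\mapsto xf$ raises degree by $|x|$, and $f(zu)$ has degree $p+|z|=p+|x|$, because $|z|=|Dt|=|x|$. In the even case, $|t|=|x|-1$, so evaluating on $ut$ while returning $f(u)$ lowers degree by $|x|-1$, i.e.\ $\Phi$ has degree $1-|x|$.

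\emph{The identity $\partial\Phi=-\Phi\partial$.} I will check this on the free $R_i\otimes_B E_B$-basis monomials $ut^j$, using only $Dt=x-z$, $Dz=dx$ and $x^2=0$ in the odd case.

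In the odd case there are two kinds of basis elements.
\begin{itemize}
\item On $u$, the left side is $d(xf(u)\mp f(zu))\pm\Phi(f)(Du)$. Expanding, using $dx=Dz$ and the cochain property of the extended $f$, gives $\Phi(\partial f)(u)$ up to sign.
\item On $ut$, the equation reduces to $\Phi(f)(D(ut))=0$ modulo terms with $t$. Here $D(ut)$ contributes $\pm u(x-z)$, and $\Phi(f)(ux-uz)=x\Phi(f)(u)-\Phi(f)(uz)$. This vanishes because $x^2=0$ and the two $xf(zu)$ cross terms cancel.
\end{itemize}
For $ut^j$ with $j\geq2$ we have $t^2=0$ when $t$ is even only if we work in divided powers, so I will handle these cases inductively in the same way.

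The even case is the standard one-variable Koszul computation. Here $t$ is odd, so the basis is $u,ut$, and $\partial\Phi(f)(ut)$ involves $D(ut)=Du\,t\pm u(x-z)$. Since $\Phi(f)$ kills elements without $t$, only $Du\,t$ survives. This reproduces $\Phi(\partial f)$. The sign $(-1)^{p+|u|}$ is chosen exactly to make this work.

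\emph{Cohomology isomorphism.} I will filter both Hom complexes by the degree of the $B$-coefficients (equivalently, by powers of $B^+$). These filtrations are complete and exhaustive, and they are finite in each total degree because $B$ is simply connected and of finite type while the cells are bounded below. Since $z\in B^+E_B$, all terms involving $z$ and the twisting part of $D$ strictly raise the filtration. So on associated graded pieces, $\Phi$ becomes $\operatorname{gr}B$ tensored with the untwisted one-generator map
\[
\operatorname{Hom}(\Lambda(sW_{<i}),\Q)\to\operatorname{Hom}_{\Lambda x}(\Lambda x\otimes\Lambda(sW_{<i})\otimes\Lambda t,\Lambda x),\qquad Dt=x.
\]
This is the classical statement $\operatorname{Ext}_{\Lambda x}(\Q,\Lambda x)\cong\Q$ concentrated in degree $|x|$ (odd $x$, where $\Lambda x$ is exterior and Frobenius) or in degree $1-|x|$ (even $x$, where the polynomial case gives the Gorenstein shift). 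It is checked directly on the small complex, and it is tensored with the identity on the $E_B$ factor. The filtered comparison theorem then gives the isomorphism.

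The main obstacle is this last step. One has to verify that $\Phi$ really preserves the filtration and that the associated graded differential on the target is exactly the untwisted Koszul one, i.e.\ that no $B^0$-linear terms from $D$ on $E_B$ survive. Relative minimality (coefficients in $A^{\geq2}$, Lemma~\ref{lem:relative-thom-cylinder}) is what should guarantee this.
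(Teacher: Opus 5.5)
Your overall route is the paper's. You check $\partial\Phi=-\Phi\partial$ directly on the monomials $ut^j$. You then filter both Hom complexes by the degree of the $B$-coefficients, not counting powers of $x$. You observe that $d_B$, $dx$, the differentials of the old suspended generators, and $z$ all raise this filtration, since every term of $D(sw_j)=w_j-z_j$ has a coefficient in $B^+$. Finally you identify the associated graded map with the one-generator Koszul comparison for $Dt=x$, and that computation is correct if read entrywise, as a product over input monomials rather than a tensor product with $B$.

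\textbf{The gap is the convergence step.} The filtrations are \emph{not} finite in a fixed total degree. Both $\operatorname{Hom}_B(E_B,B)$ and $\operatorname{Hom}_{R_i}(E_i,R_i)$ are products over infinitely many cells of unbounded degree: all monomials $u$, and, when $x$ is odd, all $ut^j$. So for an unbounded $B$ (the general case), a cochain of degree $k$ supported on cells of large degree lies in $F^a$ for arbitrarily large $a$. A bounded-below cell structure does not change this, so the comparison theorem for finite filtrations is not available.

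What does hold, and what the paper uses, is an entrywise bound. For a fixed input monomial and a fixed cochain degree, the value lies in a fixed finite-dimensional graded piece of $B$ or $R_i$. Each entry therefore sees only finitely many filtration levels, and this is what makes the filtrations complete. You then need a complete-filtration argument on the mapping cone of the shifted $\Phi$:
\begin{itemize}
\item Given a cocycle $c$, solve successively modulo $F^1,F^2,\ldots$ to obtain $h_a\in F^a$.
\item Form $h=\sum_a h_a$ entrywise.
\item Check that $\partial$ commutes with this limit. This holds because each output entry of the cone differential uses only finitely many input entries, since $Du$ and $z$ are finite sums.
\item Separatedness then gives $\partial h=c$.
\end{itemize}
Citing the comparison theorem for complete exhaustive filtrations is equivalent, but completeness has to be justified entrywise as above, not by finiteness.

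Two smaller points concern the cochain check. For odd $x$, $t$ is even, so $\Lambda t$ is polynomial and $t^j\neq0$; divided powers play no role. Still, no induction is needed for $j\geq2$: $D(ut^j)$ involves only monomials containing $t^j$ or $t^{j-1}$ with $j-1\geq1$, and $\Phi(f)$ kills all of these. In the case $j=1$ you also need $z^2=0$, since $\Phi(f)(uz)$ contains the term $f(z^2u)$. This vanishing is automatic because $|z|=|x|$ is odd.
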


\begin{proof}
Both possible values of $\sigma_x$ are odd.  Substitution into the
Hom differential
$\partial f=Df-(-1)^{|f|}fD$ verifies
$\partial\Phi=-\Phi\partial$ and graded $B$-linearity.
The cancellation uses $Dz=dx$ and $Dt=x-z$; for odd $x$, it also
uses $x^2=z^2=0$.  These identities apply on every monomial in $t$,
so the comparison is a map of complexes after the stated shift.

Filter both Hom complexes by the degree of their $B$-coefficients.
More precisely, a cochain belongs to $F^a$ when each of its values on
the chosen free module generators has all coefficients in
$B^{\geq a}$; powers of the new generator $x$ in the target are not
counted.  The filtrations start at $F^0$, are decreasing and
separated, and are preserved by the differentials and by $\Phi$.
They are complete coefficient by coefficient.  For a fixed
homogeneous input monomial and a fixed cochain degree, the output
degree is fixed.  More explicitly, in unshifted notation a
$B$-coefficient $b$ in $f(v)$ satisfies
\[
 \deg b\leq |v|+|f|.
\]
The algebras are nonnegatively graded and $|x|>0$, so only finitely
many degrees of $B$ can occur in that output.  The same bound applies
to each shifted summand of the cone with its corresponding cochain
degree. This bound is taken separately for each input monomial.

In the associated graded complexes, the differential of $B$, the
differentials of the old module generators, and $dx$ all raise the
filtration and disappear.  So does $z\in B^+E_B$.  The only
remaining module differential is $Dt=x$.  The associated graded map
is therefore the one-generator Koszul comparison, independently at
each old input monomial and each $B$-coefficient degree.
For an odd exterior generator $x$, the suspended generator $t$ is
polynomial and $D(t^j)=jxt^{j-1}$.  The cohomology class is represented
by the cochain taking $1$ to $x$ and positive powers of $t$ to zero;
the remaining terms contract using the nonzero integers $j$.
For an even polynomial generator $x$, $t$ is exterior and the
comparison extracts the coefficient of $t$, with the sign in
\eqref{eq:thom-phi-even}; the cohomology is the quotient by $(x)$.
Thus both associated graded comparisons induce isomorphisms.
This calculation is entrywise and applies to the full Hom products.

For convergence, consider the mapping cone of the shifted
comparison. It has the same complete filtration, and every
associated graded piece is acyclic.  Given a cocycle $c$, solve for
a primitive modulo $F^1$, subtract its differential, and solve the
remainder modulo $F^2$.  Continuing gives cochains $h_a\in F^a$ such
that
\[
 c-\partial\Bigl(\sum_{a=0}^{n}h_a\Bigr)\in F^{n+1}
 \qquad(n\geq0).
\]
The sum $h=\sum_{a\geq0}h_a$ converges coefficient by coefficient.
Its differential commutes with this limit: for each free module
input, the module differential is a finite linear combination of
inputs, and the comparison term of the cone differential also uses
only finitely many entries, since $z$ is a finite expression.
Each resulting output coefficient stabilizes, so applying the
differential before or after the limit gives the same coefficient.
Separatedness now yields $\partial h=c$.  The cone is acyclic, which
proves that $\Phi$ is a quasi-isomorphism after the shift.
\end{proof}

Starting with $\operatorname{Hom}_A(A,A)=A$ and iterating
Lemma~\ref{lem:complete-thom-comparison} proves the generator assertion
in Lemma~\ref{lem:filtered-thom}.  The sum of the successive shifts is
\[
 \sum_{w\in\mathcal B_{\mathrm{odd}}}|w|
 -\sum_{w\in\mathcal B_{\mathrm{even}}}(|w|-1)=N.
\]
The independent relative-length control from
Lemma~\ref{lem:relative-thom-cylinder}, combined with the induction in
the main proof, supplies its ideal-power estimate.

\Needspace{12\baselineskip}

\Needspace{12\baselineskip}
\bigskip
\noindent\textbf{Declaration on the use of generative artificial intelligence.}
During the preparation of this manuscript, the author used ChatGPT
(OpenAI) for language editing, \LaTeX{} preparation, bibliographic
assistance, and the exploration, drafting, and checking of mathematical
arguments, including the Thom criterion, Thom constructions, and
geometric applications in Sections~\ref{sec:finite-homotopy} and~\ref{sec:geometric-examples}
and Appendix~\ref{app:filtered-thom}.
The author also used Claude (Anthropic) to produce mathematical notes and
critical readings of drafts.
The author assumes full responsibility for the mathematical statements,
proofs, references, and conclusions of the article.
\end{document}